\theoremstyle{plain}
\newtheorem{Thm}[equation]{Theorem}
\newtheorem{Cor}[equation]{Corollary}
\newtheorem{Lem}[equation]{Lemma}
\newtheorem{Prop}[equation]{Proposition}
\newtheorem{Conj}[equation]{Conjecture}
\theoremstyle{definition}
\newtheorem{Def}[equation]{Definition}
\theoremstyle{remark}
\newtheorem{Rem}[equation]{Remark}
\numberwithin{equation}{section}
\renewcommand{\rm}{\normalshape}
\newif\ifShowLabels
\newdimen\theight
\def\TeXref#1{%
    \leavevmode\vadjust{\setbox0=\hbox{{\tt
        \quad\quad  {\small \rm #1}}}%
    \theight=\ht0
    \advance\theight by \lineskip
    \kern -\theight \vbox to
    \theight{\rightline{\rlap{\box0}}%
    \vss}%
    }}%
\newenvironment{thm}[1]%
    { \begin{Thm} \label{T:#1}  \ifShowLabels \TeXref{T:#1} \fi }%
    { \end{Thm} }
\renewcommand{\th}[1]{\begin{thm}{#1} \sl }
\renewcommand{\eth}{\end{thm} }
\newenvironment{lemma}[1]%
    { \begin{Lem} \label{L:#1}  \ifShowLabels \TeXref{L:#1} \fi }%
    { \end{Lem} }
\newcommand{\lem}[1]{\begin{lemma}{#1} \sl}
\newcommand{\elem}{\end{lemma}}
\newenvironment{propos}[1]%
    { \begin{Prop} \label{P:#1}  \ifShowLabels \TeXref{P:#1} \fi }%
    { \end{Prop} }
\newcommand{\prop}[1]{\begin{propos}{#1}\sl }
\newcommand{\eprop}{\end{propos}}
\newenvironment{corol}[1]%
    { \begin{Cor} \label{C:#1}  \ifShowLabels \TeXref{C:#1} \fi }%
    { \end{Cor} }
\newcommand{\cor}[1]{\begin{corol}{#1} \sl }
\newcommand{\ecor}{\end{corol}}
\newenvironment{defeni}[1]%
    { \begin{Def} \label{D:#1}  \ifShowLabels \TeXref{D:#1} \fi }%
    { \end{Def} }
\newcommand{\defe}[1]{\begin{defeni}{#1} \sl }
\newcommand{\edefe}{\end{defeni}}
\newenvironment{remark}[1]%
    { \begin{Rem} \label{R:#1}  \ifShowLabels \TeXref{R:#1} \fi }%
    { \end{Rem} }
\newcommand{\rem}[1]{\begin{remark}{#1}}
\newcommand{\erem}{\end{remark}}
\newenvironment{conjec}[1]%
    { \begin{Conj} \label{Co:#1}  \ifShowLabels \TeXref{Co:#1} \fi }%
    { \end{Conj} }
\renewcommand{\conj}[1]{\begin{conjec}{#1} \sl }
\newcommand{\econj}{\end{conjec}}
\newcommand{\eq}[1]%
    { \ifShowLabels \TeXref{E:#1} \fi
       \begin{equation} \label{E:#1} }
\newcommand{\eeq}{ \end{equation} }
\newcommand{\prf}{ \begin{proof} }
\newcommand{\epr}{ \end{proof} }
\newcommand\nc{\newcommand}
\nc{\unl}{\underline}
\nc{\ol}{\overline}
\nc{\on}{\operatorname}
\nc{\BA}{{\mathbb{A}}}
\nc{\BC}{{\mathbb{C}}}
\nc{\BD}{{\mathbb{D}}}
\nc{\BF}{{\mathbb{F}}}
\nc{\BG}{{\mathbb{G}}}
\nc{\BM}{{\mathbb{M}}}
\nc{\BN}{{\mathbb{N}}}
\nc{\BO}{{\mathbb{O}}}
\nc{\BQ}{{\mathbb{Q}}}
\nc{\BP}{{\mathbb{P}}}
\nc{\BR}{{\mathbb{R}}}
\nc{\BZ}{{\mathbb{Z}}}
\nc{\BS}{{\mathbb{S}}}
\nc{\BK}{{\mathbb{K}}}
\nc{\CA}{{\mathcal{A}}} \nc{\CB}{{\mathcal{B}}} \nc{\CalC}{{\mathcal
C}} \nc{\CalD}{{\mathcal D}} \nc{\CE}{{\mathcal{E}}}
\nc{\CF}{{\mathcal{F}}} \nc{\CG}{{\mathcal{G}}}
\nc{\CH}{{\mathcal{H}}} \nc{\CI}{{\mathcal{I}}}
\nc{\CK}{{\mathcal{K}}} \nc{\CL}{{\mathcal{L}}}
\nc{\CM}{{\mathcal{M}}} \nc{\CN}{{\mathcal{N}}}
\nc{\CO}{{\mathcal{O}}} \nc{\CP}{{\mathcal{P}}}
\nc{\CQ}{{\mathcal{Q}}} \nc{\CR}{{\mathcal{R}}}
\nc{\CS}{{\mathcal{S}}} \nc{\CT}{{\mathcal{T}}}
\nc{\CU}{{\mathcal{U}}} \nc{\CV}{{\mathcal{V}}}
\nc{\CW}{{\mathcal{W}}} \nc{\CX}{{\mathcal{X}}}
\nc{\CY}{{\mathcal{Y}}} \nc{\CZ}{{\mathcal{Z}}}
\nc{\fa}{{\mathfrak{a}}}
\nc{\fb}{{\mathfrak{b}}}
\nc{\fg}{{\mathfrak{g}}}
\nc{\fgl}{{\mathfrak{gl}}}
\nc{\fh}{{\mathfrak{h}}}
\nc{\fj}{{\mathfrak{j}}}
\nc{\fl}{{\mathfrak{l}}}
\nc{\fm}{{\mathfrak{m}}}
\nc{\fn}{{\mathfrak{n}}}
\nc{\fu}{{\mathfrak{u}}}
\nc{\fp}{{\mathfrak{p}}}
\nc{\frr}{{\mathfrak{r}}}
\nc{\fs}{{\mathfrak{s}}}
\nc{\ft}{{\mathfrak{t}}}
\nc{\fw}{{\mathfrak{w}}}
\nc{\fz}{{\mathfrak{z}}}
\nc{\fA}{{\mathfrak{A}}}
\nc{\fB}{{\mathfrak{B}}}
\nc{\fD}{{\mathfrak{D}}}
\nc{\fE}{{\mathfrak{E}}}
\nc{\fF}{{\mathfrak{F}}}
\nc{\fG}{{\mathfrak{G}}}
\nc{\fI}{{\mathfrak{I}}}
\nc{\fJ}{{\mathfrak{J}}}
\nc{\fK}{{\mathfrak{K}}}
\nc{\fL}{{\mathfrak{L}}}
\nc{\fM}{{\mathfrak{M}}}
\nc{\fN}{{\mathfrak{N}}}
\nc{\frP}{{\mathfrak{P}}}
\nc{\fQ}{{\mathfrak Q}}
\nc{\fR}{{\mathfrak R}}
\nc{\fS}{{\mathfrak S}}
\nc{\fT}{{\mathfrak{T}}}
\nc{\fU}{{\mathfrak{U}}}
\nc{\fW}{{\mathfrak{W}}}
\nc{\fY}{{\mathfrak{Y}}}
\nc{\fZ}{{\mathfrak{Z}}}
\nc{\ba}{{\mathbf{a}}}
\nc{\bb}{{\mathbf{b}}}
\nc{\bc}{{\mathbf{c}}}
\nc{\bd}{{\mathbf{d}}}
\nc{\be}{{\mathbf{e}}}
\nc{\bi}{{\mathbf{i}}}
\nc{\bj}{{\mathbf{j}}}
\nc{\bn}{{\mathbf{n}}}
\nc{\bp}{{\mathbf{p}}}
\nc{\bq}{{\mathbf{q}}}
\nc{\bu}{{\mathbf{u}}}
\nc{\bv}{{\mathbf{v}}}
\nc{\bw}{{\mathbf{w}}}
\nc{\bx}{{\mathbf{x}}}
\nc{\by}{{\mathbf{y}}}
\nc{\bz}{{\mathbf{z}}}
\nc{\bA}{{\mathbf{A}}}
\nc{\bB}{{\mathbf{B}}}
\nc{\bC}{{\mathbf{C}}}
\nc{\bD}{{\mathbf{D}}}
\nc{\bE}{{\mathbf{E}}}
\nc{\bF}{{\mathbf{F}}}
\nc{\bI}{{\mathbf{I}}}
\nc{\bK}{{\mathbf{K}}}
\nc{\bH}{{\mathbf{H}}}
\nc{\bM}{{\mathbf{M}}}
\nc{\bN}{{\mathbf{N}}}
\nc{\bO}{{\mathbf{O}}}
\nc{\bQ}{{\mathbf Q}}
\nc{\bS}{{\mathbf{S}}}
\nc{\bT}{{\mathbf{T}}}
\nc{\bV}{{\mathbf{V}}}
\nc{\bW}{{\mathbf{W}}}
\nc{\bX}{{\mathbf{X}}}
\nc{\bP}{{\mathbf{P}}}
\nc{\bY}{{\mathbf{Y}}}
\nc{\bZ}{{\mathbf{Z}}}
\nc{\sA}{{\mathsf{A}}}
\nc{\sB}{{\mathsf{B}}}
\nc{\sC}{{\mathsf{C}}}
\nc{\sD}{{\mathsf{D}}}
\nc{\sF}{{\mathsf{F}}}
\nc{\sH}{{\mathsf{H}}}
\nc{\sK}{{\mathsf{K}}}
\nc{\sM}{{\mathsf{M}}}
\nc{\sO}{{\mathsf{O}}}
\nc{\sQ}{{\mathsf{Q}}}
\nc{\sP}{{\mathsf{P}}}
\nc{\sT}{{\mathsf{T}}}
\nc{\sV}{{\mathsf{V}}}
\nc{\sW}{{\mathsf{W}}}
\nc{\sX}{{\mathsf{X}}}
\nc{\sZ}{{\mathsf{Z}}}
\nc{\sU}{{\mathsf{U}}}
\nc{\sS}{{\mathsf{S}}}
\nc{\sG}{{\mathsf{G}}}
\nc{\sfb}{{\mathsf{b}}}
\nc{\sfc}{{\mathsf{c}}}
\nc{\sd}{{\mathsf{d}}}
\nc{\sg}{{\mathsf{g}}}
\nc{\sk}{{\mathsf{k}}}
\nc{\sfl}{{\mathsf{l}}}
\nc{\sfp}{{\mathsf{p}}}
\nc{\sr}{{\mathsf{r}}}
\nc{\st}{{\mathsf{t}}}
\nc{\sfu}{{\mathsf{u}}}
\nc{\sw}{{\mathsf{w}}}
\nc{\sz}{{\mathsf{z}}}
\nc{\sx}{{\mathsf{x}}}
\nc{\se}{{\mathsf{e}}}
\nc{\sff}{{\mathsf{f}}}
\nc{\sfv}{{\mathsf{v}}}
\nc{\bLambda}{{\boldsymbol{\Lambda}}}
\nc{\vv}{{\boldsymbol{v}}}
\nc{\Fl}{{{\mathcal F}\ell}}
\nc{\Gr}{{\on{Gr}}}
\nc{\CHH}{{\CH\!\!\CH}}
\nc{\lambdavee}{{\lambda^{\!\scriptscriptstyle\vee}}}
\nc{\alphavee}{\alpha^{\!\scriptscriptstyle\vee}}
\nc{\rhovee}{{\rho^{\!\scriptscriptstyle\vee}}}
\newcommand\iso{\,\vphantom{j^{X^2}}\smash{\overset{\sim}{\vphantom{\rule{0pt}{0.20em}}\smash{\longrightarrow}}}\,}
\nc{\oQM}{\vphantom{j^{X^2}}\smash{\overset{\circ}{\vphantom{\vstretch{0.7}{A}}\smash{\QM}}}}
\nc{\oZ}{{}^\dagger\!\vphantom{j^{X^2}}\smash{\overset{\circ}{\vphantom{\vstretch{0.7}{A}}\smash{Z}}}}
\nc{\odZ}{{}^\dagger\!\vphantom{j^{X^2}}\smash{\overset{\circ}{\vphantom{\vstretch{0.7}{A}}\smash{\mathfrak Z}}}^{c',c}}
\nc{\bdZ}{{}^\dagger\!\vphantom{j^{X^2}}\smash{\overset{\bullet}{\vphantom{\vstretch{0.7}{A}}\smash{\mathfrak Z}}}^{c',c}}
\nc{\oS}{\vphantom{j^{X^2}}\smash{\overset{\circ}{\vphantom{\vstretch{0.7}{A}}\smash{S}}}}
\nc{\buM}{\vphantom{j^{X^2}}\smash{\overset{\bullet}{\vphantom{\vstretch{0.7}{A}}\smash{M}}}}
\nc{\dW}{{}^\dagger\ol\CW{}}
\nc{\hW}{{}^\dagger\hat\CW{}}
\nc{\wW}{{}^\dagger\widetilde\CW{}}
\nc{\dZ}{{}^\dagger\!\fZ^{c',c}}
\nc{\dZc}{{}^\dagger\!\fZ^{c,c}}
\nc{\tZ}{{}^\dagger\!\tilde{Z}{}}
\nc{\hZ}{{}^\dagger\!\hat{Z}{}}
\nc{\ssl}{\mathfrak{sl}} \nc{\gl}{\mathfrak{gl}}
\nc{\wt}{\widetilde} \nc{\Sym}{\mathrm{Sym}} \nc{\Res}{\mathrm{Res}}
\nc{\sE}{{\mathsf{E}}} \nc{\bs}{{\mathbf{s}}}
\nc{\trig}{\mathrm{trig}} \nc{\rat}{\mathrm{rat}}
\nc{\sign}{\mathrm{sign}} \nc{\sL}{{\mathsf{L}}}
\nc{\fv}{{\mathfrak{v}}} \nc{\ad}{\mathrm{ad}}
\nc{\spsi}{{\mathsf{\psi}}} \nc{\sh}{{\mathsf{h}}}
\nc{\rtt}{\mathrm{rtt}} \nc{\qdet}{\mathrm{qdet}} \nc{\pt}{{\operatorname{pt}}}
\nc{\M}{\mathrm{M}} \nc{\Ker}{\mathrm{Ker}} \nc{\ssc}{\mathrm{sc}}
\nc{\loc}{\mathrm{loc}} \nc{\fra}{\mathrm{frac}}
\nc{\ddj}{\mathrm{DJ}} \nc{\End}{\mathrm{End}} \nc{\ev}{\mathrm{ev}}
\nc{\GL}{\mathrm{GL}}
\nc{\ext}{\mathrm{ext}}
\nc{\Ad}{\mathrm{Ad}}
\nc{\blambda}{\boldsymbol{\lambda}}
\nc{\bmu}{\boldsymbol{\mu}}
\nc{\spp}{\mathfrak{p}}
\nc{\sll}{\mathfrak{l}}
\nc{\snn}{\mathfrak{n}}
\nc{\rrr}{{\mathsf{r}}}
\nc{\sss}{{\mathsf{s}}}
\nc{\supp}{\mathrm{supp}}
\nc{\cl}{\mathrm{cl}}
\begin{document}
\title[Rational and trigonometric Lax matrices]
{Lax matrices from antidominantly shifted Yangians and quantum affine algebras: A-type}

\author{Rouven Frassek}
 \address{R.F.: University of Modena and Reggio Emilia, FIM, 41125 Modena, Italy}
 \email{rouven.frassek@unimore.it}

\author{Vasily Pestun}
 \address{V.P.: Institut des Hautes \'Etudes Scientifiques, Bures-sur-Yvette, France}
 \email{pestun@ihes.fr}

\author{Alexander Tsymbaliuk}
 \address{A.T.:  Purdue University, Department of Mathematics, West Lafayette, IN 47907, USA}
 \email{sashikts@gmail.com}

\begin{abstract}
We construct a family of $GL_n$ rational and trigonometric Lax matrices $T_D(z)$ parametrized
by $\Lambda^+$-valued divisors $D$ on $\BP^1$. To this end, we study the shifted Drinfeld
Yangians $Y_\mu(\gl_n)$ and quantum affine algebras $U_{\mu^+,\mu^-}(L\gl_n)$, which slightly
generalize their $\ssl_n$-counterparts of~\cite{bfnb,ft1}. Our key observation is that both
algebras admit the RTT type realization when $\mu$ (resp.\ $\mu^+$ and $\mu^-$) are antidominant coweights.
We prove that $T_D(z)$ are polynomial in $z$ (up to a rational factor) and obtain explicit
simple formulas for those linear in $z$. This generalizes the recent construction by the first two
authors of linear rational Lax matrices~\cite{fp} in both trigonometric and higher $z$-degree directions.
Furthermore, we show that all $T_D(z)$ are \emph{normalized limits} of those parametrized by $D$
supported away from $\{\infty\}$ (in the rational case) or $\{0,\infty\}$ (in the trigonometric case).
The RTT approach provides conceptual and elementary proofs for the construction of the coproduct
homomorphisms on shifted Yangians and quantum affine algebras of $\ssl_n$, previously established
in~\cite{fkp,ft1} via rather tedious computations. Finally, we establish a close relation between a certain
collection of explicit linear Lax matrices and the well-known parabolic Gelfand-Tsetlin formulas.
\end{abstract}
\maketitle
\tableofcontents


\section{Introduction}


\subsection{Summary}\label{ssec summary}
\

Let $G$ be a complex reductive group and let $(C,dz)$ be a complex projective line
$\mathbb{P}^{1}$ with a marked point $z = \infty$, also equipped with a section $dz$
of the canonical line bundle $\mathcal{K}_{C}$ whose only singularity is a second order
pole at $z = \infty$. Let $\langle \cdot,\cdot \rangle$ be the Killing form on
the Lie algebra $\fg$ of $G$.

To the data $(G, C, \langle\cdot,\cdot \rangle, dz)$ one can associate in
the standard way an (infinite-dimensional) Poisson-Lie group $G_1(C)$ of
$G$-valued rational functions on $C$ with fixed value $1$ at $\infty$.
By the formal series expansion at $z=\infty$ there is a natural inclusion
$G_1(C) \hookrightarrow G_1[[z^{-1}]]$, where $G_1[[z^{-1}]]$ are $G$-valued power
series in $z^{-1}$ with the constant term $1$. The group $G_1[[z^{-1}]]$ is
the Poisson-Lie group whose Poisson structure is constructed in the
standard way from the Lie bialgebra defined by the Manin triple
$(\fg((z^{-1})), \fg[z], z^{-1}\fg[[z^{-1}]])$ and
the residue pairing $\oint_{\infty} \langle \cdot,\cdot  \rangle dz$.
The quantization of the Poisson-Lie group $G_1[[z^{-1}]]$ produces the
Hopf algebra called the Drinfeld Yangian $Y(\fg)$.

Let $\Lambda^{+}$ be a cone of dominant coweights in the coweight lattice
$\Lambda$ of $G$. A formal linear combination of points of $C$ with coefficients
in $\Lambda^{+}$ will be called a $\Lambda^{+}$-valued divisor $D$ on $C$.

The symplectic leaves $\mathfrak{M}_{D}$ in the Poisson-Lie group $G_1(C)$ are
classified by $\Lambda^{+}$-valued divisors $D = \sum_{x \in \mathbb{P}^{1}} \lambda_x [x]$
trivial at infinity~\cite{s,ep}, i.e.\ with $\lambda_{\infty} = 0$. Namely,
for a given $D$, the symplectic leaf  $\mathfrak{M}_{D} \subset G_1(C)$ consists
of those elements in $G_1(C)$ that are regular away from $\supp(D)$, the support of $D$,
while having a singularity of the form $G[[z_x]] z_x^{-\lambda_x} G[[z_x]]$ in
a neighborhood of each $x \in \supp(D)$, where $z_x$ is a local coordinate near
$x$ vanishing at $x$ and $\lambda_x \in \Lambda^{+}$ is the coefficient of $D$ at $x$.

The symplectic leaves $\mathfrak{M}_{D}$ of $G_1(C)$ are interesting in many aspects.
A symplectic leaf $\mathfrak{M}_{D}$ can be identified with (I):
\begin{enumerate}
  \item
    a moduli space of $G$-multiplicative Higgs fields trivially framed at $z=\infty$~\cite{ep}
  \item
    a moduli space of $G_{c}$-monopoles on $C \times S^1$ regular at infinity and
    with Dirac singularities whose projection on $C$ is encoded by the $\Lambda^{+}$-valued
    divisor $D$, where $G_c$ is the compact group associated to the complex reductive group $G$~\cite{ck,ch}
  \item
    a Coulomb branch of $\mathcal{N}=2$ (ultraviolet fixed point) UV conformal quiver gauge theory
    on $\mathbb{R}^3 \times S^1$ if $G$ is of ADE type and the ADE quiver is the Dynkin diagram
    of $\fg$~\cite{ck}
  \item
    a phase space of an algebraic integrable system known in the quantum field theory literature
    as the Seiberg-Witten integrable system of $\mathcal{N}=2$ ADE UV conformal quiver gauge theory~\cite{np}
  \item
    a classical limit of the GKLO-modules of $Y(\fg)$ constructed by Gerasimov, Kharchev, Lebedev and Oblezin~\cite{gklo}
\end{enumerate}

Let $\mu \equiv \lambda_{\infty} \equiv D|_{\infty}$ denote the coefficient of the
divisor $D$ at infinity. In the constructions of the above list it was assumed that
$\mu$ vanishes. In the constructions (1) and (2), the restriction $\mu=0$ translates
to the regularity either of the Higgs field at $\infty \in \BP^1$ or to the
regularity of the monopole configuration on the infinity of $\mathbb{R}^2 \times S^1$.
In the points (3) and (4), for $G$ of a simple ADE type, $\mu$ encodes the
UV $\beta$-function of an $\mathcal{N}=2$ supersymmetric quiver gauge theory,
and consequently, the restriction $\mu = 0 $ translates to the condition that
the UV $\beta$-function of the quiver theory vanishes (cf.~\cite{np}).

It is natural to explore what happens with the constructions listed
above when the restriction $\mu = 0$ is lifted. The natural
generalizations for not necessarily vanishing $\mu$ are (II):
\begin{enumerate}
  \item
    a moduli space of $G$-multiplicative Higgs fields with the
    \emph{framed singularity $z^{\mu}$ at $z=\infty$ of the coweight $\mu$}
  \item
    a moduli space of $G_{c}$-monopoles on $C \times S^1$ with
    \emph{a charge $\mu$ at infinity} and with Dirac singularities
    whose projection on $C$ is encoded by the $\Lambda^{+}$-valued divisor $D$
  \item
    a Coulomb branch of $\mathcal{N}=2$ UV  quiver gauge theory on $\mathbb{R}^3 \times S^1$
    if $G$ is of ADE type and the ADE quiver is the Dynkin diagram of $\fg$~\cite{np} with the
    \emph{UV $\beta$-function $-\mu$ }
  \item
     a phase space of the Seiberg-Witten algebraic integrable system of $\mathcal{N}=2$
     supersymmetric ADE quiver gauge theory with the \emph{UV $\beta$-function $-\mu$}
  \item
    a classical limit of the analogues of the GKLO-modules~\cite{gklo} but for a
    \emph{shifted Yangian} $Y_{-\mu}(\fg)$~\cite{kwwy, bfnb}
\end{enumerate}

In this paper, we put further details on the  construction (5) focusing on $G = GL_n$
and \emph{antidominantly shifted} Yangians, which in our notations are recorded
as $Y_{-\mu}(\gl_n)$ with $\mu \in \Lambda^{+}$. A generalization to other classical BCD
types has been carried out in the follow-up paper~\cite{frts}.

From the perspective of Coulomb branches of the $\mathcal{N}=2$ supersymmetric ADE quiver gauge theories
I (3) and II (3) there is a natural procedure to obtain the asymptotically free ADE quiver gauge theory with the non-zero
UV $\beta$-function $-\mu$ with $\mu \in \Lambda^{+}$ from a UV conformal ADE quiver gauge theory
with the vanishing UV $\beta$-function $\mu = 0$. This procedure involves:
\begin{enumerate}
\item[(I)]
  starting from the UV conformal ADE quiver gauge theory, with $\beta$-function given by
  $-\sum \mathbf{v}_i \alpha_{i}^{\vee} + \sum \mathbf{w}_i \omega_{i}^{\vee} = 0$ where $U(\mathbf{v_i})$ is
  the gauge group factor of the ADE quiver theory attached to the node $i$, the $\mathbf{w}_i$ is the number of
  fundamental multiplets attached to the node $i$, and their masses are $x_{i,1}, \dots x_{i, \mathbf{w_i}}$;
\item[(II)]
  and then switching off some of those fundamental multiplet fields from the Lagrangian.
  The switching off effect of a quantum field in the QFT can be achieved by sending the mass prescribed to
  that field in the perturbative Lagrangian to the infinity: in this way the quantum excitation of that field
  requires infinite energy, and therefore the correlation functions of a QFT in which some quantum fields are
  ascribed infinite masses are equivalent (after renormalization) to the correlation functions of the QFT where
  those fields have been deleted from the Lagrangian.
\end{enumerate}
Therefore we can expect to recover the Coulomb branches and integrable systems associated to $\mathcal{N}=2$ supersymmetric
asymptotically free ADE quiver gauge theories by taking a limit of a suitable UV conformal theory where some of the masses
$x$ (corresponding to the points of the divisor in our geometrical presentation) are sent to infinity,
see~\cite{np,wit}. Indeed, we show explicitly in Section~\ref{ssec rational limit shifted from nonshifted} that
our construction satisfies this ``normalized limit'' property,
expected from the physics of $\mathcal{N}=2$ ADE quiver gauge theories as described above.

\medskip

Generalizing~\cite{d,bk}, we present the isomorphism between the Drinfeld and RTT
realizations of $Y_{-\mu}(\gl_n)$ and both as a consequence and a tool to prove this
isomorphism we construct $GL_n$ Lax matrices $T_{D}(z)$ with prescribed singularities
at $D$ for any $\Lambda^{+}$-valued divisor $D$ (with an additional property that the
sum of the coefficients $\sum_{x\in \BP^1}\lambda_x$ is in the coroot lattice of $G$).

While in the paper we implicitly assume $\hbar = 1$ (for simplicity of our exposition)
and explicitly present only the quantum case, our construction can be naturally generalized
to the $\BC[\hbar]$-setup: both (antidominantly) shifted Drinfeld and shifted RTT Yangians
of $\gl_n$ become associative algebras over $\BC[\hbar]$, $\hbar$ appears in the commutation
relations between the canonical coordinates on $\mathfrak{M}_{D}$ as
  $[p_{i,r},e^{q_{j,s}}]=\delta_{i,j}\delta_{r,s}\hbar e^{q_{j,s}}$,
and the rational Lax matrices $T_D(z)$ obviously generalize to keep track of $\hbar$.
Then, the classical limit is recovered in the usual way by sending $\hbar \to 0$
and replacing $\frac{1}{\hbar}[\cdot,\cdot]$ by the Poisson bracket $\{\cdot,\cdot\}$.

We conjecture that the classical limit of our construction describes the full family of
symplectic leaves in the Poisson-Lie group obtained as the classical limit
of the shifted Yangian $Y_{-\mu}(\fg)$, and for each $\Lambda^{+}$-valued divisor $D$
on $C$ we obtain Darboux coordinates on the symplectic leaf $\mathfrak{M}_{D}$.
We leave out for a future work the precise details as well as the details of
the construction of the moduli space of multiplicative Higgs fields with
a singularity at the framing point and moduli space of singular monopoles
on $\mathbb{R}^2 \times S^1$ (cf.~\cite{fo,mo} for the relevant constructions
of singular monopoles and Kobayashi-Hitchin correspondence in that context).

The Lax matrices $T_{D}(z)$ can be used to construct explicitly classical commuting
Hamiltonians of the corresponding completely integrable systems on $\mathfrak{M}_{D}$
as well as their quantizations. The classical commuting Hamiltonians are obtained
as the coefficients of the spectral curve
\begin{equation}
\label{eq:classical_spectral}
  \mathrm{det} \Big( y - g_{\infty} T_D(z) \Big) =
  \sum_{i=0}^{n} y^{n-i} (-1)^i \mathrm{tr}_{\Lambda^{i}} \Big( g_\infty T_D(z) \Big).
\end{equation}
Here, $g_\infty$ is a regular semi-simple element of $G$ that defines the coupling
constants of the respective integrable system or encodes the gauge couplings of the
respective quiver gauge theory in case when $\mathfrak{M}_{D}$ is interpreted as a
Coulomb branch~\cite{np}. For a general $G$, the classical complete integrability
can be established from the abstract cameral curve construction following~\cite{dg}.

In the quantum case, using that the homomorphism $\Psi_D$ of Theorem~\ref{homom for Yangian gl}
factors through the quantized Coulomb branch, see~\cite[Theorem B.18]{bfnb}, the construction of
Bethe subalgebras (see~\cite[\S1.14]{m} or the original paper~\cite{no}) that uses a quantum version
of the spectral curve gives rise to a family of \emph{Bethe commutative subalgebras} in the quantized
Coulomb branches.
We note that existence of such a construction was suggested to one of the authors and Michael Finkelberg by Boris Feigin in 2017.
The pre-quantized Hamiltonians are represented in the algebra
of difference operators with rational coefficients on functions of $p_{\ast,\ast}$.
We do not discuss in this paper the actual quantization (the choice of a polarization,
the Hilbert space structure, or analytic properties of the wave-functions).

For example, the $i=n$ term in the spectral curve~(\ref{eq:classical_spectral}), the det of the Lax matrix,
after a quantization is replaced by the quantum determinant and is given by the formula~(\ref{image of qdet}):
\begin{equation*}
  \qdet\ T_D(z)=
  \prod_{i=1}^n \prod_{x\in \BP^1\backslash\{\infty\}}(z-x+(i-1)\hbar)^{-\epsilon^\vee_i(\lambda_x)}.
\end{equation*}

The Bethe ansatz for these quantum integrable systems was constructed in~\cite{nps}.

\medskip

The origin of the canonical coordinates $(p_{\ast}, q_{\ast})$ of the present work goes back
to the work of Atiyah-Hitchin on the moduli space of monopoles on $\mathbb{R}^{3}$,~\cite{ah},
that identified such moduli space with the moduli space of based rational maps from
$C=\BP^{1}$ to the flag variety $G/B$.

For example, for $G = SL_2$ the flag variety $G/B$ is $\BP^1$, and the based rational maps
from $C$ to $G/B$ are simply rational functions $f(z)$ vanishing at $z=\infty$. Given a coset
representative of a based rational map from $C$ to $G/B$ in the form
 $\begin{pmatrix}
  \mathsf{A}(z) & \mathsf{B}(z) \\
  \mathsf{C}(z) & \mathsf{D}(z)
  \end{pmatrix}$,
the respective rational function is $f(z)=\mathsf{B}(z)/\mathsf{A}(z)$.
For the divisor $D$ consisting only of a singularity at $\infty\in \BP^1$, the coordinates
$p_{\ast}$ are the locations of zeros of $\mathsf{A}(z)$ (i.e.\ poles of $f(z)$), while the
coordinates $e^{q_{\ast}}$ are the values of $\mathsf{B}(z)$ at these zeros. Such canonical
coordinates in the space of rational functions also appeared in the work of Sklyanin on separation of variables.
Furthermore, Jarvis in his work on monopoles on $\mathbb{R}^3$,~\cite{j1,j2}, constructed
a lift of a based rational map from $C$ to $G/B$ to a rational map from $C$ to $G$.
The classical limit of the formulas for the rational Lax matrices $T_D(z)$ presented in
this work for $G=GL_n$ could be seen as a canonical realization of Jarvis's lift of a
based rational map from $C$ to $G/B$ to a rational map from $C$ to $G$, equipped with
canonical $(p_{\ast}, q_{\ast})$-coordinates induced from the Atiyah-Hitchin construction
for the based rational maps to $G/B$. We provide some more details in
Remark~\ref{Monodromy matrices vs Higher order}, while referring the interested reader
to~\cite[2(xi, xii, xiii)]{bfnb} for a more detailed discussion.

\medskip

In the second part of the paper we proceed to the trigonometric case by taking
$C = \mathbb{P}^{1} = \mathbb{C}^{\times} \cup \{0\} \cup \{\infty\}$ equipped
with a section $dz/z$ of the canonical bundle $\mathcal{K}_{C}$ that has order
one poles at $0$ and $\infty$. Given the Borel decomposition of $\fg$, the section
of $\mathcal{K}_{C}$, and the Killing form on $\fg$, one obtains in the usual way
the Lie bialgebra structure on the loop algebra $L\fg$ with the trigonometric
$r$-matrix and the corresponding Poisson-Lie loop group. The quantization of this
Poisson-Lie group gives rise to the quantum loop algebra $U_\vv(L\fg)$
(also known as the quantum affine algebra with the trivial central charge).

Similar to the rational case, to each $\Lambda^{+}$-valued divisor $D$ on $C$ we
associate a module of a shifted counterpart of $U_\vv(L\fg)$ in a construction analogous to~\cite{gklo,gklo2}.
However, in the trigonometric case there are two special framing points $0$ and $\infty$ on $C$.
We denote the coefficients of $D$ at these framing points by
$\mu^{-} \equiv \lambda_{0} = D|_{0}$ and $\mu^{+} \equiv \lambda_{\infty} = D|_{\infty}$,
respectively. Then, for any $\Lambda^{+}$-valued divisor $D$ on $C$
(with an additional property that the sum of the coefficients $\sum_{x\in \BP^1}\lambda_x$
lies in the coroot lattice of $G$), we construct a homomorphism from the shifted
quantum affine algebra $U_{-\mu^{+}, -\mu^{-}}(L\fg)$ to the algebra of
$\vv$-difference operators (see Remark~\ref{multiplicative difference operators} and~\cite{ft1}),
and using an isomorphism between the Drinfeld and the RTT realizations of
$U_{-\mu^{+}, -\mu^{-}}(L\gl_n), \mu^\pm\in \Lambda^+$, we construct and present
explicitly the corresponding $GL_n$ trigonometric Lax matrices $T_{D}(z)$.

Conjecturally, the classical limit of our construction describes the full family
of symplectic leaves in the $(-\mu^{+}, -\mu^{-})$-shifted Poisson-Lie loop group
obtained as the classical limit of the shifted quantum affine algebra
$U_{-\mu^{+}, -\mu^{-}}(L\fg)$, where $(\mu^{+}, \mu^{-})$ are the coweights encoding
the  prescribed singularities at $\infty$ and $0$. Conjecturally, each symplectic leaf
$\mathfrak{M}_{D}$ is isomorphic as a symplectic variety to the moduli space of
multiplicative Higgs bundles on $(\mathbb{P}^{1}, dz/z)$ with Borel framing
at $0$ and $\infty$ and with prescribed singularities on $D$. We leave out
the precise definitions and details of this construction for a future work.

\medskip

A subset of $GL_n$ rational Lax matrices constructed in~\cite{fp} are known to be
the building blocks for the transfer matrices of non-compact spin chains and Baxter
$Q$-operators, see~\cite{bflms, dm} (cf.~\cite{z} for a discussion of the trigonometric case).
The matrix elements of those Lax matrices are realized as polynomials in the
Heisenberg algebra generators in analogy to the free field realization. The Fock vacuum vector
serves as the highest weight state and the trace in the transfer matrix construction
is taken over the entire Fock space. As discussed in Section~\ref{ssec GT patterns},
the realization studied in this paper is closely related to the Gelfand-Tsetlin bases
which are not necessarily constrained to representations of the highest/lowest weight type.
In order to describe the modules that arise from the free field realization one has
to impose additional conditions on the corresponding Gelfand-Tsetlin patterns.
Consequently, we expect that the transfer matrices can be defined in terms of
the Lax matrices presented in this article by introducing the appropriate trace
over the Gelfand-Tsetlin oscillator realization. In addition to the construction
of transfer matrices from Lax matrices linear in the spectral parameter, this approach
should allow for the construction of the commuting family of operators with Lax matrices
of higher degree in the spectral parameter. We leave the precise details of this construction
as well as generalizations to Lie algebras beyond $A$-type for a future work.

\medskip

Historically, the shifted Yangians $Y_\nu(\fg)$ were first introduced for $\fg=\gl_n$
and dominant shifts $\nu$ in~\cite{bk2}, where their certain quotients were identified
with type $A$ finite $W$-algebras, the latter being natural quantizations of type $A$ Slodowy slices.
This construction was further generalized to any semisimple $\fg$ but still dominant
$\nu\in \Lambda^+$ in~\cite{kwwy}, where it was shown that their GKLO-type quotients
(called \emph{truncated shifted Yangians}) quantize slices in the affine Grassmannians.
The generalization to arbitrary shifts $\nu\in \Lambda$ was finally carried out in~\cite[Appendix~B]{bfnb},
where it was conjectured that their truncations quantize \emph{generalized slices in
the affine Grassmannians} introduced in~\emph{loc.cit.} The latter result was recently established in~\cite{w1}.

In contrast to the aforementioned original approach, we consider exactly the opposite case,
with antidominant shifts, in the current paper
(note that any shifted Yangian $Y_\nu(\fg)$ may be embedded into the antidominantly shifted
one $Y_{-\mu}(\fg),\ \mu\in \Lambda^+$, via the \emph{shift homomorphisms} of~\cite{fkp}).
The main technical benefit is the RTT realization of those $Y_{-\mu}(\gl_n)$
(respectively $U_{-\mu^+,-\mu^-}(L\gl_n)$), and as a result a conceptual explanation of
the coproduct homomorphisms of~\cite{fkp} (respectively of~\cite{ft1}). Also,
we note that the antidominant case allows to access interesting algebraic integrable systems that appear on
the Coulomb branches of four-dimensional supersymmetric $\mathcal{N}=2$ ADE quiver
gauge theories of the asymptotically free type~\cite{np}; a typical representative
of such an integrable system is a closed Toda chain.


\subsection{Outline of the paper}
\


$\bullet$
In Section~\ref{ssec Shifted Yangian of gl}, we introduce the
\emph{shifted Drinfeld Yangians of $\gl_n$}, the algebras $Y_\mu(\gl_n)$,
where $\mu\in \Lambda$ is a coweight of $\gl_n$. These algebras depend
only on the associated coweight $\bar{\mu}\in \bar{\Lambda}$ of $\ssl_n$,
up to an isomorphism, see Lemma~\ref{identifying gl-Yangians}. They also
contain the shifted Yangians of $\ssl_n$ (introduced in~\cite{bfnb})
via the natural embedding
   $\iota_{\mu}\colon Y_{\bar{\mu}}(\ssl_n)\hookrightarrow Y_\mu(\gl_n)$
of Proposition~\ref{relation yangians sl vs gl}
(generalizing the classical embedding $Y(\ssl_n)\hookrightarrow Y(\gl_n)$).
Moreover, we have the isomorphism
  $Y_\mu(\gl_n)\simeq ZY_\mu(\gl_n)\otimes_\BC Y_{\bar{\mu}}(\ssl_n)$
with $ZY_\mu(\gl_n)$ denoting the center of $Y_\mu(\gl_n)$, see
Corollary~\ref{shifted Yangian sl as a quotient of gl} and
Lemma~\ref{center of shifted yangians}
(generalizing~\cite[Theorem 1.8.2]{m} in the unshifted case $\mu=0$).

In Section~\ref{ssec homomorphism from shifted Yangian of gl}, we introduce the key
notion of \emph{$\Lambda$-valued divisors on $\BP^1$, $\Lambda^+$-valued outside
$\{\infty\}\in \BP^1$}, see~(\ref{divisor def1},~\ref{divisor def2}).
For each such divisor $D$ satisfying an auxiliary condition~(\ref{assumption})
(which encodes that the sum of all the coefficients of the divisor $D$ lies in the coroot lattice),
we construct in Theorem~\ref{homom for Yangian gl} an algebra homomorphism
$\Psi_D\colon Y_{-\mu}(\gl_n)\to \CA$, where $\mu=D|_\infty$ is the coefficient
of $D$ at $\infty$ and the target $\CA$ is the algebra of difference
operators~(\ref{algebra A}), see Remark~\ref{additive difference operators}.
This construction generalizes the $A_{n-1}$-case of~\cite[Theorem B.15]{bfnb} as
the composition $\Psi_D\circ \iota_{-\mu}\colon Y_{-\bar{\mu}}(\ssl_n)\to \CA$
is precisely the homomorphism $\Phi^{\bar{\lambda}}_{-\bar{\mu}}$ of~\emph{loc.cit.}\
(where $\lambda$ is the sum of all coefficients of $D$ outside $\infty$).

In Section~\ref{ssec shifted RTT yangian of gl}, we introduce the
\emph{(antidominantly) shifted RTT Yangians of $\gl_n$}, the algebras
$Y^\rtt_{-\mu}(\gl_n)$ with $\mu\in \Lambda^+$ being a \emph{dominant coweight of $\gl_n$}.
They are defined via the RTT relation~(\ref{ratRTT}) and the Gauss
decomposition~(\ref{Gauss product rational},~\ref{t-modes shifted}).
We construct the epimorphisms
  $\Upsilon_{-\mu}\colon Y_{-\mu}(\gl_n)\twoheadrightarrow Y^\rtt_{-\mu}(\gl_n)$
for any $\mu\in \Lambda^+$, see Theorem~\ref{epimorphism of shifted Yangians}.
The main result of this section (the proof of which is established in
Section~\ref{ssec proof of Conjecture 1}), Theorem~\ref{Main Conjecture 1}, is that
$\Upsilon_{-\mu}$ are actually algebra isomorphisms for any $\mu\in \Lambda^+$
(generalizing~\cite{d,bk} in the unshifted case $\mu=0$ as well as~\cite{ft1}
in the smallest rank case $n=2$, see Remark~\ref{Validity of Main Conj 1}).

In Section~\ref{ssec rational Lax via shifted Yangians}, we construct $n\times n$
\emph{rational Lax} matrices $T_D(z)$ (with coefficients in $\CA((z^{-1}))$) for
each $\Lambda^+$-valued divisor $D$ on $\BP^1$ satisfying~(\ref{assumption}).
They are explicitly defined
via~(\ref{redefinition of rational Lax},~\ref{explicit long formula rational})
combined with~(\ref{diagonal entries},~\ref{upper triangular all entries},~\ref{lower triangular all entries}),
while arising naturally as the image of the $n\times n$ matrix $T(z)$
(encoding all the generators of $Y^\rtt_{-\mu}(\gl_n)$) under the composition
$\Psi_D\circ \Upsilon_{-\mu}^{-1}\colon Y^\rtt_{-\mu}(\gl_n)\to \CA$,
assuming Theorem~\ref{Main Conjecture 1} has been established,
see~(\ref{Theta homom},~\ref{construction of rational Lax}).
As Theorem~\ref{Main Conjecture 1} is well-known for $\mu=0$ and
any Lax matrix $T_D(z)$ is a \emph{normalized limit} of $T_{\bar{D}}(z)$
with $\bar{D}|_\infty=0$, see Proposition~\ref{Degenerating rational Lax}
and Corollary~\ref{rational Lax as a limit of nonshifted}, we immediately
derive the RTT relation~(\ref{ratRTT}) for all matrices $T_D(z)$,
see Proposition~\ref{preserving RTT} (hence, the terminology ``rational Lax matrices'').
Combining the latter with the key result of~\cite{w}, see Theorem~\ref{alex's theorem},
we finally prove Theorem~\ref{Main Conjecture 1} in
Section~\ref{ssec proof of Conjecture 1}. We note that similar arguments may be used
to prove the triviality of the centers of shifted Yangians $Y_\nu(\fg)$ for
any coweight of a semisimple Lie algebra $\fg$, see Remark~\ref{proof of trivial center}.
The key property of the rational Lax matrices $T_D(z)$ is their regularity
(up to a rational factor~(\ref{renormalized rational Lax})), see Theorem~\ref{Main Theorem 1}
(the proof of which is based on a certain cancelation of poles reminiscent
to the one appearing in the work on $q$-characters~\cite{fr} and $qq$-characters~\cite{n},
see Remark~\ref{q and qq}). Finally, we derive simplified explicit formulas for all
rational Lax matrices $T_D(z)$ which are linear in $z$, see Theorem~\ref{Main Theorem 2}.
In the smallest rank $n=2$ case, those recover the well-known $2\times 2$ elementary
Lax matrices for the Toda chain, the DST chain, and the Heisenberg magnet, see
Remark~\ref{Toda,DST,Heisenberg}.
We conclude Section~\ref{ssec rational Lax via shifted Yangians} with
Remark~\ref{Monodromy matrices vs Higher order}, which is three-fold:
  comparing the complete monodromy matrix~(\ref{monodromy matrix}) of the Toda chain
  for $GL_N$ to the degree $N$ rational $2\times 2$ Lax matrix $T_D(z)$ with $D=N\alpha[\infty]$,
  identifying the phase spaces of the corresponding classical integrable systems with
  the \emph{$SU(2)$-monopoles of topological charge $N$}, and generalizing the latter
  to \emph{$SU(2)$-monopoles of topological charge $N$ with singularities},
thus providing more details to our discussion of Section~\ref{ssec summary}.

In Section~\ref{ssec comparison to FP Lax matrices}, we evaluate explicitly
some linear (in $z$) rational Lax matrices $T_D(z)$ and compare them to the
linear rational Lax matrices constructed by the first two authors in~\cite{fp}
(actually, we treat all the explicit ``building blocks'' of \emph{loc.cit.}, the
fusion of which provides the entire family of the rational Lax matrices
$L_{\blambda,\unl{x},\bmu}(z)$ of~\cite{fp}).

In Section~\ref{ssec coproduct Yangians}, we construct coproduct homomorphisms
on antidominantly shifted Yangians. We start by constructing homomorphisms
  $\Delta^\rtt_{-\mu_1,-\mu_2}\colon Y^\rtt_{-\mu_1-\mu_2}(\gl_n)\to
   Y^\rtt_{-\mu_1}(\gl_n)\otimes Y^\rtt_{-\mu_2}(\gl_n)$
defined via
  $\Delta^\rtt_{-\mu_1,-\mu_2}(T(z))=T(z)\otimes T(z)$
for any $\mu_1,\mu_2\in \Lambda^+$, see Proposition~\ref{shifted rtt coproduct}.
Evoking the key isomorphism
  $Y_{-\mu}(\gl_n)\simeq Y^\rtt_{-\mu}(\gl_n)$ of Theorem~\ref{Main Conjecture 1},
this naturally gives rise to homomorphisms
  $\Delta_{-\mu_1,-\mu_2}\colon Y_{-\mu_1-\mu_2}(\gl_n)\to
   Y_{-\mu_1}(\gl_n)\otimes Y_{-\mu_2}(\gl_n)$,
and we compute the images of the generators in
Proposition~\ref{shifted coproduct Drinfeld Yangian gl}.
The latter, in turn, gives rise to homomorphisms
  $\Delta_{-\nu_1,-\nu_2}\colon Y_{-\nu_1-\nu_2}(\ssl_n)\to
   Y_{-\nu_1}(\ssl_n)\otimes Y_{-\nu_2}(\ssl_n)$
for any dominant $\ssl_n$-coweights $\nu_1,\nu_2\in \bar{\Lambda}^+$,
see Proposition~\ref{shifted coproduct Drinfeld Yangian sl}, thus providing
a conceptual and elementary proof of $A_{n-1}$-case of~\cite[Theorem 4.8]{fkp}.
Finally, we note that $\Delta_{\nu_1,\nu_2}$ with $\nu_1,\nu_2\in -\Lambda^+$
actually give rise to homomorphisms
  $\Delta_{\nu_1,\nu_2}\colon Y_{\nu_1+\nu_2}(\ssl_n)\to
   Y_{\nu_1}(\ssl_n)\otimes Y_{\nu_2}(\ssl_n)$
for any $\nu_1,\nu_2\in \bar{\Lambda}$, due to~\cite[Theorem 4.12]{fkp},
see Remark~\ref{all coproducts yangian}.

In Section~\ref{ssec GT patterns}, for any Young diagram $\blambda$ of size
$|\blambda|=n$, we show that the homomorphism $Y^\rtt_{\varpi_0}(\gl_n)\to \CA$
determined by the rational Lax matrix $T_D(z)$ with
  $D=\sum_{i=1}^{\blambda^t_1} \varpi_{n-\blambda_i}[x_i]-\varpi_0[\infty]$
is equal (up to a gauge transformation) to a composition of the evaluation
homomorphism $\wt{\ev}\colon Y^\rtt_{\varpi_0}(\gl_n)\to U(\gl_n)$~(\ref{twisted evaluation})
and the homomorphism $U(\gl_n)\to \CA$ determined by the \emph{type $\blambda$ parabolic Gelfand-Tsetlin}
formulas (which arise naturally from the $\gl_n$-action in the Gelfand-Tsetlin basis
of the type $\blambda$ parabolic Verma module, see~(\ref{parabGT eq 1}--\ref{parabGT eq 3})),
see Proposition~\ref{Lax=GT}.
We note that likewise choosing another standard bases of type $\blambda$ parabolic
Verma modules over $\gl_n$ gives rise to all linear rational Lax matrices
of~\cite{fp} with $\bmu=\emptyset$ (cf.~\cite{s}), see Remark~\ref{another basis of parabolic Verma}.


\medskip

$\bullet$
In Section~\ref{ssec Shifted QAffine of gl}, we introduce the
\emph{shifted Drinfeld quantum affine algebras of $\gl_n$}, the algebras
$U_{\mu^+,\mu^-}(L\gl_n)$, where $\mu^+,\mu^-\in \Lambda$ are coweights of $\gl_n$.
These algebras depend only on the associated coweights $\bar{\mu}^+,\bar{\mu}^-\in \bar{\Lambda}$
of $\ssl_n$, up to an isomorphism, see Lemma~\ref{identifying gl-qaffine}.
They also contain the simply-connected versions of the shifted quantum affine algebras
of $\ssl_n$ (introduced in~\cite{ft1}) via the natural embedding
  $\iota_{\mu^+,\mu^-}\colon
   U^\ssc_{\bar{\mu}^+,\bar{\mu}^-}(L\ssl_n)\hookrightarrow U_{\mu^+,\mu^-}(L\gl_n)$,
while their centrally enlarged counterparts $U'_{\mu^+,\mu^-}(L\gl_n)$ of~(\ref{extended shifted qaffine gl})
contain the adjoint versions of the shifted quantum affine algebras of $\ssl_n$ via
  $\iota_{\mu^+,\mu^-}\colon
   U^\ad_{\bar{\mu}^+,\bar{\mu}^-}(L\ssl_n)\hookrightarrow U'_{\mu^+,\mu^-}(L\gl_n)$,
see Proposition~\ref{relation Qaffine sl vs gl} (generalizing the classical embedding
$U_\vv(L\ssl_n)\hookrightarrow U_\vv(L\gl_n)$ of quantum loop algebras).
Finally, we establish the decomposition
  $U'_{\mu^+,\mu^-}(L\gl_n)\simeq Z\otimes_{\BC(\vv)} U^\ad_{\bar{\mu}^+,\bar{\mu}^-}(L\ssl_n)$,
see Lemma~\ref{shifted Qaffine sl as a quotient of gl}, where $Z\subset U'_{\mu^+,\mu^-}(L\gl_n)$
is an explicit central subalgebra (which conjecturally coincides with the center of
$U'_{\mu^+,\mu^-}(L\gl_n)$, see Remark~\ref{conjectured center of shifted qaffine}).

In Section~\ref{ssec homomorphism from shifted qaffine of gl}, we introduce
\emph{$\Lambda$-valued divisors on $\BP^1$,  $\Lambda^+$-valued outside
$\{0,\infty\}\in \BP^1$}, see~(\ref{trig divisor def1},~\ref{trig divisor def2}).
For each such $D$ satisfying an auxiliary condition~(\ref{trig assumption})
(which encodes that the sum of all the coefficients of the divisor $D$ lies in the coroot lattice),
we construct in Theorem~\ref{homom for qaffine gl} an algebra homomorphism
  $\Psi_D\colon U_{-\mu^+,-\mu^-}(L\gl_n)\to \wt{\CA}^\vv_\fra$,
where $\mu^+=D|_\infty$ and $\mu^-=\mu|_0$ are the coefficients of $D$ at $\infty$ and $0$,
while the target $\wt{\CA}^\vv_\fra$ is the algebra of $\vv$-difference
operators~(\ref{v-deformed extended target}), see Remark~\ref{multiplicative difference operators}.
This construction generalizes the $A_{n-1}$-case of~\cite[Theorem 7.1]{ft1} as the composition
  $\Psi_D\circ \iota_{-\mu^+,-\mu^-}\colon
   U^\ad_{-\bar{\mu}^+,-\bar{\mu}^-}(L\ssl_n)\to \wt{\CA}^\vv_\fra$
essentially coincides with the homomorphism
  $\wt{\Phi}^{\bar{\lambda}}_{-\bar{\mu}^+, - \bar{\mu}^-}\colon
   U^\ad_{-\bar{\mu}^+, - \bar{\mu}^-}(L\ssl_n) \to \wt{\CA}^{\vv}_\fra$
of~\emph{loc.cit.}\ (where $\lambda$ is the sum of all coefficients of $D$
outside $0,\infty$), see Remark~\ref{relating to FT1 homom}.

In Section~\ref{ssec shifted RTT qaffine of gl}, we introduce the
\emph{(antidominantly) shifted RTT quantum affine algebras of $\gl_n$}, the algebras
$U^\rtt_{-\mu^+,-\mu^-}(L\gl_n)$ with $\mu^+,\mu^-\in \Lambda^+$ being
\emph{dominant coweights of $\gl_n$}. They are defined via the RTT relation~(\ref{trigRTT}),
the Gauss decomposition~(\ref{Gauss product trigonometric},~\ref{quantum t-modes shifted}),
and an additional invertibility condition~(\ref{g-modes invertibility}).
We construct the epimorphisms
  $\Upsilon_{-\mu^+,-\mu^-}\colon U_{-\mu^+,-\mu^-}(L\gl_n)
   \twoheadrightarrow U^\rtt_{-\mu^+,-\mu^-}(L\gl_n)$
for any $\mu^+,\mu^-\in \Lambda^+$, similar to~\cite[Main Theorem]{df},
see Theorem~\ref{epimorphism of shifted quantum affine}. Modulo a trigonometric
counterpart of~\cite[Theorem 12]{w}, see Conjecture~\ref{alex's trig conjecture},
we establish in Theorem~\ref{Main Conjecture 2} that $\Upsilon_{-\mu^+,-\mu^-}$ are
actually isomorphisms for any $\mu^+,\mu^-\in \Lambda^+$
(generalizing~\cite{df} in the unshifted case $\mu^+=\mu^-=0$ and~\cite{ft1}
in the rank $n=2$ case, see Remark~\ref{Validity of Main Conj 2}).

In Section~\ref{ssec trigonometric Lax via shifted qaffine}, we construct $n\times n$
\emph{trigonometric Lax} matrices $T_D(z)$ (with coefficients in $\wt{\CA}^\vv(z)$)
for each $\Lambda^+$-valued divisor $D$ on $\BP^1$ satisfying~(\ref{trig assumption}).
They are explicitly defined
via~(\ref{redefinition of trigonometric Lax uniform},~\ref{explicit long formula trigonometric 1 uniform})
combined with~(\ref{diagonal quantum entries},~\ref{upper triangular quantum all entries},~\ref{lower triangular quantum all entries}),
while arising naturally as the image of the $n\times n$ matrices $T^\pm(z)$
(encoding all the generators of $U^\rtt_{-\mu^+,-\mu^-}(L\gl_n)$) under the composition
  $\Psi_D\circ \Upsilon_{-\mu^+,-\mu^-}^{-1}\colon
   U^\rtt_{-\mu^+,-\mu^-}(L\gl_n)\to \wt{\CA}^\vv_\fra$,
assuming Theorem~\ref{Main Conjecture 2} has been established,
see~(\ref{trig Theta homom},~\ref{construction of trigonometric Lax}).
As Theorem~\ref{Main Conjecture 2} is well-known for $\mu^+=\mu^-=0$ and
any Lax matrix $T_D(z)$ is a \emph{normalized limit} of $T_{\bar{D}}(z)$
with $\bar{D}|_\infty=0=\bar{D}|_0$, see
Propositions~\ref{Degerating trigonometric Lax at zero},~\ref{Degenerating trigonometric Lax at infinity}
and Corollary~\ref{trigonometric Lax as a limit of nonshifted}, we immediately
derive the RTT relation~(\ref{trigRTT}) for all matrices $T_D(z)$,
see Proposition~\ref{preserving trig RTT} (hence, the terminology ``trigonometric Lax matrices'').
Combining the latter with the conjectural trigonometric generalization of~\cite[Theorem 12]{w},
see Conjecture~\ref{alex's trig conjecture}, we finally prove Theorem~\ref{Main Conjecture 2} in
Section~\ref{ssec proof of Conjecture 2}. The key property of the trigonometric
Lax matrices $T_D(z)$ is their regularity (up to a rational factor~(\ref{renormalized trigonometric Lax})),
see Theorem~\ref{Main Theorem 1q}. Similar to Theorem~\ref{Main Theorem 1}, we also
derive simplified explicit formulas for all trigonometric Lax matrices $T_D(z)$ which
are linear in $z$, see Theorem~\ref{Main Theorem 2q}. These formulas may be related to the
$\vv$-deformed parabolic Gelfand-Tsetlin formulas in spirit of Proposition~\ref{Lax=GT},
see Remark~\ref{relation to qGT formulas}. Noticing that all linear trigonometric Lax matrices
$T_D(z)$ are of the form $z\cdot T^+ - T^-$ with $T^+,T^-$ being $z$-independent matrices,
we find a criteria on the matrices $T^+,T^-$ so that $zT^+-T^-$ satisfies the trigonometric
RTT relation~(\ref{trigRTT single}), see Proposition~\ref{shifted vs contracted}. Finally,
we explain how the trigonometric Lax matrices $T^\trig_\ast(z)$ of
Section~\ref{sssec construction trig Lax} may be degenerated into the rational
Lax matrices $T^\rat_\ast(z)$ of Section~\ref{sssec construction Lax}, see
Proposition~\ref{rat from trig}.

In Section~\ref{ssec six trig Lax}, we apply Theorem~\ref{Main Theorem 2q} to evaluate
explicitly all linear trigonometric Lax matrices $T_D(z)$ for $n=2$, thus generalizing
the three Lax matrices of~\cite{ft1}, see Remark~\ref{relation to L-matrices of ft1}.

In Section~\ref{ssec coproduct qaffine}, we construct coproduct homomorphisms
on antidominantly shifted quantum affine algebras. We start by constructing algebra
homomorphisms (see Proposition~\ref{shifted rtt quantum coproduct})
\begin{equation*}
  \Delta^\rtt_{-\mu^+_1,-\mu^-_1,-\mu^+_2,-\mu^-_2}\colon
  U^\rtt_{-\mu^+_1-\mu^+_2,-\mu^-_1-\mu^-_2}(L\gl_n)\longrightarrow
  U^\rtt_{-\mu^+_1,-\mu^-_1}(L\gl_n) \otimes U^\rtt_{-\mu^+_2,-\mu^-_2}(L\gl_n)
\end{equation*}
defined via
  $\Delta^\rtt_{-\mu^+_1,-\mu^-_1,-\mu^+_2,-\mu^-_2}(T^\pm(z))=T^\pm(z)\otimes T^\pm(z)$
for any $\mu^+_1,\mu^-_1,\mu^+_2,\mu^-_2\in \Lambda^+$.
Evoking the key isomorphism
  $U_{-\mu^+,-\mu^-}(L\gl_n)\iso U^\rtt_{-\mu^+,-\mu^-}(L\gl_n)$ of Theorem~\ref{Main Conjecture 2},
this gives rise to
\begin{equation*}
  \Delta_{-\mu^+_1,-\mu^-_1,-\mu^+_2,-\mu^-_2}\colon
  U_{-\mu^+_1-\mu^+_2,-\mu^-_1-\mu^-_2}(L\gl_n)\longrightarrow
  U_{-\mu^+_1,-\mu^-_1}(L\gl_n) \otimes U_{-\mu^+_2,-\mu^-_2}(L\gl_n).
\end{equation*}
The latter, in turn, gives rise to algebra homomorphisms
\begin{equation*}
  \Delta_{-\nu^+_1,-\nu^-_1,-\nu^+_2,-\nu^-_2}\colon
   U^\ssc_{-\nu^+_1-\nu^+_2,-\nu^-_1-\nu^-_2}(L\ssl_n)\longrightarrow
   U^\ssc_{-\nu^+_1,-\nu^-_1}(L\ssl_n) \otimes U^\ssc_{-\nu^+_2,-\nu^-_2}(L\ssl_n)
\end{equation*}
for any dominant $\ssl_n$-coweights $\nu^+_1,\nu^-_1,\nu^+_2,\nu^-_2\in \bar{\Lambda}^+$,
see Proposition~\ref{shifted coproduct qaffine sl}, thus recovering and providing
a more conceptual and simpler proof of~\cite[Theorem 10.16]{ft1}.
The latter give rise to homomorphisms
  $\Delta_{\nu^+_1,\nu^-_1,\nu^+_2,\nu^-_2}\colon
   U^\ssc_{\nu^+_1+\nu^+_2,\nu^-_1+\nu^-_2}(L\ssl_n)\to
   U^\ssc_{\nu^+_1,\nu^-_1}(L\ssl_n) \otimes U^\ssc_{\nu^+_2,\nu^-_2}(L\ssl_n)$
for any $\ssl_n$--coweights $\nu^+_1,\nu^-_1,\nu^+_2,\nu^-_2\in \bar{\Lambda}$, due
to~\cite[Theorem 10.20]{ft1}, see Remark~\ref{coproduct all shifts qaffine}.


\subsection{Acknowledgments}
\

We are indebted to the anonymous referees of this and the follow-up paper~\cite{frts} for helpful comments.
A.T.\ is deeply grateful to Boris Feigin, Michael Finkelberg, Igor Frenkel, Andrei Negu\c{t},
and Alex Weekes, whose generous help and advice was crucial in the process of our work.
A.T.\ gratefully acknowledges support from Yale University and is extremely grateful
to IHES (Bures-sur-Yvette, France) for invitation and wonderful working conditions
in the winter and summer $2019$, where this project was conceived and its major parts were completed.

This project has received funding from the European Research Council
(ERC) under the European Union's Horizon $2020$ research and innovation
programme (QUASIFT grant agreement $677368$).
R.F.\ acknowledges the support of the Max Planck Institute for Mathematics, Bonn, and the IHES visitor program.
R.F.\ also received funding from the DFG German Research Fellowships Programme $416527151$ at École normale supérieure.
A.T.\ gratefully acknowledges the support from NSF Grants DMS-$1821185$ and DMS-$2037602$.


\section{Rational Lax matrices}\label{sec Rational Lax matrices}


\subsection{Shifted Drinfeld Yangians of $\gl_n$}\label{ssec Shifted Yangian of gl}
\

Consider the lattice $\Lambda^\vee=\oplus_{j=1}^n \BZ\epsilon^\vee_j$
associated with the standard module of $\gl_n$, so that
$\alphavee_i:=\epsilon^\vee_i-\epsilon^\vee_{i+1}\ (1\leq i<n)$ are the standard
simple positive roots of $\ssl_n$. Let $\Lambda=\oplus_{j=1}^n \BZ\epsilon_j$
be the dual lattice so that $\epsilon^{\vee}_i(\epsilon_j)=\delta_{i,j}$.
We will also need its alternative $\BZ$-basis: $\Lambda=\oplus_{i=0}^{n-1} \BZ\varpi_i$
with $\varpi_i:=-\sum_{j=i+1}^n \epsilon_j$. For $\mu\in \Lambda$, define
  $\unl{d}=\{d_j\}_{j=1}^n\in \BZ^n, \unl{b}=\{b_i\}_{i=1}^{n-1}\in \BZ^{n-1}$
via
\begin{equation}\label{rational shifts}
  d_j:=\epsilon^\vee_j(\mu),\quad b_i:=\alphavee_i(\mu)=d_{i}-d_{i+1}.
\end{equation}

Fix a $\gl_n$--coweight $\mu\in \Lambda$.
Define the \emph{shifted Drinfeld Yangian of $\gl_n$}, denoted by $Y_\mu(\gl_n)$,
to be the associative $\BC$-algebra generated by
  $\{E_i^{(r)},F_i^{(r)}\}_{1\leq i<n}^{r\geq 1}\cup
   \{D_i^{(s_i)},\wt{D}_i^{(\wt{s}_i)}\}_{1\leq i\leq n}^{s_i\geq d_i,\wt{s}_i\geq -d_i}$
with the following defining relations (for all admissible $i,j,r,s,t$):
\begin{equation}\label{Y0}
  D_i^{(d_i)}=1,\
  \sum_{t=d_i}^{r+d_i} D_i^{(t)}\wt{D}_i^{(r-t)} = -\delta_{r,0},\
  [D_i^{(r)}, D_j^{(s)}]=0,
\end{equation}
\begin{equation}\label{Y1}
  [E_i^{(r)}, F_j^{(s)}]=
  -\delta_{i,j}\sum_{t=-d_i}^{r+s-1-d_{i+1}}\wt{D}_i^{(t)} D_{i+1}^{(r+s-t-1)},
\end{equation}
\begin{equation}\label{Y2}
  [D_i^{(r)}, E_j^{(s)}]=
  (\delta_{i,j+1}-\delta_{i,j})\sum_{t=d_i}^{r-1} D_i^{(t)}E_j^{(r+s-t-1)},
\end{equation}
\begin{equation}\label{Y3}
  [D_i^{(r)}, F_j^{(s)}]=
  (\delta_{i,j}-\delta_{i,j+1})\sum_{t=d_i}^{r-1} F_j^{(r+s-t-1)}D_i^{(t)},
\end{equation}
\begin{equation}\label{Y4}
  [E_i^{(r)}, E_i^{(s)}]=
  \sum_{t=1}^{r-1} E_i^{(t)}E_i^{(r+s-t-1)} - \sum_{t=1}^{s-1} E_i^{(t)}E_i^{(r+s-t-1)},
\end{equation}
\begin{equation}\label{Y5}
  [F_i^{(r)}, F_i^{(s)}]=
  \sum_{t=1}^{s-1} F_i^{(r+s-t-1)}F_i^{(t)} - \sum_{t=1}^{r-1} F_i^{(r+s-t-1)}F_i^{(t)},
\end{equation}
\begin{equation}\label{Y6}
  [E_i^{(r+1)}, E_{i+1}^{(s)}] - [E_i^{(r)}, E_{i+1}^{(s+1)}]=
  -E_i^{(r)}E_{i+1}^{(s)},
\end{equation}
\begin{equation}\label{Y7}
  [F_i^{(r+1)}, F_{i+1}^{(s)}] - [F_i^{(r)}, F_{i+1}^{(s+1)}]=
  F_{i+1}^{(s)}F_i^{(r)},
\end{equation}
\begin{equation}\label{Y8}
  [E_i^{(r)},E_j^{(s)}]=0\ \mathrm{if}\ |i-j|>1,
\end{equation}
\begin{equation}\label{Y9}
  [F_i^{(r)},F_j^{(s)}]=0\ \mathrm{if}\ |i-j|>1,
\end{equation}
\begin{equation}\label{Y10}
  [E_i^{(r)},[E_i^{(s)},E_j^{(t)}]] + [E_i^{(s)},[E_i^{(r)},E_j^{(t)}]]=0\ \mathrm{if}\ |i-j|=1,
\end{equation}
\begin{equation}\label{Y11}
  [F_i^{(r)},[F_i^{(s)},F_j^{(t)}]] + [F_i^{(s)},[F_i^{(r)},F_j^{(t)}]]=0\ \mathrm{if}\ |i-j|=1.
\end{equation}

\begin{Rem}\label{rmk comparing to bk}
(a) For $\mu=0$, this definition recovers the Drinfeld Yangian of $\gl_n$,
see~\cite{d} and~\cite[Theorem 5.2]{bk} (to be more precise, multiplying
$E_i^{(r)},F_i^{(r)},D_i^{(r)},\wt{D}_i^{(r)}$ by $(-1)^r$ the
relations~(\ref{Y0}--\ref{Y11}) transform into the defining relations
(5.7--5.20) of~\cite{bk}, cf.~Remark~\ref{opposite R-matrix}).
We note that the conventions $r\geq 1$ instead of $r\geq 0$ are in charge of perceiving the Yangian
as a \emph{QFSHA} (quantum formal series Hopf algebra) which is related to a more standard
viewpoint of it as a \emph{QUEA} (quantum universal enveloping algebra) via the so-called
Drinfeld-Gavarini quantum duality principle.

\noindent
(b) Similar to~\cite[Remark 5.3]{bk}, the relations~(\ref{Y4})
and~(\ref{Y5}) are equivalent to the relations
\begin{equation}\label{Y4 alternative}
  [E_i^{(r+1)}, E_i^{(s)}] - [E_i^{(r)}, E_i^{(s+1)}]=
  E_i^{(r)}E_i^{(s)} + E_i^{(s)}E_i^{(r)},
\end{equation}
\begin{equation}\label{Y5 alternative}
  [F_i^{(r+1)}, F_i^{(s)}] - [F_i^{(r)}, F_i^{(s+1)}]=
  -F_i^{(r)}F_i^{(s)} - F_i^{(s)}F_i^{(r)}.
\end{equation}
\end{Rem}

Let $\bar{\Lambda}=\oplus_{i=1}^{n-1}\BZ\omega_i$ be the coweight lattice of $\ssl_n$,
where $\{\omega_i\}_{i=1}^{n-1}$ are the standard fundamental coweights of $\ssl_n$.
There is a natural $\BZ$-linear projection $\Lambda\to \bar{\Lambda}, \mu\mapsto \bar{\mu}$, defined~via:
\begin{equation*}\label{sl coweight from gl}
  \alphavee_i(\bar{\mu})=\alphavee_i(\mu)
  \quad \mathrm{for}\ 1\leq i\leq n-1.
\end{equation*}
Equivalently, we have $\bar{\varpi}_0=0$ and
$\bar{\varpi}_i=\omega_i\ \mathrm{for}\ 1\leq i\leq n-1$.

The algebra $Y_{\mu}(\gl_n)$ depends only on the associated $\ssl_n$--coweight
$\bar{\mu}$, up to an isomorphism:

\begin{Lem}\label{identifying gl-Yangians}
For $\gl_n$--coweights $\mu_1,\mu_2\in \Lambda$ such that $\bar{\mu}_1=\bar{\mu}_2$
in $\bar{\Lambda}$, the assignment
\begin{equation}\label{isom of shifted gl-Yangians}
  E^{(r)}_i\mapsto E^{(r)}_i,\ F^{(r)}_i\mapsto F^{(r)}_i,\
  D^{(s_i)}_i\mapsto D^{(s_i-\epsilon^\vee_i(\mu_1-\mu_2))}_i,\
  \wt{D}^{(\wt{s}_i)}_i\mapsto \wt{D}^{(\wt{s}_i+\epsilon^\vee_i(\mu_1-\mu_2))}_i
\end{equation}
gives rise to a $\BC$-algebra isomorphism $Y_{\mu_1}(\gl_n)\iso Y_{\mu_2}(\gl_n)$.
\end{Lem}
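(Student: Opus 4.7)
\smallskip

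\noindent\textbf{Proof proposal.} The plan is to reduce the lemma to an elementary reindexing argument. The first and really only conceptual observation is that the hypothesis $\bar\mu_1=\bar\mu_2$ in $\bar\Lambda$ is equivalent to $\alphavee_i(\mu_1-\mu_2)=0$ for all $1\le i<n$, which by~\eqref{E:rational shifts} says $d^{(1)}_i-d^{(2)}_i=d^{(1)}_{i+1}-d^{(2)}_{i+1}$, so that the integer
\[
 c\ :=\ \epsilon^\vee_i(\mu_1-\mu_2)\ =\ d^{(1)}_i-d^{(2)}_i
\]
is independent of $i$. Consequently the assignment~\eqref{E:isom of shifted gl-Yangians} simply shifts every $D$-generator index by $-c$ and every $\wt D$-generator index by $+c$, uniformly in $i$.

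Next, I would check that the map is well-defined at the level of generators. The allowed range $s_i\ge d^{(1)}_i$ in $Y_{\mu_1}(\gl_n)$ becomes $s_i-c\ge d^{(1)}_i-c=d^{(2)}_i$, matching the allowed range of $D_i^{(\bullet)}$ in $Y_{\mu_2}(\gl_n)$, and symmetrically for $\wt D_i^{(\wt s_i)}$ via $\wt s_i+c\ge -d^{(1)}_i+c=-d^{(2)}_i$. Thus the images are bona fide generators of $Y_{\mu_2}(\gl_n)$.

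The bulk of the argument is then the verification that each defining relation \eqref{E:Y0}--\eqref{E:Y11} is preserved. Since all $E$- and $F$-generators are fixed and the $D,\wt D$-shifts are uniform, every relation reindexes cleanly under the substitutions $t\mapsto t-c$ (for $D$-sums), $t\mapsto t+c$ (for $\wt D$-sums), $r\mapsto r-c$ or $s\mapsto s-c$ (when $r$ or $s$ records a $D$-index). For instance, in~\eqref{E:Y1} the sum $\sum_{t=-d^{(1)}_i}^{r+s-1-d^{(1)}_{i+1}}\wt D_i^{(t)}D_{i+1}^{(r+s-t-1)}$ maps to $\sum_{t=-d^{(1)}_i}^{r+s-1-d^{(1)}_{i+1}}\wt D_i^{(t+c)}D_{i+1}^{(r+s-t-1-c)}$, and the substitution $t'=t+c$ (using that the same $c$ appears on both $\wt D_i$ and $D_{i+1}$) produces exactly the corresponding relation in $Y_{\mu_2}(\gl_n)$. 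The relations~\eqref{E:Y0}, \eqref{E:Y2}, \eqref{E:Y3} are handled identically, while~\eqref{E:Y4}--\eqref{E:Y11} are automatic since they involve only $E$'s and $F$'s, which are fixed by the map.

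This shows that~\eqref{E:isom of shifted gl-Yangians} defines an algebra homomorphism $Y_{\mu_1}(\gl_n)\to Y_{\mu_2}(\gl_n)$. Swapping the roles of $\mu_1$ and $\mu_2$ (equivalently, replacing $c$ by $-c$) yields the two-sided inverse, so the map is an isomorphism. There is no real obstacle here: the key point is the uniformity of the scalar $c$ across $i$, which is precisely the content of $\bar\mu_1=\bar\mu_2$, and without which the shifts on different $D_i$'s would not cancel inside the mixed sums such as~\eqref{E:Y1}.
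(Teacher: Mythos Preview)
Your proposal is correct and follows exactly the same approach as the paper's proof, which simply asserts that the assignment is ``clearly compatible with the defining relations~\eqref{E:Y0}--\eqref{E:Y11}'' and then obtains the inverse by swapping $\mu_1$ and $\mu_2$. You have merely (and helpfully) unpacked what ``clearly compatible'' means, making explicit the key point that $\epsilon^\vee_i(\mu_1-\mu_2)$ is independent of $i$ and showing how the reindexing goes through in a representative relation.
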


\begin{proof}
The assignment~(\ref{isom of shifted gl-Yangians}) is clearly compatible with
the defining relations~(\ref{Y0}--\ref{Y11}), thus, it gives rise to a
$\BC$-algebra homomorphism $Y_{\mu_1}(\gl_n)\to Y_{\mu_2}(\gl_n)$.
Switching the roles of $\mu_1$ and $\mu_2$, we obtain the inverse
homomorphism $Y_{\mu_2}(\gl_n)\to Y_{\mu_1}(\gl_n)$. Hence, the result.
\end{proof}

We define the generating series of the above generators as follows:
\begin{equation*}
\begin{split}
  & E_i(z):=\sum_{r\geq 1} E_i^{(r)}z^{-r}, \quad
    F_i(z):=\sum_{r\geq 1} F_i^{(r)}z^{-r},\\
  & D_i(z):=\sum_{r\geq d_i} D_i^{(r)}z^{-r}, \quad
    \wt{D}_i(z):=\sum_{r\geq -d_i} \wt{D}_i^{(r)}z^{-r}=-D_i(z)^{-1}.
\end{split}
\end{equation*}

The algebras $Y_\mu(\gl_n)$ slightly generalize the \emph{shifted (Drinfeld) Yangians
of $\ssl_n$}, denoted by $Y_{\nu}(\ssl_n)$ in~\cite[Definition B.2]{bfnb}, where
$\nu\in \bar{\Lambda}$ is an $\ssl_n$--coweight. Recall that the latter is
an associative $\BC$-algebra generated by
  $\{\sE_i^{(r)},\sF_i^{(r)},\sH_i^{(s_i)}\}_{1\leq i<n}^{r\geq 1, s_i\geq -b_i}$
with the defining relations of~\cite[Definition B.1]{bfnb} and $\sH_i^{(-b_i)}=1$,
where $b_i:=\alphavee_i(\nu)$. We define the generating series
\begin{equation*}
  \sE_i(z):=\sum_{r\geq 1} \sE_i^{(r)}z^{-r}, \quad
  \sF_i(z):=\sum_{r\geq 1} \sF_i^{(r)}z^{-r}, \quad
  \sH_i(z):=\sum_{r\geq -b_i} \sH_i^{(r)}z^{-r}.
\end{equation*}

The explicit relation between the shifted Drinfeld Yangians of $\ssl_n$ and $\gl_n$
is as follows:

\begin{Prop}\label{relation yangians sl vs gl}
For any $\mu\in \Lambda$, there exists a $\BC$-algebra embedding
\begin{equation}\label{embedding of sl to gl Yangians}
  \iota_\mu\colon Y_{\bar{\mu}}(\ssl_n)\hookrightarrow Y_\mu(\gl_n),
\end{equation}
uniquely determined by
\begin{equation}\label{assignment sl vs gl series}
  \sE_i(z)\mapsto E_i\left(z+\frac{i}{2}\right),\
  \sF_i(z)\mapsto F_i\left(z+\frac{i}{2}\right),\
  \sH_i(z)\mapsto -\wt{D}_i\left(z+\frac{i}{2}\right)D_{i+1}\left(z+\frac{i}{2}\right).
\end{equation}
\end{Prop}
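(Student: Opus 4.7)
The plan is to prove the proposition in two stages: first, verify that the assignment~(\ref{assignment sl vs gl series}) preserves the defining relations of $Y_{\bar{\mu}}(\ssl_n)$ from~\cite[Definition B.1]{bfnb}, so that $\iota_\mu$ is a well-defined algebra homomorphism; second, establish its injectivity.

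For the first stage, I would translate each defining relation of $Y_\mu(\gl_n)$ from~(\ref{Y0}--\ref{Y11}) into generating-function form and apply it to the images. The normalization condition $\sH_i^{(-b_i)}=1$ is automatic: since $D_i^{(d_i)}=1=D_{i+1}^{(d_{i+1})}$, the leading term of $-\wt{D}_i(z+i/2)D_{i+1}(z+i/2)=D_i(z+i/2)^{-1}D_{i+1}(z+i/2)$ is $z^{d_i-d_{i+1}}=z^{b_i}$ with coefficient $1$. The Cartan commutativity $[\sH_i(z),\sH_j(w)]=0$ reduces to the third part of~(\ref{Y0}); the commutators of $\sH_i(z)$ with $\sE_j(w)$ and $\sF_j(w)$ follow from~(\ref{Y2},~\ref{Y3}) applied separately to the $D_i$ and $D_{i+1}$ factors; the Serre relations~(\ref{Y8}--\ref{Y11}) pass through verbatim; and the relations among the $\sE_i$'s and $\sF_i$'s come from~(\ref{Y4 alternative},~\ref{Y5 alternative},~\ref{Y6},~\ref{Y7}), where the relative $1/2$-shift between neighbouring indices built into~(\ref{assignment sl vs gl series}) supplies exactly the constant corrections expected on the $\ssl_n$ side.

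For injectivity, my preferred route is to construct a left inverse: sending $E_i^{(r)},F_i^{(r)}$ to $\sE_i^{(r)},\sF_i^{(r)}$ (with the triangular index adjustments dictated by the $i/2$-shift) and a suitable combination of $D$-modes to $\sH_i^{(\bullet)}$, while killing the remaining central degrees of freedom, should define an algebra homomorphism $\pi_\mu\colon Y_\mu(\gl_n)\to Y_{\bar{\mu}}(\ssl_n)$ with $\pi_\mu\circ\iota_\mu=\mathrm{id}$. Alternatively, one may combine a PBW theorem for $Y_\mu(\gl_n)$ (provable by deformation from the unshifted case of~\cite{bk}) with the known PBW theorem for $Y_{\bar{\mu}}(\ssl_n)$ and observe that $\iota_\mu$ maps a PBW basis of the source to a linearly independent subset on the target side. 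Care must be taken not to invoke the tensor decomposition $Y_\mu(\gl_n)\simeq ZY_\mu(\gl_n)\otimes Y_{\bar{\mu}}(\ssl_n)$ at this point, since that decomposition ultimately relies on the present proposition.

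The hardest part is the derivation of the $[\sE_i(z),\sF_j(w)]$ identity in generating-function form from the mode-level commutator~(\ref{Y1}): the telescoping sum on the right-hand side must be reorganized under the common shift $z,w\mapsto z+i/2,w+i/2$, and the lower summation bound $t\geq -d_i$ matched against the expansion of $\wt{D}_i(z+i/2)$, in order to recover the standard Drinfeld form $\delta_{i,j}(\sH_i(z)-\sH_i(w))/(z-w)$. Once this core identity is established, the remaining verifications amount to routine manipulations of formal power series.
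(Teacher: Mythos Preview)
Your proposal is correct and, in its first alternative for injectivity, essentially equivalent to the paper's argument---but your concern about circularity is misplaced in a way worth pointing out. The paper \emph{does} establish the tensor decomposition $Y_\mu(\gl_n)\simeq Z\otimes_\BC Y'_\mu(\gl_n)$ inside this very proof, and then deduces Corollary~\ref{shifted Yangian sl as a quotient of gl} from it, not the other way around. Concretely, the paper observes that the coefficients of $C(z)=D_1(z)D_2(z+1)\cdots D_n(z+n-1)$ are central by~(\ref{Y0},~\ref{Y2},~\ref{Y3}), and that in the commutative polynomial algebra on the $D_i^{(r)}$ one may pass to the alternative set of generators $\{\bar D_i^{(s_i)}\}\cup\{C_s\}$ with $\bar D_i(z)=D_i(z)^{-1}D_{i+1}(z)$. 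Since relations~(\ref{Y1}--\ref{Y3}) involve $D_i,D_{i+1}$ only through the combination $D_i^{-1}D_{i+1}$, the full presentation splits: $Y_\mu(\gl_n)\simeq Z\otimes_\BC Y'_\mu(\gl_n)$, where $Y'_\mu(\gl_n)$ is generated by $E_i^{(r)},F_i^{(r)},\bar D_i^{(s_i)}$. One then recognizes $Y'_\mu(\gl_n)$ as having exactly the presentation of $Y_{\bar\mu}(\ssl_n)$, so $\iota_\mu$ is an isomorphism onto this factor. Your ``left inverse $\pi_\mu$'' is precisely the projection onto $Y'_\mu(\gl_n)$ composed with this identification, so the two arguments coincide; the paper's formulation has the advantage of making the central complement $Z$ explicit and simultaneously yielding the decomposition as a corollary. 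Your PBW alternative would also work but is heavier than needed here.
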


\begin{Rem}\label{classical embedding Yangians}
For $\mu=0$, this recovers the classical embedding
$Y(\ssl_n)\hookrightarrow Y(\gl_n)$ of Yangians.
\end{Rem}

\begin{proof}[Proof of Proposition~\ref{relation yangians sl vs gl}]
As in the $\mu=0$ case (see Remark~\ref{classical embedding Yangians}), it is
straightforward to see that the assignment~(\ref{assignment sl vs gl series})
is compatible with the defining relations of $Y_{\bar{\mu}}(\ssl_n)$,
giving rise to a $\BC$-algebra homomorphism
  $\iota_\mu\colon Y_{\bar{\mu}}(\ssl_n)\to Y_\mu(\gl_n)$.
It remains to establish the injectivity of~$\iota_\mu$.

To this end, we first note that the coefficients of the series
\begin{equation}\label{eq:C-center}
  C(z)=z^{-d_1-\ldots-d_n}\, + \sum_{s>d_1+\ldots+d_n}C_sz^{-s}:=D_1(z)D_2(z+1)\cdots D_n(z+n-1)
\end{equation}
are in the center of $Y_\mu(\gl_n)$, due to the defining
relations~(\ref{Y0},~\ref{Y2},~\ref{Y3}), cf.~\cite[Theorem~7.2]{bk}.

Second, given an abstract polynomial algebra
  $\CB=\BC[\{D_i^{(r_i)}\}_{1\leq i\leq n}^{r_i>d_i}]$,
define the elements $\{C_s\}_{s> d_1+\ldots+d_n}$ and $\{\bar{D}_i^{(s_i)}\}_{1\leq i<n}^{s_i>d_{i+1}-d_i}$
of $\CB$, respectively via the formula~\eqref{eq:C-center} and
\begin{equation*}
  \bar{D}_i(z):=z^{d_i-d_{i+1}}\, + \sum_{s>d_{i+1}-d_i}\bar{D}_i^{(s)}z^{-s}=
  D_i(z)^{-1}D_{i+1}(z),
\end{equation*}
where we set $D_i(z):=z^{-d_i}+\sum_{r>d_i} D_i^{(r)}z^{-r}$.
It is clear that
  $\{\bar{D}_i^{(s_i)}\}_{1\leq i<n}^{s_i>d_{i+1}-d_i}\cup \{C_{s}\}_{s>d_1+\ldots+d_n}$
provide an alternative collection of generators of the polynomial algebra $\CB$, so that we have:
\begin{equation*}
  \CB\simeq
  \BC[\{C_s\}_{s>d_1+\ldots+d_n}]\otimes_\BC \BC[\{\bar{D}_i^{(s_i)}\}_{1\leq i<n}^{s_i>d_{i+1}-d_i}].
\end{equation*}

Applying this in our setup, we get the decomposition
$Y_\mu(\gl_n)\simeq Z\otimes_{\BC} Y'_{\mu}(\gl_n)$, where $Z$ is a $\BC$-subalgebra generated by
$\{C_s\}_{s> d_1+\ldots+d_n}$ and $Y'_{\mu}(\gl_n)$ is the $\BC$-subalgebra generated by
  $\{E_{i}^{(r)},F_i^{(r)},\bar{D}_i^{(s_i)}\}_{1\leq i<n}^{r\geq 1, s_i> d_{i+1}-d_i}$.
Furthermore, the defining relations~(\ref{Y1}--\ref{Y3}) are equivalent to the subalgebra $Z$ being central
(as explained above) and the commutation relations between $\bar{D}_i^{(s)}$ and $E_j^{(r)},F_j^{(r)}$ exactly
matching those of $Y_{\bar{\mu}}(\ssl_n)$ through~(\ref{assignment sl vs gl series}).
Thus, $\iota_\mu$ is injective.
\end{proof}

The above proof implies the shifted version of the decomposition from~\cite[Theorem 1.8.2]{m}:

\begin{Cor}\label{shifted Yangian sl as a quotient of gl}
There is a $\BC$-algebra isomorphism
\begin{equation}\label{gl as sl plus center}
  Y_\mu(\gl_n)\simeq \BC[\{C_s\}_{s>d_1+\ldots+d_n}]\otimes_{\BC} Y_{\bar{\mu}}(\ssl_n).
\end{equation}
In particular, $Y_{\bar{\mu}}(\ssl_n)$ may be realized both as a subalgebra of
$Y_\mu(\gl_n)$ via~(\ref{embedding of sl to gl Yangians}) and as a quotient algebra
of $Y_\mu(\gl_n)$ by the central ideal $(C_s-b_s)_{s>d_1+\ldots+d_n}$ for any
collection of $b_s\in \BC$.
\end{Cor}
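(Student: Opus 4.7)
The plan is to extract the desired decomposition directly from the proof of \refp{relation yangians sl vs gl}, where essentially all the ingredients have already been assembled. Let $Z\subset Y_\mu(\gl_n)$ denote the subalgebra generated by the coefficients $\{C_s\}_{s>d_1+\ldots+d_n}$ of the series $C(z)=\prod_{i=1}^n D_i(z+i-1)$. From the defining relations \eqref{E:Y0}--\eqref{E:Y3} one sees (as in \cite[Theorem 7.2]{bk}) that each $C_s$ is central in $Y_\mu(\gl_n)$; the argument on the abstract polynomial algebra $\CB$ in the cited proof then shows that the $C_s$'s are algebraically independent inside the subalgebra generated by the $D_i^{(r)}$'s, so $Z\simeq \BC[\{C_s\}_{s>d_1+\ldots+d_n}]$ as an abstract algebra.

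Next, I would invoke the identification $Y_\mu(\gl_n)\simeq Z\otimes_\BC Y'_\mu(\gl_n)$, already established in the proof of \refp{relation yangians sl vs gl} by passing from the generating set $\{D_i^{(r)},E_i^{(r)},F_i^{(r)}\}$ to the alternative set $\{\bar{D}_i^{(s_i)},E_i^{(r)},F_i^{(r)}\}\cup\{C_s\}$, together with the isomorphism $\iota_\mu\colon Y_{\bar{\mu}}(\ssl_n)\iso Y'_\mu(\gl_n)$ proved there via \eqref{E:assignment sl vs gl series}. Composing these two identifications yields the required algebra isomorphism \eqref{E:gl as sl plus center}.

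The ``in particular'' statement is then an immediate consequence: the embedding of $Y_{\bar\mu}(\ssl_n)$ as a subalgebra is precisely $\iota_\mu$ of \refp{relation yangians sl vs gl}, while the quotient by the central ideal generated by $\{C_s\}_{s>d_1+\ldots+d_n}$ collapses the first tensor factor of \eqref{E:gl as sl plus center} and recovers $Y_{\bar\mu}(\ssl_n)$ through the second factor.

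The only subtle point to double-check is that the multiplication map $Z\otimes_\BC Y'_\mu(\gl_n)\to Y_\mu(\gl_n)$ is genuinely a bijection (not merely a surjection); this reduces to a PBW-type statement for $Y_\mu(\gl_n)$, which is encoded in the observation about the polynomial algebra $\CB$ generated by the $D_i^{(r)}$, combined with the triangular structure of the relations \eqref{E:Y1}--\eqref{E:Y3} that govern the interaction between the Cartan and the $E_i^{(r)}, F_i^{(r)}$ generators. This is the step I expect to be the main technical obstacle, but it is already implicit in the preceding proof and requires no genuinely new input.
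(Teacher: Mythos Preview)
Your proposal is correct and takes essentially the same approach as the paper: the Corollary is stated there without a separate proof, as it follows directly from the decomposition $Y_\mu(\gl_n)\simeq Z\otimes_\BC Y'_\mu(\gl_n)$ and the isomorphism $\iota_\mu\colon Y_{\bar\mu}(\ssl_n)\iso Y'_\mu(\gl_n)$ established in the proof of \refp{relation yangians sl vs gl}. Your identification of the bijectivity of the multiplication map as the one subtle point is apt; the paper likewise does not spell this out, deriving it in one line from the change of Cartan generators in $\CB$ together with relations \eqref{E:Y1}--\eqref{E:Y3}.
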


The following result provides a shifted version of the remaining part of~\cite[Theorem 1.8.2]{m}:

\begin{Lem}\label{center of shifted yangians}
(a) The center of the shifted Yangian $Y_{\nu}(\ssl_n)$ is trivial
for any shift $\nu\in \bar{\Lambda}$.

\noindent
(b) The center of the shifted Yangian $Y_\mu(\gl_n)$ coincides with
$\BC[\{C_s\}_{s>d_1+\ldots+d_n}]$ for any $\mu\in \Lambda$.
\end{Lem}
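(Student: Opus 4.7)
My strategy is to reduce everything to the antidominant case via the tensor decomposition of Corollary~\ref{shifted Yangian sl as a quotient of gl}, establish (b) in the antidominant case using the RTT realization (Theorem~\ref{Main Conjecture 1}) and Weekes' injectivity Theorem~\ref{alex's theorem}, and then extend (a) beyond the antidominant case via the shift homomorphisms of~\cite{fkp}. The logical flow is: (i) prove (b) for $\mu=-\nu,\ \nu\in\Lambda^+$; (ii) deduce (a) for the antidominant $\bar\nu$; (iii) extend (a) to arbitrary $\nu\in\bar\Lambda$; (iv) recover (b) in general via the identity $Z(A \otimes_\BC B) = A \otimes_\BC Z(B)$ (valid when $A$ is commutative) applied to~(\ref{gl as sl plus center}).

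For step (i), the inclusion $\BC[\{C_s\}] \subseteq Z(Y_{-\nu}(\gl_n))$ is already recorded in the proof of Proposition~\ref{relation yangians sl vs gl}. For the reverse inclusion I would combine the RTT isomorphism of Theorem~\ref{Main Conjecture 1} with the family of homomorphisms $\Psi_D \colon Y_{-\nu}(\gl_n) \to \CA$ of Theorem~\ref{homom for Yangian gl}, indexed by $\Lambda^+$-valued divisors $D$ on $\BP^1$ with $D|_\infty = \nu$. Formula~(\ref{image of qdet}) identifies $\Psi_D|_{\BC[\{C_s\}]}$ with an injection into the commutative shift-operator subalgebra of $\CA$. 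Given a central $z \in Y_{-\nu}(\gl_n)$, each $\Psi_D(z)$ centralizes $\Psi_D(Y_{-\nu}(\gl_n))$ inside $\CA$, and Weekes' Theorem~\ref{alex's theorem} guarantees triviality of $\bigcap_D \ker\Psi_D$ modulo the central subalgebra; combining these constraints with an explicit centralizer analysis inside $\CA$ forces $z \in \BC[\{C_s\}]$. Step (ii) is then formal: for $z \in Z(Y_{-\bar\nu}(\ssl_n))$, the image $\iota_{-\nu}(z)$ commutes with $Y'_{-\nu}(\gl_n) = \iota_{-\nu}(Y_{-\bar\nu}(\ssl_n))$ by centrality and with $\BC[\{C_s\}]$ since the latter sits in $Z(Y_{-\nu}(\gl_n))$, hence $\iota_{-\nu}(z) \in Z(Y_{-\nu}(\gl_n)) \cap Y'_{-\nu}(\gl_n) = \BC$ by~(\ref{gl as sl plus center}) and step~(i).

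For step (iii), I would embed $Y_\nu(\ssl_n) \hookrightarrow Y_{-\mu}(\ssl_n)$ via the shift homomorphism of~\cite{fkp} for sufficiently antidominant $-\mu$, and adapt the argument of (i) by restricting the family $\{\Psi_D \circ \iota_{-\mu}\}_D$ along this embedding. Even though the shift embedding is not center-preserving (so step (ii) does not transfer directly), a central $z \in Y_\nu(\ssl_n)$ still has its image commute with the entire image of $Y_\nu(\ssl_n)$ in each target $\CA$, and a sufficiently rich family of $D$'s should provide enough representations to pin $z$ down to a scalar by a centralizer analysis parallel to the one in (i).

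\emph{The main obstacle} is the centralizer computation in step (i), which must convert Weekes' injectivity of $\prod_D \Psi_D$ into the concrete containment $Z(Y_{-\nu}(\gl_n)) \subseteq \BC[\{C_s\}]$. The difference operator algebra $\CA$ from~(\ref{algebra A}) has an explicit structure in which the centralizer of the image of $\Psi_D$ is generated by functions of the shift operators, and one expects this to match $\Psi_D(\BC[\{C_s\}])$ for generic $D$; making this rigorous across a sufficient family of divisors is the main technical work. Once (i) is in hand, steps (ii) and (iv) are bookkeeping, while step (iii) is a variation on the same theme that additionally requires checking that the shift-embedded family of representations remains ample enough to execute the centralizer analysis.
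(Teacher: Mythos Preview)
Your proposal shares the paper's core engine (Weekes' injectivity Theorem~\ref{alex's theorem} combined with a centralizer computation inside $\CA$), but the logical organization is inverted and unnecessarily heavy. The paper proves (a) \emph{first}, directly for an arbitrary shift $\nu\in\bar\Lambda$: Theorem~\ref{alex's theorem} is stated for any coweight, so one simply argues that the $\Phi^\lambda_\nu$-image of a central element is a symmetric rational function in the $p_{\ast,\ast}$ (since the image contains all symmetric polynomials in them), and then that commuting with the images of $E_i(z),F_i(z)$ forces it to be $p$-independent, hence scalar. Part (b) then drops out immediately from (a), the decomposition~(\ref{gl as sl plus center}), and the centrality of the $C_s$.

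By contrast, you route through (b) in the antidominant case first, invoking the RTT isomorphism of Theorem~\ref{Main Conjecture 1}, then to (a) antidominant, then to (a) general via shift homomorphisms, then back to (b). Two observations: first, the RTT realization plays no role here---the homomorphisms $\Psi_D$ of Theorem~\ref{homom for Yangian gl} are already defined on the Drinfeld presentation, so step~(i) does not need Theorem~\ref{Main Conjecture 1} at all. Second, your step~(iii) is both unnecessary and the weakest link: since Weekes' theorem already covers every coweight, the detour through shift embeddings is superfluous, and as you yourself note, a shift embedding only guarantees the image of a central element centralizes the \emph{sub}image, so the centralizer analysis you would need is genuinely different and you have not supplied it. Dropping steps~(i), (iii), (iv) and running your step~(ii) centralizer argument directly for arbitrary $\nu$ recovers exactly the paper's sketch in Remark~\ref{proof of trivial center}.
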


As we will not use Lemma~\ref{center of shifted yangians} in the rest of this paper,
we will only sketch the proof of part (a) in Remark~\ref{proof of trivial center},
crucially using the result of~\cite{w} discussed below. Part (b) follows immediately from (a),
the decomposition~(\ref{gl as sl plus center}), and the centrality of $C_s$ established above.


\subsection{Homomorphism $\Psi_D$}\label{ssec homomorphism from shifted Yangian of gl}
\

In this section, we generalize~\cite[Theorem B.15]{bfnb} for the type $A_{n-1}$
Dynkin diagram with arrows pointing $i\to i+1$ for $1\leq i\leq n-2$ by replacing
$Y_{\bar{\mu}}(\ssl_n)$ of~\emph{loc.cit.}\ with $Y_\mu(\gl_n)$.

\begin{Rem}\label{remark about orientations}
While similar generalizations exist for all orientations of $A_{n-1}$ Dynkin diagram,
for the purposes of this paper it suffices to consider only the above orientation,
see Remark~\ref{rmk on other orientations}.
\end{Rem}

A $\gl_n$--coweight $\lambda\in \Lambda$ will be called \emph{dominant}, which
we denote by $\lambda\in \Lambda^+$, if the corresponding $\ssl_n$--coweight
$\bar{\lambda}$ is dominant (denoted by $\bar{\lambda}\in \bar{\Lambda}^+$), that is
$\alphavee_i(\bar{\lambda})\in \BN$ for $1\leq i\leq n-1$. Thus,
$\sum_{i=0}^{n-1} c_i\varpi_i$ is dominant iff $c_i\in \BN$ for $1\leq i\leq n-1$.

A \textbf{$\Lambda$-valued divisor $D$ on $\BP^1$,
$\Lambda^+$-valued outside $\{\infty\}\in \BP^1$}, is a formal sum
\begin{equation}\label{divisor def1}
  D \, = \sum_{1\leq s\leq N} \gamma_s\varpi_{i_s} [x_s] + \mu [\infty]
\end{equation}
with $N\in \BN$, $0\leq i_s<n$, $x_s\in \BC$,
   $\gamma_s=\begin{cases}
     1 & \text{if } i_s\ne 0 \\
     \pm 1 & \text{if } i_s=0
   \end{cases}$,
and $\mu\in \Lambda$.
We will write $\mu=D|_\infty$. If $\mu\in \Lambda^+$, we call
$D$ a \textbf{$\Lambda^+$-valued divisor on $\BP^1$}.
It will be convenient to present
\begin{equation}\label{divisor def2}
  D \, = \sum_{x\in \BP^1\backslash\{\infty\}} \lambda_x [x] + \mu [\infty]
  \ \mathrm{with}\ \lambda_x\in \Lambda^+,
\end{equation}
related to~(\ref{divisor def1}) via
  $\lambda_x:=\sum_{1\leq s\leq N}^{x_s=x} \gamma_s\varpi_{i_s}$.
Set $\lambda:=\sum_{s=1}^N \gamma_s\varpi_{i_s}\in \Lambda^+$.
Let $\{\alpha_i\}_{i=1}^{n-1}\subset \Lambda$ be the simple coroots of $\ssl_n$,
that is $\alpha_i=\epsilon_i-\epsilon_{i+1}$. Following~\cite{bfnb}, we make the following
\begin{equation}\label{assumption}
  \textbf{Assumption:} \quad \lambda+\mu=a_1\alpha_1+\ldots+a_{n-1}\alpha_{n-1}
  \quad \mathrm{with}\ a_i\in \BN.
\end{equation}

\begin{Rem}\label{integrality condition}
(\ref{assumption}) is equivalent to $\sum_{j=1}^n \epsilon^\vee_j(\lambda+\mu)=0$
and $\sum_{j=1}^i \epsilon^\vee_j(\lambda+\mu)\in \BN$ for $1\leq i<n$.
\end{Rem}

Consider the associative $\BC$-algebra
\begin{equation}\label{algebra A}
   \CA=\BC \langle p_{i,r}, e^{\pm q_{i,r}}, (p_{i,r}-p_{i,s}+m)^{-1}
   \rangle_{1\leq i<n, m\in \BZ}^{1\leq r\ne s\leq a_i}
\end{equation}
with the defining relations
\begin{equation*}
  [e^{\pm q_{i,r}},p_{j,s}]=\mp \delta_{i,j}\delta_{r,s} e^{\pm q_{i,r}}, \quad
  [p_{i,r},p_{j,s}]=0=[e^{q_{i,r}},e^{q_{j,s}}], \quad
  e^{\pm q_{i,r}} e^{\mp q_{i,r}}=1.
\end{equation*}

\begin{Rem}\label{additive difference operators}
This algebra $\CA$ can be represented in the algebra of difference operators
with rational coefficients on functions of $\{p_{i,r}\}_{1\leq i<n}^{1\leq r\leq a_i}$
by taking $e^{\mp q_{i,r}}$ to be a difference operator $\mathsf{D}^{\pm 1}_{i,r}$ that acts as
  $(\mathsf{D}^{\pm 1}_{i,r} \mathsf{\Psi})(p_{1,1},\ldots,p_{i,r},\ldots, p_{n-1,a_{n-1}}) =
   \mathsf{\Psi}(p_{1,1},\ldots,p_{i,r}\pm 1, \ldots, p_{n-1,a_{n-1}})$.
\end{Rem}

For $0\leq i\leq n-1$ and $1\leq j\leq n-1$, we define
\begin{equation}\label{ZW-series}
\begin{split}
  & Z_i(z):=\prod_{1\leq s\leq N}^{i_s=i} (z-x_s)^{\gamma_s} \, =
    \prod_{x\in \BP^1\backslash\{\infty\}} (z-x)^{\alphavee_i(\lambda_x)},\\
  & P_j(z):=\prod_{r=1}^{a_j} (z-p_{j,r}), \quad
    P_{j,r}(z):=\prod_{1\leq s\leq a_j}^{s\ne r} (z-p_{j,s}),
\end{split}
\end{equation}
where $\alphavee_0:=-\epsilon^\vee_1$. We also define
\begin{equation*}
  a_0:=0, \quad a_n:=0, \quad P_0(z):=1, \quad P_n(z):=1.
\end{equation*}

The following result generalizes $A_{n-1}$-case of~\cite[Theorem B.15]{bfnb}
stated for semisimple Lie algebras $\fg$ (preceded by~\cite{gklo} for the
trivial shift and by~\cite{kwwy} for dominant shifts):

\begin{Thm}\label{homom for Yangian gl}
Let $D$ be as above and $\mu=D|_\infty$.
There is a unique $\BC$-algebra homomorphism
\begin{equation}\label{homom psi}
  \Psi_D\colon Y_{-\mu}(\gl_n)\longrightarrow \CA
\end{equation}
such that
\begin{equation}\label{homom assignment}
\begin{split}
  & E_i(z)\mapsto
    -\sum_{r=1}^{a_i}\frac{P_{i-1}(p_{i,r}-1)Z_i(p_{i,r})}{(z-p_{i,r})P_{i,r}(p_{i,r})}e^{q_{i,r}},\\
  & F_i(z)\mapsto
    \sum_{r=1}^{a_i}\frac{P_{i+1}(p_{i,r}+1)}{(z-p_{i,r}-1)P_{i,r}(p_{i,r})}e^{-q_{i,r}},\\
  & D_i(z)\mapsto
    \frac{P_i(z)}{P_{i-1}(z-1)} \prod_{0\leq k<i} Z_k(z)=
    \frac{P_i(z)}{P_{i-1}(z-1)} \prod_{x\in \BP^1\backslash\{\infty\}}(z-x)^{-\epsilon^\vee_i(\lambda_x)}.
\end{split}
\end{equation}
\end{Thm}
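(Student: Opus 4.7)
The plan is to reduce the verification of the defining relations~\eqref{Y0}--\eqref{Y11} for the assignment~\eqref{homom assignment} to the known $\ssl_n$-case of \cite[Theorem~B.15]{bfnb}, combined with a small amount of direct checking. Concretely, I split the relations into two groups: those that involve only the combinations of $D_i(z)$'s appearing as $\sH_i(z)=-\wt D_i(z)D_{i+1}(z)$, and those that genuinely test the individual $D_i(z)$. The first group (namely~\eqref{Y1} and~\eqref{Y4}--\eqref{Y11}) I handle via the embedding $\iota_{-\mu}\colon Y_{-\bar\mu}(\ssl_n)\hookrightarrow Y_{-\mu}(\gl_n)$ of Proposition~\ref{relation yangians sl vs gl}; the second group (namely~\eqref{Y0}, \eqref{Y2}, and~\eqref{Y3}) I verify directly.

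For the first group, substituting~\eqref{homom assignment} into~\eqref{assignment sl vs gl series} one computes
\begin{equation*}
\sH_i(z)\longmapsto \frac{D_{i+1}(z+i/2)}{D_i(z+i/2)}=\frac{P_{i+1}(z+i/2)\,P_{i-1}(z+i/2-1)}{P_i(z+i/2)\,P_i(z+i/2-1)}\,Z_i(z+i/2),
\end{equation*}
after the factors $\prod_{k<i}Z_k$ in numerator and denominator telescope (using $\alphavee_i=\epsilon^\vee_i-\epsilon^\vee_{i+1}$). Together with the images $\sE_i(z)\mapsto E_i(z+i/2)$ and $\sF_i(z)\mapsto F_i(z+i/2)$, this matches precisely the formulas defining the homomorphism $\Phi^{\bar\lambda}_{-\bar\mu}$ from \cite[Theorem~B.15]{bfnb} (up to the standard $z\mapsto z+i/2$ renormalization of spectral parameters built into $\iota_{-\mu}$). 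Invoking \emph{loc.cit.}, this shows that $\Psi_D\circ\iota_{-\mu}$ is a well-defined algebra homomorphism, so all defining relations of $Y_{-\bar\mu}(\ssl_n)$ are satisfied in $\CA$; consequently, \eqref{Y1} and \eqref{Y4}--\eqref{Y11} hold.

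For the second group, relation~\eqref{Y0} is immediate: the mutual commutativity of $D_i^{(r)}$ holds because the images lie in the commutative subalgebra $\BC[p_{*,*}]\subset\CA$; the identity $\sum_t D_i^{(t)}\wt D_i^{(r-t)}=-\delta_{r,0}$ is the definitional relation $\wt D_i(z)=-D_i(z)^{-1}$; and the normalization $D_i^{(d_i)}=1$ with $d_i=-\epsilon^\vee_i(\mu)$ amounts to the degree identity $a_i-a_{i-1}=\epsilon^\vee_i(\lambda+\mu)$, which is exactly the content of assumption~\eqref{assumption}. For~\eqref{Y2} and~\eqref{Y3}, I would work at the level of generating series. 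The only source of non-commutation between the image of $D_i(z)$ and that of $E_j(w)$ is moving $e^{q_{j,s}}$ past the factors $P_j(\cdot)$ and $P_{j-1}(\cdot-1)$ that appear in $D_i(z)$ when $j\in\{i-1,i\}$; a residue computation at $z=p_{j,s}$ (or $z=p_{j,s}+1$) then reproduces the right-hand side of~\eqref{Y2} in closed form, and when $j\notin\{i-1,i\}$ the commutator vanishes. Relation~\eqref{Y3} is handled symmetrically.

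The main technical obstacle is the bookkeeping in the second group, especially in expanding the series identities~\eqref{Y2} and~\eqref{Y3} in modes $z^{-r},w^{-s}$. The cleanest route is to certify these as identities of rational functions in $(z,w)$, exploiting the factorized form of $D_i(z)$, and extract the mode equalities by Laurent expansion near $z,w=\infty$, thereby avoiding a term-by-term match of the double sums.
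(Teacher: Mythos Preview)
Your approach is correct, but the paper takes a somewhat different and cleaner route. Rather than splitting the relations and verifying~\eqref{Y2},~\eqref{Y3} by a direct residue computation, the paper invokes the structural decomposition of Corollary~\ref{shifted Yangian sl as a quotient of gl}:
\[
  Y_{-\mu}(\gl_n)\ \simeq\ \BC[\{C_s\}_{s>-(d_1+\ldots+d_n)}]\otimes_\BC Y_{-\bar\mu}(\ssl_n).
\]
Given this tensor decomposition, defining an algebra homomorphism out of $Y_{-\mu}(\gl_n)$ amounts to specifying homomorphisms on the two tensor factors separately and checking that their images commute. The $\ssl_n$-factor is handled exactly as you do, via~\cite[Theorem~B.15]{bfnb}. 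For the central factor, the paper simply computes $\Psi_D(C(z))$ explicitly (a product of linear factors in $z$, see~\eqref{image of qdet}) and observes that it lands in $\BC(z)\subset\CA$, hence is central. This bypasses the relations~\eqref{Y2},~\eqref{Y3} entirely. What your direct verification buys is self-containment: you do not need Corollary~\ref{shifted Yangian sl as a quotient of gl} as input, at the cost of a short additional calculation. What the paper's route buys is economy: once the decomposition~\eqref{gl as sl plus center} is established, the individual $D_i$-relations never have to be touched, and the only new computation beyond the $\ssl_n$-case is the one-line formula for $\Psi_D(C(z))$.
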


\begin{Rem}\label{relating to BFNb homom}
Consider a decomposition
  $\bar{\lambda}=\sum_{1\leq s\leq N}^{i_s\ne 0} \omega_{i_s}$
and assign $z_s:=x_s-\frac{i_s+1}{2}\in \BC$ to the $s$-th summand.
Identifying $\CA$ of~\eqref{algebra A} with $\tilde{\CA}$ of~\cite[\S B(ii)]{bfnb}
(with $z_i$ of \emph{loc.cit.}\ specialized to complex numbers) via
  $p_{i,r}\leftrightarrow w_{i,r}+\frac{i}{2}$
and
  $e^{\pm q_{i,r}}\leftrightarrow \sfu_{i,r}^{\mp 1}$,
the (restriction) composition
  $Y_{-\bar{\mu}}(\ssl_n)\xrightarrow{\iota_{-\mu}} Y_{-\mu}(\gl_n) \xrightarrow{\Psi_D} \CA$
is just the homomorphism $\Phi^{\bar{\lambda}}_{-\bar{\mu}}$
of~\cite[Theorem~B.15]{bfnb}.
\end{Rem}

\begin{proof}[Proof of Theorem~\ref{homom for Yangian gl}]
First, we need to verify that under the above assignment~(\ref{homom assignment}),
the image of $D_i(z)$ is of the form $z^{d_i}+(\mathrm{lower\ order\ terms\ in}\ z)$.
Let $\deg_i$ denote the leading power of $z$ in the image of $D_i(z)$
(clearly the coefficient of $z^{\deg_i}$ equals $1$). Then, indeed we have
\begin{equation*}
  \deg_i=a_i-a_{i-1}-\sum_{x\in \BP^1\backslash\{\infty\}} \epsilon^\vee_i(\lambda_x)=
  a_i-a_{i-1}-\epsilon^\vee_i(\lambda)=a_i-a_{i-1}-(a_i-a_{i-1}-\epsilon^\vee_i(\mu))=d_i.
\end{equation*}

Evoking the decomposition~(\ref{gl as sl plus center}), it suffices to prove that
the restrictions of the assignment~(\ref{homom assignment}) to the subalgebras
$Y_{-\bar{\mu}}(\ssl_n)$ and $\BC[\{C_s\}_{s>-(d_1+\ldots+d_n)}]$ determine algebra
homomorphisms, whose images commute. The former is clear for the restriction to
$Y_{-\bar{\mu}}(\ssl_n)$, due to Theorem B.15 of~\cite{bfnb} combined with
Remark~\ref{relating to BFNb homom} above. On the other hand, we have
\begin{equation}\label{image of qdet}
  \Psi_D(C(z))=
  \prod_{i=1}^n\prod_{x\in \BP^1\backslash\{\infty\}}(z+i-1-x)^{-\epsilon^\vee_i(\lambda_x)}=
  \prod_{s=1}^{N} \prod_{k=i_s}^{n-1} (z-x_s+k)^{\gamma_s}.
\end{equation}
Thus, the restriction of $\Psi_D$ to the polynomial algebra $\BC[\{C_s\}_{s>-(d_1+\ldots+d_n)}]$
defines an algebra homomorphism, whose image is central in $\CA$.
This completes our proof of Theorem~\ref{homom for Yangian gl}.
\end{proof}


\subsection{Antidominantly shifted RTT Yangians of $\gl_n$}\label{ssec shifted RTT yangian of gl}
\

Consider the \emph{rational} $R$-matrix $R_\rat(z)=z\mathrm{Id}+P$, where
  $P=\sum_{i,j=1}^n E_{ij}\otimes E_{ji}\in (\End\, \BC^n)^{\otimes 2}$
is the permutation operator. It satisfies the Yang-Baxter equation
with a spectral parameter:
\begin{equation}\label{hYB}
  R_{\rat;12}(u)R_{\rat;13}(u+v)R_{\rat;23}(v)=
  R_{\rat;23}(v)R_{\rat;13}(u+v)R_{\rat;12}(u).
\end{equation}

Fix $\mu\in \Lambda^+$. Define the \emph{(antidominantly) shifted RTT Yangian of $\gl_n$},
denoted by $Y^\rtt_{-\mu}(\gl_n)$, to be the associative $\BC$-algebra generated
by $\{t^{(r)}_{ij}\}_{1\leq i,j\leq n}^{r\in \BZ}$ subject to the following
two families of relations:

\noindent
$\bullet$
The first family of relations may be encoded by a single RTT relation
\begin{equation}\label{ratRTT}
  R_{\rat}(z-w)T_1(z)T_2(w)=T_2(w)T_1(z)R_\rat(z-w),
\end{equation}
where
  $T(z)\in Y^\rtt_{-\mu}(\gl_n)[[z,z^{-1}]]\otimes_\BC \End\, \BC^n$
is defined via
\begin{equation}\label{T-matrix}
  T(z)=\sum_{i,j} t_{ij}(z)\otimes E_{ij} \quad \mathrm{with} \quad
  t_{ij}(z):=\sum_{r\in \BZ} t^{(r)}_{ij}z^{-r}.
\end{equation}
Thus,~(\ref{ratRTT}) is an equality in
  $Y^\rtt_{-\mu}(\gl_n)[[z,z^{-1},w,w^{-1}]]\otimes_\BC (\End\ \BC^n)^{\otimes 2}$.

\noindent
$\bullet$
The second family of relations encodes the fact that $T(z)$ admits the Gauss decomposition:
\begin{equation}\label{Gauss product rational}
  T(z)=F(z)\cdot G(z)\cdot E(z),
\end{equation}
where
  $F(z),G(z),E(z)\in Y^\rtt_{-\mu}(\gl_n)((z^{-1}))\otimes_\BC \End\, \BC^n$
are of the form
\begin{equation*}
  F(z)=\sum_{i} E_{ii}+\sum_{i<j} f_{ji}(z)\otimes E_{ji},\
  G(z)=\sum_{i} g_i(z)\otimes E_{ii},\
  E(z)=\sum_{i} E_{ii}+\sum_{i<j} e_{ij}(z)\otimes E_{ij},
\end{equation*}
with the matrix coefficients having the following expansions in $z$:
\begin{equation}\label{t-modes shifted}
  e_{ij}(z)=\sum_{r\geq 1} e^{(r)}_{ij}z^{-r}, \quad
  f_{ji}(z)=\sum_{r\geq 1} f^{(r)}_{ji}z^{-r}, \quad
  g_i(z)=z^{d_i}\, + \sum_{r\geq 1-d_i} g^{(r)}_i z^{-r},
\end{equation}
where
  $\{e^{(r)}_{ij},f^{(r)}_{ji}\}_{1\leq i<j\leq n}^{r\geq 1}
   \cup\{g^{(s_i)}_i\}_{1\leq i\leq n}^{s_i\geq 1-d_i}
   \subset Y^\rtt_{-\mu}(\gl_n)$.

\begin{Rem}\label{explaining shifted Gauss}
(a) For $\mu=0$, the second family of
relations~(\ref{Gauss product rational},~\ref{t-modes shifted}) is equivalent
to the relations $t_{ij}^{(r)}=0$ for $r<0$ and $t_{ij}^{(0)}=\delta_{i,j}$.
Thus, $Y^\rtt_0(\gl_n)$ is the RTT Yangian of $\gl_n$ of~\cite{frt}.

\noindent
(b) Likewise,~(\ref{t-modes shifted}) is equivalent to a certain
family of algebraic relations on $t_{ij}^{(r)}$, which can be best understood
in terms of the quasi-determinants (as defined by I.~Gelfand and V.~Retakh in~\cite{gr})
following~\cite[(5.2--5.4)]{bk}. In particular, we have $T(z)\in Y^\rtt_{-\mu}(\gl_n)((z^{-1}))\otimes_\BC \End\, \BC^n$.
For example,~(\ref{t-modes shifted}) for $i=1$ are equivalent to:
\begin{equation*}
  t_{11}^{(-d_1)}=1 \ \mathrm{and}\ t_{11}^{(r)}=0\ \mathrm{for}\ r<-d_1, \qquad
  t_{1j}^{(r)}=0=t_{j1}^{(r)}\ \mathrm{for}\ r\leq -d_1, 1<j\leq n.
\end{equation*}

\noindent
(c) If $\mu\notin \Lambda^+$, then the above two families of relations
are contradictive and thus the algebra $Y^\rtt_{-\mu}(\gl_n)$ is trivial,
see Remark~\ref{Importance of antidominance}.

\noindent
(d) If $\mu_1,\mu_2\in \Lambda^+$ satisfy $\bar{\mu}_1=\bar{\mu}_2\in \bar{\Lambda}$, that is,
$\mu_2=\mu_1+c\varpi_0$ with $c\in \BZ$, then the assignment $T(z)\mapsto z^{c}T(z)$
gives rise to a $\BC$-algebra isomorphism $Y^\rtt_{-\mu_1}(\gl_n)\iso Y^\rtt_{-\mu_2}(\gl_n)$,
cf.\ Lemma~\ref{identifying gl-Yangians}.
\end{Rem}

\begin{Lem}
For any $1\leq i<j\leq n$ and $r\geq 1$, we have the following identities:
\begin{equation}\label{Gauss Matrix Entries yangian}
\begin{split}
  & e^{(r)}_{ij}=[e_{j-1,j}^{(1)},[e^{(1)}_{j-2,j-1},\cdots,[e^{(1)}_{i+1,i+2},e^{(r)}_{i,i+1}]\cdots]],\\
  & f^{(r)}_{ji}=[[\cdots[f^{(r)}_{i+1,i},f^{(1)}_{i+2,i+1}],\cdots,f^{(1)}_{j-1,j-2}],f^{(1)}_{j,j-1}].
\end{split}
\end{equation}
\end{Lem}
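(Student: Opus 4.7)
The plan is to prove both identities simultaneously by induction on $\ell = j - i$. The base case $\ell = 1$ is vacuous: for $j = i + 1$ the nested commutator collapses to $e^{(r)}_{i,i+1}$ (resp.\ $f^{(r)}_{i+1,i}$), as the surrounding list of degree-one generators $e^{(1)}_{i+1,i+2}, \ldots, e^{(1)}_{j-1,j}$ (resp.\ $f^{(1)}_{i+2,i+1},\ldots,f^{(1)}_{j,j-1}$) is empty. For the inductive step it suffices to establish the one-step recursions
\begin{equation*}
  e^{(r)}_{i,j} = [e^{(1)}_{j-1,j},\, e^{(r)}_{i,j-1}], \qquad
  f^{(r)}_{j,i} = [f^{(r)}_{j-1,i},\, f^{(1)}_{j,j-1}]
\end{equation*}
for $j - i \geq 2$; iterating these reproduces the nested commutators of the lemma.

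To extract the recursions I would use the RTT relation~(\ref{ratRTT}), which in scalar form reads
\begin{equation*}
  (z - w)[t_{ab}(z), t_{cd}(w)] = t_{cb}(z) t_{ad}(w) - t_{cb}(w) t_{ad}(z)
\end{equation*}
for all indices $a,b,c,d$. Inverting the Gauss decomposition~(\ref{Gauss product rational}) expresses each $e_{ij}(z),\ f_{ji}(z),\ g_i(z)$ as a quasi-determinantal polynomial in the $t_{ab}(z)$'s and $g_k(z)^{-1}$'s. Substituting these expressions into the RTT commutators for suitably chosen index tuples (e.g.\ tuples isolating the entries near the $(i,j-1)$, $(j-1,j)$ block) and conjugating away the diagonal factors yields the two desired commutator identities; this is the direct analogue of the derivation of~\cite[Theorem 5.2]{bk} (see also~\cite[\S 1.11]{m}) in the unshifted RTT setting.

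The critical observation that makes the unshifted argument apply verbatim is that the shift $\mu$ enters only through the diagonal series: the off-diagonal Gauss generators $e^{(r)}_{ij},\ f^{(r)}_{ji}$ retain the same expansion range $r \geq 1$ as in the classical case~(\ref{t-modes shifted}), and each $g_i(z)$ remains invertible in $Y^{\rtt}_{-\mu}(\gl_n)((z^{-1}))$ because its leading coefficient equals $1$. All quasi-determinantal identities and series manipulations used to derive the commutator recursions depend only on the RTT relation and on this invertibility, neither of which is altered by the antidominant shift.

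The main obstacle is therefore not conceptual but a careful bookkeeping of Laurent expansions around $z=\infty$: one must verify that the right-hand sides of the one-step recursions, once expressed in terms of the Gauss generators, do not drag in any negative modes of $g_i$ outside the allowed range $s_i \geq 1 - d_i$ fixed by~(\ref{t-modes shifted}). This follows by comparing the degree in $z$ of the quasi-determinantal expressions on both sides before extracting the coefficient of $z^{-r}$. Once the two recursions are secured in this form, the nested-commutator formulas claimed in the lemma follow by straightforward induction on $\ell = j - i$.
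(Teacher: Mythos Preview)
Your proposal is correct and follows the same approach as the paper: the paper's proof is a one-line citation, stating that the argument is analogous to that of~\cite[(5.5)]{bk} (see also~\cite[Corollary 2.38]{ft2}), and your sketch spells out precisely what that analogy amounts to---induction on $j-i$ via the one-step recursions $e^{(r)}_{i,j}=[e^{(1)}_{j-1,j},e^{(r)}_{i,j-1}]$ and $f^{(r)}_{j,i}=[f^{(r)}_{j-1,i},f^{(1)}_{j,j-1}]$, extracted from the RTT relation and the Gauss decomposition. Your key observation that the shift affects only the diagonal series $g_i(z)$, while the off-diagonal modes $e^{(r)}_{ij},f^{(r)}_{ji}$ keep the unshifted expansion range $r\ge 1$, is exactly why the Brundan--Kleshchev computation transfers verbatim; the concern in your final paragraph about stray $g_i$-modes is legitimate to flag but in practice does not arise, since the quasi-determinantal manipulations in~\cite{bk} only use invertibility of $g_i(z)$ as a Laurent series, which holds here because the leading coefficient is $1$.
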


\begin{proof}
The proof is analogous to that of~\cite[(5.5)]{bk} (see also~\cite[Corollary 2.23]{ft2}).
\end{proof}

\begin{Cor}\label{rat generating set}
The algebra $Y^\rtt_{-\mu}(\gl_n)$ is generated by
  $\{e_{i,i+1}^{(r)}, f_{i+1,i}^{(r)},g_j^{(s_j)}\}_{1\leq i<n, 1\leq j\leq n}^{r\geq 1,s_j\geq 1-d_j}$.
\end{Cor}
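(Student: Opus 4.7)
The plan is to combine the Gauss decomposition~(\ref{Gauss product rational}) with the iterated commutator identities~(\ref{Gauss Matrix Entries yangian}) just established.

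First, I would observe that by definition the algebra $Y^\rtt_{-\mu}(\gl_n)$ is generated by the matrix entries $\{t^{(r)}_{ij}\}_{1\leq i,j\leq n}^{r\in \BZ}$. Expanding the matrix product $T(z)=F(z)G(z)E(z)$ entry by entry, and using that $F(z)$ is lower triangular unipotent, $G(z)$ is diagonal, and $E(z)$ is upper triangular unipotent, yields
\begin{equation*}
  t_{ij}(z)=\sum_{k=1}^{\min(i,j)} f_{ik}(z)\,g_k(z)\,e_{kj}(z),
\end{equation*}
with the convention $f_{kk}(z)=e_{kk}(z)=1$. Each summand on the right-hand side is a well-defined element of $Y^\rtt_{-\mu}(\gl_n)((z^{-1}))$, and extracting the coefficient of $z^{-r}$ expresses every $t^{(r)}_{ij}$ as a polynomial in the Gauss coordinate modes $\{e^{(r)}_{ij},f^{(r)}_{ji}\}_{1\leq i<j\leq n}^{r\geq 1}\cup \{g^{(s_j)}_j\}_{1\leq j\leq n}^{s_j\geq 1-d_j}$. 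Consequently this full collection of Gauss coordinates generates $Y^\rtt_{-\mu}(\gl_n)$.

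Next, I would invoke the preceding lemma, which expresses every non-adjacent Gauss coordinate $e^{(r)}_{ij}$ (resp.\ $f^{(r)}_{ji}$) with $j>i+1$ as an iterated commutator of the single higher-mode adjacent element $e^{(r)}_{i,i+1}$ (resp.\ $f^{(r)}_{i+1,i}$) with the first-mode adjacent elements $\{e^{(1)}_{k,k+1}\}_{i<k<j}$ (resp.\ $\{f^{(1)}_{k+1,k}\}_{i<k<j}$). Since all ingredients of these commutators already lie in the purported generating set $\{e^{(r)}_{i,i+1},f^{(r)}_{i+1,i},g^{(s_j)}_j\}$, so does every non-adjacent Gauss coordinate.

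Combining the two steps, each $t^{(r)}_{ij}$ lies in the subalgebra generated by $\{e^{(r)}_{i,i+1},f^{(r)}_{i+1,i},g^{(s_j)}_j\}$, establishing the corollary. No serious obstacle arises: the first step is a routine coefficient extraction from the Gauss decomposition, and the second is a direct reading-off from the lemma above.
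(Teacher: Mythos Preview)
Your proposal is correct and follows exactly the reasoning the paper leaves implicit: the corollary is stated without proof, directly after the lemma containing~(\ref{Gauss Matrix Entries yangian}), and the intended argument is precisely the two-step one you wrote---express each $t^{(r)}_{ij}$ via the Gauss decomposition~(\ref{Gauss product rational}) in terms of the full set of Gauss modes, then reduce the non-adjacent $e^{(r)}_{ij},f^{(r)}_{ji}$ to adjacent ones using~(\ref{Gauss Matrix Entries yangian}).
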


The following result is proved completely analogously to~\cite[Lemmas 5.4, 5.5, 5.7]{bk}:

\begin{Lem}\label{defining relations from bk}
The following identities hold:

\noindent
(a) $[g_i(z),g_j(w)]=0$;

\noindent
(b) $(z-w)[g_i(z),e_{j,j+1}(w)]=(\delta_{i,j}-\delta_{i,j+1})g_i(z)(e_{j,j+1}(z)-e_{j,j+1}(w))$;

\noindent
(c) $(z-w)[g_i(z),f_{j+1,j}(w)]=(\delta_{i,j+1}-\delta_{i,j})(f_{j+1,j}(z)-f_{j+1,j}(w))g_i(z)$;

\noindent
(d) $[e_{i,i+1}(z),f_{j+1,j}(w)]=0$ if $i\ne j$;

\noindent
(e) $(z-w)[e_{i,i+1}(z),f_{i+1,i}(w)]=g_i(w)^{-1}g_{i+1}(w)-g_i(z)^{-1}g_{i+1}(z)$;

\noindent
(f) $(z-w)[e_{i,i+1}(z),e_{i,i+1}(w)]=-(e_{i,i+1}(z)-e_{i,i+1}(w))^2$;

\noindent
(g) $(z-w)[e_{i,i+1}(z),e_{i+1,i+2}(w)]=\\
     -e_{i,i+1}(z)e_{i+1,i+2}(w)+e_{i,i+1}(w)e_{i+1,i+2}(w)-e_{i,i+2}(w)+e_{i,i+2}(z)$;

\noindent
(h) $[e_{i,i+1}(z),e_{j,j+1}(w)]=0$ if $|i-j|>1$;

\noindent
(i) $[e_{i,i+1}(z_1),[e_{i,i+1}(z_2),e_{j,j+1}(w)]]+[e_{i,i+1}(z_2),[e_{i,i+1}(z_1),e_{j,j+1}(w)]]=0$
    if $|i-j|=1$;

\noindent
(j) $(z-w)[f_{i+1,i}(z),f_{i+1,i}(w)]=(f_{i+1,i}(z)-f_{i+1,i}(w))^2$;

\noindent
(k) $(z-w)[f_{i+1,i}(z),f_{i+2,i+1}(w)]=\\
     f_{i+2,i+1}(w)f_{i+1,i}(z)-f_{i+2,i+1}(w)f_{i+1,i}(w)+f_{i+2,i}(w)-f_{i+2,i}(z)$;

\noindent
(l) $[f_{i+1,i}(z),f_{j+1,j}(w)]=0$ if $|i-j|>1$;

\noindent
(m) $[f_{i+1,i}(z_1),[f_{i+1,i}(z_2),f_{j+1,j}(w)]]+[f_{i+1,i}(z_2),[f_{i+1,i}(z_1),f_{j+1,j}(w)]]=0$
    if $|i-j|=1$.
\end{Lem}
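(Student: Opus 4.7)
My plan is to follow the Gauss-decomposition strategy of~\cite[Lemmas 5.4, 5.5, 5.7]{bk}, while checking that antidominance $\mu \in \Lambda^+$ is exactly what makes every intermediate series manipulation well-defined. First, I would expand the matrix RTT relation~\eqref{ratRTT} in the standard basis of $(\End\,\BC^n)^{\otimes 2}$ to obtain the component form
\begin{equation*}
  (z-w)[t_{ij}(z),t_{kl}(w)] = t_{kj}(w)t_{il}(z) - t_{kj}(z)t_{il}(w)
\end{equation*}
for all admissible indices. Since $R_\rat(z) = z + P$ is independent of $\mu$, this identity is literally the same as the one used by Brundan-Kleshchev, and it is the sole consequence of~\eqref{ratRTT} that will be needed.

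Next, I would invert the Gauss decomposition~\eqref{Gauss product rational} to express each $g_i(z)$, $e_{i,i+1}(z)$, $f_{i+1,i}(z)$ as a quasideterminant of a principal $2\times 2$ sub-block of $T(z)$, and similarly express $e_{i,i+2}(z)$, $f_{i+2,i}(z)$ via $3\times 3$ sub-blocks (this is consistent with the nested commutator formulas~\eqref{Gauss Matrix Entries yangian}). Each $g_i(z)$ has leading term $z^{d_i}\cdot 1$ by~\eqref{t-modes shifted}, so it admits a formal inverse in $Y^\rtt_{-\mu}(\gl_n)((z^{-1}))$ via the geometric series. Antidominance plays its silent role here: without $\mu\in\Lambda^+$ the two families of defining relations become inconsistent and the Gauss factors do not exist at all, cf.~Remark~\ref{explaining shifted Gauss}(c).

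With these tools, identities (a)--(m) fall out in the expected order. Items (a)--(f) and (j) involve at most two adjacent indices and reduce to computations in the $\gl_2$-type sub-blocks of $T(z)$; these are the content of~\cite[Lemma 5.4]{bk}. Items (g) and (k) require the $\gl_3$-type sub-block and invoke~\eqref{Gauss Matrix Entries yangian} to identify $e_{i,i+2}(z)$ and $f_{i+2,i}(z)$; these are~\cite[Lemma 5.5]{bk}. The long-distance commutations (h), (l) and the Serre-type relations (i), (m) then follow from the preceding cases by induction on $|i-j|$ combined with (g) and (k), as in~\cite[Lemma 5.7]{bk}.

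The main obstacle is purely bookkeeping rather than conceptual: one must verify that every series inversion, product, and double commutator used in the derivation lies in the two-variable formal completion $Y^\rtt_{-\mu}(\gl_n)((z^{-1}))\,\widehat{\otimes}\,Y^\rtt_{-\mu}(\gl_n)((w^{-1}))$. Because the antidominance $\mu\in\Lambda^+$ pins down the precise leading $z$-degree of every Gauss factor via~\eqref{t-modes shifted}, all such formal inversions have well-defined leading terms, and the Brundan-Kleshchev argument carries over without structural modification.
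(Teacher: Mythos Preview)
Your proposal is correct and mirrors the paper's own treatment: the paper simply states that the lemma ``is proved completely analogously to~\cite[Lemmas 5.4, 5.5, 5.7]{bk},'' and your outline is precisely an expansion of what that analogy entails. The only detail worth flagging is the global sign flip relative to~\cite{bk} coming from the choice $R_\rat(z)=z+P$ rather than $z-P$ (cf.\ Remark~\ref{opposite R-matrix}), which you should track when transcribing the BK computations.
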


\begin{Rem}\label{Importance of antidominance}
If $d_i<d_{i+1}$ for some $1\leq i<n$, then the right-hand side of the identity in
Lemma~\ref{defining relations from bk}(e) contains monomials $z^{d_{i+1}-d_i}$
and $w^{d_{i+1}-d_i}$, while all monomials in the left-hand side have negative degrees.
Thus, the defining relations of $Y^\rtt_{-\mu}(\gl_n)$ are contradictive unless $\mu$ is dominant
(see~\cite[Remark 11.14]{ft1} for the trigonometric $\ssl_2$-counterpart of this conclusion).
\end{Rem}

\begin{Rem}\label{opposite R-matrix}
The right-hand sides in all identities of Lemma~\ref{defining relations from bk}
have opposite signs to those of~\cite[\S5]{bk}, due to a different choice of the
$R$-matrix $R(z)=z\mathrm{Id}-P=-R_\rat(-z)$ in~\cite{bk}.
\end{Rem}

Comparing the identities of Lemma~\ref{defining relations from bk} with the
defining relations~(\ref{Y0}--\ref{Y11}) of $Y_{-\mu}(\gl_n)$ and evoking
Corollary~\ref{rat generating set}, we immediately obtain:

\begin{Thm}\label{epimorphism of shifted Yangians}
For any $\mu\in \Lambda^+$, there is a unique $\BC$-algebra epimorphism
\begin{equation*}
  \Upsilon_{-\mu}\colon Y_{-\mu}(\gl_n)\twoheadrightarrow Y^\rtt_{-\mu}(\gl_n)
\end{equation*}
defined by
\begin{equation}\label{matching yangians}
  E_i(z)\mapsto e_{i,i+1}(z), \quad
  F_i(z)\mapsto f_{i+1,i}(z), \quad
  D_j(z)\mapsto g_j(z).
\end{equation}
\end{Thm}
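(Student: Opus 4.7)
The plan is two-fold: establish that the assignment in~(\ref{matching yangians}) defines an algebra map (well-definedness), and then observe that surjectivity comes for free from Corollary~\ref{rat generating set}.

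For well-definedness, I would verify that each of the defining relations (\ref{Y0})--(\ref{Y11}) of $Y_{-\mu}(\gl_n)$ holds among the images in $Y^\rtt_{-\mu}(\gl_n)$ by extracting coefficients of $z^{-r}w^{-s}$ from the generating-function identities in Lemma~\ref{defining relations from bk} and matching them termwise. Concretely: relation (\ref{Y0}) is the combination of the normalization $g_i(z) = z^{d_i} + \ldots$ (which matches $D_i^{(d_i)}=1$), the defining expansion of $\wt{D}_i(z) = -D_i(z)^{-1}$, and the commutativity $[g_i(z),g_j(w)]=0$ from Lemma~\ref{defining relations from bk}(a). Relations (\ref{Y2}),(\ref{Y3}) come from Lemma~\ref{defining relations from bk}(b,c) after multiplying out $(z-w)^{-1}$. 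Relation (\ref{Y1}) comes from Lemma~\ref{defining relations from bk}(d,e), where on the right-hand side one uses $-g_i(z)^{-1}g_{i+1}(z) = \wt{D}_i(z)D_{i+1}(z)$ and then extracts the coefficient of $z^{-r}w^{-s}$. Relations (\ref{Y4}),(\ref{Y5}) follow from the equivalent forms (\ref{Y4 alternative}),(\ref{Y5 alternative}) of Remark~\ref{rmk comparing to bk}(b), which are precisely the mode-expansions of Lemma~\ref{defining relations from bk}(f,j). Finally, (\ref{Y8})--(\ref{Y11}) are the direct mode-wise form of Lemma~\ref{defining relations from bk}(h,i,l,m).

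The slightly subtler step is deriving (\ref{Y6}),(\ref{Y7}) from Lemma~\ref{defining relations from bk}(g,k). Here the right-hand side contains the ``auxiliary'' Gauss series $e_{i,i+2}(z)$ (resp.\ $f_{i+2,i}(z)$) but only via the difference $e_{i,i+2}(z) - e_{i,i+2}(w)$. Since both $e_{i,i+2}(z)$ and $e_{i,i+2}(w)$ expand only in negative powers of $z$ and $w$ respectively, the coefficient of $z^{-r}w^{-s}$ of that difference vanishes whenever $r,s\geq 1$, so the auxiliary terms drop out; what remains, after rearranging $(z-w)[e_{i,i+1}(z),e_{i+1,i+2}(w)]$, is exactly (\ref{Y6}), and similarly (\ref{Y7}).

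Surjectivity is then immediate from Corollary~\ref{rat generating set}: the algebra $Y^\rtt_{-\mu}(\gl_n)$ is generated by $\{e_{i,i+1}^{(r)}, f_{i+1,i}^{(r)}, g_j^{(s_j)}\}$, and these are precisely the images of the corresponding generators of $Y_{-\mu}(\gl_n)$ under~(\ref{matching yangians}). Uniqueness is automatic since the map is prescribed on a generating set.

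The main technical obstacle is essentially bookkeeping: one must carefully treat the shifts in the grading of $g_j(z)$ (whose expansion starts at $z^{d_j}$ with $d_j = -\epsilon^\vee_j(\mu)$), so that when extracting modes from the series identities the resulting index ranges match those of (\ref{Y0})--(\ref{Y3}) exactly. The dominance assumption $\mu \in \Lambda^+$ enters only implicitly here, through the well-posedness of the target algebra (cf.~Remark~\ref{Importance of antidominance}); the compatibility check itself does not require any further constraint beyond the definitions already in place.
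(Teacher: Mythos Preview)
Your proposal is correct and follows exactly the approach the paper takes: the paper's proof consists of the single sentence ``Comparing the identities of Lemma~\ref{defining relations from bk} with the defining relations~(\ref{Y0}--\ref{Y11}) of $Y_{-\mu}(\gl_n)$ and recalling Corollary~\ref{rat generating set}, we immediately obtain [the theorem]'', and your write-up is a faithful elaboration of that comparison, including the correct observation that the auxiliary $e_{i,i+2}$, $f_{i+2,i}$ terms in Lemma~\ref{defining relations from bk}(g,k) contribute nothing to the coefficient of $z^{-r}w^{-s}$ once $r,s\geq 1$.
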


Our first main result (the proof of which is postponed till
Section~\ref{ssec proof of Conjecture 1}) is:

\begin{Thm}\label{Main Conjecture 1}
$\Upsilon_{-\mu}\colon Y_{-\mu}(\gl_n)\iso Y^\rtt_{-\mu}(\gl_n)$ is
a $\BC$-algebra isomorphism for any $\mu\in \Lambda^+$.
\end{Thm}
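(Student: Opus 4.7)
The strategy is to deduce injectivity of $\Upsilon_{-\mu}$ (surjectivity being \reft{epimorphism of shifted Yangians}) by showing that every $\Psi_D$ from \reft{homom for Yangian gl} factors through $\Upsilon_{-\mu}$, and then invoking the separation-of-points result of Weekes~\cite{w} to conclude that the family $\{\Psi_D\}_D$ has trivial intersection of kernels.

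To exhibit the factorization, I would use the rational Lax matrices $T_D(z)$ advertised in the outline. For any $\Lambda^+$-valued divisor $D$ on $\BP^1$ satisfying~\refe{assumption} with $D|_\infty=\mu$, the matrix $T_D(z)$ has entries in $\CA((z^{-1}))$, and by construction its Gauss factors reproduce the images of $E_i(z)$, $F_i(z)$, and $D_j(z)$ under $\Psi_D$. The key input is that $T_D(z)$ satisfies the rational RTT relation~\refe{ratRTT}; this is obtained by realizing $T_D(z)$ as a normalized limit of Lax matrices $T_{\bar D}(z)$ with $\bar D|_\infty=0$, for which the RTT relation is the classical theorem of Drinfeld--Brundan--Kleshchev~\cite{d,bk}. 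Since the RTT relation is an equation on coefficients of formal series depending rationally on the parameters of $D$, it survives the normalized limit. Hence $t_{ij}(z)\mapsto (T_D(z))_{ij}$ defines an algebra homomorphism $\eta_D\colon Y^\rtt_{-\mu}(\gl_n)\to \CA$, and the uniqueness of the Gauss decomposition together with~\refe{matching yangians} yields $\eta_D\circ\Upsilon_{-\mu}=\Psi_D$.

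To conclude, I would invoke the key result of Weekes~\cite{w} asserting that the intersection $\bigcap_{\bar\lambda}\ker\Phi^{\bar\lambda}_{-\bar\mu}$ over all dominant $\bar\lambda\in \bar\Lambda^+$ compatible with~\refe{assumption} is trivial in $Y_{-\bar\mu}(\ssl_n)$. Combined with the tensor decomposition~\refe{gl as sl plus center} of \refc{shifted Yangian sl as a quotient of gl} and \refr{relating to BFNb homom}, and using formula~\refe{image of qdet} to see that varying the support points of $D$ separates the central polynomial generators $C_s$, this promotes to $\bigcap_D \ker\Psi_D=0$ in $Y_{-\mu}(\gl_n)$, where $D$ ranges over admissible divisors with $D|_\infty=\mu$. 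Any $x\in \ker\Upsilon_{-\mu}$ then satisfies $\Psi_D(x)=\eta_D(\Upsilon_{-\mu}(x))=0$ for every such $D$, forcing $x=0$ and completing the proof. The main technical obstacle is establishing the RTT relation for $T_D(z)$ via the degeneration procedure: one must control the rational normalization factor and verify that coefficient-wise identities of formal series survive the specialization of the $D$-parameters; a secondary subtlety is the $\gl_n$-to-$\ssl_n$ transfer of Weekes' theorem, which is handled cleanly by the central factorization above.
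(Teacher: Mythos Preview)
Your proposal is correct and follows essentially the same route as the paper: factor each $\Psi_D$ through $\Upsilon_{-\mu}$ via the homomorphism $Y^\rtt_{-\mu}(\gl_n)\to\CA$ determined by the rational Lax matrix $T_D(z)$ (whose RTT relation is inherited from the unshifted case by the normalized-limit procedure), and then invoke Weekes' result to conclude injectivity. Your explicit handling of the $\gl_n$-to-$\ssl_n$ transfer via the decomposition~\refe{gl as sl plus center} and the separation of the central generators through~\refe{image of qdet} is in fact more detailed than what the paper spells out (the paper simply refers to ``the $\gl_n$-counterpart'' of Weekes' theorem).
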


\begin{Rem}\label{Validity of Main Conj 1}
(a) For $\mu=0$ and any $n$, the isomorphism
  $\Upsilon_0\colon Y(\gl_n)\iso Y^\rtt(\gl_n)$
of Theorem~\ref{Main Conjecture 1} was stated in~\cite{d},
but was properly established only in~\cite[Theorem 5.2]{bk}.

\noindent
(b) For $n=2$ and $\mu\in \Lambda^+$, a long straightforward
verification shows (see~\cite[Remark 11.17]{ft1}) that the assignment
\begin{equation*}
\begin{split}
  & t_{11}(z)\mapsto D_1(z), \quad
    t_{22}(z)\mapsto F_1(z)D_1(z)E_1(z)+D_2(z),\\
  & t_{12}(z)\mapsto D_1(z)E_1(z), \quad
    t_{21}(z)\mapsto F_1(z)D_1(z),
\end{split}
\end{equation*}
gives rise to an algebra homomorphism $Y^\rtt_{-\mu}(\gl_2)\to Y_{-\mu}(\gl_2)$
(the trigonometric $\ssl_2$-counterpart of this result has been properly established
in~\cite[Theorem 11.11]{ft1}), which is clearly the inverse of $\Upsilon_{-\mu}$.
Thus, Theorem~\ref{Main Conjecture 1} for $n=2$ is essentially due to~\cite{ft1}.
\end{Rem}


\subsection{Rational Lax matrices via antidominantly shifted Yangians of $\gl_n$}
\label{ssec rational Lax via shifted Yangians}
\

In this section, we construct $n\times n$ \emph{rational Lax} matrices $T_D(z)$
(with coefficients in $\CA((z^{-1}))$) for each $\Lambda^+$-valued divisor $D$
on $\BP^1$ satisfying~(\ref{assumption}). They are explicitly defined
via~(\ref{redefinition of rational Lax},~\ref{explicit long formula rational})
combined with~(\ref{diagonal entries},~\ref{upper triangular all entries},~\ref{lower triangular all entries}).
We note that these long formulas arise naturally as the image of
  $T(z)\in Y^\rtt_{-\mu}(\gl_n)((z^{-1}))\otimes_\BC \End\, \BC^n$
under the composition
  $\Psi_D\circ \Upsilon_{-\mu}^{-1}\colon Y^\rtt_{-\mu}(\gl_n)\to \CA$,
assuming Theorem~\ref{Main Conjecture 1} has been established,
see~(\ref{Theta homom},~\ref{construction of rational Lax}). As the name indicates,
these $T_D(z)$ satisfy the RTT relation~(\ref{ratRTT}), which is derived in
Proposition~\ref{preserving RTT}. Combining the latter with the results of~\cite{w},
see Theorem~\ref{alex's theorem}, we finally prove Theorem~\ref{Main Conjecture 1}
in Section~\ref{ssec proof of Conjecture 1}.
We also establish the regularity (up to a rational factor~(\ref{renormalized rational Lax}))
of $T_D(z)$ in Theorem~\ref{Main Theorem 1}, and find simplified explicit formulas for those
$T_D(z)$ which are linear in $z$ in Theorem~\ref{Main Theorem 2}.


\subsubsection{Construction of $T_D(z)$ and their regularity}\label{sssec construction Lax}
\

Consider a $\Lambda^+$-valued divisor $D$ on $\BP^1$, see~\eqref{divisor def1}, satisfying
the assumption~(\ref{assumption}). Note that $\mu:=D|_\infty\in \Lambda^+$.
Assuming the validity of Theorem~\ref{Main Conjecture 1}, let us compose
$\Psi_D\colon Y_{-\mu}(\gl_n)\to \CA$ of~(\ref{homom psi}) with
$\Upsilon_{-\mu}^{-1}\colon Y^\rtt_{-\mu}(\gl_n)\iso Y_{-\mu}(\gl_n)$
to obtain an algebra homomorphism
\begin{equation}\label{Theta homom}
  \Theta_D=\Psi_D\circ \Upsilon_{-\mu}^{-1}\colon Y^\rtt_{-\mu}(\gl_n)\longrightarrow \CA.
\end{equation}
Such a homomorphism is uniquely determined by
  $T_D(z)\in \CA((z^{-1}))\otimes_\BC \End\, \BC^n$
defined via
\begin{equation}\label{construction of rational Lax}
  T_D(z):=\Theta_D(T(z))=\Theta_D(F(z))\cdot \Theta_D(G(z))\cdot \Theta_D(E(z)).
\end{equation}

Let us compute explicitly the images of the matrices $F(z),G(z),E(z)$ under
$\Theta_D$, which shall provide an explicit formula for the matrix $T_D(z)$
via~(\ref{construction of rational Lax}).

Combining $\Upsilon_{-\mu}^{-1}(g_i(z))=D_i(z)$ with the formula for
$\Psi_D(D_i(z))$, we obtain:
\begin{equation}\label{diagonal entries}
  \Theta_D(g_i(z))=
  \frac{P_i(z)}{P_{i-1}(z-1)} \prod_{0\leq k<i} Z_k(z)=
  \frac{P_i(z)}{P_{i-1}(z-1)} \prod_{x\in \BP^1\backslash\{\infty\}}(z-x)^{-\epsilon^\vee_i(\lambda_x)}.
\end{equation}

Combining $\Upsilon_{-\mu}^{-1}(e_{i,i+1}(z))=E_i(z)$ with the formula for $\Psi_D(E_i(z))$,
we obtain:
\begin{equation}\label{upper triangular simplest entries}
  \Theta_D(e_{i,i+1}(z))=
  -\sum_{r=1}^{a_i}\frac{P_{i-1}(p_{i,r}-1)Z_i(p_{i,r})}{(z-p_{i,r})P_{i,r}(p_{i,r})}e^{q_{i,r}}.
\end{equation}
As $e_{ij}(z)=[e_{j-1,j}^{(1)},\cdots,[e^{(1)}_{i+1,i+2},e_{i,i+1}(z)]\cdots]$
due to~(\ref{Gauss Matrix Entries yangian}), we thus get (cf.~\cite[(2.29)]{ft2}):
\begin{multline}\label{upper triangular all entries}
  \Theta_D(e_{ij}(z))=\\
  -\sum_{\substack{1\leq r_i\leq a_i\\\cdots\\ 1\leq r_{j-1}\leq a_{j-1}}}
     \frac{P_{i-1}(p_{i,r_i}-1)\prod_{k=i}^{j-2}P_{k,r_k}(p_{k+1,r_{k+1}}-1)}
          {(z-p_{i,r_i})\prod_{k=i}^{j-1} P_{k,r_k}(p_{k,r_k})}\cdot
     \prod_{k=i}^{j-1} Z_k(p_{k,r_k})\cdot e^{\sum_{k=i}^{j-1} q_{k,r_k}}.
\end{multline}

Combining $\Upsilon_{-\mu}^{-1}(f_{i+1,i}(z))=F_i(z)$ with the formula
for $\Psi_D(F_i(z))$, we obtain:
\begin{equation}\label{lower triangular simplest entries}
  \Theta_D(f_{i+1,i}(z))=
  \sum_{r=1}^{a_i}\frac{P_{i+1}(p_{i,r}+1)}{(z-p_{i,r}-1)P_{i,r}(p_{i,r})}e^{-q_{i,r}}.
\end{equation}
As $f_{ji}(z)=[\cdots[f_{i+1,i}(z),f^{(1)}_{i+2,i+1}],\cdots,f^{(1)}_{j,j-1}]$
due to~(\ref{Gauss Matrix Entries yangian}), we thus get (cf.~\cite[(2.30)]{ft2}):
\begin{multline}\label{lower triangular all entries}
  \Theta_D(f_{ji}(z))=\\
  \sum_{\substack{1\leq r_i\leq a_i\\\cdots\\ 1\leq r_{j-1}\leq a_{j-1}}}
    \frac{P_{j}(p_{j-1,r_{j-1}}+1)\prod_{k=i+1}^{j-1}P_{k,r_k}(p_{k-1,r_{k-1}}+1)}
          {(z-p_{i,r_i}-1)\prod_{k=i}^{j-1} P_{k,r_k}(p_{k,r_k})}\cdot e^{-\sum_{k=i}^{j-1} q_{k,r_k}}.
\end{multline}

While the above derivation of the
formulas~(\ref{diagonal entries},~\ref{upper triangular all entries},~\ref{lower triangular all entries})
is based on yet unproved Theorem~\ref{Main Conjecture 1}, we shall use
their explicit right-hand sides from now on, without any direct referral to
Theorem~\ref{Main Conjecture 1}. More precisely, let us define $\CA((z^{-1}))$-valued
$n\times n$ diagonal matrix $G_D(z)$, an upper-triangular matrix $E_D(z)$,
and a lower-triangular matrix $F_D(z)$, whose matrix coefficients
$g^D_i(z), e^D_{ij}(z),f^D_{ji}(z)$ are given by the right-hand sides
of~(\ref{diagonal entries},~\ref{upper triangular all entries},~\ref{lower triangular all entries})
expanded in $z^{-1}$, respectively. Thus, we amend~\eqref{construction of rational Lax} and define
\begin{equation}\label{redefinition of rational Lax}
  T_D(z):=F_D(z)G_D(z)E_D(z),
\end{equation}
so that the matrix coefficients of $T_D(z)$ are given by
\begin{equation}\label{explicit long formula rational}
  T_D(z)_{\alpha,\beta}\, =
  \sum_{i=1}^{\min\{\alpha,\beta\}}
  f^D_{\alpha,i}(z)\cdot g^D_i(z)\cdot e^D_{i,\beta}(z)
\end{equation}
for any $1\leq \alpha,\beta\leq n$, where the three factors in the
right-hand side of~(\ref{explicit long formula rational}) are determined
via~(\ref{lower triangular all entries},~\ref{diagonal entries},~\ref{upper triangular all entries}),
respectively, with the conventions $f^D_{\alpha,\alpha}(z)=1=e^D_{\beta,\beta}(z)$.

\begin{Rem}\label{Lax asymptotics}
We note that $T_D(z)$ is singular at $x\in \BC$ if and only if $\lambda_x\ne 0$.
As $F_D(z)$ and $E_D(z)$ are regular in the neighborhood of $x$,
while $G_D(z)=(\mathrm{regular\ part})\cdot (z-x)^{-\lambda_x}$, we see that
in the classical limit $T_D(z)$ represents a $GL_n$-multiplicative Higgs field
on $\BP^1$ with a framing at $\infty \in \BP^1$ (rational type) and
with prescribed singularities on $D$, cf.~\cite{ep}.
\end{Rem}

We shall also need the following normalized rational Lax matrices:
\begin{equation}\label{renormalized rational Lax}
  \sT_D(z):=\frac{T_D(z)}{Z_0(z)},
\end{equation}
with the normalization factor determined via~(\ref{ZW-series}):
\begin{equation*}
  \frac{1}{Z_0(z)}=\prod_{1\leq s\leq N}^{i_s=0} (z-x_s)^{-\gamma_s} \, =
  \prod_{x\in \BP^1\backslash\{\infty\}} (z-x)^{-\alphavee_0(\lambda_x)}.
\end{equation*}
The first main result of this section establishes the regularity of these matrices:

\begin{Thm}\label{Main Theorem 1}
We have $\sT_D(z)\in \CA[z]\otimes_\BC \End\, \BC^n$.
\end{Thm}

\begin{proof}
In view of~(\ref{explicit long formula rational}), it suffices to prove
for any $1\leq \alpha,\beta\leq n$ that
\begin{equation}\label{regularity property}
  \frac{1}{Z_0(z)}
  \sum_{i=1}^{\min\{\alpha,\beta\}}
  f^D_{\alpha,i}(z)\cdot g^D_i(z)\cdot e^D_{i,\beta}(z)
  \ \mathrm{is\ polynomial\ in}\ z,
\end{equation}
where the factors in the right-hand side are determined
via~(\ref{lower triangular all entries},~\ref{diagonal entries},~\ref{upper triangular all entries}),
respectively.

The $i$-th summand in~(\ref{regularity property}) is explicitly given by
\begin{equation}\label{double long formula}
\begin{split}
  & Z_0(z)^{-1}\cdot f^D_{\alpha,i}(z)\cdot g^D_i(z)\cdot e^D_{i,\beta}(z)=\\
  & -\sum_{\substack{1\leq r_i\leq a_i\\\cdots\\ 1\leq r_{\alpha-1}\leq a_{\alpha-1}}}
     \frac{P_{i+1,r_{i+1}}(p_{i,r_{i}}+1)\cdots P_{\alpha-1,r_{\alpha-1}}(p_{\alpha-2,r_{\alpha-2}}+1)P_\alpha(p_{\alpha-1,r_{\alpha-1}}+1)}
          {(z-p_{i,r_i}-1)P_{i,r_i}(p_{i,r_i})\cdots P_{\alpha-1,r_{\alpha-1}}(p_{\alpha-1,r_{\alpha-1}})}\times\\
  & e^{-q_{i,r_i}-q_{i+1,r_{i+1}}-\ldots-q_{\alpha-1,r_{\alpha-1}}}\cdot
    \frac{P_i(z)}{P_{i-1}(z-1)}\cdot Z_1(z)\cdots Z_{i-1}(z)\times\\
  & \sum_{\substack{1\leq s_i\leq a_i\\\cdots\\ 1\leq s_{\beta-1}\leq a_{\beta-1}}}
     \frac{P_{i-1}(p_{i,s_i}-1)P_{i,s_i}(p_{i+1,s_{i+1}}-1)\cdots P_{\beta-2,s_{\beta-2}}(p_{\beta-1,s_{\beta-1}}-1)}
          {(z-p_{i,s_i})P_{i,s_i}(p_{i,s_i})\cdots P_{\beta-1,s_{\beta-1}}(p_{\beta-1,s_{\beta-1}})}\times\\
  & Z_i(p_{i,s_i})\cdots Z_{\beta-1}(p_{\beta-1,s_{\beta-1}})\cdot
    e^{q_{i,s_i}+q_{i+1,s_{i+1}}+\ldots+q_{\beta-1,s_{\beta-1}}}.
\end{split}
\end{equation}
Moving $e^{-q_{i,r_i}-\ldots-q_{\alpha-1,r_{\alpha-1}}}$ to the rightmost side,
we rewrite the right-hand side of~(\ref{double long formula}) as
\begin{equation*}
  \sum_{\substack{1\leq r_i\leq a_i\\\cdots\\ 1\leq r_{\alpha-1}\leq a_{\alpha-1}}}^{\substack{1\leq s_i\leq a_i\\\cdots\\ 1\leq s_{\beta-1}\leq a_{\beta-1}}}
  Q_{r_i,\ldots,r_{\alpha-1}}^{s_i,\ldots,s_{\beta-1}}(z)\cdot
  e^{-q_{i,r_i}-\ldots-q_{\alpha-1,r_{\alpha-1}}}\cdot
  e^{q_{i,s_i}+\ldots+q_{\beta-1,s_{\beta-1}}}.
\end{equation*}

The coefficient $Q_{r_i,\ldots,r_{\alpha-1}}^{s_i,\ldots,s_{\beta-1}}(z)$ is
a rational function in $z$ with simple poles at:

$\bullet$ $\{1+p_{i-1,s} \, | \, 1\leq s\leq a_{i-1}\}$ if $r_i\ne s_i$;

$\bullet$ $\{1+p_{i-1,s} \, | \, 1\leq s\leq a_{i-1}\}\cup \{1+p_{i,r_i}\}$ if $r_i=s_i$.

\noindent
Thus, the only (at most simple) poles of~(\ref{regularity property}) are at
$\{1+p_{i,r}|1\leq i<\min\{\alpha,\beta\},1\leq r\leq a_i\}$. The following
straightforward result actually shows that the residues at these points vanish:

\begin{Lem}\label{zero residue}
For any $1\leq i<\min\{\alpha,\beta\},1\leq r\leq a_i$, and any admissible collection
of indices $r_{i+1},\ldots,r_{\alpha-1},s_{i+1},\ldots,s_{\beta-1}$, we have the equality
\begin{equation}\label{sum of residues}
  \Res_{z=1+p_{i,r}} \Big(Q_{r_{i+1},\ldots,r_{\alpha-1}}^{s_{i+1},\ldots,s_{\beta-1}}(z)\mathrm{d}z\Big) +
  \Res_{z=1+p_{i,r}} \Big(Q_{r,r_{i+1},\ldots,r_{\alpha-1}}^{r,s_{i+1},\ldots,s_{\beta-1}}(z)\mathrm{d}z\Big)=0.
\end{equation}
\end{Lem}

\begin{proof}[Proof of Lemma~\ref{zero residue}]
Applying the explicit formula~(\ref{double long formula}), we find
\begin{equation*}
  Q_{r_{i+1},\ldots,r_{\alpha-1}}^{s_{i+1},\ldots,s_{\beta-1}}(z)e^{-q_{i+1,r_{i+1}}}=
  \frac{A}{z-p_{i+1,r_{i+1}}-1}\cdot e^{-q_{i+1,r_{i+1}}}\cdot \frac{P_{i+1}(z)}{P_i(z-1)}
  \cdot Z_i(z)\cdot \frac{P_i(p_{i+1,s_{i+1}}-1)}{z-p_{i+1,s_{i+1}}}
\end{equation*}
and
\begin{equation*}
\begin{split}
  & Q_{r,r_{i+1},\ldots,r_{\alpha-1}}^{r,s_{i+1},\ldots,s_{\beta-1}}(z)e^{-q_{i+1,r_{i+1}}}=\\
  & \frac{A\cdot P_{i+1,r_{i+1}}(p_{i,r}+1)}{(z-p_{i,r}-1)P_{i,r}(p_{i,r})}\cdot e^{-q_{i,r}-q_{i+1,r_{i+1}}}
    \cdot \frac{P_{i}(z)P_{i-1}(p_{i,r}-1)P_{i,r}(p_{i+1,s_{i+1}}-1)Z_i(p_{i,r})}{P_{i-1}(z-1)(z-p_{i,r})P_{i,r}(p_{i,r})}
    \cdot e^{q_{i,r}},
\end{split}
\end{equation*}
where $A$ is a common $(z,p_{i+1,r_{i+1}})$-independent factor
(its explicit form is irrelevant for us). Hence,
\begin{equation*}
  Q_{r_{i+1},\ldots,r_{\alpha-1}}^{s_{i+1},\ldots,s_{\beta-1}}(z)=
  A\cdot \frac{P_{i+1,r_{i+1}}(z)}{P_{i,r}(z-1)(z-p_{i,r}-1)}\cdot Z_i(z)
  \cdot \frac{P_i(p_{i+1,s_{i+1}}-1+\delta_{r_{i+1},s_{i+1}})}{z-p_{i+1,s_{i+1}}-\delta_{r_{i+1},s_{i+1}}}
\end{equation*}
and
\begin{equation*}
\begin{split}
  & Q_{r,r_{i+1},\ldots,r_{\alpha-1}}^{r,s_{i+1},\ldots,s_{\beta-1}}(z)=\\
  & A\cdot \frac{P_{i+1,r_{i+1}}(p_{i,r}+1)}{P_{i,r}(p_{i,r})}\cdot \frac{P_{i,r}(z)}{P_{i-1}(z-1)}
    \cdot \frac{P_{i-1}(p_{i,r})P_{i,r}(p_{i+1,s_{i+1}}-1+\delta_{r_{i+1},s_{i+1}})}{P_{i,r}(p_{i,r}+1)(z-p_{i,r}-1)}Z_i(p_{i,r}+1).
\end{split}
\end{equation*}

Therefore, the corresponding residues are given by
\begin{multline*}\label{res 1}
  \Res_{z=1+p_{i,r}} \left(Q_{r_{i+1},\ldots,r_{\alpha-1}}^{s_{i+1},\ldots,s_{\beta-1}}(z)\mathrm{d}z\right)=\\
  A\cdot \frac{P_{i+1,r_{i+1}}(p_{i,r}+1)}{P_{i,r}(p_{i,r})}\cdot Z_i(p_{i,r}+1)\cdot \left(-P_{i,r}(p_{i+1,s_{i+1}}-1+\delta_{r_{i+1},s_{i+1}})\right),
\end{multline*}
\begin{equation*}\label{res 2}
\begin{split}
  & \Res_{z=1+p_{i,r}} \left(Q_{r,r_{i+1},\ldots,r_{\alpha-1}}^{r,s_{i+1},\ldots,s_{\beta-1}}(z)\mathrm{d}z\right)=\\
  & A\cdot \frac{P_{i+1,r_{i+1}}(p_{i,r}+1)}{P_{i,r}(p_{i,r})}\cdot \frac{P_{i,r}(p_{i,r}+1)}{P_{i-1}(p_{i,r})}
    \cdot \frac{P_{i-1}(p_{i,r})P_{i,r}(p_{i+1,s_{i+1}}-1+\delta_{r_{i+1},s_{i+1}})}{P_{i,r}(p_{i,r}+1)}\cdot Z_i(p_{i,r}+1),
\end{split}
\end{equation*}
thus summing up to zero and implying~(\ref{sum of residues}).
\end{proof}

This completes our proof of~(\ref{regularity property}) and, hence,
of Theorem~\ref{Main Theorem 1}.
\end{proof}

\begin{Rem}\label{q and qq}
We note that a similar cancelation of poles appeared in the work on $q$-characters
\cite{fr} and $qq$-characters~\cite{n}.
\end{Rem}

\begin{Rem}\label{rmk on other orientations}
Similar to~\cite[Theorem B.15]{bfnb}, one
can generalize Theorem~\ref{homom for Yangian gl} by constructing the homomorphisms
$\Psi_D\colon Y_{-\mu}(\gl_n)\to \CA$ for any orientation of $A_{n-1}$ Dynkin diagram
(so that $\Psi_D\circ \iota_{-\mu}=\Phi^{\bar{\lambda}}_{-\bar{\mu}}$ as in
Remark~\ref{relating to BFNb homom}, while the images of $D_i(z)$ are given by
the same formulas as in~(\ref{homom assignment})). However, extending $\CA$ to its
localization $\CA_{\loc}$ by the multiplicative set generated by
  $\{p_{i,r}-p_{i+1,s}+m\}_{r\leq a_i,s\leq a_{i+1}}^{m\in \BZ}$,
all such homomorphisms are compositions of the one from~(\ref{homom psi}) with
algebra automorphisms of $\CA_{\mathrm{loc}}$. Thus, the resulting rational Lax
matrices are equivalent to $T_D(z)$ constructed above via algebra automorphisms
of $\CA_{\mathrm{loc}}$, cf.~Remark~\ref{remark about orientations}.
\end{Rem}



\subsubsection{Normalized limit description and the RTT relation for $T_D(z)$}
\label{ssec rational limit shifted from nonshifted}
\

Consider a $\Lambda^+$-valued divisor
  $D=\sum_{s=1}^{N} \gamma_s\varpi_{i_s} [x_s] + \mu [\infty]$.
As $x_N\to \infty$, we obtain another $\Lambda^+$-valued divisor
  $D'=\sum_{s=1}^{N-1} \gamma_s\varpi_{i_s} [x_s] + (\mu+\gamma_N\varpi_{i_N}) [\infty]$.
We will relate $T_{D'}(z)$ to $T_D(z)$.

If $i_N=0$, then
\begin{equation}\label{relation 1}
  T_{D'}(z)=(z-x_N)^{-\gamma_N}T_D(z),
\end{equation}
due to
  $F_{D}(z)=F_{D'}(z), E_{D}(z)=E_{D'}(z), G_D(z)=(z-x_N)^{\gamma_N}G_{D'}(z)$
and~(\ref{redefinition of rational Lax}).

Let us now consider the case $1\leq i_N\leq n-1$ (note that $\gamma_N=1$),
so that $(-x_N)^{\varpi_{i_N}}=\mathrm{diag}(1^{i_N},(-x_N^{-1})^{n-i_N})$
is the diagonal $n\times n$ matrix with the first $i_N$ diagonal entries
equal to $1$ and the remaining $n-i_N$ entries equal to $-x_N^{-1}$.

\begin{Prop}\label{Degenerating rational Lax}
The $x_N\to \infty$ limit of $T_D(z)\cdot (-x_N)^{\varpi_{i_N}}$
equals $T_{D'}(z)$.
\end{Prop}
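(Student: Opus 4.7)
The plan is to work with the Gauss decomposition $T_D(z) = F_D(z)\, G_D(z)\, E_D(z)$ and track how each factor transforms as $x_N \to \infty$. The crucial preliminary observation is that, since $\lambda_{D'} + \mu_{D'} = (\lambda_D - \varpi_{i_N}) + (\mu + \varpi_{i_N}) = \lambda_D + \mu$, the divisors $D$ and $D'$ yield the same integers $a_1, \ldots, a_{n-1}$ in~\eqref{assumption}. Consequently they produce the same algebra $\CA$, the same $p$-variables, and the same polynomials $P_i(z), P_{i,r}(z)$. Since the right-hand side of~\eqref{lower triangular all entries} involves only these $P$'s, we obtain for free that $F_D(z) = F_{D'}(z)$.

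Setting $A := (-x_N)^{\varpi_{i_N}} = \on{diag}(1^{i_N}, (-x_N^{-1})^{n-i_N})$, I would rewrite
\begin{equation*}
  T_D(z) \cdot A \;=\; F_D(z)\cdot (G_D(z)\,A) \cdot (A^{-1} E_D(z)\, A),
\end{equation*}
which is still a Gauss-type product ($A$ is diagonal, so $A^{-1} E_D(z) A$ remains upper unitriangular). The only polynomial that changes between $D$ and $D'$ is the $Z_{i_N}$-factor: $Z^D_{i_N}(z) = (z-x_N)\, Z^{D'}_{i_N}(z)$, while all other $Z_k$ coincide. From~\eqref{diagonal entries} this gives $g^D_i(z) = g^{D'}_i(z)$ for $i \le i_N$ and $g^D_i(z) = (z-x_N)\, g^{D'}_i(z)$ for $i > i_N$. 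Matching these against $A_{ii} = 1$ (resp.\ $-x_N^{-1}$) yields
\begin{equation*}
  (G_D(z)\, A)_{ii} = g^D_i(z) \cdot A_{ii} \;\xrightarrow{\,x_N\to\infty\,}\; g^{D'}_i(z),
\end{equation*}
so $G_D(z)\, A \to G_{D'}(z)$ term by term.

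For the upper-triangular factor, the key point is that in the formula~\eqref{upper triangular all entries} for $e^D_{ij}(z)$, the factor $Z_{i_N}$ appears in $\prod_{k=i}^{j-1} Z_k(p_{k,r_k})$ precisely when $i \le i_N < j$ --- which is exactly the index range where the conjugation ratio $A_{jj}/A_{ii}$ equals $-x_N^{-1}$. Using $Z^D_{i_N}(p_{i_N, r_{i_N}}) = (p_{i_N, r_{i_N}} - x_N)\, Z^{D'}_{i_N}(p_{i_N, r_{i_N}})$ summand-wise, one finds
\begin{equation*}
  (A^{-1} E_D(z)\, A)_{ij}
  \;=\; \frac{A_{jj}}{A_{ii}}\, e^D_{ij}(z)
  \;\xrightarrow{\,x_N\to\infty\,}\; e^{D'}_{ij}(z),
\end{equation*}
since $(-x_N^{-1})(p_{i_N, r_{i_N}} - x_N) \to 1$ uniformly in $r_{i_N}$; and in the remaining two ranges ($i,j \le i_N$ or $i_N < i < j$) both the conjugation ratio and the formula for $e^D_{ij}$ are already unchanged. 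Multiplying the three limits yields $T_D(z)\cdot A \to F_{D'}(z)\, G_{D'}(z)\, E_{D'}(z) = T_{D'}(z)$, completing the argument. The only genuinely delicate point is checking that the leading-order cancellation between $A_{jj}/A_{ii}$ and the linear-in-$x_N$ growth of $Z^D_{i_N}$ is compatible with the summation over $r_{i_N}$, but since the $Z_{i_N}$-factor enters each summand only through the single evaluation at $p_{i_N, r_{i_N}}$, this is a termwise verification rather than a cancellation of competing contributions.
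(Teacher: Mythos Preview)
Your proof is correct and follows essentially the same approach as the paper's: both rewrite $T_D(z)\cdot(-x_N)^{\varpi_{i_N}}$ via the conjugated Gauss decomposition $F_D(z)\cdot\bigl(G_D(z)(-x_N)^{\varpi_{i_N}}\bigr)\cdot\bigl((-x_N)^{-\varpi_{i_N}}E_D(z)(-x_N)^{\varpi_{i_N}}\bigr)$ and verify the limit factor by factor. Your write-up is slightly more explicit in noting that the $a_i$'s agree for $D$ and $D'$, and in pinpointing that the appearance of $Z_{i_N}$ in $e^D_{ij}$ matches the index range $i\le i_N<j$ where the conjugation ratio is $-x_N^{-1}$, but the argument is the same.
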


\begin{proof}
According to~(\ref{redefinition of rational Lax}),
  $T_D(z)=F_D(z)G_D(z)E_D(z)$
and
  $T_{D'}(z)=F_{D'}(z)G_{D'}(z)E_{D'}(z)$
with the three factors determined explicitly
via~(\ref{lower triangular all entries},~\ref{diagonal entries},~\ref{upper triangular all entries}).
Hence, $T_D(z)\cdot(-x_N)^{\varpi_{i_N}}$ has the following Gauss decomposition:
\begin{equation}\label{Gauss for normalized rational Lax}
  T_D(z)\cdot(-x_N)^{\varpi_{i_N}}=
  F_D(z)\cdot \left(G_D(z) (-x_N)^{\varpi_{i_N}}\right) \cdot
  \left((-x_N)^{-\varpi_{i_N}}E_D(z)(-x_N)^{\varpi_{i_N}}\right).
\end{equation}

The leftmost factor in the right-hand side of~(\ref{Gauss for normalized rational Lax})
does not depend on $\{x_s\}_{s=1}^{N}$ and coincides with $F_{D'}(z)$.
As $G_D(z)=(z-x_N)^{-\varpi_{i_N}}G_{D'}(z)$ and
$\lim_{x_N\to \infty} \frac{z-x_N}{-x_N}=1$, it is clear that the
$x_N\to \infty$ limit of the diagonal factor $G_D(z) (-x_N)^{\varpi_{i_N}}$
in~(\ref{Gauss for normalized rational Lax}) coincides with $G_{D'}(z)$.
Finally, the matrix coefficients of the upper-triangular factor
in~(\ref{Gauss for normalized rational Lax}) are
  $((-x_N)^{-\varpi_{i_N}}E_D(z)(-x_N)^{\varpi_{i_N}})_{\alpha,\beta}=
   e^D_{\alpha,\beta}(z)\cdot (-x_N)^{-\delta_{\alpha\leq i_N<\beta}}$
and their $x_N\to \infty$ limits exactly coincide with
$e^{D'}_{\alpha,\beta}(z)$, the matrix coefficients of $E_{D'}(z)$.

This completes our proof of Proposition~\ref{Degenerating rational Lax}.
\end{proof}

\begin{Cor}\label{normalized limit rational}
$T_{D'}(z)$ is a \emph{normalized limit} of $T_D(z)$.
\end{Cor}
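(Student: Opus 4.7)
The plan is to observe that Corollary~\ref{normalized limit rational} is essentially a packaging of the two cases already analyzed in the excerpt. First, I would note that the case $i_N=0$ is covered by the exact identity~(\ref{relation 1}), namely $T_{D'}(z)=(z-x_N)^{-\gamma_N}T_D(z)$, which holds for every finite $x_N$ and is therefore a scalar normalization (a normalized ``limit'' in the trivial sense that both sides are independent of the limiting procedure once the normalization is applied). Second, the remaining case $1\leq i_N\leq n-1$ is precisely what Proposition~\ref{Degenerating rational Lax} asserts: after right-multiplying $T_D(z)$ by the diagonal matrix $(-x_N)^{\varpi_{i_N}}$, the limit $x_N\to\infty$ exists and equals $T_{D'}(z)$.

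Together these two cases exhaust all possibilities for the coefficient $\gamma_N\varpi_{i_N}$ of $D$ at the collapsing support point $x_N$, so in either case $T_{D'}(z)$ is obtained from $T_D(z)$ by a normalization (scalar when $i_N=0$, by a diagonal matrix of the form $(-x_N)^{\varpi_{i_N}}$ when $i_N\neq 0$) followed by sending $x_N\to\infty$. There is no real obstacle: the only small conceptual point requiring a comment is that the term ``normalized limit'' must be understood to cover both the exact scalar case and the genuine matrix-normalized limiting case simultaneously. Thus the proof is a one-line reference to Proposition~\ref{Degenerating rational Lax} and equation~(\ref{relation 1}).
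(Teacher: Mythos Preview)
Your proposal is correct and matches the paper's intent: the corollary has no explicit proof in the paper because it is precisely the packaging of the two preceding results, equation~(\ref{relation 1}) for $i_N=0$ and Proposition~\ref{Degenerating rational Lax} for $1\leq i_N\leq n-1$, under the umbrella phrase ``normalized limit''. Your one-line reference to these two ingredients is exactly what the paper expects the reader to supply.
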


For $D$ as above, we can pick a $\Lambda^+$-valued divisor
  $\bar{D}=\sum_{s=1}^{N+M} \gamma_s\varpi_{i_s} [x_s]$,
so that $\{x_s\}_{s=N+1}^{N+M}$ are some points on $\BP^1\backslash\{\infty\}$
while $\sum_{s=N+1}^{N+M} \gamma_s\varpi_{i_s}=\mu$.
Note that $\infty\notin \supp(\bar{D})$, i.e.\ $\bar{D}|_\infty=0$.

\begin{Cor}\label{rational Lax as a limit of nonshifted}
For any $\Lambda^+$-valued divisor $D$ on $\BP^1$ satisfying~(\ref{assumption}),
the matrix $T_{D}(z)$ of~\eqref{explicit long formula rational} is a normalized limit
of $T_{\bar{D}}(z)$ with a $\Lambda^+$-valued divisor $\bar{D}$ satisfying $\bar{D}|_\infty=0$.
\end{Cor}

Evoking Remark~\ref{Validity of Main Conj 1}(a), we see that the original definition
of $T_{\bar{D}}(z)$ via~(\ref{Theta homom},~\ref{construction of rational Lax})
is valid. Hence, $T_{\bar{D}}(z)$ defined via~(\ref{explicit long formula rational})
indeed satisfies the RTT relation~(\ref{ratRTT}). As a multiplication by diagonal
$z$-independent matrices preserves~(\ref{ratRTT}), we obtain the main result of this section:

\begin{Prop}\label{preserving RTT}
For any $\Lambda^+$-valued divisor $D$ on $\BP^1$ satisfying the assumption~(\ref{assumption}),
the matrix $T_D(z)$ defined via~(\ref{redefinition of rational Lax},~\ref{explicit long formula rational})
is Lax, i.e.\ it satisfies the RTT relation~(\ref{ratRTT}).
\end{Prop}


\subsubsection{Proof of Theorem~\ref{Main Conjecture 1}}
\label{ssec proof of Conjecture 1}
\

Due to Proposition~\ref{preserving RTT} and the Gauss
decomposition~(\ref{redefinition of rational Lax},~\ref{explicit long formula rational})
of $T_D(z)$ with the factors defined
via~(\ref{diagonal entries},~\ref{upper triangular all entries},~\ref{lower triangular all entries}),
we see that $T_D(z)$ indeed gives rise to the algebra homomorphism
$\Theta_D\colon Y^\rtt_{-\mu}(\gl_n)\to \CA$, whose composition with the epimorphism
  $\Upsilon_{-\mu}\colon Y_{-\mu}(\gl_n)\twoheadrightarrow Y^\rtt_{-\mu}(\gl_n)$
of Theorem~\ref{epimorphism of shifted Yangians} coincides with $\Psi_D$ of~(\ref{homom psi}).
Thus, for $\mu\in \Lambda^+$ and any $\Lambda^+$-valued divisor $D$ on $\BP^1$,
satisfying~(\ref{assumption}) and $D|_\infty=\mu$, the homomorphism $\Psi_D$
does factor through~$\Upsilon_{-\mu}$.

This observation immediately implies the injectivity of $\Upsilon_{-\mu}$,
due to the following recent result of Alex Weekes
(actually, we need its $\gl_n$-counterpart that follows from~\eqref{gl as sl plus center} and~\eqref{image of qdet}):

\begin{Thm}[\cite{w}]\label{alex's theorem}
For any coweight $\nu$ of a semisimple Lie algebra $\fg$, the intersection
of kernels of the homomorphisms $\Phi^{\ast}_{-\nu}$ of~\cite[Theorem B.15]{bfnb} is zero:
  $\bigcap_{\lambda}\ \mathrm{Ker}(\Phi^{\lambda}_{-\nu})=0$,
where $\lambda$ ranges through all dominant coweights of $\fg$ such that
$\lambda+\nu=\sum\, a_i\alpha_i$ with $a_i\in \BN$, $\alpha_i$ being
simple coroots of $\fg$, and points $\{z_i\}$ of~\emph{loc.cit.}\ specialized
to arbitrary complex parameters.
\end{Thm}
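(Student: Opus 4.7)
The plan is to reduce the statement to the associated graded (classical) level via the Kazhdan filtration on $Y_{-\nu}(\fg)$, and then invoke the geometric realization of truncated shifted Yangians as quantizations of the generalized affine Grassmannian slices $\ol{\CW}{}^\lambda_{-\nu}$ developed in~\cite{kwwy, bfnb}, whose union exhausts the generalized slice at infinity $\CW_{-\nu}$ that quantizes $Y_{-\nu}(\fg)$.

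First, equip $Y_{-\nu}(\fg)$ with the Kazhdan filtration in which the $r$-th Drinfeld mode has degree $r$. By the PBW theorem for shifted Yangians~\cite[Corollary B.17]{bfnb}, the associated graded $\operatorname{gr} Y_{-\nu}(\fg)$ is identified with the coordinate ring $\BC[\CW_{-\nu}]$. Equip the target algebra $\tilde{\CA}$ of $\Phi^\lambda_{-\nu}$ with the analogous filtration (so that the $w$-coordinates have degree $1$ and the $\sfu^{\pm 1}$-coordinates have degree $0$), so that $\Phi^\lambda_{-\nu}$ becomes a filtered homomorphism whose associated graded $\operatorname{gr}\Phi^\lambda_{-\nu}$ is the classical GKLO map onto a Zariski open chart in $\ol{\CW}{}^\lambda_{-\nu}$. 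Since a nonzero $x\in Y_{-\nu}(\fg)$ has a nonzero leading symbol $\bar{x}\in \BC[\CW_{-\nu}]$ and the leading symbol of $\Phi^\lambda_{-\nu}(x)$ equals $(\operatorname{gr}\Phi^\lambda_{-\nu})(\bar{x})$, it suffices to prove the classical intersection-of-kernels statement for the family $\{\operatorname{gr}\Phi^\lambda_{-\nu}\}$.

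Geometrically, the classical statement asserts: for every nonzero $\bar{x}\in \BC[\CW_{-\nu}]$ there exist a dominant $\lambda$ with $\lambda+\nu\in \sum_i \BN\alpha_i$ and a specialization of the parameters $\{z_i\}$ to complex numbers such that the pullback of $\bar{x}$ to the classical GKLO chart in $\ol{\CW}{}^\lambda_{-\nu}$ is nonzero. Two ingredients are required: (a) an \emph{exhaustion} property, namely that the natural closed embeddings $\ol{\CW}{}^\lambda_{-\nu}\hookrightarrow \ol{\CW}{}^{\lambda+\lambda'}_{-\nu}$ for any dominant $\lambda'$ make the inductive union dense in $\CW_{-\nu}$, so that dually $\BC[\CW_{-\nu}]\hookrightarrow \varprojlim_\lambda \BC[\ol{\CW}{}^\lambda_{-\nu}]$ is injective; and (b) a \emph{Zariski-density} statement, asserting that as $\{z_i\}$ range over $\BC$, the classical GKLO charts cover a dense open subset of $\ol{\CW}{}^\lambda_{-\nu}$, so that a nonzero regular function on the slice is nonzero on the chart for generic $\{z_i\}$. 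Combining (a) and (b) with the filtered reduction produces a $\lambda$ and a specialization with $\Phi^\lambda_{-\nu}(x)\ne 0$.

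The main obstacle is ingredient (a): the exhaustion of $\CW_{-\nu}$ by its truncations $\ol{\CW}{}^\lambda_{-\nu}$. One must carefully analyze how the defining equations cutting out $\ol{\CW}{}^\lambda_{-\nu}$ inside $\CW_{-\nu}$ (controlled by the truncation relations of~\cite[Appendix B]{bfnb}) weaken as $\lambda$ grows, ensuring that no nonzero element of $\BC[\CW_{-\nu}]$ can vanish on \emph{every} $\ol{\CW}{}^\lambda_{-\nu}$. This is the technical heart of Weekes' argument and is not expected to have a short elementary proof. Ingredient (b), by contrast, follows directly from the explicit GKLO formulas: the $w$-coordinates parametrize a Zastava-type open chart, and varying $\{z_i\}$ generically guarantees non-vanishing of any prescribed polynomial in these coordinates at some point of the chart.
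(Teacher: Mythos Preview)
The paper does not contain a proof of this theorem: it is simply \emph{cited} from~\cite{w} (Weekes, in preparation) and used as a black box in Section~\ref{ssec proof of Conjecture 1} to deduce the injectivity of $\Upsilon_{-\mu}$. There is therefore no proof in the paper to compare your proposal against.

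As for the proposal itself: what you have written is a plausible strategy outline rather than a proof. You correctly identify the filtered-to-graded reduction via the Kazhdan filtration and the PBW theorem of~\cite[Corollary~B.17]{bfnb}, and you are honest that the main content---the exhaustion statement (a)---is left unproved. A few cautions are in order. First, the identification $\operatorname{gr} Y_{-\nu}(\fg)\simeq \BC[\CW_{-\nu}]$ is itself nontrivial and requires justification beyond the PBW statement, which only gives a polynomial algebra structure on the associated graded. Second, your claim that $\operatorname{gr}\Phi^\lambda_{-\nu}$ lands on a Zariski open chart of $\ol{\CW}{}^\lambda_{-\nu}$ and that varying the $z_i$ covers a dense subset (your ingredient (b)) is asserted without argument; this is not as automatic as you suggest and itself requires some geometric input. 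Third, even granting (a) and (b), the passage from the graded statement back to the filtered one needs a bit more care than ``the leading symbol of $\Phi^\lambda_{-\nu}(x)$ equals $(\operatorname{gr}\Phi^\lambda_{-\nu})(\bar x)$'': one must check that the filtrations are compatible in the sense that no cancellation of top-degree terms can occur on the target side.

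In summary: the paper offers no proof to compare with, and your sketch, while pointing in a reasonable direction, leaves the essential difficulties (both (a) and, to a lesser extent, (b)) unresolved, as you yourself acknowledge.
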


This completes our proof of Theorem~\ref{Main Conjecture 1}.

\begin{Rem}\label{proof of trivial center}(A.~Weekes)
Using similar arguments, one can show that the center of the shifted Yangian
$Y_{\nu}(\fg)$ is trivial (thus implying Lemma~\ref{center of shifted yangians}(a)) for
any coweight $\nu$ of a semisimple Lie algebra $\fg$. Indeed, due to
Theorem~\ref{alex's theorem}, it suffices to show that the $\Phi^{\lambda}_{\nu}$-images
have no nonconstant central elements. Assuming $x$ is central, one can show it is
a symmetric rational function in $p_{\ast,\ast}$ (as $\mathrm{Im}(\Phi^{\lambda}_{\nu})$
contains all symmetric polynomials in $p_{\ast,\ast}$), and then show that it is actually
$p_{\ast,\ast}$-independent (using the commutativity with the images of $E_i(z),F_i(z)$).
\end{Rem}

The above argument can also be used to identify the image of the central series $C(z)$~\eqref{eq:C-center}
under the isomorphism $\Upsilon_{-\mu}$ with the \emph{quantum determinant} $\qdet\, T(z)$ of $Y^{\rtt}_{-\mu}(\gl_n)$
defined via:
\begin{equation}\label{eq:qdet}
  \qdet\, T(z):=\sum_{\sigma\in S_n} (-1)^{\ell(\sigma)}
    t_{1,\sigma(1)}(z+n-1)t_{2,\sigma(2)}(z+n-2)\cdots t_{n-1,\sigma(n-1)}(z+1) t_{n,\sigma(n)}(z).
\end{equation}

\begin{Prop}\label{prop:center-identification}
For any $\mu\in \Lambda^+$, the series~\eqref{eq:C-center} and~\eqref{eq:qdet} are related via
\begin{equation}\label{eq:C-qdet}
  \Upsilon_{-\mu}(C(z))=\qdet\, T(z).
\end{equation}
\end{Prop}

\begin{proof}
According to (the $\gl_n$-counterpart of) Theorem~\ref{alex's theorem} and~\eqref{image of qdet},
it suffices to verify:
\begin{equation}\label{eq:identify-centers}
  \qdet\, T_D(z) =
  \prod_{s=1}^{N} \prod_{k=i_s}^{n-1} (z-x_s+k)^{\gamma_s}
\end{equation}
for any $\Lambda^+$-valued divisor $D=\sum_{s=1}^{N} \gamma_s\varpi_{i_s} [x_s] + \mu [\infty]$ on $\BP^1$
with $x_s\in \BC$, as in~\eqref{divisor def1}, satisfying the assumption~\eqref{assumption} and $D|_\infty=\mu$.
According to~\cite[Theorem 8.6]{bk}, the equality~\eqref{eq:C-qdet} holds for $\mu=0$, and consequently
the equality~\eqref{eq:identify-centers} holds for those $D$ such that $D|_\infty=0$.

Next, using the notations of Section~\ref{ssec rational limit shifted from nonshifted},
we note that the validity of~\eqref{eq:identify-centers} for $D$ implies the one for $D'$ as follows from
the following equalities:
\begin{multline*}
  \qdet\, T_{D'}(z) =
  \underset{x_N\to \infty}{\lim}\, \qdet\, \Big(T_D(z)\cdot (-x_N)^{\gamma_N\varpi_{i_N}}\Big) = \\
  \underset{x_N\to \infty}{\lim}\, \left(\prod_{s=1}^{N} \prod_{k=i_s}^{n-1} (z-x_s+k)^{\gamma_s} \cdot (-x_N)^{-\gamma_N(n-i_N)}\right) =
  \prod_{s=1}^{N-1} \prod_{k=i_s}^{n-1} (z-x_s+k)^{\gamma_s}.
\end{multline*}
Therefore, the validity of~\eqref{eq:identify-centers} for any $D$ follows from
its special case $D|_\infty=0$ (established above) combined with Corollary~\ref{rational Lax as a limit of nonshifted}.
\end{proof}

Combining this result with Lemma~\ref{center of shifted yangians}(b), we obtain:

\begin{Cor}
For any $\mu\in \Lambda^+$, the center of the shifted RTT Yangian $Y^\rtt_{-\mu}(\gl_n)$ is a polynomial algebra
in the coefficients of the quantum determinant $\qdet\, T(z)$ defined via~\eqref{eq:qdet}.
\end{Cor}


\subsubsection{Linear rational Lax matrices}\label{sssec explicit Lax}
\

In this section, we will obtain simplified explicit formulas for all
$\sT_D(z)$ that are linear in $z$.

First, let us note that elements of $\Lambda^+$ may be encoded by weakly decreasing
sequences $\blambda$ of $n$ integers $\blambda_1\geq \cdots\geq \blambda_n$, which we
call \emph{pseudo Young diagrams} with $n$ rows (in mathematical literature, they are
also called \emph{signatures} of length $n$, following Hermann Weyl).
Explicitly, such a pseudo Young diagram
$\blambda=(\blambda_1,\cdots,\blambda_n)$ encodes a dominant coweight $\lambda\in \Lambda^+$ via
\begin{equation}\label{cwt via pseudoYoung}
  \lambda:=-\sum_{1\le i\le n} \blambda_{n-i+1}\epsilon_i=
  \blambda_n\varpi_0\, + \sum_{1\leq i\leq n-1} (\blambda_{n-i}-\blambda_{n-i+1})\varpi_i.
\end{equation}
We denote $|\blambda|:=\sum_{i=1}^n \blambda_i$. If $\blambda_n\geq 0$, then
$\blambda$ is a standard Young diagram of length $\leq n$.

Fix a pair of pseudo Young diagrams $\blambda,\bmu$. Then, $\lambda+\mu$
is of the form $\lambda+\mu=\sum_{i=1}^{n-1} a_i\alpha_i$ for some $a_i\in \BC$
iff $|\blambda|+|\bmu|=0$. Let us establish the key properties of $a_i$ in the latter case:

\begin{Lem}\label{explicit a's}
(a) $a_i=-\sum_{j=n-i+1}^n(\blambda_j+\bmu_j)$ for any $1\leq i\leq n-1$.

\noindent
(b) $a_i\in \BN$ for any $1\leq i\leq n-1$.

\noindent
(c) $a_j-a_{j-1}=-\blambda_{n-j+1}-\bmu_{n-j+1}$ for any $1\leq j\leq n$,
where we set $a_0:=0,a_n:=0$.
\end{Lem}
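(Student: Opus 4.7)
The plan is to handle the three parts in the order (c), (a), (b), since (a) follows from (c) by telescoping and (b) is the only substantive claim.

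For part (c), I would just unfold the definitions. From \eqref{cwt via pseudoYoung},
\[
  \lambda+\mu=-\sum_{i=1}^n(\blambda_{n-i+1}+\bmu_{n-i+1})\epsilon_i,
\]
while rewriting the right-hand side of the hypothesis using $\alpha_i=\epsilon_i-\epsilon_{i+1}$ gives
\[
  \sum_{i=1}^{n-1}a_i\alpha_i=\sum_{j=1}^n(a_j-a_{j-1})\epsilon_j
\]
with the conventions $a_0=a_n=0$. Comparing coefficients of $\epsilon_j$ yields (c) (and, taking $j=n$, recovers the constraint $|\blambda|+|\bmu|=0$). Part (a) is then immediate by telescoping: $a_i=\sum_{j=1}^i(a_j-a_{j-1})=-\sum_{j=1}^i(\blambda_{n-j+1}+\bmu_{n-j+1})$, and reindexing $k=n-j+1$ gives the claimed formula.

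The main point is part (b). Integrality is clear from (a) since $\blambda,\bmu\in\BZ^n$, so the only content is non-negativity. Using $|\blambda|+|\bmu|=0$, rewrite
\[
  a_i=\sum_{j=1}^{n-i}(\blambda_j+\bmu_j).
\]
The key observation is that because $\blambda$ is weakly decreasing, the average of any top-$k$ entries dominates the overall average. Concretely, for $1\le k\le n-1$ the inequalities $\blambda_j\ge\blambda_{j'}$ for $j\le k<j'$ imply $(n-k)\sum_{j=1}^k\blambda_j\ge k\sum_{j=k+1}^n\blambda_j$, and adding $k\sum_{j=1}^k\blambda_j$ to both sides gives $\sum_{j=1}^k\blambda_j\ge \tfrac{k}{n}|\blambda|$. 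The same bound holds for $\bmu$. Applying both with $k=n-i$ and summing,
\[
  a_i\ \ge\ \tfrac{n-i}{n}\bigl(|\blambda|+|\bmu|\bigr)\ =\ 0,
\]
which is the desired inequality.

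I do not foresee any real obstacle: (c) and (a) are essentially bookkeeping, and the only mildly nontrivial step is the averaging inequality in (b), which is a clean consequence of the weakly decreasing property of pseudo Young diagrams together with the balance condition $|\blambda|+|\bmu|=0$.
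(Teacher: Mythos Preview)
Your proof is correct and follows essentially the same approach as the paper: prove (c) by comparing coefficients in the $\epsilon_j$-basis, derive (a) by telescoping, and obtain (b) from the averaging inequality for weakly decreasing sequences together with $|\blambda|+|\bmu|=0$. The only cosmetic difference is that the paper applies the averaging inequality directly to the combined sequence $-\blambda_j-\bmu_j$ (summing over the last $i$ indices), whereas you apply it to $\blambda$ and $\bmu$ separately (summing over the first $n-i$ indices); these are equivalent.
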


\begin{proof}
(c) Follows from the equality
\begin{equation*}
  \sum_{1\leq j\leq n} (a_j-a_{j-1})\epsilon_j \, = \sum_{1\leq i\leq n-1}a_i\alpha_i=
  \lambda+\mu=\sum_{1\leq j\leq n} (-\blambda_{n-j+1}-\bmu_{n-j+1})\epsilon_j.
\end{equation*}

\noindent
(a) Follows by summing the equalities of part (c) for $j=1,\ldots,i$.

\noindent
(b) As
  $-\blambda_n-\bmu_n\geq -\blambda_{n-1}-\bmu_{n-1}\geq \ldots \geq -\blambda_1-\bmu_1$,
we have an obvious inequality
  $\sum_{j=n-i+1}^{n}(-\blambda_j-\bmu_j)\geq \frac{i}{n}\sum_{j=1}^{n}(-\blambda_j-\bmu_j)=
   \frac{-i}{n}(|\blambda|+|\bmu|)=0$. Hence, $a_i\in \BN$ by part~(a).
\end{proof}

Thus, $\Lambda^+$-valued divisors on $\BP^1$ satisfying~(\ref{assumption}) and
without summands $\{-\varpi_0[x]\}_{x\in \BC}$ may be encoded by pairs $(\blambda,\bmu)$
of a Young diagram $\blambda$ of length $\leq n$ and a pseudo Young diagram $\bmu$ with
$n$ rows and of total size $|\blambda|+|\bmu|=0$, together with a collection of points
$\unl{x}=\{x_i\}_{i=1}^{\blambda_1}$ of $\BC$ (so that $x_i$ is assigned to the $i$-th
column of $\blambda$). Explicitly, given $\blambda,\bmu,\unl{x}$ as above, we set
  $D=D(\blambda,\unl{x},\bmu):=\sum_{i=1}^{\blambda_1} \varpi_{n-\blambda^t_i}[x_i]+\mu[\infty]$,
where $\blambda^t_i$ is the height of the $i$-th column of $\blambda$.

Due to~(\ref{relation 1}), we shall assume that $D$ does not contain
summands $\{\pm \varpi_0[x]\}_{x\in \BC}$. Thus, $\blambda_n=0$ so that $Z_0(z)=1$,
and $T_D(z)=\sT_D(z)$ is polynomial in $z$ by Theorem~\ref{Main Theorem 1}.
Moreover, $T_D(z)_{11}=g^D_{1}(z)$ is a polynomial in $z$ of degree
$a_1=-(\blambda_n+\bmu_n)=-\bmu_n\geq 0$. Hence, we have $-\bmu_n\leq 1$ for
linear Lax matrices $T_D(z)$. If $\bmu_n=0$, then $\blambda_i=\bmu_i=0$ for all $i$,
since $|\blambda|+|\bmu|=0$, and so $T_D(z)=\sT_D(z)=I_n$, the identity matrix.
Therefore, it remains to treat the case when
$\blambda_n=0$ and $\bmu_n=-1$, which constitutes the key result of this section.

\begin{Rem}\label{comparison to FP}
If $|\blambda|+|\bmu|=0,\blambda_n=0, \bmu_n=-1$, then $\blambda$ and
$\wt{\bmu}=(\bmu_1+1,\ldots,\bmu_n+1)$ form a pair of Young diagrams
of total size $|\blambda|+|\wt{\bmu}|=n$. In that setup, an alternative
construction of rational Lax matrices $L_{\blambda,\unl{x},\wt{\bmu}}(z)$
was recently proposed in~\cite{fp}.
In Section~\ref{ssec comparison to FP Lax matrices}, we shall compare all
explicit Lax matrices $L_{\blambda,\unl{x},\wt{\bmu}}(z)$ of~\cite{fp} to
the corresponding Lax matrices $T_D(z)$. However, we do not have an interpretation
of the ``fusion procedure'' of~\cite{fp} (used to construct all
$L_{\blambda,\unl{x},\wt{\bmu}}(z)$ from the aforementioned
explicit ``building blocks'') in the present approach.
\end{Rem}

\begin{Thm}\label{Main Theorem 2}
Following the above notations, assume further that $\blambda_n=0$ and $\bmu_n=-1$.
Define $m:=\max\{i \, | \, \bmu_{n-i+1}=-1\}$ and $m':=\max\{i \, | \, \bmu_{n-i+1}\leq 0\}$.

\noindent
(a) The rational Lax matrix $\sT_D(z)$ is explicitly determined as follows:

(I) The matrix coefficients on the main diagonal are:
\begin{equation}\label{diagonal Lax entries}
  \sT_D(z)_{ii}=
  \begin{cases}
     z+\sum_{r=1}^{a_{i-1}}(p_{i-1,r}+1)-\sum_{r=1}^{a_i} p_{i,r}+
       \sum_{x\in \BP^1\backslash\{\infty\}} \epsilon^\vee_i(\lambda_x)x & \text{if } i\leq m \\
     1 & \text{if } m<i\leq m'\\
     0 & \text{if } i>m'
   \end{cases}.
\end{equation}

(II) The matrix coefficients above the main diagonal are:
\begin{equation}\label{upper triangular Lax entries 1}
  \sT_D(z)_{ij}=0 \quad \mathrm{if}\ m<i<j,
\end{equation}
\begin{equation}\label{upper triangular Lax entries 2}
\begin{split}
  & \sT_D(z)_{ij}=\\
  & -\sum_{\substack{1\leq r_i\leq a_i\\\cdots\\ 1\leq r_{j-1}\leq a_{j-1}}}
   \frac{P_{i-1}(p_{i,r_i}-1)\prod_{k=i}^{j-2}P_{k,r_k}(p_{k+1,r_{k+1}}-1)}
        {\prod_{k=i}^{j-1} P_{k,r_k}(p_{k,r_k})}\cdot
   \prod_{k=i}^{j-1} Z_k(p_{k,r_k})\cdot e^{\sum_{k=i}^{j-1} q_{k,r_k}}\\
  & \mathrm{if}\ i\leq m\ \mathrm{and}\ i<j.
\end{split}
\end{equation}

(III) The matrix coefficients below the main diagonal are:
\begin{equation}\label{lower triangular Lax entries 1}
  \sT_D(z)_{ji}=0 \quad \mathrm{if}\ m<i<j,
\end{equation}
\begin{equation}\label{lower triangular Lax entries 2}
\begin{split}
  & \sT_D(z)_{ji}\, =
  \sum_{\substack{1\leq r_i\leq a_i\\\cdots\\ 1\leq r_{j-1}\leq a_{j-1}}}
    \frac{P_{j}(p_{j-1,r_{j-1}}+1)\prod_{k=i+1}^{j-1}P_{k,r_k}(p_{k-1,r_{k-1}}+1)}
         {\prod_{k=i}^{j-1} P_{k,r_k}(p_{k,r_k})}\cdot e^{-\sum_{k=i}^{j-1} q_{k,r_k}}\\
  & \mathrm{if}\ i\leq m\ \mathrm{and}\ i<j.
\end{split}
\end{equation}

\noindent
(b) $\sT_D(z)=T_D(z)$ is polynomial of degree $1$ in $z$,
and the coefficient of $z$ equals $\sum_{i=1}^m E_{ii}$.
\end{Thm}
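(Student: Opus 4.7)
The plan is to use Theorem~\ref{Main Theorem 1} as the key input: the assumption $\blambda_n=0$ forces $Z_0(z)=1$, so $T_D(z)=\sT_D(z)$ is genuinely polynomial in $z$. Consequently every entry coincides with the $z^{\ge 0}$-part of its Gauss-decomposition expression~(\ref{explicit long formula rational}) expanded at $z=\infty$, and the proof reduces to (i) a degree count identifying which summands can contribute to this polynomial part, followed by (ii) extracting the leading coefficients of the surviving summands.

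For step~(i), I would first establish the key formula $\deg_z g^D_k(z)=-\bmu_{n-k+1}$ by combining $\deg P_k(z)=a_k$, $\deg P_{k-1}(z-1)=a_{k-1}$, the expansion $\prod_x(z-x)^{-\epsilon^\vee_k(\lambda_x)}=z^{-\epsilon^\vee_k(\lambda)}(1+O(z^{-1}))$, Lemma~\ref{explicit a's}(c), and the identity $\epsilon^\vee_k(\lambda)=-\blambda_{n-k+1}$ from~(\ref{cwt via pseudoYoung}). By the definitions of $m,m'$ this degree is $1$ for $k\le m$, equals $0$ for $m<k\le m'$, and is strictly negative for $k>m'$. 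Since $e^D_{k,\beta}(z),f^D_{\alpha,k}(z)\in z^{-1}\CA[[z^{-1}]]$ by~(\ref{upper triangular all entries},\ref{lower triangular all entries}), a summand $f^D_{\alpha,k}(z)g^D_k(z)e^D_{k,\beta}(z)$ of~(\ref{explicit long formula rational}) (with the conventions $f^D_{\alpha,\alpha}=e^D_{\beta,\beta}=1$) has $z$-degree at most $-\bmu_{n-k+1}-2$ when both $\alpha\ne k$ and $\beta\ne k$, at most $-\bmu_{n-k+1}-1$ when exactly one of these fails, and exactly $-\bmu_{n-k+1}$ when both $\alpha=k$ and $\beta=k$. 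Any summand of strictly negative $z$-degree contributes $0$ to the polynomial part, which immediately yields~(\ref{upper triangular Lax entries 1}),~(\ref{lower triangular Lax entries 1}), and the last two cases of~(\ref{diagonal Lax entries}); in particular for $m<i\le m'$ only the $z^0$-coefficient of $g^D_i(z)$ survives, and a glance at the leading monomials of the monic polynomials in~(\ref{diagonal entries}) shows this equals $1$.

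For step~(ii), in each remaining case exactly one summand contributes: $g^D_i(z)$ itself for the diagonal $T_D(z)_{ii}$ with $i\le m$, and $g^D_i(z)e^D_{i,j}(z)$ or $f^D_{j,i}(z)g^D_i(z)$ for the off-diagonal entries with $i\le m$. For the diagonal case, expanding $\frac{P_i(z)}{P_{i-1}(z-1)}=z^{a_i-a_{i-1}}\bigl(1+\bigl(\sum_r(p_{i-1,r}+1)-\sum_r p_{i,r}\bigr)/z+\cdots\bigr)$ together with $\prod_x(z-x)^{-\epsilon^\vee_i(\lambda_x)}=z^{-\epsilon^\vee_i(\lambda)}\bigl(1+\bigl(\sum_x x\,\epsilon^\vee_i(\lambda_x)\bigr)/z+\cdots\bigr)$ produces exactly the three sums in~(\ref{diagonal Lax entries}). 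For the upper-triangular case, since the only $z$-dependence in each summand of $e^D_{i,j}(z)$ is the prefactor $(z-p_{i,r_i})^{-1}$ and all $p$-polynomial coefficients commute with one another, the limit $g^D_i(z)(z-p_{i,r_i})^{-1}\to 1$ as $z\to\infty$ extracts precisely~(\ref{upper triangular Lax entries 2}). The lower-triangular case is analogous but requires one extra commutation: pushing $e^{-q_{i,r_i}}$ on the right of each summand of $f^D_{j,i}(z)$ past $g^D_i(z)$ shifts $p_{i,r_i}\mapsto p_{i,r_i}+1$, replacing $P_i(z)$ in $g^D_i(z)$ by $(z-p_{i,r_i}-1)P_{i,r_i}(z)$ and thereby cancelling the $(z-p_{i,r_i}-1)^{-1}$ prefactor of $f^D_{j,i}(z)$; the $z\to\infty$ limit then yields~(\ref{lower triangular Lax entries 2}).

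Part~(b) is then immediate: among all entries of $T_D(z)$, only the diagonals $T_D(z)_{ii}$ with $i\le m$ carry a $z^1$ term, each with coefficient~$1$, so the $z$-coefficient of $T_D(z)$ equals $\sum_{i=1}^m E_{ii}$. The only real obstacle is the Laurent-expansion bookkeeping behind~(\ref{diagonal Lax entries}) (together with the small commutation in the lower-triangular case); conceptually, Theorem~\ref{Main Theorem 1} does all the heavy lifting by forcing the $O(z^{-1})$ cross-term contributions in the Gauss decomposition to cancel automatically, so no nontrivial identity needs to be verified by hand.
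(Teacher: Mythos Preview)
Your proposal is correct and follows essentially the same route as the paper: both arguments invoke Theorem~\ref{Main Theorem 1} to reduce the problem to extracting the nonnegative $z$-part of~(\ref{explicit long formula rational}), then compute $\deg_z g^D_i(z)=-\bmu_{n-i+1}$ via Lemma~\ref{explicit a's}(c) and use $e^D_{ij}(z),f^D_{ji}(z)\in z^{-1}\CA[[z^{-1}]]$ to discard the irrelevant summands. The one place you do more work than necessary is the lower-triangular case: since $f^D_{ji}(z)=f^{(1)}_{ji}z^{-1}+O(z^{-2})$ and $g^D_i(z)=z+O(1)$, the $z^0$-coefficient of the product $f^D_{ji}(z)g^D_i(z)$ is simply $f^{(1)}_{ji}\cdot 1$, regardless of any commutation between the coefficients---so your shift-and-cancel argument, while valid, is not needed.
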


\begin{proof}
(a) Combining the explicit
formulas~(\ref{explicit long formula rational},~\ref{renormalized rational Lax})
for the matrix coefficients $\sT_D(z)_{\alpha,\beta}$ with their polynomiality of
Theorem~\ref{Main Theorem 1}, we may immediately determine all of them explicitly.
The latter is based on the following observations:

\noindent
$\bullet$  The leading power of $z$ in $e^D_{ij}(z)$ given by the right-hand side
of~(\ref{upper triangular all entries}) expanded in $z^{-1}$ equals $-1$, while the coefficient
of $z^{-1}$ is exactly the right-hand side of~(\ref{upper triangular Lax entries 2}) for any $i<j$.

\noindent
$\bullet$  The leading power of $z$ in $f^D_{ji}(z)$ given by the right-hand side
of~(\ref{lower triangular all entries}) expanded in $z^{-1}$ equals $-1$, while the coefficient
of $z^{-1}$ is exactly the right-hand side of~(\ref{lower triangular Lax entries 2}) for any $i<j$.

\noindent
$\bullet$ The leading power of $z$ in $Z_0(z)^{-1} g^D_i(z)=g^D_i(z)$ expanded in $z^{-1}$,
cf.~(\ref{diagonal entries}), equals
\begin{equation*}
  a_i-a_{i-1}+(\epsilon^\vee_1-\epsilon^\vee_i)(\lambda)=
  (-\blambda_{n-i+1}-\bmu_{n-i+1})+(-\blambda_n+\blambda_{n-i+1})=-\bmu_{n-i+1},
\end{equation*}
due to Lemma~\ref{explicit a's}(c) and the assumption $\blambda_n=0$.
By the definition of $m$ and $m'$, we note that $-\bmu_{n-i+1}$ is negative if $i>m'$,
is zero if $m<i\leq m'$, and equals $1$ if $i\leq m$, while the corresponding coefficient of
$z^{-\bmu_{n-i+1}}$ equals $1$. Finally, for $i\leq m$, the coefficient
of $z^0$ in $Z_0(z)^{-1}g^D_i(z)$ equals
  $\sum_{r=1}^{a_{i-1}}(p_{i-1,r}+1) - \sum_{r=1}^{a_i} p_{i,r} +
   \sum_{x\in \BP^1\backslash\{\infty\}} \epsilon^\vee_i(\lambda_x)x$.

Part (b) follows immediately from part (a).
\end{proof}

\begin{Rem}\label{Toda,DST,Heisenberg}
Applying Theorem~\ref{Main Theorem 2} for $n=2$, we obtain three $2\times 2$
rational Lax matrices
\begin{equation}\label{three rational n=2}
\begin{pmatrix}
  z-p & -e^q \\
  e^{-q} & 0
\end{pmatrix}, \quad
\begin{pmatrix}
  z-p & -(p-x_1)e^q \\
  e^{-q} & 1
\end{pmatrix}, \quad
\begin{pmatrix}
  z-p & -(p-x_1)(p-x_2)e^q \\
  e^{-q} & z+p+1-x_1-x_2
\end{pmatrix},
\end{equation}
corresponding to $\blambda=(0,0)$ and $\bmu=(1,-1)$,
$\blambda=(1,0)$ and $\bmu=(0,-1)$, $\blambda=(2,0)$ and $\bmu=(-1,-1)$,
respectively (as $a_1=1$, we relabeled $p_1,q_1$ by $p,q$). These are
the well-known $2\times 2$ elementary Lax matrices for the Toda chain,
the DST chain, and the Heisenberg magnet.
\end{Rem}


\begin{Rem}\label{Monodromy matrices vs Higher order}
At this point, it is instructive to discuss higher $z$-degree Lax matrices for $n=2$.
Fix a positive integer $N$ and let $\CA_N$ denote the algebra $\CA$ of~(\ref{algebra A})
with $n=2,a_1=N$. To simplify our notations, we shall denote the generators
$\{p_{1,r},e^{\pm q_{1,r}}\}_{r=1}^N$ simply by $\{p_{r},e^{\pm q_{r}}\}_{r=1}^N$.

Let
  $L_r(z)=
   \begin{pmatrix}
     z-p_r & -e^{q_r} \\
     e^{-q_r} & 0
   \end{pmatrix}, 1\leq r\leq N$,
be the $2\times 2$ elementary Lax matrices for the Toda chain,
and consider the \emph{complete monodromy matrix}
\begin{equation}\label{monodromy matrix}
  T_N(z):=L_1(z)\cdots L_N(z)=
   \begin{pmatrix}
     A_N(z) & B_N(z) \\
     C_N(z) & D_N(z)
   \end{pmatrix}.
\end{equation}
Note that the matrix coefficients $A_N(z),B_N(z),C_N(z),D_N(z)$ are polynomials in $z$
with coefficients in the algebra $\CA_1^{\otimes N}$ of degrees $N,N-1,N-1,N-2$,
respectively. For any $\epsilon\in \BC$, the coefficients in powers of $z$ of the
linear combination $A_N(z)+\epsilon D_N(z)$ pairwise commute and coincide with
Hamiltonians of the quantum closed Toda system of $GL_N$,  due to~\cite{tf}.

Following Remark~\ref{Toda,DST,Heisenberg} and our
construction~(\ref{Theta homom},~\ref{construction of rational Lax}) of rational
Lax matrices $T_\ast(z)$, we note that local Lax matrices $L_r(z)$ encode the homomorphisms
  $\Psi_{\alpha[\infty]}\colon Y_{-\alpha}(\gl_2)\to \CA_1$ of~(\ref{homom psi}),
where $\alpha:=\alpha_1=-\varpi_0+2\varpi_1$ is a simple coroot of $\ssl_2$.
Furthermore, evoking the coproduct homomorphisms of
Propositions~\ref{shifted rtt coproduct} and~\ref{shifted coproduct Drinfeld Yangian gl}
below, we see that the complete monodromy matrix $T_N(z)$ of~(\ref{monodromy matrix}) encodes
the homomorphism $Y_{-N\alpha}(\gl_2)\to \CA_1^{\otimes N}$ obtained as a composition
of the iterated coproduct homomorphism
  $Y_{-N\alpha}(\gl_2)\to Y_{-\alpha}(\gl_2)^{\otimes N}$
and the homomorphism
  $\Psi_{\alpha[\infty]}^{\otimes N}\colon
   Y_{-\alpha}(\gl_2)^{\otimes N}\to \CA_1^{\otimes N}$.

On the other hand, consider the rational Lax matrix $T_D(z)$ for the $\Lambda^+$-valued
divisor $D=N\alpha[\infty]$ on $\BP^1$. According to Theorem~\ref{Main Theorem 1},
the matrix coefficients of $T_D(z)$ are
polynomials in $z$ with coefficients in the algebra $\CA_N$. Moreover, evoking
formulas~(\ref{diagonal entries},~\ref{upper triangular all entries},~\ref{lower triangular all entries}),
we find:
\begin{equation*}
  T_D(z)_{11}=P(z), \quad
  T_D(z)_{12}=-\sum_{r=1}^N \frac{P_r(z)}{P_r(p_r)}e^{q_r}, \quad
  T_D(z)_{21}=\sum_{r=1}^N \frac{P_r(z)}{P_r(p_r)}e^{-q_r},
\end{equation*}
\begin{multline*}
  T_D(z)_{22}=
  \frac{1}{P(z-1)} \, - \sum_{1\leq r\leq N} \frac{P_r(z)}{(z-p_r-1)P_r(p_r)P_r(p_r+1)} \, - \\
  \sum_{1\leq r\ne s\leq N}\frac{P_{r,s}(z)}{P_{r,s}(p_r)P_{r,s}(p_s)(p_r-p_s)(p_s-p_r-1)}e^{q_s-q_r},
\end{multline*}
where
\begin{equation*}
  P(z):=\prod_{r=1}^N (z-p_r),\quad
  P_r(z):=\prod_{1\leq s\leq N}^{s\ne r} (z-p_s),\quad
  P_{r,s}(z):=\prod_{1\leq t\leq N}^{t\ne r,s} (z-p_t),
\end{equation*}
cf.~(\ref{ZW-series}). Due to the RTT relation~(\ref{ratRTT}) for $T_D(z)$, the
coefficients in powers of $z$ of the linear combination $T_D(z)_{11}+\epsilon T_D(z)_{22}$
pairwise commute and define a quantum integrable system. These commuting Hamiltonians
can be constructed by applying~(\ref{eq:classical_spectral}) to the Lax matrix $T_D(z)$ with
  $g_\infty=\begin{pmatrix}
     1 & 0 \\
     0 & \epsilon
   \end{pmatrix}$,
where $\epsilon$ is known as the \emph{coupling constant}.

The classical limits of the above two quantum integrable systems coincide and recover
the well-known \emph{Atiyah-Hitchin integrable system}, see~\cite{ah} (we note that the identification of
the corresponding quantum integrable systems was established in~\cite[Theorem 6.12]{fkp}).
Its phase space $Z^N$, known as the space of \emph{$SU(2)$-monopoles of topological charge $N$}, consists of degree
$N$ based rational maps from $\BP^1$ to the flag variety $\mathcal{B}$ of $SL_2$ (note that $\mathcal{B}\simeq \BP^1$).
Explicitly, $Z^N$ consists of pairs of relatively prime polynomials of degrees $N$ and $N-1$
(and the former is monic):
\begin{equation*}\label{monopoles}
  Z^N=\left\{(\mathsf{A}(z)=z^N+\mathsf{a}_1z^{N-1}+\ldots+\mathsf{a}_N,
    \mathsf{B}(z)=\mathsf{b}_1z^{N-1}+\ldots+\mathsf{b}_N) \, | \,
    \mathrm{gcd}(\mathsf{A}(z),\mathsf{B}(z))=1\right\}.
\end{equation*}

To see $Z^N$ as the classical limit of the above quantum integrable systems,
recall an important embedding $Z^N\hookrightarrow SL(2,\BC[z])$ taking
$(\mathsf{A}(z),\mathsf{B}(z))$ to a unique matrix
(known as \emph{the scattering matrix of the $SU(2)$-monopole})
  $\begin{pmatrix}
     \mathsf{A}(z) & \mathsf{B}(z) \\
     \mathsf{C}(z) & \mathsf{D}(z)
   \end{pmatrix}$
such that $\deg \mathsf{C}(z)\leq N-1 > \deg \mathsf{D}(z)$
(such $\mathsf{C}(z),\mathsf{D}(z)$ exist due to the Euclidean algorithm).
Identifying $Z^N$ with its image in $SL(2,\BC[z])$, we note that the matrix multiplication
gives rise to the multiplication homomorphisms
\begin{equation*}
  Z^{N}\times Z^{N'}\longrightarrow Z^{N+N'}.
\end{equation*}

From that perspective, the classical limit of the $p_\ast$-generators appearing
in $T_D(z)$ are the roots of $\mathsf{A}(z)$, while the classical limit of
$e^{q_\ast}$-generators are the values of $-\mathsf{B}(z)$ at these roots.

In the smallest rank $n=2$ case, our construction of $T_D(z)$ is a
generalization of the above one as we may add some points $x_i\in \BC$ to
the support of $D$. Given $k\leq 2N$ and a collection of points
$\unl{x}=\{x_i\}_{i=1}^k$ on $\BC$, consider the $\Lambda^+$-valued divisor
  $D:=\sum_{i=1}^k \varpi_1[x_i]+(N\alpha-k\varpi_1)[\infty]$.
The phase space $Z^N_{k,\unl{x}}$ of the classical limit of the quantum integrable system
determined by $T_D(z)$ is known as the space of \emph{$SU(2)$-monopoles of topological
charge $N$ with singularity $k$}. Similar to $Z^N$, it may be identified with
a closed subvariety of $\mathrm{Mat}(2,\BC[z])$ consisting of
\begin{equation*}\label{monopoles with singularities}
\begin{split}
  & M(z)=\begin{pmatrix}
     \mathsf{A}(z) & \mathsf{B}(z) \\
     \mathsf{C}(z) & \mathsf{D}(z)
    \end{pmatrix}
    \ \mathrm{such\ that}\\
  & \mathsf{A}(z)=z^N+\mathsf{a}_1z^{N-1}+\ldots+\mathsf{a}_N,\quad
    \deg \mathsf{B}(z)<N>\deg \mathsf{C}(z),\quad
    \det M(z)=\prod_{i=1}^k (z-x_i).
\end{split}
\end{equation*}
Let us note that the condition $k\leq 2N$ guarantees that the
matrix multiplication gives rise to the multiplication homomorphisms
(closely related to~\cite[\S2(vi)]{bfnb} and~\cite[\S5.9]{fkp})
\begin{equation*}
  Z^{N}_{k,\unl{x}}\times Z^{N'}_{k',\unl{x}'}\longrightarrow Z^{N+N'}_{k+k',\unl{x}\cup\unl{x}'}.
\end{equation*}
\end{Rem}


\subsection{Examples and comparison to the rational Lax matrices of~\cite{fp}}
\label{ssec comparison to FP Lax matrices}
\

In this section, we consider some examples of the Lax matrices $\sT_D(z)$ of
Theorem~\ref{Main Theorem 2} and compare them to the corresponding Lax matrices
$L_{\blambda,\unl{x},\wt{\bmu}}(z)$ (cf.~Remark~\ref{comparison to FP}) of~\cite{fp}.


\medskip
\noindent
$\bullet$ \emph{\underline{Example 1}: $\blambda=(0^n),\bmu=(1,0^{n-2},-1)$.}
\

Then $a_1=\ldots=a_{n-1}=1$ and $D=D(\blambda,\emptyset,\bmu)=(\varpi_1+\varpi_{n-1}-\varpi_0)[\infty]$.
To simplify our notations, let us relabel $\{p_{i,1},e^{\pm q_{i,1}}\}_{i=1}^{n-1}$ by
$\{p_i,e^{\pm q_i}\}_{i=1}^{n-1}$. Due to Theorem~\ref{Main Theorem 2}, we have:
\begin{equation}\label{Matrix Example 1}
\begin{split}
& \sT_D(z)=\\
& \begin{pmatrix}
  z-p_1 & -e^{q_1} & -e^{q_1+q_2} & \cdots & -e^{q_1+\ldots+q_{n-2}} & -e^{q_1+\ldots+q_{n-1}} \\
  (p_1+1-p_2)e^{-q_1} & 1 & 0 & \cdots & 0 & 0\\
  (p_2+1-p_3)e^{-q_1-q_2} & 0 & 1 & \cdots & 0 & 0\\
  \vdots &  \vdots & \vdots & \ddots & \vdots & \vdots\\
  (p_{n-2}+1-p_{n-1})e^{-q_1-\ldots-q_{n-2}} & 0 & 0 & \cdots & 1 & 0\\
  e^{-q_1-\ldots-q_{n-1}} & 0 & 0 & \cdots & 0 & 0
\end{pmatrix}.
\end{split}
\end{equation}

Let us compare this Lax matrix $\sT_D(z)$ to the Lax matrix $L_{\blambda,\wt{\bmu}}(z)$ of~\cite[(4.7)]{fp}
with $\wt{\bmu}=(2,1^{n-2},0)=\bmu+(1^n)$, cf.~Remark~\ref{comparison to FP}, given by
\begin{equation}\label{FP Matrix Example 1}
L_{\blambda,\wt{\bmu}}(z)=
\begin{pmatrix}
  0 & 0 &  \cdots & 0 & -e^{-q_{n,n}} \\
  0 & 1 & \cdots & 0 & -p_{2,n}\\
  \vdots &  \vdots & \ddots & \vdots & \vdots\\
  0 & 0 & \cdots & 1 & -p_{n-1,n}\\
  e^{q_{n,n}} & q_{n,2} & \cdots & q_{n,n-1} & z-p_{n,n}-q_{n,2}p_{2,n}-\ldots-q_{n,n-1}p_{n-1,n}
\end{pmatrix}.
\end{equation}
Conjugating~(\ref{FP Matrix Example 1}) by the permutation matrix
$\sum_{i=1}^n E_{i,n-i+1}$ (which clearly preserves the RTT relation~(\ref{ratRTT})),
and making the canonical transformation (preserving commutation relations)
\begin{equation*}
  q_{n,n-i} = -e^{\bq_i}, \quad p_{n-i,n} = -\bp_i e^{-\bq_i}, \quad
  e^{q_{n,n}} = -e^{\bq_{n-1}}, \quad p_{n,n} = \bp_{n-1} \quad \mathrm{for}\ 1\leq i\leq n-2,
\end{equation*}
we obtain the following rational Lax matrix:
\begin{equation}\label{modified FP Matrix Example 1}
\wt{L}_{\blambda,\wt{\bmu}}(z)=
\begin{pmatrix}
  z-\bp_{n-1}-(\bp_1-1)-\ldots-(\bp_{n-2}-1) & -e^{\bq_1} & \cdots & -e^{\bq_{n-2}} & -e^{\bq_{n-1}} \\
  \bp_1e^{-\bq_1} & 1 & \cdots & 0 & 0\\
  \vdots &  \vdots & \ddots & \vdots & \vdots\\
  \bp_{n-2}e^{-\bq_{n-2}} & 0 & \cdots & 1 & 0\\
  e^{-\bq_{n-1}} & 0 & \cdots & 0 & 0
\end{pmatrix}.
\end{equation}

Thus $\sT_D(z)$ of~(\ref{Matrix Example 1}) and $\wt{L}_{\blambda,\wt{\bmu}}(z)$
of~(\ref{modified FP Matrix Example 1}) coincide upon the canonical transformation:
\begin{equation*}
  \bq_j=q_1+\ldots+q_j, \quad \bp_i=p_i-p_{i+1}+1, \quad \bp_{n-1}=p_{n-1} \quad
  \mathrm{for}\ 1\leq i\leq n-2, 1\leq j\leq n-1.
\end{equation*}


\medskip
\noindent
$\bullet$ \emph{\underline{Example 2}: $\blambda=(0^{2\rrr}),\bmu=(1^\rrr,(-1)^\rrr), n=2\rrr$.}
\

Then $D=D(\blambda,\emptyset,\bmu)=(2\varpi_{\rrr}-\varpi_0)[\infty]$ and the coefficients
$\{a_i\}_{i=1}^{n-1}$ are given by:
\begin{equation*}
  a_1=1,\ a_2=2,\ \ldots\ ,\ a_{\rrr-1}=\rrr-1,\ a_{\rrr}=\rrr,\
  a_{\rrr+1}=\rrr-1,\ \ldots\ ,\ a_{2\rrr-2}=2,\ a_{2\rrr-1}=1.
\end{equation*}
According to Theorem~\ref{Main Theorem 2}, $\sT_D(z)$ is a block matrix of the form
\begin{equation}\label{Matrix Example 3}
\sT_D(z)=
\begin{pmatrix}
  zI_{\rrr}-F & \bar{K} \\
  K & 0
\end{pmatrix},
\end{equation}
where $F,K,\bar{K}$ are $z$-independent $\rrr\times \rrr$ matrices,
and $I_{\rrr}$ is the identity $\rrr\times \rrr$ matrix.

The first simple property of the matrices $F,K,\bar{K}$ is:

\begin{Lem}
(a) The matrix elements $\{K_{ij}\}_{i,j=1}^{\rrr}$ of the matrix $K$ pairwise commute.

\noindent
(b) The matrix elements $\{\bar{K}_{ij}\}_{i,j=1}^{\rrr}$ of the matrix $\bar{K}$
pairwise commute.

\noindent
(c) The matrix elements of $K$ commute with the  matrix elements of $\bar{K}$, that is
$[K_{ij},\bar{K}_{k\ell}]=0$.

\noindent
(d) The matrix elements $\{F_{ij}\}_{i,j=1}^{\rrr}$ of the matrix $F$ satisfy the following commutation relations:
\begin{equation}\label{eq:F-commuting}
  [F_{ij},\bar{K}_{k \ell}]=\delta_{j,k}\bar{K}_{i \ell}, \quad
  [F_{ij},K_{k \ell}]=-\delta_{\ell,i} K_{kj}, \quad
  [F_{ij},F_{k \ell}]=\delta_{j,k}F_{i\ell}-\delta_{\ell,i}F_{kj}.
\end{equation}
\end{Lem}

\begin{proof}
It is a direct consequence of the RTT relation~(\ref{ratRTT}) for $\sT_D(z)$
and the ansatz~\eqref{Matrix Example 3}.
\end{proof}

A much deeper relation between $K$ and $\bar{K}$ is established in the following result:

\begin{Thm}\label{KK result}
We have $K\cdot \bar{K} = -I_{\rrr}$.
\end{Thm}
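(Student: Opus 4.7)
The plan is to relate $K\cdot\bar K$ to the matrix product $T_D(z)\cdot T_D(z)$ and then combine the RTT relation (\refp{preserving RTT}) with the quantum determinant identity $\qdet T_D(z)=1$ (which follows from~\refe{image of qdet} since $\supp(D)\subset\{\infty\}$) to extract the desired identity. The starting observation is that
\[
  (T_D(z)^{2})_{\rrr+\alpha,\, \rrr+\beta}
  \ =\ \sum_{k=1}^{n} T_D(z)_{\rrr+\alpha,k}\, T_D(z)_{k,\rrr+\beta}
  \ =\ \sum_{k=1}^{\rrr} K_{\alpha,k}\, \bar K_{k,\beta}
  \ =\ (K\bar K)_{\alpha,\beta},
\]
the middle equality using that the bottom-right $\rrr\times\rrr$ block of $T_D(z)$ vanishes; in particular this quantity is $z$-independent.

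Since $\qdet T_D(z)=1$, the matrix $T_D(z)$ is invertible and $T_D(z)^{-1}$ is given by the quantum cofactor formula, each entry being an $(n-1)\times(n-1)$ quantum minor of $T_D(z)$. A systematic analysis of which permutations contribute nonzero terms to each such minor---using the vanishing $T_D(z)_{i,j}=0$ for $i,j>\rrr$ as a strong selection rule---will show that $T_D(z)^{-1}$ is polynomial of degree at most one in $z$, with zero top-left $\rrr\times\rrr$ block, $z$-independent top-right and bottom-left blocks, and a bottom-right block of degree at most $1$. Writing $T_D(z)^{-1} = z\,\tilde Q_1 + \tilde N$ with $\tilde Q_1$ supported in the bottom-right block (its $\rrr\times\rrr$ entry denoted $\tilde Q$), the $z^{1}$-coefficient of the $(\alpha,\rrr+\beta)$-entry of $T_D(z)\,T_D(z)^{-1}=I_n$ yields $\tilde N_{\alpha,\,\rrr+\beta}+(\bar K\tilde Q)_{\alpha,\beta}=0$, while the constant term of the $(\rrr+\alpha,\rrr+\beta)$-entry yields $K\cdot\tilde N|_{\bullet,\,\rrr+\bullet} = I_\rrr$. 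These combine into
\[
  K\cdot\bar K\cdot\tilde Q \ =\ -I_\rrr,
\]
reducing the claim to $\tilde Q = I_\rrr$.

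The hard part will be precisely this last step: the RTT relation together with $\qdet T_D(z)=1$ is insufficient on its own (these constraints pin down only the scalar $\det(K\bar K)=(-1)^\rrr$), so the explicit realization furnished by~\reft{Main Theorem 2} must be invoked. The plan for this step is to expand the quantum minor computing $\tilde Q_{\alpha,\beta}$ over its contributing permutations---those for which exactly one column $i_0\in\{1,\dots,\rrr\}$ is fixed to its own row, while the remaining columns $\leq\rrr$ map to rows $>\rrr$ (producing an $(\rrr-1)\times(\rrr-1)$ minor of $K$) and the columns $>\rrr$ map bijectively to rows $\leq\rrr$ other than $i_0$ (producing a similar minor of $\bar K$)---factor the resulting sum using the commutativity of the entries of $K$ and of $\bar K$ among themselves and with each other (both derived from the RTT relation, as shown in the preceding Lemma), and then simplify via a pole-cancellation mechanism analogous to~\refl{zero residue}. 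The base case $\rrr=1$, for which $K\bar K = e^{-q}\cdot(-e^{q})=-1$ is immediate from the explicit formula of~\refr{Toda,DST,Heisenberg}, will serve both as a sanity check and as the anchor for a possible inductive verification.
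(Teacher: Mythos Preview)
Your reduction is correct but circular. The relation $K\bar K\,\tilde Q=-I_\rrr$ you derive shows that $\tilde Q=I_\rrr$ is \emph{equivalent} to $K\bar K=-I_\rrr$, not a simplification of it; indeed, in the classical limit (where all entries commute) one has $T_D(z)^{-1}=\begin{pmatrix}0&K^{-1}\\\bar K^{-1}&-\bar K^{-1}(zI_\rrr-F)K^{-1}\end{pmatrix}$, whence $\tilde Q=-\bar K^{-1}K^{-1}=-(K\bar K)^{-1}$. So the detour through quantum cofactors merely restates the problem. Your sketched attack on $\tilde Q$---expanding the $(n-1)\times(n-1)$ quantum minor as a sum over $i_0$ of products of $(\rrr-1)\times(\rrr-1)$ minors of $K$ and of $\bar K$---is strictly more complicated than computing $(K\bar K)_{\alpha,\beta}$ directly (a single product of entries rather than a product of large minors), and you do not carry it out. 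The ``pole-cancellation mechanism analogous to Lemma~\ref{L:zero residue}'' is exactly where the work lies, and it is not supplied.

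The paper's proof takes the direct route you defer. It writes out $(K\bar K)_{\alpha,\beta}$ using the explicit formulas~\refe{upper triangular Lax entries 2} and~\refe{lower triangular Lax entries 2}, pushes all $e^{q_{\ast,\ast}}$-factors to the right, and groups the summands by their net $e^{q_{\ast,\ast}}$-content. For each nontrivial group it identifies the largest index $k$ appearing and shows the group sums to zero by a telescoping argument built on elementary rational-function identities of the type
\[
  1+\sum_{r_l}\frac{P_{l-1,r_{l-1}}(p_{l,r_l}-1)}{P_{l,r_l}(p_{l,r_l})}\cdot\frac{1}{1+p_{l-1,s_{l-1}}-p_{l,r_l}}=0,
\]
proved by noting the left side is symmetric in the $p_{l,\ast}$, pole-free, of nonpositive degree, and vanishing at infinity. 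The terms with trivial $e^{q_{\ast,\ast}}$-content (forcing $\alpha=\beta$) are shown to sum to $1$ by the same mechanism. This explicit combinatorics \emph{is} the proof; your proposal recognises that such a computation is unavoidable but does not perform it.
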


\begin{proof}
Due to~(\ref{upper triangular Lax entries 2},~\ref{lower triangular Lax entries 2}),
it suffices to prove the following equality:
\begin{multline}\label{kk=-1}
     \sum_{\gamma=1}^{\rrr}
     \sum_{\substack{1\leq r_\alpha\leq a_\alpha\\\cdots\\ 1\leq r_{\rrr+\gamma-1}\leq a_{\rrr+\gamma-1}}}
        \frac{P_{\alpha-1}(p_{\alpha,r_\alpha}-1)\prod_{k=\alpha}^{\rrr+\gamma-2}P_{k,r_k}(p_{k+1,r_{k+1}}-1)}
             {\prod_{k=\alpha}^{\rrr+\gamma-1} P_{k,r_k}(p_{k,r_k})}
        \cdot e^{q_{\alpha,r_\alpha}+\ldots+q_{\rrr+\gamma-1,r_{\rrr+\gamma-1}}}\times\\
     \sum_{\substack{1\leq s_\beta\leq a_\beta\\\cdots\\ 1\leq s_{\rrr+\gamma-1}\leq a_{\rrr+\gamma-1}}}
       \frac{P_{\rrr+\gamma}(p_{\rrr+\gamma-1,s_{\rrr+\gamma-1}}+1)\prod_{k=\beta+1}^{\rrr+\gamma-1}P_{k,s_k}(p_{k-1,s_{k-1}}+1)}
            {\prod_{k=\beta}^{\rrr+\gamma-1} P_{k,s_k}(p_{k,s_k})}\times\\
     e^{-q_{\beta,s_\beta}-\ldots-q_{\rrr+\gamma-1,s_{\rrr+\gamma-1}}}=\delta_{\alpha,\beta}
\end{multline}
for any $1\leq \alpha,\beta\leq \rrr$.

To evaluate the sum in the left-hand side of~(\ref{kk=-1}), we first move
$e^{q_{\alpha,r_\alpha}+\ldots+q_{\rrr+\gamma-1,r_{\rrr+\gamma-1}}}$ to the right
of $p_{\ast,\ast}$-terms, then simplify
  $e^{q_{\iota,r_\iota}}e^{-q_{\iota,s_\iota}}\rightsquigarrow 1$ once $r_\iota=s_\iota$,
and finally group together the summands which have the common $e^{q_{\ast,\ast}}$-factor.
For each such group, pick the maximal $k$ (if such exists) such that $e^{q_{k,\ast}}$ does
appear. If $k$ exists, then $1\leq k\leq 2\rrr-2$ as $a_{2\rrr-1}=1$, while $k$ does not exist
if and only if $\alpha=\beta$ and $r_\iota=s_\iota$ for each $\alpha\leq \iota\leq \rrr+\gamma-1$.

The equality~(\ref{kk=-1}) follows from the following result:

\begin{Prop}\label{canceling sums}
Pick any of the above groups and consider the associated $k$ (if it exists).

\noindent
(a) If $\rrr\leq k\leq 2\rrr-2$, then the sum of terms in the corresponding group is zero.

\noindent
(b) If $1\leq k<\rrr$, then the sum of terms in the corresponding group is zero.

\noindent
(c) If $k$ does not exist, then the sum of terms in the corresponding group equals $1$.
\end{Prop}

\begin{proof}[Proof of Proposition~\ref{canceling sums}]
(a) Fix any admissible collections $r_\alpha,\ldots,r_k$ and
$s_\beta,\ldots,s_k$ with $r_k\ne s_k$. Then, the terms in the corresponding
group are parametrized by $k+1-\rrr\leq \gamma\leq \rrr$ and all admissible
collections $r_{k+1}=s_{k+1},\ldots,r_{\rrr+\gamma-1}=s_{\rrr+\gamma-1}$.
Ignoring the common factor, the total sum of terms in this group equals
$\sum_{\gamma=k+1-\rrr}^{\rrr} S_\gamma$, where each summand is given by
\begin{multline}\label{zero sum}
  S_\gamma:=
  \sum_{\substack{1\leq r_{k+1}\leq a_{k+1}\\\cdots\\ 1\leq r_{\rrr+\gamma-1}\leq a_{\rrr+\gamma-1}}}
  \frac{P_{k,r_k}(p_{k+1,r_{k+1}}-1)\cdots P_{\rrr+\gamma-2,r_{\rrr+\gamma-2}}(p_{\rrr+\gamma-1,r_{\rrr+\gamma-1}}-1)}
       {P_{k+1,r_{k+1}}(p_{k+1,r_{k+1}})\cdots P_{\rrr+\gamma-1,r_{\rrr+\gamma-1}}(p_{\rrr+\gamma-1,r_{\rrr+\gamma-1}})}\times\\
  \frac{P_{k+1,r_{k+1}}(p_{k,s_{k}}+1)P_{k+2,r_{k+2}}(p_{k+1,r_{k+1}})\cdots P_{\rrr+\gamma}(p_{\rrr+\gamma-1,r_{\rrr+\gamma-1}})}
       {P_{k+1,r_{k+1}}(p_{k+1,r_{k+1}}-1)\cdots P_{\rrr+\gamma-1,r_{\rrr+\gamma-1}}(p_{\rrr+\gamma-1,r_{\rrr+\gamma-1}}-1)}.
\end{multline}

It remains to prove $\sum_{\gamma=k+1-\rrr}^{\rrr} S_\gamma=0$.
For the latter, we need the following simple result:

\begin{Lem}\label{auxil lemma 1}
Fix $\rrr<l\leq 2\rrr-1$ and $1\leq r_{l-1}\ne s_{l-1}\leq a_{l-1}$.
Then, we have
\begin{equation}\label{simplification 1}
  1+\sum_{1\leq r_l\leq a_l} \frac{P_{l-1,r_{l-1}}(p_{l,r_l}-1)}{P_{l,r_l}(p_{l,r_l})}\cdot
  \frac{1}{1+p_{l-1,s_{l-1}}-p_{l,r_l}}=0,
\end{equation}
\begin{equation}\label{simplification 2}
  1+\sum_{1\leq r_l\leq a_l} \frac{P_{l-1,r_{l-1}}(p_{l,r_l}-1)}{P_{l,r_l}(p_{l,r_l})}\cdot
  \frac{1}{p_{l-1,r_{l-1}}-p_{l,r_l}}=\frac{P_{l-1,r_{l-1}}(p_{l-1,r_{l-1}}-1)}{P_l(p_{l-1,r_{l-1}})}.
\end{equation}
\end{Lem}

\begin{proof}[Proof of Lemma~\ref{auxil lemma 1}]
Recall that $a_l=2\rrr-l, a_{l-1}=2\rrr-l+1$. Without loss of generality,
we may assume that $r_{l-1}=2\rrr-l+1$ and $s_{l-1}=2\rrr-l$. To simplify the formulas
below, let us relabel $\{p_{l,i}\}_{i=1}^{2\rrr-l}$ by $\{c_i\}_{i=1}^{2\rrr-l}$ and
$\{p_{l-1,i}\}_{i=1}^{2\rrr-l+1}$ by $\{b_i\}_{i=1}^{2\rrr-l+1}$, respectively.

Then, the left-hand side of~(\ref{simplification 1}) becomes
\begin{equation*}
  1-\sum_{i=1}^{2\rrr-l}
    \frac{(c_i-1-b_1)\cdots (c_i-1-b_{2\rrr-l-1})}
         {(c_i-c_1)\cdots (c_i-c_{i-1})(c_i-c_{i+1})\cdots (c_i-c_{2\rrr-l})}.
\end{equation*}
This is a symmetric rational function in $\{c_i\}_{i=1}^{2\rrr-l}$ without poles
(as symmetric functions may not have simple poles at $c_i=c_j$ with $i\ne j$), hence,
it is polynomial in $\{c_i\}_{i=1}^{2\rrr-l}$. However, being of degree $\leq 0$,
this polynomial must be a constant (depending on $\{b_i\}_{i=1}^{2\rrr-l+1}$).
To determine the latter, let $c_1\to \infty$, in which case the sum tends to $0$.
This completes our proof of~(\ref{simplification 1}).

Likewise, the left-hand side of~(\ref{simplification 2}) becomes
\begin{equation*}
  1+\sum_{i=1}^{2\rrr-l}\frac{(c_i-1-b_1)\cdots (c_i-1-b_{2\rrr-l})}
  {(c_i-c_1)\cdots (c_i-c_{i-1})(c_i-c_{i+1})\cdots (c_i-c_{2\rrr-l})}\cdot \frac{1}{b_{2\rrr-l+1}-c_i}.
\end{equation*}
This is a symmetric rational function in $\{c_i\}_{i=1}^{2\rrr-l}$ with the only
poles (which are at most simple) at $c_i=b_{2\rrr-l+1}\ (1\leq i\leq 2\rrr-l)$.
Hence, it is of the form
  $\frac{R(\{b_i\}_{i=1}^{2\rrr-l+1},\{c_i\}_{i=1}^{2\rrr-l})}{\prod_{i=1}^{2\rrr-l} (b_{2\rrr-l+1}-c_i)}$
for some polynomial $R$ of total degree $\deg(R)\leq 2\rrr-l$.
Due to~(\ref{simplification 1}), $R$ must be divisible by
$\prod_{i=1}^{2\rrr-l} (b_{2\rrr-l+1}-1-b_i)$, and thus, for degree reasons,
we have $R(\{b_i\},\{c_i\})=t\cdot \prod_{i=1}^{2\rrr-l} (b_{2\rrr-l+1}-1-b_i)$
with $t\in \BC$. Letting $b_{2\rrr-l+1}\to \infty$, we find $t=1$.
This completes our proof of~(\ref{simplification 2}).
\end{proof}

Applying~(\ref{simplification 2}) to simplify $S_{\rrr-1}+S_{\rrr}$, we find
\begin{multline*}
  S_{\rrr-1}+S_{\rrr}=
  \sum_{\substack{1\leq r_{k+1}\leq a_{k+1}\\\cdots\\ 1\leq r_{2\rrr-2}\leq a_{2\rrr-2}}}
  \frac{P_{k,r_k}(p_{k+1,r_{k+1}}-1)\cdots P_{2\rrr-3,r_{2\rrr-3}}(p_{2\rrr-2,r_{2\rrr-2}}-1)}
       {P_{k+1,r_{k+1}}(p_{k+1,r_{k+1}})\cdots P_{2\rrr-2,r_{2\rrr-2}}(p_{2\rrr-2,r_{2\rrr-2}})}\times\\
  \frac{P_{k+1,r_{k+1}}(p_{k,s_{k}}+1)P_{k+2,r_{k+2}}(p_{k+1,r_{k+1}})\cdots P_{2\rrr-2,r_{2\rrr-2}}(p_{2\rrr-3,r_{2\rrr-3}})}
       {P_{k+1,r_{k+1}}(p_{k+1,r_{k+1}}-1)\cdots P_{2\rrr-3,r_{2\rrr-3}}(p_{2\rrr-3,r_{2\rrr-3}}-1)}.
\end{multline*}
Applying~(\ref{simplification 2}) once again, we can now simplify the sum of
the above expression and $S_{\rrr-2}$. Proceeding in the same way and
applying~(\ref{simplification 2}) at each step, we eventually get
\begin{equation*}
  \sum_{k+1-\rrr\leq \gamma\leq \rrr} S_\gamma=
  1+\sum_{r_{k+1}}\frac{P_{k,r_k}(p_{k+1,r_{k+1}}-1)}{P_{k+1,r_{k+1}}(p_{k+1,r_{k+1}})}\cdot \frac{1}{1+p_{k,s_k}-p_{k+1,r_{k+1}}}=0,
\end{equation*}
due to~(\ref{simplification 1}) as $r_k\ne s_k$.
This completes our proof of Proposition~\ref{canceling sums}(a).

\medskip
(b) The proof of Proposition~\ref{canceling sums}(b) is completely
analogous to the above proof of part (a) and is crucially based both
on Lemma~\ref{auxil lemma 1} and its following counterpart:

\begin{Lem}\label{auxil lemma 2}
Fix $1< l\leq \rrr$ and $1\leq r_{l-1}\ne s_{l-1}\leq a_{l-1}$.
Then, we have
\begin{equation}\label{simplification 3}
  \sum_{1\leq r_l\leq a_l} \frac{P_{l-1,r_{l-1}}(p_{l,r_l}-1)}{P_{l,r_l}(p_{l,r_l})}\cdot
  \frac{1}{1+p_{l-1,s_{l-1}}-p_{l,r_l}}=0,
\end{equation}
\begin{equation}\label{simplification 4}
  \sum_{1\leq r_l\leq a_l} \frac{P_{l-1,r_{l-1}}(p_{l,r_l}-1)}{P_{l,r_l}(p_{l,r_l})}\cdot
  \frac{1}{p_{l-1,r_{l-1}}-p_{l,r_l}}=\frac{P_{l-1,r_{l-1}}(p_{l-1,r_{l-1}}-1)}{P_l(p_{l-1,r_{l-1}})}.
\end{equation}
\end{Lem}

\begin{proof}
The proof is similar to that of~(\ref{simplification 1},~\ref{simplification 2});
we leave details to the interested reader.
\end{proof}

(c) The proof of Proposition~\ref{canceling sums}(c) is completely analogous
to the above proofs of parts (a,b) and is crucially based both on
Lemmas~\ref{auxil lemma 1},~\ref{auxil lemma 2} and their following counterpart:

\begin{Lem}\label{auxil lemma 3}
Fix $1< l\leq \rrr$ and $1\leq r_{l-1}\leq a_{l-1}$. Then, we have
\begin{equation}\label{simplification 5}
  \sum_{1\leq r_l\leq a_l} \frac{P_{l-1,r_{l-1}}(p_{l,r_l}-1)}{P_{l,r_l}(p_{l,r_l})}=0,
\end{equation}
\begin{equation}\label{simplification 6}
  \sum_{1\leq r_l\leq a_l} \frac{P_{l-1}(p_{l,r_l}-1)}{P_{l,r_l}(p_{l,r_l})}=1.
\end{equation}
\end{Lem}

\begin{proof}
The proof is similar to that of~(\ref{simplification 1},~\ref{simplification 2});
we leave details to the interested reader.
\end{proof}

\noindent
This completes our proof of Proposition~\ref{canceling sums}.
\end{proof}

\noindent
As Proposition~\ref{canceling sums} implies the equality~(\ref{kk=-1}),
the proof of Theorem~\ref{KK result} is completed.
\end{proof}

It is instructive to compare this Lax matrix $\sT_D(z)$ to the Lax matrix
$L_{\blambda,\wt{\bmu}}(z)$ of~\cite[(4.2)]{fp} with $\wt{\bmu}=(2^\rrr,0^\rrr)=\bmu+(1^n)$,
cf.~Remark~\ref{comparison to FP}. Conjugating the latter by the permutation matrix
  $\begin{pmatrix}
     0 & I_{\rrr} \\
     I_{\rrr} & 0
   \end{pmatrix}$,
we obtain the following rational Lax matrix
\begin{equation}\label{modified FP Matrix Example 3}
\wt{L}_{\blambda,\wt{\bmu}}(z)=
\begin{pmatrix}
  zI_{\rrr}-\bF & \bar{\bK} \\
  \bK & 0
\end{pmatrix},
\end{equation}
where $\bK\bar{\bK}=-I_{\rrr}$ and $\bK$ encodes all the
$\bq_{\ast,\ast}$-variables via~\cite[(4.4)]{fp}.


\medskip
\noindent
$\bullet$ \emph{\underline{Example 3}: $\blambda=(0^{2\rrr+\sss}),\bmu=(1^{\rrr},0^{\sss},(-1)^{\rrr}), n=2\rrr+\sss$ with $\rrr,\sss>0$.}
\

Then $D=D(\blambda,\emptyset,\bmu)=(\varpi_{\rrr}+\varpi_{\rrr+\sss}-\varpi_0)[\infty]$
and the coefficients $\{a_i\}_{i=1}^{n-1}$ are given by:
\begin{equation*}
  a_1=1,\ \ldots\ ,\ a_{\rrr-1}=\rrr-1,\ a_{\rrr}=a_{\rrr+1}=\ldots=a_{\rrr+\sss}=\rrr,\
  a_{\rrr+\sss+1}=\rrr-1,\ \ldots\ ,\ a_{2\rrr+\sss-1}=1.
\end{equation*}
According to Theorem~\ref{Main Theorem 2}, $\sT_D(z)$ is a block matrix of the form
\begin{equation}\label{Matrix Example 4}
\sT_D(z)=
\begin{pmatrix}
  zI_{\rrr}-F & Q & \bar{K} \\
  -P & I_{\sss} & 0\\
  K & 0 &0
\end{pmatrix},
\end{equation}
where
  $F=(F_{ij})_{i,j=1}^{\rrr}, K=(K_{ij})_{i,j=1}^{\rrr}, \bar{K}=(\bar{K}_{ij})_{i,j=1}^{\rrr}$
are $\rrr\times \rrr$ matrices, $P=(P_{ij})_{1\leq i\leq \sss}^{1\leq j\leq \rrr}$
is an $\sss\times \rrr$ matrix, $Q=(Q_{ji})_{1\leq i\leq \sss}^{1\leq j\leq \rrr}$
is an $\rrr\times \sss$ matrix, and all of them are $z$-independent.

The first simple property of the matrices $P,Q,K,\bar{K}$ is:

\begin{Lem}
(a) The matrix elements
  $\{K_{ij}\}_{i,j=1}^{\rrr}\cup \{P_{ij}\}_{1\leq i\leq \sss}^{1\leq j\leq \rrr}$
pairwise commute.

\noindent
(b) The matrix elements
  $\{\bar{K}_{ij}\}_{i,j=1}^{\rrr}\cup \{Q_{ji}\}_{1\leq i\leq \sss}^{1\leq j\leq \rrr}$
pairwise commute.

\noindent
(c) We have $[K_{ij},\bar{K}_{k\ell}]=0$, $[P_{ij},\bar{K}_{k\ell}]=0$,
$[Q_{ji},K_{k\ell}]=0$, $[P_{ij},Q_{\ell k}]=\delta_{i,k}\delta_{j,\ell}$.

\noindent
(d) The matrix elements $\{F_{ij}\}_{i,j=1}^{\rrr}$ of the matrix $F$ satisfy~\eqref{eq:F-commuting} as well as:
\begin{equation*}
  [F_{ij},Q_{k \ell}]=\delta_{j,k}Q_{i \ell}, \quad
  [F_{ij},P_{k \ell}]=-\delta_{\ell,i} P_{kj}.
\end{equation*}
\end{Lem}

\begin{proof}
These results follow from the RTT relation~(\ref{ratRTT}) for $\sT_D(z)$ and the ansatz~\eqref{Matrix Example 4}.
\end{proof}

Similar to Theorem~\ref{KK result}, there is also a much deeper
relation between $K$ and $\bar{K}$:

\begin{Thm}\label{KK result update}
We have $K\cdot \bar{K} = -I_{\rrr}$.
\end{Thm}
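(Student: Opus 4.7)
The plan is to run a variant of the direct-computation argument that proved Theorem~\ref{KK result}, adapted to accommodate the new middle block of constant rank $a_\rrr = a_{\rrr+1} = \cdots = a_{\rrr+\sss} = \rrr$ in the profile of coefficients. Since $\supp(D)\cap\BC = \emptyset$, we have $Z_k(z)\equiv 1$ for all $k \geq 1$, so the matrix coefficients $K_{\gamma\beta} = \sT_D(z)_{\rrr+\sss+\gamma,\beta}$ and $\bar K_{\alpha\gamma} = \sT_D(z)_{\alpha,\rrr+\sss+\gamma}$ produced by (\ref{lower triangular Lax entries 2}) and (\ref{upper triangular Lax entries 2}) are structurally identical to those of Example 2 but with longer chains of summation indices that now pass through three distinct segments of the $a$-profile.

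First I would write out $(K\bar K)_{\alpha\beta} = \sum_{\gamma=1}^\rrr \bar K_{\alpha,\gamma} K_{\gamma,\beta}$ as an explicit double sum, move every $e^{\pm q}$-factor past the $p$-factors, simplify $e^{q_{\iota,r_\iota}}e^{-q_{\iota,s_\iota}}\rightsquigarrow 1$ whenever $r_\iota = s_\iota$, and group terms by the surviving $e^{q_{\ast,\ast}}$-monomial. Each group is indexed by the maximal level $k$ at which a $q$-variable survives, or by the degenerate case in which no $q$-variable survives; in the latter case $\alpha=\beta$ is forced and all chain indices coincide. The degenerate group contributes $1$ by the direct analogue of Proposition~\ref{canceling sums}(c), and together with the overall minus sign in (\ref{upper triangular Lax entries 2}) this produces the required $-\delta_{\alpha\beta}$ on the ``diagonal'' of $K\bar K$.

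Next I would show that every group with $k$ defined vanishes. There are three regimes to distinguish: the decreasing-rank regime $\rrr+\sss \leq k \leq 2\rrr+\sss-2$, the new constant-rank regime $\rrr \leq k < \rrr+\sss$, and the increasing-rank regime $1 \leq k < \rrr$. The decreasing and increasing regimes are handled exactly as in Proposition~\ref{canceling sums}(a) and (b) via Lemmas~\ref{auxil lemma 1} and \ref{auxil lemma 2} respectively; no new input is required beyond the obvious reindexing of the $S_\gamma$-telescope to accommodate the lengthened chains.

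The heart of the matter, and the main obstacle, is the new constant-rank regime. Here the naive analogues of (\ref{simplification 1}) and (\ref{simplification 2}) acquire extra additive constants: a short residue calculation (the numerator and denominator of $P_{l-1,r_{l-1}}(z-1)/P_l(z)$ now have equal degree $\rrr-1$, so the contour at infinity no longer annihilates the sum) shows that the equi-rank analogue of (\ref{simplification 2}) evaluates to $1 + \frac{P_{l-1,r_{l-1}}(p_{l-1,r_{l-1}}-1)}{P_l(p_{l-1,r_{l-1}})}$ rather than simply the ratio. Iterating the telescope through $\sss$ consecutive equi-rank levels therefore accrues an additional contribution that must cancel against the boundary terms at $l = \rrr$ and $l = \rrr+\sss$. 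I expect this cancellation to follow from one further symmetric-rational-function identity, proved by the standard technique of polynomiality plus degree-at-infinity analysis, but tracking the bridge between the three regimes requires the most delicate combinatorial bookkeeping in the proof. Once this is in place, the trichotomy analogous to Proposition~\ref{canceling sums} delivers $(K\bar K)_{\alpha\beta} = -\delta_{\alpha\beta}$, which is precisely the asserted identity $K\cdot\bar K = -I_\rrr$.
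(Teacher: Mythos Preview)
Your overall strategy matches the paper's: expand $(K\bar K)_{\alpha\beta}$, normal-order the exponentials, group by the maximal surviving level $k$, and telescope each group to zero (or to $1$ in the degenerate case). The decreasing and increasing regimes are indeed handled exactly as in Proposition~\ref{canceling sums}(a,b) via Lemmas~\ref{auxil lemma 1} and~\ref{auxil lemma 2}.

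The gap is in your treatment of the constant-rank regime $\rrr < l \le \rrr+\sss$. You assert that the numerator and denominator of $P_{l-1,r_{l-1}}(z-1)/P_l(z)$ have equal degree $\rrr-1$; this is a miscount. Since $a_l=\rrr$, the polynomial $P_l(z)$ has degree $\rrr$, while $P_{l-1,r_{l-1}}(z-1)$ has degree $a_{l-1}-1=\rrr-1$. Consequently the auxiliary rational function
\[
  \frac{P_{l-1,r_{l-1}}(z-1)}{P_l(z)\,(c-z)}
\]
is $O(z^{-2})$ at infinity, its residue there vanishes, and the sum of finite residues yields \emph{exactly} (\ref{simplification 3}) and (\ref{simplification 4}) with no additive ``$1$''. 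In other words, the constant-rank regime behaves like the increasing regime (Lemma~\ref{auxil lemma 2}), not the decreasing one; this is precisely the paper's Lemma~\ref{auxil lemma 4}. There is no accumulated extra contribution, no delicate boundary matching between regimes, and no further identity is required. Once you correct the degree count, the telescope runs through all three regimes using Lemmas~\ref{auxil lemma 1}, \ref{auxil lemma 4}, and~\ref{auxil lemma 2} at the respective levels, and the proof concludes as in Theorem~\ref{KK result}.
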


\begin{proof}
The proof of Theorem~\ref{KK result update} is completely analogous to
the above proof of Theorem~\ref{KK result}. The only extra technical result
needed is the following counterpart of Lemma~\ref{auxil lemma 2}:

\begin{Lem}\label{auxil lemma 4}
For $\rrr< l\leq \rrr+\sss$ and $1\leq r_{l-1}\ne s_{l-1}\leq a_{l-1}$,
both~(\ref{simplification 3},~\ref{simplification 4}) hold.
\end{Lem}

We leave details to the interested reader.
\end{proof}

It is instructive to compare the Lax matrix $\sT_D(z)$ of~(\ref{Matrix Example 4})
to the Lax matrix $L_{\blambda,\wt{\bmu}}(z)$ of~\cite[(4.7)]{fp} with
$\wt{\bmu}=(2^{\rrr},1^{\sss},0^{\rrr})=\bmu+(1^n)$, cf.~Remark~\ref{comparison to FP}.
Conjugating the latter by the permutation matrix
  $\begin{pmatrix}
     0 & 0 & I_{\rrr} \\
     0 & I_{\sss} & 0 \\
     I_{\rrr} & 0 & 0
   \end{pmatrix}$,
we obtain the following rational Lax matrix
\begin{equation}\label{modified FP Matrix Example 4}
\wt{L}_{\blambda,\wt{\bmu}}(z)=
\begin{pmatrix}
  zI_{\rrr}-\wt{\bF} & \bQ & \bar{\bK} \\
  -\bP & I_{\sss} & 0\\
  \bK & 0 &0
\end{pmatrix},
\end{equation}
where $\bK\bar{\bK}=-I_{\rrr}$ and the matrices $\bK,\bQ$ encode all the
$\bq_{\ast,\ast}$-variables via~\cite[(4.4, 4.8)]{fp}.

\begin{Rem}
We note that $\bK,\bar{\bK}$ of~(\ref{modified FP Matrix Example 4}) coincide with $\bK,\bar{\bK}$
of~(\ref{modified FP Matrix Example 3}), while $K,\bar{K}$ of~(\ref{Matrix Example 4})
are not the same as $K,\bar{K}$ of~(\ref{Matrix Example 3}).
\end{Rem}


\medskip
\noindent
$\bullet$ \emph{\underline{Example 4}: $\blambda=(1,0^{n-1}),\bmu=(0^{n-1},-1),\unl{x}=\{x_1\}$.}
\

The corresponding divisor is $D=D(\blambda,\{x_1\},\bmu)=\varpi_{n-1}[x_1]+(\varpi_1-\varpi_0)[\infty]$
with $x_1\in \BC$. This example is similar to the above Example~1 since the
coefficients $a_i$ are the same: $a_1=\ldots=a_{n-1}=1$. To simplify our notations,
let us relabel $\{p_{i,1},e^{\pm q_{i,1}}\}_{i=1}^{n-1}$ by $\{p_i,e^{\pm q_i}\}_{i=1}^{n-1}$.
Due to Theorem~\ref{Main Theorem 2}, the matrix $\sT_D(z)$ equals:
\begin{equation}\label{Matrix Example 5}
\begin{split}
& \sT_D(z)=\\
& \begin{pmatrix}
  z-p_1 & -e^{q_1} & \cdots & -e^{q_1+\ldots+q_{n-2}} & -(p_{n-1}-x_1)e^{q_1+\ldots+q_{n-1}} \\
  (p_1+1-p_2)e^{-q_1} & 1 & \cdots & 0 & 0\\
   \vdots &  \vdots & \ddots & \vdots & \vdots\\
  (p_{n-2}+1-p_{n-1})e^{-q_1-\ldots-q_{n-2}} & 0 & \cdots & 1 & 0\\
  e^{-q_1-\ldots-q_{n-1}} & 0 & \cdots & 0 & 1
\end{pmatrix}.
\end{split}
\end{equation}

Let us compare this Lax matrix $\sT_D(z)$ to the rational Lax matrix
$L_{\blambda,x_1,\wt{\bmu}}(z)$ of~\cite[(3.1)]{fp} with
$\wt{\bmu}=(1^{n-1},0)=\bmu+(1^n)$, cf.~Remark~\ref{comparison to FP}.
Conjugating the latter by the permutation matrix $\sum_{i=1}^n E_{i,n-i+1}$,
we obtain the following rational Lax matrix:
\begin{equation}\label{modified FP Matrix Example 5}
\wt{L}_{\blambda,x_1,\wt{\bmu}}(z)=
\begin{pmatrix}
  z-x_1-\bq_{n,1}\bp_{1,n}-\ldots-\bq_{n,n-1}\bp_{n-1,n} & \bq_{n,n-1} & \cdots & \bq_{n,2} & \bq_{n,1} \\
  -\bp_{n-1,n} & 1 & \cdots & 0 & 0\\
  \vdots &  \vdots & \ddots & \vdots & \vdots\\
  -\bp_{2,n} & 0 & \cdots & 1 & 0\\
  -\bp_{1,n} & 0 & \cdots & 0 & 1
\end{pmatrix}.
\end{equation}

Thus $\sT_D(z)$ of~(\ref{Matrix Example 5}) and
$\wt{L}_{\blambda,x_1,\wt{\bmu}}(z)$ of~(\ref{modified FP Matrix Example 5})
coincide upon the canonical transformation:
\begin{equation*}
\begin{split}
  & \bq_{n,n-1}=-e^{q_1},\ \ldots \ ,\
    \bq_{n,2}=-e^{q_1+\ldots+q_{n-2}},\ \bq_{n,1}=-(p_{n-1}-x_1)e^{q_1+\ldots+q_{n-1}},\\
  & \bp_{n-1,n}=(p_2-p_1-1)e^{-q_1}, \ldots,
    \bp_{2,n}=(p_{n-1}-p_{n-2}-1)e^{-q_1-\ldots-q_{n-2}}, \bp_{1,n}=-e^{-q_1-\ldots-q_{n-1}}.
\end{split}
\end{equation*}


\medskip
\noindent
$\bullet$ \emph{\underline{Example 5}: $\blambda=(1^{n-1},0),\bmu=(0,(-1)^{n-1}),\unl{x}=\{x_1\}$.}
\

The corresponding divisor is $D=D(\blambda,\{x_1\},\bmu)=\varpi_{1}[x_1]+(\varpi_{n-1}-\varpi_0)[\infty]$
with $x_1\in \BC$. This example is similar to the previous one as
$a_1=\ldots=a_{n-1}=1$, and we shall still relabel
$\{p_{i,1},e^{\pm q_{i,1}}\}_{i=1}^{n-1}$ by $\{p_i,e^{\pm q_i}\}_{i=1}^{n-1}$.
Due to Theorem~\ref{Main Theorem 2}, the matrix coefficients of $\sT_D(z)$ are:
\begin{equation}\label{Matrix Example 6}
\begin{split}
  & \sT_D(z)_{ii}=
    \begin{cases}
      z-p_1 & \text{if } i=1 \\
      z+p_{i-1}-p_i+1-x_1 & \text{if } 1<i<n\\
      1 & \text{if } i=n
    \end{cases},\\
  & \sT_D(z)_{ij}=
    \begin{cases}
      -(p_1-x_1)e^{q_1+\ldots+q_{j-1}} & \text{if } 1=i<j \\
      -(p_i-1-p_{i-1})e^{q_i+\ldots+q_{j-1}} & \text{if } 1<i<j
    \end{cases},\\
  & \sT_D(z)_{ji}=
    \begin{cases}
      (p_{j-1}+1-p_j)e^{-q_i-\ldots-q_{j-1}} & \text{if } i<j<n \\
      e^{-q_i-\ldots-q_{n-1}} & \text{if } i<j=n
    \end{cases}.
\end{split}
\end{equation}

The following is straightforward:

\begin{Lem}
For any $1\leq i,j\leq n-1$, we have
  $\sT_D(z)_{ij}=\delta_{i,j}(z-x_1)+\sT_D(z)_{in} \sT_D(z)_{nj}$.
\end{Lem}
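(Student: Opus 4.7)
The plan is to verify the identity by direct computation from the explicit formulas in~(\ref{Matrix Example 6}), treating separately the three cases $i=j$, $i<j$, and $i>j$. The key observation that makes this tractable is that the last row and last column of $\sT_D(z)$ have a very rigid form: for $1\le i,j<n$,
\begin{equation*}
  \sT_D(z)_{in}=
  \begin{cases}
     -(p_1-x_1)\,e^{q_1+\ldots+q_{n-1}}, & i=1\\
     -(p_i-1-p_{i-1})\,e^{q_i+\ldots+q_{n-1}}, & 1<i<n
  \end{cases},
  \qquad
  \sT_D(z)_{nj}=e^{-q_j-\ldots-q_{n-1}},
\end{equation*}
and the scalar factor in $\sT_D(z)_{in}$ involves only $p_{i-1},p_i,x_1$, which commute with all $e^{q_k}$ appearing to the right. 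Moreover, since $[e^{q_k},e^{q_l}]=0$, the product of the exponential factors simply telescopes.

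First, I would record the three telescoping identities
\begin{equation*}
  e^{q_i+\ldots+q_{n-1}}\cdot e^{-q_j-\ldots-q_{n-1}}=
  \begin{cases}
    e^{q_i+\ldots+q_{j-1}}, & i<j\\
    1, & i=j\\
    e^{-q_j-\ldots-q_{i-1}}, & i>j
  \end{cases},
\end{equation*}
which follow at once from the commutativity of the $e^{q_k}$'s. Then, for $i<j$, substituting into $\sT_D(z)_{in}\sT_D(z)_{nj}$ yields precisely the formula for $\sT_D(z)_{ij}$ in~(\ref{Matrix Example 6}) (both in the subcase $i=1$ and in the subcase $1<i<j<n$). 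For $i>j$, one obtains $-(p_i-1-p_{i-1})e^{-q_j-\ldots-q_{i-1}}$; rewriting $-(p_i-1-p_{i-1})=p_{i-1}+1-p_i$ matches the formula for the lower-triangular entry $\sT_D(z)_{ij}$ of~(\ref{Matrix Example 6}). In both of these off-diagonal cases $\delta_{i,j}(z-x_1)=0$, so the identity holds.

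The only slightly different case is $i=j$. Here $\sT_D(z)_{in}\sT_D(z)_{ni}$ is a scalar in $z$: it equals $-(p_1-x_1)=x_1-p_1$ for $i=1$ and $p_{i-1}+1-p_i$ for $1<i<n$. Adding $\delta_{i,i}(z-x_1)=z-x_1$ then produces $z-p_1$ for $i=1$ and $z+p_{i-1}-p_i+1-x_1$ for $1<i<n$, which are precisely the diagonal entries of $\sT_D(z)$ given in~(\ref{Matrix Example 6}). The main (and essentially only) obstacle is bookkeeping — one must be careful to handle the special case $i=1$ separately because of the factor $(p_1-x_1)$ instead of $(p_i-1-p_{i-1})$ — but no nontrivial commutator ever arises because the $p_*$-prefactor in $\sT_D(z)_{in}$ sits to the left of the $e^{q_k}$ factors with $k\ge i$, all of which commute with it.
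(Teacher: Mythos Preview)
Your proof is correct and matches the paper's approach, which simply labels the lemma ``straightforward'' and gives no further argument; the direct case-by-case verification from the explicit formulas in~(\ref{Matrix Example 6}) is exactly what is intended.

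One small remark: the assertion that the scalar prefactor $-(p_i-1-p_{i-1})$ (or $-(p_1-x_1)$) commutes with all $e^{q_k}$ for $k\ge i$ is false, since $[e^{q_i},p_i]=-e^{q_i}$. However, this claim is never actually used in your computation: the prefactor already sits to the \emph{left} in $\sT_D(z)_{in}$ and stays there in the target formula for $\sT_D(z)_{ij}$, so nothing ever needs to be moved past it. The only commutativity you genuinely rely on is $[e^{q_k},e^{\pm q_l}]=0$, which holds and is precisely what makes the exponential factors telescope. So the argument stands; just delete or correct that parenthetical justification.
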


Let us compare this Lax matrix $\sT_D(z)$ to the rational Lax matrix
$L_{\blambda,x_1,\wt{\bmu}}(z)$ of~\cite[(3.1)]{fp} with
$\wt{\bmu}=(1,0^{n-1})=\bmu+(1^n)$, cf.~Remark~\ref{comparison to FP}.
Conjugating the latter by the permutation matrix
$E_{12}+\ldots+E_{n-1,n}+E_{n,1}$, we obtain the rational Lax matrix
$\wt{L}_{\blambda,x_1,\wt{\bmu}}(z)$ with the following matrix coefficients:
\begin{equation}\label{modified FP Matrix Example 6}
\begin{split}
  & \wt{L}_{\blambda,x_1,\wt{\bmu}}(z)_{ij}=\delta_{i,j}(z-x_1)-\bq_{i+1,1}\bp_{1,j+1}\quad
    \mathrm{if}\ 1\leq i,j<n,\\
  & \wt{L}_{\blambda,x_1,\wt{\bmu}}(z)_{in}=\bq_{i+1,1}, \quad
    \wt{L}_{\blambda,x_1,\wt{\bmu}}(z)_{ni}=-\bp_{1,i+1}, \quad
    \wt{L}_{\blambda,x_1,\wt{\bmu}}(z)_{nn}=1 \quad \mathrm{if}\ 1\leq i<n.
\end{split}
\end{equation}

Thus $\sT_D(z)$ of~(\ref{Matrix Example 6}) and
$\wt{L}_{\blambda,x_1,\wt{\bmu}}(z)$ of~(\ref{modified FP Matrix Example 6})
coincide upon the canonical transformation:
\begin{equation*}
\begin{split}
  & \bq_{2,1}=(x_1-p_1)e^{q_1+\ldots+q_{n-1}},
    \bq_{3,1}=(p_1-p_2+1)e^{q_2+\ldots+q_{n-1}}, \ldots,
    \bq_{n,1}=(p_{n-2}-p_{n-1}+1)e^{q_{n-1}},\\
  & \bp_{1,2}=-e^{-q_1-\ldots-q_{n-1}},\
    \bp_{1,3}=-e^{-q_2-\ldots-q_{n-1}},\ \ldots \ ,\ \bp_{1,n}=-e^{-q_{n-1}}.
\end{split}
\end{equation*}


\medskip
\noindent
$\bullet$ \emph{\underline{Example 6}: $\blambda=(1^{\rrr},0^{\sss}),\bmu=(0^{\sss},(-1)^{\rrr}),\unl{x}=\{x_1\}, n=\rrr+\sss$ with $\rrr,\sss>0$.}
\

This example naturally generalizes Example~4 ($\rrr=1$ case)
and Example~5 ($\sss=1$ case) above. The corresponding divisor is
  $D=D(\blambda,\{x_1\},\bmu)=\varpi_{\sss}[x_1]+(\varpi_{\rrr}-\varpi_0)[\infty]$ with $x_1\in \BC$.
According to Theorem~\ref{Main Theorem 2}, $\sT_D(z)$ is a block matrix of the form
\begin{equation}\label{Matrix Example 7}
\sT_D(z)=
\begin{pmatrix}
  zI_{\rrr}-F & Q \\
  -P & I_{\sss}
\end{pmatrix},
\end{equation}
where $F=(F_{ij})_{i,j=1}^{\rrr}$ is an $\rrr\times \rrr$ matrix,
$P=(P_{ij})_{1\leq i\leq \sss}^{1\leq j\leq \rrr}$ is an $\sss\times \rrr$ matrix,
$Q=(Q_{ji})_{1\leq i\leq \sss}^{1\leq j\leq \rrr}$ is an $\rrr\times \sss$ matrix,
and all of them are $z$-independent.

The first simple property of the matrices $P,Q$ is:

\begin{Lem}\label{simple PQ}
(a) The matrix elements $
\{P_{ij}\}_{1\leq i\leq \sss}^{1\leq j\leq \rrr}$ pairwise commute.

\noindent
(b) The matrix elements
$\{Q_{ji}\}_{1\leq i\leq \sss}^{1\leq j\leq \rrr}$ pairwise commute.

\noindent
(c) The commutation relation between the matrix elements of $P,Q$ is
$[P_{ij},Q_{\ell k}]=\delta_{i,k}\delta_{j,\ell}$.

\noindent
(d) The matrix elements $\{F_{ij}\}_{i,j=1}^{\rrr}$ of the matrix $F$ satisfy the following commutation relations:
\begin{equation*}
  [F_{ij},Q_{k \ell}]=\delta_{j,k}Q_{i \ell}, \quad
  [F_{ij},P_{k \ell}]=-\delta_{\ell,i} P_{kj}, \quad
  [F_{ij},F_{k \ell}]=\delta_{j,k}F_{i\ell}-\delta_{\ell,i}F_{kj}.
\end{equation*}
\end{Lem}

\begin{proof}
These results follow from the RTT relation~(\ref{ratRTT}) for $\sT_D(z)$ and the ansatz~\eqref{Matrix Example 7}.
\end{proof}

A much deeper relation between $P,Q$, and $F$ is established in the following result:

\begin{Thm}\label{PQF result}
We have $F=x_1I_{\rrr}+QP$.
\end{Thm}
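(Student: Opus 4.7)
The plan is to exploit the Gauss decomposition $T_D(z)=F_D(z)G_D(z)E_D(z)$ at the specialization $z=x_1$, where the diagonal factor $G_D(z)$ exhibits a rank-drop. Since $\epsilon_i^\vee(\varpi_{\sss})=-1$ for $i>\sss$ and $0$ otherwise, formula~\eqref{diagonal entries} gives $g_i^D(z)=\frac{P_i(z)}{P_{i-1}(z-1)}(z-x_1)^{-\epsilon_i^\vee(\varpi_{\sss})}$, which carries a factor $(z-x_1)$ precisely for $i>\sss$. Hence $g_i^D(x_1)=0$ for $i>\sss$ in the Ore localization $\CA_{\loc}$ obtained by adjoining inverses of $\{P_{i-1}(x_1-1)\}_{i=2}^{n}$ (possible since $\CA$ is a Noetherian Ore domain). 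In $\CA_{\loc}$ the diagonal matrix $G_D(x_1)$ factors as $G_D(x_1)=\iota_{\sss}\tilde G\pi_{\sss}$, where $\pi_{\sss}\colon\CA_{\loc}^n\twoheadrightarrow\CA_{\loc}^{\sss}$ and $\iota_{\sss}\colon\CA_{\loc}^{\sss}\hookrightarrow\CA_{\loc}^n$ are the standard projection onto and inclusion of the first $\sss$ coordinates, and $\tilde G=\mathrm{diag}\bigl(g_1^D(x_1),\ldots,g_{\sss}^D(x_1)\bigr)$.

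Since the triangular factors $F_D(z)$ and $E_D(z)$ of~\eqref{lower triangular all entries} and~\eqref{upper triangular all entries} have poles only at $z=p_{\ast,\ast}$ or $z=p_{\ast,\ast}+1$, both are regular at $z=x_1$. Composing yields a factorization
\[
\sT_D(x_1)=T_D(x_1)=UV,\qquad U:=F_D(x_1)\iota_{\sss}\tilde G,\ \ V:=\pi_{\sss}E_D(x_1),
\]
with $U$ of size $n\times\sss$ and $V$ of size $\sss\times n$ over $\CA_{\loc}$. Writing $U$ with top $\rrr\times\sss$ block $U_1$ and bottom $\sss\times\sss$ block $U_2$, and $V$ with left $\sss\times\rrr$ block $V_1$ and right $\sss\times\sss$ block $V_2$, and equating $UV$ with the explicit block form~\eqref{Matrix Example 7} of $\sT_D(x_1)$, I obtain
\[
x_1I_{\rrr}-F=U_1V_1,\qquad Q=U_1V_2,\qquad -P=U_2V_1,\qquad I_{\sss}=U_2V_2.
\]
The last identity forces $U_2$ and $V_2$ to be mutual inverses, as matrix rings over the Noetherian Ore domain $\CA_{\loc}$ are stably finite. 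The off-diagonal identities then give $U_1=QU_2$ and $V_1=-V_2P$, and substitution into the upper-left block yields
\[
x_1I_{\rrr}-F=U_1V_1=(QU_2)(-V_2P)=-Q(U_2V_2)P=-QP,
\]
that is, $F=x_1I_{\rrr}+QP$. Injectivity of $\CA\hookrightarrow\CA_{\loc}$ lifts this to an equality in $\CA$.

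The main technical obstacle is justifying the Ore-localization step: one must verify that the multiplicative set generated by $\{P_{i-1}(x_1-1)\}_{i=2}^{n}$ satisfies the Ore condition in $\CA$, which is routine but tedious due to the twist relations between the $p_{\ast,\ast}$ and $e^{\pm q_{\ast,\ast}}$. A more pedestrian alternative is to establish $F=x_1I_{\rrr}+QP$ entrywise from the explicit formulas of Theorem~\ref{T:Main Theorem 2}, modelled on the proof of Theorem~\ref{T:KK result} above and invoking auxiliary identities analogous to Lemmas~\ref{L:auxil lemma 1}--\ref{L:auxil lemma 3}.
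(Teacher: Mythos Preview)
Your approach is genuinely different from the paper's. The paper simply says the proof is ``completely analogous to the above proof of Theorem~\ref{KK result}'' and leaves the details --- in other words, exactly the entrywise computation you call the ``pedestrian alternative'' at the end. Your rank-drop argument at $z=x_1$ is more conceptual: it explains \emph{why} the identity holds (the diagonal factor $G_D$ collapses to rank $\sss$ at $z=x_1$, forcing $\sT_D(x_1)$ to factor through an $\sss$-dimensional intermediary), whereas the paper's route verifies it by brute cancellation of rational functions in the $p_{\ast,\ast}$.

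There is, however, a genuine gap in your localization. Your $\CA_{\loc}$ adjoins only the inverses of $P_{i-1}(x_1-1)=\prod_r(x_1-1-p_{i-1,r})$. That suffices for $G_D(x_1)$ and in fact for $F_D(x_1)$ (the poles of $f^D_{ji}(z)$ are at $z=p_{i,r}+1$, so evaluation at $x_1$ needs exactly $(x_1-p_{i,r}-1)^{-1}$, which your set contains). But $e^D_{ij}(z)$ has its $z$-dependence through $\frac{1}{z-p_{i,r_i}}$, so $E_D(x_1)$ --- and hence your $V=\pi_{\sss}E_D(x_1)$ --- requires $(x_1-p_{i,r})^{-1}$, which is \emph{not} in your $\CA_{\loc}$. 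The sentence ``have poles only at $z=p_{\ast,\ast}$ \ldots\ both are regular at $z=x_1$'' conflates two things: the question is not whether the scalar $x_1$ coincides with a pole location, but whether the element $x_1-p_{i,r}$ is invertible in the ring you are working in, and in your $\CA_{\loc}$ it is not. The fix is straightforward: enlarge the Ore set to include the $(x_1-p_{i,r})$ as well. These lie in the commutative polynomial subalgebra in the $p$'s, and conjugation by $e^{\pm q_{i,r}}$ sends them to $(x_1-p_{i,r}\mp 1)$, which is again in the enlarged set, so the Ore condition is easy. With that correction your factorization $\sT_D(x_1)=UV$ is legitimate, and the stably-finite argument (Noetherian domains have stably finite matrix rings, and Ore localizations of Noetherian rings remain Noetherian) goes through as written.
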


\begin{proof}
The proof of Theorem~\ref{PQF result} is completely analogous to the above
proof of Theorem~\ref{KK result}. We leave details to the interested reader.
\end{proof}

Let us compare this Lax matrix $\sT_D(z)$ to the rational Lax matrix
$L_{\blambda,x_1,\wt{\bmu}}(z)$ of~\cite[(3.1)]{fp} with
$\wt{\bmu}=(1^{\sss},0^{\rrr})=\bmu+(1^n)$, cf.~Remark~\ref{comparison to FP}.
Conjugating the latter by the permutation matrix
$\sum_{i=1}^{\rrr} E_{i,\sss+i}+\sum_{i=1}^{\sss} E_{\rrr+i,i}$,
we obtain the following rational Lax matrix
\begin{equation}\label{modified FP Matrix Example 7}
\wt{L}_{\blambda,x_1,\wt{\bmu}}(z)=
\begin{pmatrix}
  (z-x_1)I_{\rrr}-\bQ\bP & \bQ \\
  -\bP & I_{\sss}
\end{pmatrix},
\end{equation}
where $\bP=(\bp_{i,\sss+j})_{1\leq i\leq \sss}^{1\leq j\leq \rrr}$ and
$\bQ=(\bq_{\sss+j,i})_{1\leq i\leq \sss}^{1\leq j\leq \rrr}$
encode all the variables $\bp_{\ast,\ast}, \bq_{\ast,\ast}$ of~\cite{fp}.

Thus $\sT_D(z)$ of~(\ref{Matrix Example 7}) and
$\wt{L}_{\blambda,x_1,\wt{\bmu}}(z)$ of~(\ref{modified FP Matrix Example 7})
coincide upon the canonical transformation:
\begin{equation*}
  \bq_{\sss+j,i}=\sT_D(z)_{j,\rrr+i},\quad \bp_{i,\sss+j}=-\sT_D(z)_{\rrr+i,j},
\end{equation*}
with $\sT_D(z)_{j,\rrr+i}$ and $\sT_D(z)_{\rrr+i,j}$ evaluated
via~(\ref{upper triangular Lax entries 2}) and~(\ref{lower triangular Lax entries 2}),
respectively.


\subsection{Coproduct homomorphisms for shifted Yangians}\label{ssec coproduct Yangians}
\

One of the crucial benefits of the RTT realization is that it immediately
endows the Yangian of $\gl_n$ with the Hopf algebra structure, in particular,
the coproduct homomorphism
\begin{equation}\label{eq:rtt-coproduct-nonshifted}
  \Delta^\rtt\colon Y^\rtt(\gl_n)\longrightarrow Y^\rtt(\gl_n)\otimes Y^\rtt(\gl_n),
  \qquad T(z) \mapsto T(z)\otimes T(z).
\end{equation}

The main observation of this section is that~\eqref{eq:rtt-coproduct-nonshifted}
naturally admits a shifted version:

\begin{Prop}\label{shifted rtt coproduct}
For any $\mu_1,\mu_2\in \Lambda^+$, there is a unique $\BC$-algebra homomorphism
\begin{equation*}
  \Delta^\rtt_{-\mu_1,-\mu_2}\colon Y^\rtt_{-\mu_1-\mu_2}(\gl_n)\longrightarrow
  Y^\rtt_{-\mu_1}(\gl_n)\otimes Y^\rtt_{-\mu_2}(\gl_n)
\end{equation*}
defined by
\begin{equation}\label{RTT coproduct formula}
  \Delta^\rtt_{-\mu_1,-\mu_2}(T(z))=T(z)\otimes T(z).
\end{equation}
\end{Prop}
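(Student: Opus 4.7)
The plan is to verify that the matrix $\CT(z):=T'(z)T''(z)$---matrix multiplication in $\End\BC^n$ with $T'(z):=T(z)\otimes 1$ and $T''(z):=1\otimes T(z)$---satisfies both the RTT relation~\eqref{ratRTT} and the Gauss-decomposition form~\eqref{Gauss product rational}, \eqref{t-modes shifted} appropriate to $Y^\rtt_{-\mu_1-\mu_2}(\gl_n)$. This makes the assignment $T(z)\mapsto T(z)\otimes T(z)$ descend to a well-defined algebra homomorphism $Y^\rtt_{-\mu_1-\mu_2}(\gl_n)\to Y^\rtt_{-\mu_1}(\gl_n)\otimes Y^\rtt_{-\mu_2}(\gl_n)$; uniqueness is automatic since the $t^{(r)}_{ij}$ generate the source.

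For the RTT part, the entries of $T'(z)$ commute with those of $T''(w)$ as they lie in different tensor factors of the target, and each of $T', T''$ satisfies~\eqref{ratRTT} by construction, giving the standard fusion calculation
\begin{equation*}
R_\rat(z-w)\CT_1(z)\CT_2(w) = R_\rat T'_1 T'_2 T''_1 T''_2 = T'_2 T'_1 R_\rat T''_1 T''_2 = T'_2 T''_2 T'_1 T''_1 R_\rat = \CT_2(w)\CT_1(z) R_\rat(z-w).
\end{equation*}

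The bulk of the work is the Gauss factorization. Using $T'(z)=F'(z)G'(z)E'(z)$ and $T''(z)=F''(z)G''(z)E''(z)$ from~\eqref{Gauss product rational} in each factor, one has $\CT(z)=F'(z)G'(z)\,[E'(z)F''(z)]\,G''(z)E''(z)$. Since~\eqref{t-modes shifted} makes both $E'(z)$ and $F''(z)$ unitriangular with off-diagonal entries in $z^{-1}(\cdots)[[z^{-1}]]$, and since entries from distinct tensor factors commute, the middle product $E'(z)F''(z)$ lies in $I+O(z^{-1})$. Consequently it admits a unique Gauss factorization $E'(z)F''(z)=\hat F(z)\hat G(z)\hat E(z)$ with each of $\hat F(z),\hat G(z),\hat E(z)$ congruent to the identity modulo $z^{-1}$; the standard recursive formulas for extracting these factors produce well-defined formal power series in $z^{-1}$ starting from the identity. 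Substituting back and regrouping---using that $G'(z), G''(z)$ are invertible in the localization $(Y^\rtt_{-\mu_1}(\gl_n)\otimes Y^\rtt_{-\mu_2}(\gl_n))((z^{-1}))$ thanks to their monic leading terms $z^{d_i'}, z^{d_i''}$---yields the Gauss decomposition $\CT(z)=\CF(z)\CG(z)\CE(z)$ whose diagonal part satisfies
\begin{equation*}
(\CG(z))_{ii} = g'_i(z)\,\hat g_i(z)\, g''_i(z) = z^{d_i'+d_i''} + (\text{lower order in }z),
\end{equation*}
matching the shift $d_i=-\epsilon^\vee_i(\mu_1+\mu_2)$ prescribed by~\eqref{t-modes shifted} for $Y^\rtt_{-\mu_1-\mu_2}(\gl_n)$.

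The main subtlety---and the reason for restricting to antidominant shifts---is checking that the off-diagonal entries of $\CF(z)$ and $\CE(z)$ remain in $z^{-1}(\cdots)[[z^{-1}]]$ as required by~\eqref{t-modes shifted}, since the conjugations by $G'(z), G''(z)$ can a priori introduce positive $z$-powers (the shift sequence $d_i'=-\epsilon^\vee_i(\mu_1)$ being non-decreasing for $\mu_1\in\Lambda^+$). Controlling these contributions relies on the explicit commutation rules of~\refl{defining relations from bk}(b,c) for $[g_i(z),e_{j,j+1}(w)]$ and $[g_i(z),f_{j+1,j}(w)]$, and is exactly where the hypothesis $\mu_1,\mu_2\in\Lambda^+$ enters the proof.
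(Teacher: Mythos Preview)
Your overall architecture matches the paper's proof: verify the RTT relation by the standard fusion argument, then Gauss-decompose the middle factor $E'(z)F''(z)=\hat F(z)\hat G(z)\hat E(z)$ (entries of $E',F''$ sit in different tensor factors, so this is a purely formal decomposition with $\hat F,\hat G,\hat E\equiv I\pmod{z^{-1}}$), and finally absorb $G'(z),G''(z)$ back into the three factors.  Your diagonal computation $(\CG(z))_{ii}=g'_i(z)\hat g_i(z)g''_i(z)=z^{d'_i+d''_i}+\ldots$ is exactly right.

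The last paragraph, however, misidentifies both the sign and the mechanism.  First, the leading exponent of $g'_i(z)$ in $Y^\rtt_{-\mu_1}(\gl_n)$ is $d'_i=+\epsilon^\vee_i(\mu_1)$, not $-\epsilon^\vee_i(\mu_1)$; for $\mu_1\in\Lambda^+$ this sequence is \emph{non-increasing}.  Second, and more importantly, no commutation relations from \refl{defining relations from bk} are needed at all.  The $(j,i)$-entry of $G'(z)\hat F(z)G'(z)^{-1}$ is $g'_j(z)\,\hat f_{ji}(z)\,g'_i(z)^{-1}$; since the \emph{leading coefficient} of each $g'_k(z)$ is the scalar $1$, the leading term of this product is simply $z^{d'_j}\cdot \hat f^{(1)}_{ji}z^{-1}\cdot z^{-d'_i}$, regardless of any non-commutativity among lower-order coefficients.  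Dominance of $\mu_1$ gives $d'_j-d'_i\le 0$ for $j>i$, so the entry stays in $z^{-1}\CalC[[z^{-1}]]$.  The paper makes this transparent by writing $G^{(a)}(z)=D^{(a)}(z)\,z^{\mu_a}$ with $D^{(a)}(z)\equiv I\pmod{z^{-1}}$: the conjugation by the \emph{scalar} diagonal matrix $z^{\mu_a}$ shifts powers by $z^{d^{(a)}_j-d^{(a)}_i}$, and conjugation by $D^{(a)}(z)$ does not affect leading $z$-modes.  That is the whole content of the step you flagged; invoking \refl{defining relations from bk}(b,c) would be both unnecessary and awkward (the entries $\hat f_{ji}$ are iterated products of $e$'s and $f$'s, not single generators).
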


\begin{proof}
We need to prove that $T(z)\otimes T(z)$, the $n\times n$ matrix with values in
the algebra $(Y^\rtt_{-\mu_1}(\gl_n)\otimes Y^\rtt_{-\mu_2}(\gl_n))((z^{-1}))$,
satisfies the defining relations of $Y^\rtt_{-\mu_1-\mu_2}(\gl_n)$. The first of those,
the RTT relation~(\ref{ratRTT}), follows immediately from the fact that both factors
$T(z)$ satisfy it. Let us now deduce the second relation, the particular form of
the Gauss decomposition~(\ref{Gauss product rational},~\ref{t-modes shifted}),
from $\mu_1,\mu_2\in \Lambda^+$ and the corresponding relations for both factors $T(z)$.

We start from the following simple observation.
Let $\CalC$ be an associative algebra and consider a collection of its elements
  $\{\sff^{(r)}_{ji},\se^{(r)}_{ij}\}_{1\leq i<j\leq n}^{r\geq 1}$,
which are encoded via a lower-triangular matrix
  $\sF(z)=\sum_{i} E_{ii}+\sum_{i<j} \sff_{ji}(z)\otimes E_{ji}$
with $\sff_{ji}(z)=\sum_{r\geq 1} \sff^{(r)}_{ji}z^{-r}$ and
an upper-triangular matrix
  $\sE(z)=\sum_{i} E_{ii}+\sum_{i<j} \se_{ij}(z)\otimes E_{ij}$
with $\se_{ij}(z)=\sum_{r\geq 1} \se^{(r)}_{ij}z^{-r}$.
Then, the product $\sE(z)\cdot \sF(z)$ admits a Gauss decomposition
\begin{equation}\label{normal ordering in TT}
  \sE(z)\cdot \sF(z)=\bar{\sF}(z)\cdot \bar{\sG}(z)\cdot \bar{\sE}(z),
\end{equation}
\begin{equation*}
  \bar{\sF}(z)=\sum_{i} E_{ii}+\sum_{i<j} \bar{\sff}_{ji}(z)\otimes E_{ji},\
  \bar{\sG}(z)=\sum_{i} \bar{\sg}_i(z)\otimes E_{ii},\
  \bar{\sE}(z)=\sum_{i} E_{ii}+\sum_{i<j} \bar{\se}_{ij}(z)\otimes E_{ij},
\end{equation*}
with the matrix coefficients having the following expansions in $z$:
\begin{equation*}
  \bar{\se}_{ij}(z)=\sum_{r\geq 1} \bar{\se}^{(r)}_{ij}z^{-r}, \quad
  \bar{\sff}_{ji}(z)=\sum_{r\geq 1} \bar{\sff}^{(r)}_{ji}z^{-r}, \quad
  \bar{\sg}_i(z)=1+\sum_{r\geq 1} \bar{\sg}^{(r)}_i z^{-r}
\end{equation*}
for some elements
  $\{\bar{\sff}^{(r)}_{ji},\bar{\se}^{(r)}_{ij}\}_{1\leq i<j\leq n}^{r\geq 1}\cup
   \{\bar{\sg}^{(r)}_i\}_{1\leq i\leq n}^{r\geq 1}$ of $\CalC$.

Moreover, if $z^{\unl{d}}=\mathrm{diag}(z^{d_1},\cdots,z^{d_n})$
with $d_1\geq \cdots\geq d_n$, then
\begin{equation}\label{conjugation 1}
  z^{\unl{d}}\bar{\sF}(z)(z^{\unl{d}})^{-1}=\sum_{i} E_{ii}+\sum_{i<j} \wt{\sff}_{ji}(z)\otimes E_{ji}
  \quad \mathrm{with} \quad \wt{\sff}_{ji}(z)=\sum_{r\geq 1} \wt{\sff}^{(r)}_{ji}z^{-r}=z^{d_j-d_i}\bar{\sff}_{ji}(z)
\end{equation}
and
\begin{equation}\label{conjugation 2}
  (z^{\unl{d}})^{-1}\bar{\sE}(z)z^{\unl{d}}=\sum_{i} E_{ii}+\sum_{i<j} \wt{\se}_{ij}(z)\otimes E_{ij}
  \quad \mathrm{with} \quad \wt{\se}_{ij}(z)=\sum_{r\geq 1} \wt{\se}^{(r)}_{ij}z^{-r}=z^{d_j-d_i}\bar{\se}_{ij}(z)
\end{equation}
for some elements
  $\{\wt{\sff}^{(r)}_{ji},\wt{\se}^{(r)}_{ij}\}_{1\leq i<j\leq n}^{r\geq 1}$
of $\CalC$.

Finally, let us consider the Gauss decompositions of both factors $T(z)$:
\begin{equation*}
\begin{split}
  & T(z)\otimes 1=F^{(1)}(z)G^{(1)}(z)E^{(1)}(z)=F^{(1)}(z)D^{(1)}(z)z^{\mu_1}E^{(1)}(z),\\
  & 1\otimes T(z)=F^{(2)}(z)G^{(2)}(z)E^{(2)}(z)=F^{(2)}(z)z^{\mu_2}D^{(2)}(z)E^{(2)}(z),
\end{split}
\end{equation*}
where
  $z^{\mu_a}:=\mathrm{diag}(z^{d^{(a)}_1},\cdots,z^{d^{(a)}_n}),
   D^{(a)}(z):=z^{-\mu_a} G^{(a)}(z)$
with $d^{(a)}_i:=\epsilon^\vee_i(\mu_a)$ and $a=1,2$.
To obtain the Gauss decomposition of
\begin{equation*}
  T(z)\otimes T(z)=
  F^{(1)}(z)D^{(1)}(z)z^{\mu_1} E^{(1)}(z)F^{(2)}(z) z^{\mu_2} D^{(2)}(z)E^{(2)}(z),
\end{equation*}
we apply the above general observation with
  $\CalC=Y^\rtt_{-\mu_1}(\gl_n)\otimes Y^\rtt_{-\mu_2}(\gl_n)$
and
  $\se^{(r)}_{ij}=e^{(r)}_{ij}\otimes 1$, $\sff^{(r)}_{ji}=1\otimes f^{(r)}_{ji}$
to get the Gauss decomposition of $E^{(1)}(z)F^{(2)}(z)$ first.
As conjugating by $D^{(a)}(z)$ does not change the leading $z$-modes,
matrix coefficients appearing in the Gauss decomposition of $T(z)\otimes T(z)$
have the desired form, due to~(\ref{conjugation 1},~\ref{conjugation 2}).

This completes our proof of Proposition~\ref{shifted rtt coproduct}.
\end{proof}

The following basic property of $\Delta^\rtt_{\ast,\ast}$ is straightforward:

\begin{Cor}\label{coassociativity}
For any $\mu_1,\mu_2,\mu_3\in \Lambda^+$, the following diagram is commutative:
\begin{equation*}
 \begin{CD}
 Y^\rtt_{-\mu_1-\mu_2-\mu_3}(\gl_n)
    @>{\Delta^\rtt_{-\mu_1,-\mu_2-\mu_3}}>>
 Y^\rtt_{-\mu_1}(\gl_n)\otimes Y^\rtt_{-\mu_2-\mu_3}(\gl_n)\\
 @V{\Delta^\rtt_{-\mu_1-\mu_2,-\mu_3}}VV   @VV{\on{Id}\otimes \Delta^\rtt_{-\mu_2,-\mu_3}}V\\
 Y^\rtt_{-\mu_1-\mu_2}(\gl_n)\otimes Y^\rtt_{-\mu_3}(\gl_n)
    @>>{\Delta^\rtt_{-\mu_1,-\mu_2}\otimes\, \on{Id}}>
 Y^\rtt_{-\mu_1}(\gl_n)\otimes Y^\rtt_{-\mu_2}(\gl_n)\otimes Y^\rtt_{-\mu_3}(\gl_n)
 \end{CD}
\end{equation*}
\end{Cor}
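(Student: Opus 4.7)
The plan is to verify the coassociativity by evaluating both compositions on the matrix $T(z)$, which encodes all generators of $Y^\rtt_{-\mu_1-\mu_2-\mu_3}(\gl_n)$ via its modes (equivalently, via the Gauss decomposition factors $E(z), G(z), F(z)$). Since any algebra homomorphism out of $Y^\rtt_{-\mu}(\gl_n)$ is uniquely determined by the image of $T(z)$, it suffices to show that both legs of the diagram send $T(z)$ to the same element of the triple tensor product.

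First, I would apply the top-right composition. By the defining formula~\refe{RTT coproduct formula}, $\Delta^\rtt_{-\mu_1,-\mu_2-\mu_3}(T(z))=T(z)\otimes T(z)$, lying in $Y^\rtt_{-\mu_1}(\gl_n)\otimes Y^\rtt_{-\mu_2-\mu_3}(\gl_n)$. Applying $\on{Id}\otimes \Delta^\rtt_{-\mu_2,-\mu_3}$ and using~\refe{RTT coproduct formula} for the second factor then yields $T(z)\otimes T(z)\otimes T(z)$. Completely analogously, the bottom-left composition first produces $T(z)\otimes T(z)\in Y^\rtt_{-\mu_1-\mu_2}(\gl_n)\otimes Y^\rtt_{-\mu_3}(\gl_n)$ via $\Delta^\rtt_{-\mu_1-\mu_2,-\mu_3}$, and then a further application of $\Delta^\rtt_{-\mu_1,-\mu_2}\otimes \on{Id}$ produces $T(z)\otimes T(z)\otimes T(z)$ as well. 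Hence the two compositions agree on $T(z)$.

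To conclude equality of the two homomorphisms, I would invoke the fact that the algebra $Y^\rtt_{-\mu_1-\mu_2-\mu_3}(\gl_n)$ is generated (as a $\BC$-algebra) by the coefficients of $e_{i,i+1}(z), f_{i+1,i}(z), g_j(z)$, which in turn are extracted from $T(z)$ via the Gauss decomposition~\refe{Gauss product rational}; cf.\ Corollary~\ref{C:rat generating set}. Since both compositions are algebra homomorphisms taking $T(z)$ to the same matrix $T(z)\otimes T(z)\otimes T(z)$, they must coincide on the entire algebra.

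The only subtle point is that the Gauss decomposition factors are \emph{nonlinear} (rational) functions of the matrix entries of $T(z)$, so strictly speaking one should observe that any algebra map commutes with the extraction of Gauss factors in the appropriate formal completion. This is not a genuine obstacle: the Gauss decomposition is a uniquely determined factorization in the algebra of formal series, and both compositions respect the grading structure so the identity $\Delta(T(z))=T(z)\otimes T(z)$ forces agreement on the Gauss factors, and hence on all generators. I do not anticipate any deeper difficulty; the entire content of the corollary is the tautology that both iterated coproducts send the ``group-like'' element $T(z)$ to $T(z)^{\otimes 3}$.
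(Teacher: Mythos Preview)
Your proposal is correct and matches the paper's approach: the paper simply declares the corollary ``straightforward'' without writing out a proof, and your argument---checking that both compositions send $T(z)$ to $T(z)\otimes T(z)\otimes T(z)$---is exactly the intended one-line verification. One small simplification: your detour through the Gauss factors and \refc{rat generating set} is unnecessary, since by definition $Y^\rtt_{-\mu_1-\mu_2-\mu_3}(\gl_n)$ is generated by the modes $t_{ij}^{(r)}$ themselves, so agreement on $T(z)$ immediately forces the two algebra homomorphisms to coincide.
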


Evoking the key isomorphisms
  $\Upsilon_{-\mu}\colon Y_{-\mu}(\gl_n)\iso Y^\rtt_{-\mu}(\gl_n)$
of Theorem~\ref{Main Conjecture 1} for $\mu=\mu_1$, $\mu_2$, $\mu_1+\mu_2$,
we conclude that $\Delta^\rtt_{-\mu_1,-\mu_2}$ gives rise to the $\BC$-algebra homomorphism
\begin{equation}\label{eq:coproduct-gl-shifted}
  \Delta_{-\mu_1,-\mu_2}\colon
  Y_{-\mu_1-\mu_2}(\gl_n)\longrightarrow Y_{-\mu_1}(\gl_n)\otimes Y_{-\mu_2}(\gl_n).
\end{equation}

\begin{Prop}\label{shifted coproduct Drinfeld Yangian gl}
For any $\mu_1,\mu_2\in \Lambda^+$, the above $\BC$-algebra
homomorphism~\eqref{eq:coproduct-gl-shifted}
\begin{equation*}
  \Delta_{-\mu_1,-\mu_2}\colon
  Y_{-\mu_1-\mu_2}(\gl_n)\longrightarrow Y_{-\mu_1}(\gl_n)\otimes Y_{-\mu_2}(\gl_n)
\end{equation*}
is uniquely determined by specifying the image of the central series $C(z)$ of~\eqref{eq:C-center} via
\begin{equation}\label{eq:coproduct-on-center}
  C(z)\mapsto C(z)\otimes C(z),
\end{equation}
and the following formulas (for any $1\leq i\leq n-1, 1\leq j\leq n$):
\begin{equation}\label{coproduct Yangian generators}
\begin{split}
  & F^{(r)}_i\mapsto F^{(r)}_i\otimes 1
    \quad \mathrm{for}\ 1\leq r\leq \alphavee_i(\mu_1),\\
  & F^{(\alphavee_i(\mu_1)+1)}_i\mapsto
    F^{(\alphavee_i(\mu_1)+1)}_i\otimes 1 + 1\otimes F^{(1)}_i,\\
  & E^{(r)}_i\mapsto 1\otimes E^{(r)}_i
    \quad \mathrm{for}\ 1\leq r\leq \alphavee_i(\mu_2),\\
  & E^{(\alphavee_i(\mu_2)+1)}_i\mapsto
    1\otimes E^{(\alphavee_i(\mu_2)+1)}_i + E^{(1)}_i\otimes 1,\\
  & D^{(-\epsilon^\vee_j(\mu_1+\mu_2)+1)}_j\mapsto
    D^{(-\epsilon^\vee_j(\mu_1)+1)}_j \otimes 1 + 1\otimes D^{(-\epsilon^\vee_j(\mu_2)+1)}_j,\\
  & D^{(-\epsilon^\vee_j(\mu_1+\mu_2)+2)}_j\mapsto
    D^{(-\epsilon^\vee_j(\mu_1)+2)}_j \otimes 1 + 1\otimes D^{(-\epsilon^\vee_j(\mu_2)+2)}_j \, + \\
  & \ \ \ \ \ \ \ \ \ \ \ \ \ \ \ \ \ \  \ \ \ \ \
    D^{(-\epsilon^\vee_j(\mu_1)+1)}_j\otimes D^{(-\epsilon^\vee_j(\mu_2)+1)}_j +
    \sum_{\gamma^\vee \in \Delta^+} \epsilon_j(\gamma^\vee) E^{(1)}_{\gamma^\vee}\otimes F^{(1)}_{\gamma^\vee},
\end{split}
\end{equation}
where the last sum is taken over the set $\Delta^+=\{\alphavee_a+\ldots+\alphavee_{b-1} \, | \, 1\leq a<b\leq n\}$ of
positive roots of $\ssl_n$, and the root generators $\{E^{(1)}_{\gamma^\vee},F^{(1)}_{\gamma^\vee}\}_{\gamma^\vee\in \Delta^+}$
are defined via (cf.~\eqref{Gauss Matrix Entries yangian}):
\begin{equation*}
  E^{(1)}_{\alphavee_a+\ldots+\alphavee_{b-1}}:=
  [E^{(1)}_{b-1},\cdots,[E^{(1)}_{a+1},E^{(1)}_a]\cdots], \qquad
  F^{(1)}_{\alphavee_a+\ldots+\alphavee_{b-1}}:=
  [\cdots[F^{(1)}_a,F^{(1)}_{a+1}],\cdots,F^{(1)}_{b-1}].
\end{equation*}
\end{Prop}

\begin{proof}
Since $Y_{-\mu_1-\mu_2}(\gl_n)$ is generated (as an algebra) by
the coefficients of the central series $C(z)$ and the elements
  $\{E^{(1)}_i, F^{(1)}_i, D^{(-\epsilon^\vee_j(\mu_1+\mu_2)+1)}_j,
     D^{(-\epsilon^\vee_j(\mu_1+\mu_2)+2)}_j\}_{1\leq i<n}^{1\leq j\leq n}$,
as follows from Corollary~\ref{shifted Yangian sl as a quotient of gl},
it suffices to show that~\eqref{eq:coproduct-gl-shifted} satisfies
the above formulas~\eqref{eq:coproduct-on-center} and~(\ref{coproduct Yangian generators}).

Using the standard arguments (see~\cite[Corollary 1.6.10]{m} or~\cite[Lemma 8.1]{bk} and the references therein),
we have $\Delta^\rtt_{-\mu_1,-\mu_2}(\qdet\, T(z))=\qdet\, T(z) \otimes \qdet\, T(z)$. Combining this formula
with $\Upsilon^{-1}_{-\mu}(\qdet\, T(z))=C(z)$ of Proposition~\ref{prop:center-identification},
we obtain the desired formula~\eqref{eq:coproduct-on-center}.

Following our notations from the above proof of Proposition~\ref{shifted rtt coproduct},
we note that
\begin{equation*}
\begin{split}
  & \wt{\sff}^{(1)}_{ji}=\ldots=\wt{\sff}^{(d^{(1)}_i-d^{(1)}_j)}_{ji}=0, \quad
    \wt{\sff}^{(d^{(1)}_i-d^{(1)}_j+1)}_{ji}=\bar{\sff}^{(1)}_{ji}=\sff^{(1)}_{ji},\\
  & \wt{\se}^{(1)}_{ij}=\ldots=\wt{\se}^{(d^{(2)}_i-d^{(2)}_j)}_{ij}=0, \quad
    \wt{\se}^{(d^{(2)}_i-d^{(2)}_j+1)}_{ij}=\bar{\se}^{(1)}_{ij}=\se^{(1)}_{ij}.
\end{split}
\end{equation*}
Thus, following the proof of Proposition~\ref{shifted rtt coproduct}, we immediately get
\begin{equation*}
\begin{split}
  & \Delta^\rtt_{-\mu_1,-\mu_2}(f^{(r)}_{i+1,i})=
    f^{(r)}_{i+1,i}\otimes 1
    \quad \mathrm{for}\ 1\leq r\leq \alphavee_i(\mu_1),\\
  & \Delta^\rtt_{-\mu_1,-\mu_2}(f^{(\alphavee_i(\mu_1)+1)}_{i+1,i})=
    f^{(\alphavee_i(\mu_1)+1)}_{i+1,i}\otimes 1 + 1\otimes f^{(1)}_{i+1,i},\\
  & \Delta^\rtt_{-\mu_1,-\mu_2}(e^{(r)}_{i,i+1})=
    1\otimes e^{(r)}_{i,i+1}
    \quad \mathrm{for}\ 1\leq r\leq \alphavee_i(\mu_2),\\
  & \Delta^\rtt_{-\mu_1,-\mu_2}(e^{(\alphavee_i(\mu_2)+1)}_{i,i+1})=
    1\otimes e^{(\alphavee_i(\mu_2)+1)}_{i,i+1} + e^{(1)}_{i,i+1}\otimes 1,
\end{split}
\end{equation*}
which give rise to the first four formulas of~(\ref{coproduct Yangian generators})
by evoking the construction of $\Upsilon_{-\mu}$.

To deduce the last two formulas of~(\ref{coproduct Yangian generators}),
it remains to use obvious equalities
\begin{equation*}
  \bar{\sg}^{(1)}_i=0, \quad
  \bar{\sg}^{(2)}_i=
  \sum_{j>i} \se^{(1)}_{ij}\cdot \sff^{(1)}_{ji}-\sum_{j<i} \sff^{(1)}_{ji}\cdot \se^{(1)}_{ji} \, =
  \sum_{1\leq a<b\leq n} \epsilon_i(\alphavee_{a}+\ldots+\alphavee_{b-1}) e^{(1)}_{ab}\otimes f^{(1)}_{ba}.
\end{equation*}
This completes our proof of Proposition~\ref{shifted coproduct Drinfeld Yangian gl}.
\end{proof}

Proposition~\ref{shifted coproduct Drinfeld Yangian gl} provides
a conceptual and elementary proof of~\cite[Theorem 4.8]{fkp}:

\begin{Prop}\label{shifted coproduct Drinfeld Yangian sl}
(a) For any $\nu_1,\nu_2\in \bar{\Lambda}^+$, there is a unique
$\BC$-algebra homomorphism
\begin{equation}\label{fkprw homom}
  \Delta_{-\nu_1,-\nu_2}\colon
  Y_{-\nu_1-\nu_2}(\ssl_n)\longrightarrow
  Y_{-\nu_1}(\ssl_n)\otimes Y_{-\nu_2}(\ssl_n)
\end{equation}
such that the following diagram is commutative
\begin{equation}\label{compatibility with FKPRW}
 \begin{CD}
 Y_{-\bar{\mu}_1-\bar{\mu}_2}(\ssl_n) @>{\Delta_{-\bar{\mu}_1,-\bar{\mu}_2}}>> Y_{-\bar{\mu}_1}(\ssl_n)\otimes Y_{-\bar{\mu}_2}(\ssl_n)\\
 @V{\iota_{-\mu_1-\mu_2}}VV   @VV{\iota_{-\mu_1}\otimes \, \iota_{-\mu_2}}V\\
 Y_{-\mu_1-\mu_2}(\gl_n) @>{\Delta_{-\mu_1,-\mu_2}}>> Y_{-\mu_1}(\gl_n)\otimes Y_{-\mu_2}(\gl_n)
 \end{CD}
\end{equation}
for any $\mu_1,\mu_2\in \Lambda^+$.

\noindent
(b) The homomorphism $\Delta_{-\nu_1,-\nu_2}$
is uniquely determined by the following formulas:
\begin{equation}\label{coproduct Yangian generators sl}
\begin{split}
  & \sF^{(r)}_i\mapsto \sF^{(r)}_i\otimes 1
    \quad \mathrm{for}\ 1\leq r\leq \alphavee_i(\nu_1),\\
  & \sF^{(\alphavee_i(\nu_1)+1)}_i\mapsto
    \sF^{(\alphavee_i(\nu_1)+1)}_i\otimes 1 + 1\otimes \sF^{(1)}_i,\\
  & \sE^{(r)}_i\mapsto 1\otimes \sE^{(r)}_i
    \quad \mathrm{for}\ 1\leq r\leq \alphavee_i(\nu_2),\\
  & \sE^{(\alphavee_i(\nu_2)+1)}_i\mapsto
    1\otimes \sE^{(\alphavee_i(\nu_2)+1)}_i + \sE^{(1)}_i\otimes 1,\\
  & \sH^{(\alphavee_i(\nu_1+\nu_2)+1)}_i\mapsto
    \sH^{(\alphavee_i(\nu_1)+1)}_i \otimes 1 + 1\otimes \sH^{(\alphavee_i(\nu_2)+1)}_i,\\
  & \sH^{(\alphavee_i(\nu_1+\nu_2)+2)}_i\mapsto
    \sH^{(\alphavee_i(\nu_1)+2)}_i \otimes 1 + 1\otimes \sH^{(\alphavee_i(\nu_2)+2)}_i \, + \\
  & \ \ \ \ \ \ \ \ \ \ \ \ \ \ \ \ \ \  \ \ \
    \sH^{(\alphavee_i(\nu_1)+1)}_i\otimes \sH^{(\alphavee_i(\nu_2)+1)}_i -
    \sum_{\gamma^\vee\in \Delta^+} \alpha_i(\gamma^\vee) \sE^{(1)}_{\gamma^\vee}\otimes \sF^{(1)}_{\gamma^\vee},
\end{split}
\end{equation}
where
  $\sE^{(1)}_{\alphavee_a+\ldots+\alphavee_{b-1}}:=
   [\sE^{(1)}_{b-1},\cdots,[\sE^{(1)}_{a+1},\sE^{(1)}_a]\cdots]$
and
  $\sF^{(1)}_{\alphavee_a+\ldots+\alphavee_{b-1}}:=
   [\cdots[\sF^{(1)}_a,\sF^{(1)}_{a+1}],\cdots,\sF^{(1)}_{b-1}]$.
\end{Prop}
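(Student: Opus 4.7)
The strategy is to deduce the shifted $\ssl_n$-coproduct from the shifted $\gl_n$-coproduct by restriction along the embedding $\iota_{-\mu}$ of Proposition~\ref{relation yangians sl vs gl}. Given $\nu_1,\nu_2\in \bar{\Lambda}^+$, fix any dominant lifts $\mu_1,\mu_2\in\Lambda^+$ with $\bar{\mu}_a=\nu_a$ (for definiteness, $\mu_a:=\sum_{i=1}^{n-1}\alphavee_i(\nu_a)\varpi_i$ will do), and consider the $\gl_n$-level coproduct $\Delta_{-\mu_1,-\mu_2}$ supplied by Proposition~\ref{shifted coproduct Drinfeld Yangian gl}. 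The key claim is that the composition $\Delta_{-\mu_1,-\mu_2}\circ\iota_{-\mu_1-\mu_2}$ takes values in the subalgebra $\iota_{-\mu_1}(Y_{-\nu_1}(\ssl_n))\otimes\iota_{-\mu_2}(Y_{-\nu_2}(\ssl_n))$; since $\iota_{-\mu_1}\otimes\iota_{-\mu_2}$ is injective (as a tensor product of embeddings), this factorization uniquely defines a homomorphism $\Delta_{-\nu_1,-\nu_2}$ fitting into~\eqref{compatibility with FKPRW}, proving uniqueness of part~(a).

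To check the factorization, it suffices to verify it on a finite set of generators of $Y_{-\nu_1-\nu_2}(\ssl_n)$, namely those appearing in~\eqref{coproduct Yangian generators sl}; that these elements do generate is the $\ssl_n$-analogue of the generation statement invoked in the proof of Proposition~\ref{shifted coproduct Drinfeld Yangian gl}. Expand $\iota_{-\mu}(\sE_i(z))=E_i(z+i/2)$ as a formal power series in $z^{-1}$, which yields $\sE_i^{(r)}=\sum_{s=1}^{r}\binom{-s}{r-s}(i/2)^{r-s}E_i^{(s)}$, so in particular $\sE_i^{(1)}=E_i^{(1)}$, and the element $\sE_i^{(\alphavee_i(\nu_1)+1)}$ is a linear combination of $E_i^{(s)}$ for $1\leq s\leq \alphavee_i(\mu_1)+1$. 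Applying $\Delta_{-\mu_1,-\mu_2}$ and invoking the first four formulas of~\eqref{coproduct Yangian generators}, the summands with $s\leq\alphavee_i(\mu_1)$ repackage (with the same binomial coefficients) into $1\otimes\iota_{-\mu_2}(\sE_i^{(\alphavee_i(\nu_1)+1)})$, while the leading term at $s=\alphavee_i(\mu_1)+1$ contributes the additional $\iota_{-\mu_1}(\sE_i^{(1)})\otimes 1$; this yields the desired formula for $\sE_i^{(\alphavee_i(\nu_1)+1)}$ in~\eqref{coproduct Yangian generators sl}. The argument for $\sF_i^{(r)}$ is symmetric.

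The remaining, and most delicate, computation concerns the Cartan generators. Use the identity $\iota_{-\mu}(\sH_i(z))=D_i(z+i/2)^{-1}D_{i+1}(z+i/2)$ together with the last two formulas of~\eqref{coproduct Yangian generators} to compute $\Delta_{-\mu_1,-\mu_2}$ of this product; after inverting the leading behavior $\Delta_{-\mu_1,-\mu_2}(D_i(z+i/2))\sim z^{-\epsilon^\vee_i(\mu_1+\mu_2)}(1\otimes 1)$, the first subleading mode yields the additive formula for $\sH_i^{(\alphavee_i(\nu_1+\nu_2)+1)}$, while the next subleading mode produces three contributions: the additive part $\sH_i^{(\alphavee_i(\nu_a)+2)}$, the mixed product $\sH_i^{(\alphavee_i(\nu_1)+1)}\otimes\sH_i^{(\alphavee_i(\nu_2)+1)}$ coming from the cross term in the inversion, and the root-sum correction coming from the last summand of~\eqref{coproduct Yangian generators}. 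The sign reversal and the passage from $\epsilon^\vee_i(\gamma^\vee)$ to $-\alphavee_i(\gamma^\vee)$ in the root sum are consequences of the identity $\alphavee_i=\epsilon^\vee_i-\epsilon^\vee_{i+1}$ combined with the fact that the $i$-th and $(i+1)$-st contributions enter with opposite signs through $D_i^{-1}$ and $D_{i+1}$ respectively (and $\sE_\gamma^{(1)}=E_\gamma^{(1)}$, $\sF_\gamma^{(1)}=F_\gamma^{(1)}$ on the nose).

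The main technical obstacle is precisely this last calculation: one must track two orders of subleading terms through the series inversion and multiplication, and recognize that the resulting correction organizes itself into the root sum $\sum_{\gamma^\vee>0}\alphavee_i(\gamma^\vee)\,\sE^{(1)}_{\gamma^\vee}\otimes\sF^{(1)}_{\gamma^\vee}$. Once the explicit formulas~\eqref{coproduct Yangian generators sl} are established, independence of the choice of lifts $\mu_1,\mu_2$ is automatic, since the right-hand sides only involve pairings with $\alphavee_i$ and thus depend only on $\bar{\mu}_1,\bar{\mu}_2$; this verifies the commutativity of~\eqref{compatibility with FKPRW} for all $\mu_a\in\Lambda^+$ with $\bar{\mu}_a=\nu_a$, completing parts~(a) and~(b).
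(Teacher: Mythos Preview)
Your approach is precisely the paper's: restrict the $\gl_n$ coproduct of Proposition~\ref{shifted coproduct Drinfeld Yangian gl} along the embedding $\iota_{-\mu}$ of Proposition~\ref{relation yangians sl vs gl}, using the explicit formulas~\eqref{coproduct Yangian generators} together with~\eqref{assignment sl vs gl series}, and read off~\eqref{coproduct Yangian generators sl}. One minor slip to fix: in your paragraph treating $\sE_i$, the indices $\nu_1,\mu_1$ should be $\nu_2,\mu_2$ throughout (the third and fourth lines of~\eqref{coproduct Yangian generators} and~\eqref{coproduct Yangian generators sl} involve $\alphavee_i(\mu_2)$ and $\alphavee_i(\nu_2)$, not $\alphavee_i(\mu_1)$ and $\alphavee_i(\nu_1)$); once this is swapped, your computation goes through as written.
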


\begin{proof}
Follows immediately from the formulas~(\ref{coproduct Yangian generators})
of Proposition~\ref{shifted coproduct Drinfeld Yangian gl} combined with
the defining formulas~(\ref{assignment sl vs gl series}) for the embedding
$\iota_{-\mu}\colon Y_{-\bar{\mu}}(\ssl_n)\hookrightarrow Y_{-\mu}(\gl_n)$
of Proposition~\ref{relation yangians sl vs gl}.
\end{proof}

\begin{Rem}\label{all coproducts yangian}
Due to~\cite[Theorem 4.12]{fkp}, $\Delta_{-\nu_1,-\nu_2}$
with $\nu_1,\nu_2\in \bar{\Lambda}^+$ give rise to algebra homomorphisms
  $\Delta_{\nu_1,\nu_2}\colon
  Y_{\nu_1+\nu_2}(\ssl_n)\to
  Y_{\nu_1}(\ssl_n)\otimes Y_{\nu_2}(\ssl_n)$
for any $\ssl_n$--coweights $\nu_1,\nu_2\in \bar{\Lambda}$.
However, we note that $\Delta_{\nu_1,\nu_2}\, (\nu_1,\nu_2\in \Lambda)$
are not coassociative, in contrast to Corollary~\ref{coassociativity}.
\end{Rem}

\begin{Rem}\label{relation to Rolpf-Torrielli}
We note that~\cite[\S2.4]{rt} contains an attempt to construct
the simplest coproduct homomorphism
  $Y_{-\alpha}(\ssl_2)\to Y_{-\alpha/2}(\ssl_2)\otimes Y_{-\alpha/2}(\ssl_2)$
from Proposition~\ref{shifted coproduct Drinfeld Yangian sl}.
\end{Rem}


\subsection{Relation to Gelfand-Tsetlin bases of parabolic Verma modules of $\gl_n$}\label{ssec GT patterns}
\

Evoking the setup of Section~\ref{sssec explicit Lax}, assume $\bmu=((-1)^n)$ while
$\blambda$ is a Young diagram of size $n$ and length $<n$, i.e.\
$|\blambda|=n$ and $\blambda_n=0$, and consider the corresponding $\Lambda^+$-valued
divisor on $\BP^1$: $D=\sum_{k=1}^{\blambda_1} \varpi_{i_k}[x_k]-\varpi_0[\infty]$
with $x_k\in \BC$ (note that $i_k=n-\blambda^t_k$). In this section, we show that
the homomorphism
  $\Theta_D\colon Y^\rtt_{\varpi_0}(\gl_n)\to \CA$
of~(\ref{Theta homom}) may be viewed (up to a gauge transformation) as a
composition of the \emph{evaluation} homomorphism
  $\wt{\ev}\colon Y^\rtt_{\varpi_0}(\gl_n)\to U(\gl_n)$ and the homomorphism
$U(\gl_n)\to \CA$ determined by the \emph{parabolic Gelfand-Tsetlin} formulas.

Let us recall the explicit formulas for the matrix coefficients
$T_D(z)_{i,i}, T_D(z)_{i,i+1}, T_D(z)_{i+1,i}$ of Theorem~\ref{Main Theorem 2}
(note that $T_D(z)=\sT_D(z)$ in the present setup):
\begin{equation}\label{Lax eq 1}
  T_D(z)_{i,i}=
  z+\sum_{r=1}^{a_{i-1}} (p_{i-1,r}+1) - \sum_{r=1}^{a_i} p_{i,r} \, - \sum_{k:i_k\leq i-1} x_k,
\end{equation}
\begin{equation}\label{Lax eq 2}
  T_D(z)_{i,i+1}=
  -\sum_{r=1}^{a_{i}} \frac{\prod_{s=1}^{a_{i-1}} (p_{i,r}-1-p_{i-1,s})}{\prod_{1\leq s\leq a_i}^{s\ne r} (p_{i,r}-p_{i,s})}
  \cdot \prod_{k:i_k=i} (p_{i,r}-x_k)\cdot e^{q_{i,r}},
\end{equation}
\begin{equation}\label{Lax eq 3}
  T_D(z)_{i+1,i}=
  \sum_{r=1}^{a_{i}} \frac{\prod_{s=1}^{a_{i+1}} (p_{i,r}+1-p_{i+1,s})}{\prod_{1\leq s\leq a_i}^{s\ne r} (p_{i,r}-p_{i,s})}
  \cdot e^{-q_{i,r}}.
\end{equation}

Consider the following factor
\begin{equation}\label{gauge factor 1}
  S=
  \frac{\prod_{i=1}^{n-2} \prod_{r\leq a_i}^{s\leq a_{i+1}} \Gamma(p_{i,r}-p_{i+1,s}+1)\cdot
        \prod_{i=1}^{n-1} \prod_{r=1}^{a_i}\prod_{k:i_k\leq i-1} \Gamma(p_{i,r}-x_k+1)}
       {\prod_{i=1}^{n-1} \prod_{1\leq r,s\leq a_i}^{r\ne s} \Gamma(p_{i,s}-p_{i,r})},
\end{equation}
where $\Gamma(\cdot)$ denotes the classical Gamma function.
Then, $\Ad(S)$ is a well-defined automorphism of $\CA$, which shall be referred to
as the \emph{gauge transformation with respect to $S$}. Applying $\Ad(S)$ to $T_D(z)$
partially described by the formulas~(\ref{Lax eq 1},~\ref{Lax eq 2},~\ref{Lax eq 3}), we obtain
\begin{equation}\label{Lax gauge eq 1}
  \Ad(S)T_D(z)_{i,i}=
  z + \sum_{r=1}^{a_{i-1}} (p_{i-1,r}+1) - \sum_{r=1}^{a_i} p_{i,r} \, - \sum_{k:i_k\leq i-1} x_k,
\end{equation}
\begin{equation}\label{Lax gauge eq 2}
  \Ad(S)T_D(z)_{i,i+1}=
  \sum_{r=1}^{a_{i}} (-1)^{a_i+a_{i-1}}
  \frac{\prod_{s=1}^{a_{i+1}} (p_{i,r}-p_{i+1,s})}
       {\prod_{1\leq s\leq a_i}^{s\ne r} (p_{i,r}-1-p_{i,s})}
  \prod_{k:i_k\leq i} (p_{i,r}-x_k)\cdot e^{q_{i,r}},
\end{equation}
\begin{equation}\label{Lax gauge eq 3}
  \Ad(S)T_D(z)_{i+1,i}=
  \sum_{r=1}^{a_{i}} (-1)^{a_i+a_{i-1}-1}
  \frac{\prod_{s=1}^{a_{i-1}} (p_{i,r}-p_{i-1,s})}
       {\prod_{1\leq s\leq a_i}^{s\ne r} (p_{i,r}+1-p_{i,s})}
  \prod_{k:i_k\leq i-1} \frac{1}{p_{i,r}-x_k+1} \cdot e^{-q_{i,r}}.
\end{equation}

We also consider the factor
\begin{equation}\label{gauge factor 2}
  U:=\prod_{i=1}^{n-1}\prod_{r=1}^{a_i} \left((-1)^{\blambda_{n-i+1} p_{i,r}}\cdot e^{-iq_{i,r}}\right),
\end{equation}
so that $\Ad(U)$ is a well-defined automorphism of $\CA$ which maps
\begin{equation*}
  p_{i,r}\mapsto p_{i,r}+i,\quad
  e^{q_{i,r}}\mapsto (-1)^{\blambda_{n-i+1}}e^{q_{i,r}}.
\end{equation*}
Applying this automorphism to~(\ref{Lax gauge eq 1},~\ref{Lax gauge eq 2},~\ref{Lax gauge eq 3}),
we obtain
\begin{equation}\label{Lax double-gauge eq 1}
  \Ad(US)T_D(z)_{i,i}=
  z + \sum_{r=1}^{a_{i-1}} p_{i-1,r} - \sum_{r=1}^{a_i} p_{i,r} + i(a_{i-1}-a_i) \, - \sum_{k:i_k\leq i-1} x_k,
\end{equation}
\begin{equation}\label{Lax double-gauge eq 2}
  \Ad(US)T_D(z)_{i,i+1}=
  \sum_{r=1}^{a_{i}} (-1)^{\beta_i}
  \frac{\prod_{s=1}^{a_{i+1}} (p_{i+1,s}-p_{i,r}+1)}
       {\prod_{1\leq s\leq a_i}^{s\ne r} (p_{i,s}-p_{i,r}+1)}
  \prod_{k:i_k\leq i} (x_k-p_{i,r}-i)\cdot e^{q_{i,r}},
\end{equation}
\begin{equation}\label{Lax double-gauge eq 3}
  \Ad(US)T_D(z)_{i+1,i}=
  \sum_{r=1}^{a_{i}}
  \frac{\prod_{s=1}^{a_{i-1}} (p_{i-1,s}-p_{i,r}-1)}
       {\prod_{1\leq s\leq a_i}^{s\ne r} (p_{i,s}-p_{i,r}-1)}
  \prod_{k:i_k\leq i-1} \frac{1}{x_k-p_{i,r}-i-1} \cdot e^{-q_{i,r}},
\end{equation}
where $\beta_i:=a_{i-1}+a_{i+1}+1+\blambda_{n-i}+\blambda_{n-i+1}$.
Evoking $a_i-a_{i-1}=1-\blambda_{n-i+1}$, we see that $\beta_i$ is odd.
Thus, the formulas~(\ref{Lax double-gauge eq 1},~\ref{Lax double-gauge eq 2},~\ref{Lax double-gauge eq 3})
may be written as follows:
\begin{equation}\label{Lax final-gauge eq 1}
  \Ad(US)T_D(z)_{i,i}=
  z + \sum_{r=1}^{a_{i-1}} p_{i-1,r} - \sum_{r=1}^{a_i} p_{i,r} + i(\blambda_{n-i+1}-1) \, - \sum_{k:i_k\leq i-1} x_k,
\end{equation}
\begin{equation}\label{Lax final-gauge eq 2}
  \Ad(US)T_D(z)_{i,i+1}=
  -\sum_{r=1}^{a_{i}}
  \frac{\prod_{s=1}^{a_{i+1}} (p_{i+1,s}-p_{i,r}+1)}
       {\prod_{1\leq s\leq a_i}^{s\ne r} (p_{i,s}-p_{i,r}+1)}
  \prod_{k:i_k\leq i} (x_k-p_{i,r}-i)\cdot e^{q_{i,r}},
\end{equation}
\begin{equation}\label{Lax final-gauge eq 3}
  \Ad(US)T_D(z)_{i+1,i}=
  \sum_{r=1}^{a_{i}}
  \frac{\prod_{s=1}^{a_{i-1}} (p_{i-1,s}-p_{i,r}-1)}
       {\prod_{1\leq s\leq a_i}^{s\ne r} (p_{i,s}-p_{i,r}-1)}
  \prod_{k:i_k\leq i-1} \frac{1}{x_k-p_{i,r}-i-1} \cdot e^{-q_{i,r}}.
\end{equation}

\medskip
Let us now relate
formulas~(\ref{Lax final-gauge eq 1},~\ref{Lax final-gauge eq 2},~\ref{Lax final-gauge eq 3})
to the parabolic Gelfand-Tsetlin formulas. Let $\spp\subseteq\gl_n$ be
a parabolic subalgebra with the Levi factor
  $\sll\simeq \gl_{\blambda^t_1}\oplus \gl_{\blambda^t_2}\oplus \dots \oplus \gl_{\blambda^t_{\blambda_1}}$
embedded block-diagonally into $\gl_n$. For
  $\unl{y}=(y_1,\ldots,y_{\blambda_1})\in \BC^{\blambda_1}$,
let $\BC_{\unl{y}}$ be the $1$-dimensional $\spp$-module obtained as a pull-back
(along the natural projection $\spp\twoheadrightarrow \sll$) of the $1$-dimensional
$\sll$-module with $\gl_{\blambda^t_i}$-factor acting via $y_i\mathrm{tr}$. We also assume
that $y_i-y_j\notin \BZ$ for $i\ne j$. Consider the \emph{parabolic Verma module}
$M_{\unl{y}}:=\mathrm{Ind}_{\spp}^{\gl_n} \BC_{\unl{y}}$. It has a distinguished basis
$\{\xi_\Lambda\}$, called the \emph{Gelfand-Tsetlin basis}, parametrized by
$\Lambda=(\Lambda_{i,j})_{1\leq j\leq i\leq n}$ subject to the following conditions:

(a) $\Lambda_{n,\blambda^t_1+\ldots+\blambda^t_{i-1}+a}=y_a$ for $1\leq a\leq \blambda^t_i$;

(b) $\Lambda_{i+1,j}-\Lambda_{i,j}\in \BN$;

(c) if $\Lambda_{i,j}-\Lambda_{i+1,j+1}\in \BZ$, then actually
$\Lambda_{i,j}-\Lambda_{i+1,j+1}\in \BN$.

\noindent
Note that the conditions (b,c) imply $\Lambda_{i,k}=y_a$ if
  $\blambda^t_1+\ldots+\blambda^t_{a-1}<k\leq \blambda^t_1+\ldots+\blambda^t_{a}-(n-i)$.
We call such coordinates $(i,k)$ \emph{frozen}. For $1\leq i\leq n-1$,
let $J_i\subset \{1,\cdots,i\}$ denote the set of non-frozen coordinates
among $\{(i,\ast)\}$. It is easy to see that $|J_i|=a_i$.
Set $l_{i,j}:=\Lambda_{i,j}-j+1$.

Then, the \emph{classical Gelfand-Tsetlin formulas}~\cite{nt}
(corresponding to the case $\sll\simeq \gl_1^{\oplus n}$) give rise
to the \emph{parabolic Gelfand-Tsetlin formulas} for the action of $\gl_n$
in the basis $\xi_\Lambda$ of $M_{\unl{y}}$:
\begin{equation}\label{parabGT eq 1}
  E_{i,i}(\xi_\Lambda)=
  \left(\sum_{k\in J_i} l_{i,k} \, - \sum_{k\in J_{i-1}} l_{i-1,k} \ +
        \sum_{a:\blambda^t_a\geq n-i+1} (y'_a-i)+(i-1)\right)\cdot \xi_\Lambda,
\end{equation}
\begin{equation}\label{parabGT eq 2}
  E_{i,i+1}(\xi_\Lambda)=-\sum_{k\in J_i}
  \frac{\prod_{m\in J_{i+1}} (l_{i+1,m}-l_{i,k})}{\prod_{m\in J_i\backslash\{k\}} (l_{i,m}-l_{i,k})}
  \prod_{a:\blambda^t_a\geq n-i} (y'_a-l_{i,k}-i-1)\cdot \xi_{\Lambda+\delta_{i,k}},
\end{equation}
\begin{equation}\label{parabGT eq 3}
  E_{i+1,i}(\xi_\Lambda)=\sum_{k\in J_i}
  \frac{\prod_{m\in J_{i-1}} (l_{i-1,m}-l_{i,k})}{\prod_{m\in J_i\backslash\{k\}} (l_{i,m}-l_{i,k})}
  \prod_{a:\blambda^t_a\geq n-i+1} \frac{1}{y'_a-l_{i,k}-i}\cdot \xi_{\Lambda-\delta_{i,k}},
\end{equation}
where $y'_a:=y_a-(\blambda^t_1+\ldots+\blambda^t_a)+(n+1)$ and
$\Lambda\pm \delta_{i,k}$ is obtained from $\Lambda$ by adding $\pm 1$
to its $(i,k)$-th entry (if $\Lambda\pm \delta_{i,k}$ does not satisfy
(b) or (c), then the corresponding coefficient in front of
$\xi_{\Lambda\pm \delta_{i,k}}$ in~(\ref{parabGT eq 2}) or~(\ref{parabGT eq 3}),
respectively, is actually zero).

These formulas naturally give rise to the algebra homomorphism
$\varrho\colon U(\gl_n)\to \CA$ with
\begin{equation}\label{parabGT oscillator eq 1}
  E_{i,i}\mapsto \sum_{k\in J_i} p_{i,k} \, - \sum_{k\in J_{i-1}} p_{i-1,k} \ +
  \sum_{a:\blambda^t_a\geq n-i+1} (y'_a-i)+(i-1),
\end{equation}
\begin{multline}\label{parabGT oscillator eq 2}
  E_{i,i+1}\mapsto
  -\sum_{k\in J_i} e^{q_{i,k}}
  \frac{\prod_{m\in J_{i+1}} (p_{i+1,m}-p_{i,k})}{\prod_{m\in J_i\backslash\{k\}} (p_{i,m}-p_{i,k})}
  \prod_{a:\blambda^t_a\geq n-i} (y'_a-p_{i,k}-i-1)=\\
  -\sum_{k\in J_i} \frac{\prod_{m\in J_{i+1}} (p_{i+1,m}-p_{i,k}+1)}{\prod_{m\in J_i\backslash\{k\}} (p_{i,m}-p_{i,k}+1)}
  \prod_{a:\blambda^t_a\geq n-i} (y'_a-p_{i,k}-i)\cdot e^{q_{i,k}},
\end{multline}
\begin{multline}\label{parabGT oscillator eq 3}
  E_{i+1,i}\mapsto
  \sum_{k\in J_i}e^{-q_{i,k}} \frac{\prod_{m\in J_{i-1}} (p_{i-1,m}-p_{i,k})}{\prod_{m\in J_i\backslash\{k\}} (p_{i,m}-p_{i,k})}
  \prod_{a:\blambda^t_a\geq n-i+1} \frac{1}{y'_a-p_{i,k}-i}=\\
  \sum_{k\in J_i} \frac{\prod_{m\in J_{i-1}} (p_{i-1,m}-p_{i,k}-1)}{\prod_{m\in J_i\backslash\{k\}} (p_{i,m}-p_{i,k}-1)}
  \prod_{a:\blambda^t_a\geq n-i+1} \frac{1}{y'_a-p_{i,k}-i-1}\cdot e^{-q_{i,k}}.
\end{multline}

\begin{Rem}
We note that the algebra $\CA$ acts on the bigger space $\wt{M}_{\unl{y}}$ parametrized by
$\Lambda=(\Lambda_{i,j})_{1\leq j\leq i\leq n}$ satisfying only the condition (a) via
  $p_{i,k}\colon \xi_\Lambda\mapsto l_{i,k}\xi_\Lambda,\
   e^{\pm q_{i,k}}\colon \xi_{\Lambda}\mapsto \xi_{\Lambda\pm \delta_{i,k}}$.
Meanwhile, the same formulas actually define the action of the subalgebra $\mathrm{Im}(\varrho)\subset \CA$
on $M_{\unl{y}}$, composing which with $\varrho$ recovers the action of $U(\gl_n)$ on
$M_{\unl{y}}$ defined via~(\ref{parabGT eq 1},~\ref{parabGT eq 2},~\ref{parabGT eq 3}).
\end{Rem}

\medskip
Consider the \emph{evaluation homomorphism}
$\wt{\ev}\colon Y^\rtt_{\varpi_0}(\gl_n)\to U(\gl_n)$ such that
\begin{equation}\label{twisted evaluation}
  T(z)_{i,i}\mapsto z-(E_{i,i}+1),\quad
  T(z)_{i,i+1}\mapsto E_{i,i+1},\quad
  T(z)_{i+1,i}\mapsto E_{i+1,i}.
\end{equation}

\begin{Rem}
$\wt{\ev}$ is a composition of the isomorphism
  $Y^\rtt_{\varpi_0}(\gl_n)\iso Y^\rtt_{0}(\gl_n), T(z)\mapsto zT(z)$
of Remark~\ref{explaining shifted Gauss}(d), the evaluation homomorphism
  $\ev\colon Y^\rtt_{0}(\gl_n)\to U(\gl_n), t_{ij}(z)\mapsto \delta_{ij}-E_{ij}z^{-1}$,
and the isomorphism $U(\gl_n)\iso U(\gl_n)$ determined by
  $E_{ii}\mapsto E_{ii}+1, E_{i,i\pm 1}\mapsto -E_{i,i\pm 1}$.
\end{Rem}

The key result of this section is:

\begin{Prop}\label{Lax=GT}
The homomorphism $\Ad(US)\circ \Theta_D\colon Y^\rtt_{\varpi_0}(\gl_n)\to \CA$
(the gauge transformation of $\Theta_D$) coincides with the composition
$\varrho\circ \wt{\ev}\colon Y^\rtt_{\varpi_0}(\gl_n)\to \CA$,
under the identification $x_k=y'_k$ for $1\leq k\leq \blambda_1$.
\end{Prop}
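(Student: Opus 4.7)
The plan is to reduce the equality of these two $\BC$-algebra homomorphisms $Y^\rtt_{\varpi_0}(\gl_n)\to \CA$ to a direct comparison on the near-diagonal matrix entries $T(z)_{i,i}, T(z)_{i,i+1}, T(z)_{i+1,i}$, followed by a bookkeeping-heavy but essentially mechanical verification of explicit formulas. First I would observe that both matrices $\Ad(US)T_D(z)$ and $(\varrho\circ\wt{\ev})(T(z))$ have the form $zI_n + A$ with $A$ a constant matrix over $\CA$: for the former, this is Theorem~\ref{Main Theorem 2}(b) applied with $\bmu=((-1)^n)$ (so that $m=n$); for the latter, it is immediate from~(\ref{twisted evaluation}), since $\wt{\ev}(T(z)_{i,j})$ is always of degree $\le 1$ in $z$ with leading term $\delta_{ij}$. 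Both matrices satisfy the RTT relation~(\ref{ratRTT}) --- the Lax side by Proposition~\ref{preserving RTT} together with the manifest invariance of RTT under the algebra automorphism $\Ad(US)$ of $\CA$, the other side because $\wt{\ev}$ is a composition of RTT-preserving maps. A short computation shows that RTT on a matrix $zI_n + A$ forces the commutation $[A_{ik},A_{jl}] = \delta_{il}A_{jk} - \delta_{jk}A_{il}$, i.e.\ the entries of $A$ realize a representation of $\gl_n$, and hence $A_{ij}$ for $|i-j|>1$ is an iterated commutator of the Chevalley-type generators $A_{i,i}, A_{i,i+1}, A_{i+1,i}$. Agreement on these near-diagonal entries therefore suffices.

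Next, I would perform the three term-by-term comparisons. For $T(z)_{i,i}$, write $(\varrho\circ\wt{\ev})(T(z)_{i,i}) = z - \varrho(E_{ii}) - 1$ using~(\ref{parabGT oscillator eq 1}), and compare with~(\ref{Lax final-gauge eq 1}). Under $x_k=y'_k$, the equivalence $i_k\le i-1 \Longleftrightarrow \blambda^t_k\ge n-i+1$ (from $i_k=n-\blambda^t_k$) identifies the two sums of frozen variables, while the constant terms match via the identity $|\{a:\blambda^t_a\ge n-i+1\}|=\blambda_{n-i+1}$, the defining property of the conjugate partition. Similarly, the comparison of~(\ref{Lax final-gauge eq 2}) with~(\ref{parabGT oscillator eq 2}) for $T(z)_{i,i+1}\mapsto E_{i,i+1}$, and of~(\ref{Lax final-gauge eq 3}) with~(\ref{parabGT oscillator eq 3}) for $T(z)_{i+1,i}\mapsto E_{i+1,i}$, is an immediate term-by-term identification once we canonically identify the summation set $\{1,\ldots,a_i\}$ of oscillator indices with the set $J_i$ of non-frozen Gelfand-Tsetlin coordinates in row $i$, and verify that the product conditions $i_k\le i$ and $\blambda^t_a\ge n-i$ describe the same index set under $x_k=y'_k$.

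The main obstacle is the combinatorial bookkeeping, in particular establishing the canonical bijection $J_i\leftrightarrow\{1,\ldots,a_i\}$, which amounts to the equality $|J_i|=a_i$. Using Lemma~\ref{explicit a's}(a) one finds $a_i = i-\sum_{j=n-i+1}^{n}\blambda_j$, while the number of non-frozen positions in row $i$ of a Gelfand-Tsetlin pattern is $i$ minus the total number of cells of $\blambda$ lying in rows $n-i+1,\ldots,n$; these are equal by reading the same Young diagram by rows versus by columns. The gauge factors $S$ and $U$ in~(\ref{gauge factor 1}) and~(\ref{gauge factor 2}) have been tuned precisely so that the images of $T(z)_{i,i}, T(z)_{i,i+1}, T(z)_{i+1,i}$ under $\Ad(US)\circ\Theta_D$ take exactly the shape of the parabolic Gelfand-Tsetlin formulas, so once the index matching is in place no further nontrivial simplification is required.
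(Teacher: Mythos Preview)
Your proof is correct and follows essentially the same route as the paper: a direct term-by-term comparison of the gauge-transformed Lax formulas~(\ref{Lax final-gauge eq 1})--(\ref{Lax final-gauge eq 3}) with the parabolic Gelfand--Tsetlin formulas~(\ref{parabGT oscillator eq 1})--(\ref{parabGT oscillator eq 3}) via~(\ref{twisted evaluation}), using the dictionary $i_k=n-\blambda^t_k$ and $x_k=y'_k$. The paper's proof is a single sentence to this effect; your write-up supplies, in addition, an explicit argument (both matrices are of the form $zI_n+A$, and the RTT relation forces the entries of $A$ to satisfy the $\gl_n$ commutation relations, so that checking near-diagonal entries suffices) for a reduction that the paper leaves implicit.
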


\begin{proof}
The proof immediately follows by comparing the
formulas~(\ref{Lax final-gauge eq 1},~\ref{Lax final-gauge eq 2},~\ref{Lax final-gauge eq 3})
with the formulas~(\ref{parabGT oscillator eq 1},~\ref{parabGT oscillator eq 2},~\ref{parabGT oscillator eq 3})
via~(\ref{twisted evaluation}) (as well as recalling that $i_k=n-\blambda^t_k$, hence,
for example $\sum_{k:i_k\leq i-1} x_k$ of~(\ref{Lax final-gauge eq 1}) coincides with
$\sum_{a:\blambda^t_a\geq n-i+1} y'_a$ of~(\ref{parabGT oscillator eq 1})).
\end{proof}

\begin{Rem}\label{another basis of parabolic Verma}
Choosing a basis of a Lie subalgebra $\snn_-\subseteq \gl_n$ such that
$\gl_n\simeq \spp\oplus \snn_-$, yields another standard basis of $M_{\unl{y}}$
via the vector space isomorphisms $M_{\unl{y}}\simeq U(\snn_-)\simeq S(\snn_-)$,
which similar to Proposition~\ref{Lax=GT} gives rise to the rational Lax matrices
$L_{\blambda,\unl{x},\wt{\bmu}=\emptyset}(z)$ of~\cite[\S3.2]{fp}.
\end{Rem}


\section{Trigonometric Lax matrices}\label{sec Trigonometric Lax matrices}

In this section, we generalize previous results to the trigonometric case.


\subsection{Shifted Drinfeld quantum affine algebras of $\gl_n$}\label{ssec Shifted QAffine of gl}
\

For a pair of $\gl_n$--coweights $\mu^+,\mu^-\in \Lambda$, define
  $\unl{d}^\pm=\{d^\pm_j\}_{j=1}^n\in \BZ^n,
   \unl{b}^\pm=\{b^\pm_i\}_{i=1}^{n-1}\in \BZ^{n-1}$
via
\begin{equation}\label{trigonometric shifts}
  d^\pm_j:=\epsilon^\vee_j(\mu^\pm), \quad
  b^\pm_i:=\alphavee_i(\mu^\pm)=d^\pm_{i}-d^\pm_{i+1}.
\end{equation}
Then, we define the \emph{shifted Drinfeld quantum affine algebra of $\gl_n$},
denoted by $U_{\mu^+,\mu^-}(L\gl_n)$, to be the associative $\BC(\vv)$-algebra
generated by
\begin{equation*}
  \{E_{i,r},F_{i,r}\}_{1\leq i<n}^{r\in \BZ}\cup
  \{\varphi^\pm_{i, \pm s^\pm_i}, (\varphi^\pm_{i, \pm d^\pm_i})^{-1}\}_{1\leq i\leq n}^{s^\pm_i\geq d^\pm_i}
\end{equation*}
with the following defining relations
(for all admissible $i,j$ and $\epsilon,\epsilon'\in \{\pm\}$):
\begin{equation}\label{U0}
  [\varphi_i^\epsilon(z),\varphi_j^{\epsilon'}(w)]=0, \quad
  \varphi^\pm_{i,\pm d^\pm_i}\cdot (\varphi^\pm_{i,\pm d^\pm_i})^{-1}=
  (\varphi^\pm_{i,\pm d^\pm_i})^{-1}\cdot \varphi^\pm_{i,\pm d^\pm_i}=1,
\end{equation}
\begin{equation}\label{U1}
  [E_i(z), F_j(w)]=
  (\vv-\vv^{-1})\delta_{i,j}\delta\left(\frac{z}{w}\right)
  \Big((\varphi^+_i(z))^{-1}\varphi^+_{i+1}(z)-(\varphi^-_i(z))^{-1}\varphi^-_{i+1}(z)\Big),
\end{equation}
\begin{equation}\label{U2}
  \varphi^\epsilon_i(z)E_j(w)=
  \left(\frac{z-w}{\vv^{-1}z-\vv w}\right)^{\delta_{i,j+1}}
  \left(\frac{z-w}{\vv z-\vv^{-1} w}\right)^{\delta_{i,j}} E_j(w)\varphi^\epsilon_i(z),
\end{equation}
\begin{equation}\label{U3}
  \varphi^\epsilon_i(z)F_j(w)=
  \left(\frac{\vv^{-1}z-\vv w}{z-w}\right)^{\delta_{i,j+1}}
  \left(\frac{\vv z-\vv^{-1} w}{z-w}\right)^{\delta_{i,j}} F_j(w)\varphi^\epsilon_i(z),
\end{equation}
\begin{equation}\label{U4}
  E_i(z)E_j(w)=
  \left(\frac{\vv z-\vv^{-1} w}{\vv^{-1}z-\vv w}\right)^{\delta_{i,j}}
  \left(\frac{z-w}{\vv z-\vv^{-1} w}\right)^{\delta_{i,j-1}}
  \left(\frac{\vv^{-1} z-\vv w}{z-w}\right)^{\delta_{i,j+1}}
  E_j(w)E_i(z),
\end{equation}
\begin{equation}\label{U5}
  F_i(z)F_j(w)=
  \left(\frac{\vv^{-1}z-\vv w}{\vv z-\vv^{-1} w}\right)^{\delta_{i,j}}
  \left(\frac{\vv z-\vv^{-1} w}{z-w}\right)^{\delta_{i,j-1}}
  \left(\frac{z-w}{\vv^{-1} z-\vv w}\right)^{\delta_{i,j+1}}
  F_j(w)F_i(z),
\end{equation}
\begin{equation}\label{U6}
  [E_i(z_1),[E_i(z_2),E_j(w)]_{\vv}]_{\vv^{-1}}+
  [E_i(z_2),[E_i(z_1),E_j(w)]_{\vv}]_{\vv^{-1}}=0\ \mathrm{if}\ |i-j|=1,
\end{equation}
\begin{equation}\label{U7}
  [F_i(z_1),[F_i(z_2),F_j(w)]_{\vv}]_{\vv^{-1}}+
  [F_i(z_2),[F_i(z_1),F_j(w)]_{\vv}]_{\vv^{-1}}=0\ \mathrm{if}\ |i-j|=1,
\end{equation}
where $[a,b]_x:=ab-x\cdot ba$ and the generating series are defined as follows:
\begin{equation}\label{generating qaffine}
    E_i(z):=\sum_{r\in \BZ}{E_{i,r}z^{-r}},\
    F_i(z):=\sum_{r\in \BZ}{F_{i,r}z^{-r}},\
    \varphi_i^{\pm}(z):=\sum_{r\geq d^\pm_i}{\varphi^\pm_{i,\pm r}z^{\mp r}},\
    \delta(z):=\sum_{r\in \BZ}{z^r}.
\end{equation}
We will also need \emph{Drinfeld half-currents} $E^\pm_i(z),F^\pm_i(z)$ defined via
\begin{equation}\label{Drinfeld half-currents}
\begin{split}
  & E^+_i(z):=\sum_{r\geq 0} E_{i,r}z^{-r}, \quad E^-_i(z):=-\sum_{r<0} E_{i,r}z^{-r},\\
  & F^+_i(z):=\sum_{r>0} F_{i,r}z^{-r}, \quad F^-_i(z):=-\sum_{r\leq 0} F_{i,r}z^{-r},
\end{split}
\end{equation}
so that $E_i(z)=E^+_i(z)-E^-_i(z)$ and $F_i(z)=F^+_i(z)-F^-_i(z)$.

\begin{Rem}\label{rmk comparing to df}
For $\mu^+=\mu^-=0$, we have
  $U_{0,0}(L\gl_n)/(\varphi^+_{i,0}\varphi^-_{i,0}-1)\simeq U_\vv(L\gl_n)$--the
standard quantum loop (the quantum affine with the trivial central charge) algebra
of $\gl_n$ as defined in~\cite[Definition 3.1]{df}. More precisely, the generating
series $X^-_i(z),X^+_i(z), k^\pm_j(z)$ of~\emph{loc.cit.}\ correspond to
$E_i(z),F_i(z),\varphi^\mp_j(z)$ of~(\ref{generating qaffine}), respectively.
\end{Rem}

Similarly to Lemma~\ref{identifying gl-Yangians}, the algebra
$U_{\mu^+,\mu^-}(L\gl_n)$ depends only on the associated $\ssl_n$--coweights
$\bar{\mu}^+,\bar{\mu}^-\in \bar{\Lambda}$, up to an isomorphism:

\begin{Lem}\label{identifying gl-qaffine}
For $\gl_n$--coweights $\mu^+_1,\mu^-_1,\mu^+_2,\mu^-_2\in \Lambda$ such that
$\bar{\mu}^+_1=\bar{\mu}^+_2, \bar{\mu}^-_1=\bar{\mu}^-_2$ in $\bar{\Lambda}$,
the assignment
\begin{equation}\label{isom of shifted gl-qaffine}
  E^{(r)}_i\mapsto E^{(r)}_i, \quad F^{(r)}_i\mapsto F^{(r)}_i, \quad
  \varphi^\pm_{i,\pm s^\pm_i}\mapsto \varphi^\pm_{i,\pm s^\pm_i\mp \epsilon^\vee_i(\mu^\pm_1-\mu^\pm_2)}
\end{equation}
gives rise to a $\BC(\vv)$-algebra isomorphism
  $U_{\mu^+_1,\mu^-_1}(L\gl_n)\iso U_{\mu^+_2,\mu^-_2}(L\gl_n)$.
\end{Lem}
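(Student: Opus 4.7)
The plan is to mimic the rational case (Lemma~\ref{identifying gl-Yangians}): verify that the assignment~(\ref{isom of shifted gl-qaffine}) respects the defining relations~(\ref{U0}--\ref{U7}), and then obtain the inverse by swapping the roles of $\mu^\pm_1$ and $\mu^\pm_2$.

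The key reformulation is the following. The condition $\bar{\mu}^\pm_1=\bar{\mu}^\pm_2$ is equivalent to $\alphavee_i(\mu^\pm_1)=\alphavee_i(\mu^\pm_2)$ for all $1\leq i<n$, which in turn means that $\epsilon^\vee_i(\mu^\pm_1-\mu^\pm_2)$ is independent of $i\in\{1,\ldots,n\}$. Set $c^\pm:=\epsilon^\vee_i(\mu^\pm_1-\mu^\pm_2)\in\BZ$. Then a direct reindexing shows that the assignment~(\ref{isom of shifted gl-qaffine}) translates at the level of generating series into
\begin{equation*}
  E_i(z)\mapsto E_i(z),\qquad F_i(z)\mapsto F_i(z),\qquad
  \varphi^\pm_i(z)\mapsto z^{\mp c^\pm}\varphi^\pm_i(z),
\end{equation*}
where on the right-hand side the $\varphi^\pm_i(z)$ are the series in $U_{\mu^+_2,\mu^-_2}(L\gl_n)$ whose expansion starts at the shifted leading modes $d^\pm_i-c^\pm$.

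Next I would check each of the defining relations. The relation~(\ref{U0}) is immediate, as $z^{\mp c^\pm}$ is a central scalar factor. Relations~(\ref{U4}--\ref{U7}) involve only $E$'s and $F$'s and hold tautologically. For relations~(\ref{U2}--\ref{U3}), the scalar factor $z^{\mp c^\pm}$ may be moved freely past $E_j(w)$ and $F_j(w)$, so they are preserved. The crucial case is~(\ref{U1}): under the assignment one gets
\begin{equation*}
  (\varphi^\pm_i(z))^{-1}\varphi^\pm_{i+1}(z)\mapsto
  z^{\pm c^\pm}(\varphi^\pm_i(z))^{-1}\cdot z^{\mp c^\pm}\varphi^\pm_{i+1}(z)
  =(\varphi^\pm_i(z))^{-1}\varphi^\pm_{i+1}(z),
\end{equation*}
so the right-hand side of~(\ref{U1}) is invariant, matching the unchanged left-hand side. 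This verifies that the assignment extends to a $\BC(\vv)$-algebra homomorphism $U_{\mu^+_1,\mu^-_1}(L\gl_n)\to U_{\mu^+_2,\mu^-_2}(L\gl_n)$.

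Finally, exchanging $\mu^\pm_1\leftrightarrow\mu^\pm_2$ (which negates $c^\pm$) produces the inverse homomorphism, so the map is an isomorphism. The only even mildly delicate point is the bookkeeping showing that the shifts $\epsilon^\vee_i(\mu^\pm_1-\mu^\pm_2)$ are constant in $i$ so that the rescaling by $z^{\mp c^\pm}$ is well-defined and cancels in~(\ref{U1}); once this is in hand the rest is routine.
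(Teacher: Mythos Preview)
Your proof is correct and follows exactly the approach the paper indicates: the paper gives no explicit proof, only the pointer ``Similarly to Lemma~\ref{identifying gl-Yangians}'', and your argument is precisely the trigonometric analogue of that proof (verify compatibility with the defining relations, then swap $\mu^\pm_1\leftrightarrow\mu^\pm_2$ to get the inverse). Your generating-series reformulation $\varphi^\pm_i(z)\mapsto z^{\mp c^\pm}\varphi^\pm_i(z)$, with the observation that $c^\pm=\epsilon^\vee_i(\mu^\pm_1-\mu^\pm_2)$ is independent of $i$, is a clean way to see why~(\ref{U1}) is preserved and makes the verification more transparent than a bare appeal to analogy.
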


Let $U'_{\mu^+,\mu^-}(L\gl_n)$ be the associative $\BC(\vv)$-algebra
obtained from $U_{\mu^+,\mu^-}(L\gl_n)$ by formally adjoining $n$-th
roots of its central elements
  $\varphi^\pm:=\varphi^\pm_{1,\pm d^\pm_1}\varphi^\pm_{2,\pm d^\pm_2}\ldots \varphi^\pm_{n,\pm d^\pm_n}$,
that is,
\begin{equation}\label{extended shifted qaffine gl}
  U'_{\mu^+,\mu^-}(L\gl_n):=
  U_{\mu^+,\mu^-}(L\gl_n)\left[(\varphi^+)^{\pm 1/n},(\varphi^-)^{\pm 1/n}\right].
\end{equation}

The algebras $U'_{\mu^+,\mu^-}(L\gl_n)$ slightly generalize the
\emph{shifted (Drinfeld) quantum affine algebras of $\ssl_n$},
denoted by $U^\ssc_{\nu^+,\nu^-}(L\ssl_n)$ (the \emph{simply-connected version})
and $U^\ad_{\nu^+,\nu^-}(L\ssl_n)$ (the \emph{adjoint version}) in~\cite[\S5]{ft1},
where $\nu^+,\nu^-\in \bar{\Lambda}$ are $\ssl_n$--coweights. Recall that the latter,
the algebra $U^\ad_{\nu^+,\nu^-}(L\ssl_n)$, is an associative $\BC(\vv)$-algebra generated by
  $\{e_{i,r},f_{i,r},\psi^\pm_{i,\pm s^\pm_i},(\phi^\pm_i)^{\pm 1}\}_{1\leq i<n}^{r\in \BZ, s^\pm_i\geq -b^\pm_i}$
with the defining relations~\cite[(U1--U10)]{ft1}, where
$b^\pm_i:=\alphavee_i(\nu^\pm_i)$. Define the generating series
\begin{equation*}
  e_i(z):=\sum_{r\in \BZ} e_{i,r}z^{-r}, \quad
  f_i(z):=\sum_{r\in \BZ} f_{i,r}z^{-r}, \quad
  \psi_i^{\pm}(z):=\sum_{r\geq -b^\pm_i}{\psi^\pm_{i,\pm r}z^{\mp r}}.
\end{equation*}

The explicit relation between the shifted Drinfeld
quantum affine algebras of $\ssl_n$ and $\gl_n$ is:

\begin{Prop}\label{relation Qaffine sl vs gl}
For any $\mu^+,\mu^-\in \Lambda$, there exists a $\BC(\vv)$-algebra embedding
\begin{equation}\label{embedding of sl to gl qaffine}
  \iota_{\mu^+,\mu^-}\colon
  U^\ad_{\bar{\mu}^+,\bar{\mu}^-}(L\ssl_n)\hookrightarrow U'_{\mu^+,\mu^-}(L\gl_n),
\end{equation}
uniquely determined by
\begin{equation}\label{assignment quantum sl vs gl series}
\begin{split}
  & e_i(z)\mapsto \frac{E_i(\vv^i z)}{\vv-\vv^{-1}}, \quad
    f_i(z)\mapsto \frac{F_i(\vv^i z)}{\vv-\vv^{-1}},\\
  & \psi^\pm_i(z)\mapsto (\varphi^\pm_i(\vv^i z))^{-1}\varphi^\pm_{i+1}(\vv^i z), \quad
    \phi^\pm_i\mapsto (\varphi^\pm_{1,\pm d^\pm_1}\ldots \varphi^\pm_{i,\pm d^\pm_i})^{-1}\cdot (\varphi^\pm)^{i/n}.
\end{split}
\end{equation}
Restricting to
  $U^\ssc_{\bar{\mu}^+,\bar{\mu}^-}(L\ssl_n)\subset U^\ad_{\bar{\mu}^+,\bar{\mu}^-}(L\ssl_n)$,
this gives rise to a $\BC(\vv)$-algebra embedding
\begin{equation}\label{ssc embedding of sl to gl qaffine}
  \iota_{\mu^+,\mu^-}\colon
  U^\ssc_{\bar{\mu}^+,\bar{\mu}^-}(L\ssl_n)\hookrightarrow U_{\mu^+,\mu^-}(L\gl_n).
\end{equation}
\end{Prop}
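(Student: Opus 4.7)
The proof plan follows the pattern of Proposition~\ref{relation yangians sl vs gl} in the Yangian case. The first step is to verify that the assignment~(\ref{assignment quantum sl vs gl series}) respects the defining relations of $U^\ad_{\bar{\mu}^+,\bar{\mu}^-}(L\ssl_n)$ as given in~\cite[\S5]{ft1}. Several structural observations make this essentially a direct check: the image of $\psi^\pm_i(z)$ is a series whose leading $z$-power equals $d^\pm_i - d^\pm_{i+1} = b^\pm_i$, matching the required expansion range of $\psi^\pm_i(z)$; the prefactor $(\vv-\vv^{-1})$ in the assignment for $e_i(z), f_i(z)$ cancels against the same factor in relation~(\ref{U1}), yielding the correct form of the $[e_i(z), f_j(w)]$ commutator; the $\vv^i$-rescalings in the arguments are compatible because all cross-relations~(\ref{U2}--\ref{U7}) are homogeneous rational equalities in $z, w$; and the proposed images of $\phi^\pm_i$ are central since they are built from the central elements $\varphi^\pm_{j,\pm d^\pm_j}$ (central by~(\ref{U2}),~(\ref{U3}) applied to leading coefficients) together with the adjoined central roots $(\varphi^\pm)^{\pm 1/n}$.

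The more substantial step is injectivity. Following the Yangian case, I would produce a central subalgebra $\CZ \subset U'_{\mu^+,\mu^-}(L\gl_n)$ and a subalgebra $U''_{\mu^+,\mu^-}(L\gl_n)$ containing the image of $\iota_{\mu^+,\mu^-}$ such that multiplication gives an isomorphism $\CZ \otimes_{\BC(\vv)} U''_{\mu^+,\mu^-}(L\gl_n) \iso U'_{\mu^+,\mu^-}(L\gl_n)$. The natural candidate for $\CZ$ is generated by $(\varphi^\pm)^{\pm 1/n}$ together with the coefficients of a quantum-determinant-type series such as $C^\pm(z) := \prod_{i=1}^{n} \varphi^\pm_i(\vv^{-2(i-1)} z)$ (appropriately normalized); centrality of these coefficients follows from~(\ref{U2}),~(\ref{U3}) by a standard telescoping argument in which the rational factors produced by successive $\varphi^\pm_i(\vv^{-2(i-1)}z)$ acting on $E_j(w), F_j(w)$ cancel pairwise. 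The subalgebra $U''_{\mu^+,\mu^-}(L\gl_n)$ is the one generated by $E_i(z), F_i(z)$, the ratios $(\varphi^\pm_i(z))^{-1}\varphi^\pm_{i+1}(z)$, and the combinations $(\varphi^\pm_{1,\pm d^\pm_1}\cdots\varphi^\pm_{i,\pm d^\pm_i})^{-1}(\varphi^\pm)^{i/n}$ appearing in~(\ref{assignment quantum sl vs gl series}); by construction these are the images of the full set of generators of $U^\ad_{\bar{\mu}^+,\bar{\mu}^-}(L\ssl_n)$, and a presentation-comparison argument then upgrades the surjection $\iota_{\mu^+,\mu^-}\colon U^\ad_{\bar{\mu}^+,\bar{\mu}^-}(L\ssl_n) \twoheadrightarrow U''_{\mu^+,\mu^-}(L\gl_n)$ to an isomorphism, yielding injectivity.

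For the ssc version~(\ref{ssc embedding of sl to gl qaffine}), note that $U^\ssc_{\bar{\mu}^+,\bar{\mu}^-}(L\ssl_n)$ is the subalgebra of $U^\ad_{\bar{\mu}^+,\bar{\mu}^-}(L\ssl_n)$ generated by $\{e_i, f_i, \psi^\pm_i\}$ alone (omitting the $\phi^\pm_i$), and the images of these generators under $\iota_{\mu^+,\mu^-}$ lie inside $U_{\mu^+,\mu^-}(L\gl_n)$ before adjoining any $n$-th roots of $\varphi^\pm$; this gives the restricted embedding immediately, with injectivity inherited from the ad case. The main obstacle in the plan is the injectivity step: in contrast to the Yangian case, where the Cartan part was a free polynomial algebra admitting a clean tensor factorization, the quantum affine setting requires identifying the correct ``quantum determinant''-type central series with the right $\vv$-scaling shifts, verifying its centrality via a delicate telescoping computation, and confirming the nontriviality of the tensor factorization. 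A possibly cleaner alternative would exploit the forthcoming RTT realization of the antidominantly shifted algebras to directly exhibit the quantum determinant as the desired central series, thereby sidestepping the Drinfeld-side bookkeeping.
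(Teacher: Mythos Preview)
Your proposal is correct and follows essentially the same approach as the paper, which simply states that the proof is completely analogous to that of Proposition~\ref{relation yangians sl vs gl}: verify the assignment is a homomorphism, then obtain injectivity from a tensor decomposition $U'_{\mu^+,\mu^-}(L\gl_n)\simeq \CZ\otimes U^\ad_{\bar{\mu}^+,\bar{\mu}^-}(L\ssl_n)$ with $\CZ$ generated by $(\varphi^\pm)^{\pm 1/n}$ and the coefficients of the central series $C^\pm(z)$ (this is precisely Lemma~\ref{shifted Qaffine sl as a quotient of gl}). One small correction: the paper's central series is $C^\pm(z)=\varphi^\pm_1(z)\varphi^\pm_2(\vv^{2}z)\cdots\varphi^\pm_n(\vv^{2(n-1)}z)$, with $\vv^{2(i-1)}$ rather than your $\vv^{-2(i-1)}$; you rightly flagged the scaling as a point requiring care, and indeed only the former choice makes the rational factors in~(\ref{U2}),~(\ref{U3}) telescope.
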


\begin{Rem}\label{classical embedding Qaffine}
For $\mu^+=\mu^-=0$, this recovers (an extension of) the classical embedding
$U_\vv(L\ssl_n)\hookrightarrow U_\vv(L\gl_n)$ of quantum loop algebras.
\end{Rem}

\begin{proof}[Proof of Proposition~\ref{relation Qaffine sl vs gl}]
The proof is completely analogous to that of Proposition~\ref{relation yangians sl vs gl}.
\end{proof}

Define the generating series
\begin{equation}\label{quantum qdeterminant}
  C^\pm(z):=\sum_{s\geq d^\pm_1+\ldots+d^\pm_n}C^\pm_{\pm s}z^{\mp s}=
  \varphi^\pm_1(z)\varphi^\pm_2(\vv^2 z)\cdots \varphi^\pm_n(\vv^{2(n-1)}z).
\end{equation}
The coefficients $C^\pm_{\pm s}$ are central elements of both $U_{\mu^+,\mu^-}(L\gl_n)$
and $U'_{\mu^+,\mu^-}(L\gl_n)$, due to the defining relations~(\ref{U0},~\ref{U2},~\ref{U3}).
We also note that $C^\pm_{\pm(d^\pm_1+\ldots+d^\pm_n)}=\varphi^\pm$.

The following result provides a trigonometric version of the decomposition~(\ref{gl as sl plus center}):

\begin{Lem}\label{shifted Qaffine sl as a quotient of gl}
There is a $\BC(\vv)$-algebra isomorphism
\begin{equation}\label{quantum gl as sl plus center}
  U'_{\mu^+,\mu^-}(L\gl_n)\simeq
  \BC[\{C^\pm_{\pm s^\pm}, (\varphi^\epsilon)^{\pm 1/n}\}^{\epsilon\in \{+,-\}}_{s^\pm>d^\pm_1+\ldots+d^\pm_n}]
  \otimes_{\BC(\vv)} U^\ad_{\bar{\mu}^+,\bar{\mu}^-}(L\ssl_n).
\end{equation}
In particular, $U^\ad_{\bar{\mu}^+,\bar{\mu}^-}(L\ssl_n)$ may be realized both
as a subalgebra of $U'_{\mu^+,\mu^-}(L\gl_n)$ via~(\ref{embedding of sl to gl qaffine})
as well as a quotient algebra of $U'_{\mu^+,\mu^-}(L\gl_n)$ by the central ideal
  $\left(C^\pm_{\pm s^\pm}-b^\pm_{\pm s^\pm}, (\varphi^\epsilon)^{\pm 1/n}-(b^\epsilon)^{\pm 1}\right)$
with $\epsilon\in \{+,-\}, s^\pm>d^\pm_1+\ldots+d^\pm_n$ for any collection of $b^\pm_{\pm s^\pm}\in \BC$ and
$b^\epsilon\in \BC^\times$.
\end{Lem}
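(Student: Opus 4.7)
The plan is to mimic the Yangian proof (Corollary~\ref{shifted Yangian sl as a quotient of gl}) step by step, reducing the full decomposition to a purely commutative statement about the Cartan part.

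First, I would verify that the coefficients $C^\pm_{\pm s}$ of the series $C^\pm(z)$ from~(\ref{quantum qdeterminant}) are central in $U'_{\mu^+,\mu^-}(L\gl_n)$. Using~(\ref{U0}), they commute with all $\varphi^\epsilon_j(w)$. To commute them through $E_j(w)$ (resp.~$F_j(w)$), I apply~(\ref{U2}) (resp.~(\ref{U3})) to each factor $\varphi^\pm_i(\vv^{2(i-1)}z)$ in the product; the $z$-shifts by $\vv^{2(i-1)}$ are precisely arranged so that the $\delta_{i,j+1}$ and $\delta_{i,j}$ rational factors telescope to $1$, giving $[C^\pm(z),E_j(w)]=[C^\pm(z),F_j(w)]=0$. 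The same computation with only the leading coefficients extracted shows that $\varphi^\pm=\varphi^\pm_{1,\pm d^\pm_1}\cdots \varphi^\pm_{n,\pm d^\pm_n}$ is central, so the subalgebra
\begin{equation*}
  Z:=\BC(\vv)\bigl[\{C^\pm_{\pm s^\pm}\}_{s^\pm>d^\pm_1+\ldots+d^\pm_n},(\varphi^\pm)^{\pm 1/n}\bigr]\subset U'_{\mu^+,\mu^-}(L\gl_n)
\end{equation*}
is indeed central, and the multiplication map $m\colon Z\otimes_{\BC(\vv)} U^\ad_{\bar{\mu}^+,\bar{\mu}^-}(L\ssl_n)\to U'_{\mu^+,\mu^-}(L\gl_n)$ (using the embedding $\iota_{\mu^+,\mu^-}$ of~\refp{relation Qaffine sl vs gl} on the second tensor factor) is a well-defined $\BC(\vv)$-algebra homomorphism.

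Next, I establish surjectivity of $m$ by decomposing the Cartan subalgebra
\begin{equation*}
  \CH:=\BC(\vv)\bigl[\{\varphi^\pm_{i,\pm s^\pm_i}\}_{s^\pm_i\geq d^\pm_i}, (\varphi^\pm_{i,\pm d^\pm_i})^{-1},(\varphi^\pm)^{\pm 1/n}\bigr]
\end{equation*}
as $Z\otimes_{\BC(\vv)}\CH^\ssl$, where $\CH^\ssl$ is the image under $\iota_{\mu^+,\mu^-}$ of the Cartan subalgebra of $U^\ad_{\bar{\mu}^+,\bar{\mu}^-}(L\ssl_n)$ (i.e.\ generated by the coefficients of $(\varphi^\pm_i(\vv^i z))^{-1}\varphi^\pm_{i+1}(\vv^i z)$ and by $\phi^\pm_i=(\varphi^\pm_{1,\pm d^\pm_1}\cdots\varphi^\pm_{i,\pm d^\pm_i})^{-1}(\varphi^\pm)^{i/n}$ for $1\leq i<n$). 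At the level of leading coefficients, the identity $\varphi^\pm_{i,\pm d^\pm_i}=\phi^\pm_{i-1}(\phi^\pm_i)^{-1}(\varphi^\pm)^{1/n}$ (with $\phi^\pm_0=\phi^\pm_n=1$) is clearly invertible, expressing $\{\varphi^\pm_{i,\pm d^\pm_i}\}_{i=1}^n$ in terms of $\{\phi^\pm_1,\ldots,\phi^\pm_{n-1},(\varphi^\pm)^{1/n}\}$. For the non-leading coefficients, the relations
\begin{equation*}
  \varphi^\pm_{i+1}(z)=\varphi^\pm_i(z)\cdot\bigl((\varphi^\pm_i(z))^{-1}\varphi^\pm_{i+1}(z)\bigr),\qquad C^\pm(z)=\prod_{i=1}^{n}\varphi^\pm_i(\vv^{2(i-1)}z)
\end{equation*}
allow one to solve recursively for all coefficients of $\varphi^\pm_1(z),\ldots,\varphi^\pm_n(z)$ in terms of the $\CH^\ssl$-generators and the $C^\pm_{\pm s^\pm}$. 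Indeed, substituting the first identity into the second yields a triangular system: modulo lower-order coefficients, the coefficient of $z^{-(d^\pm_1+\ldots+d^\pm_n+k)}$ in $C^\pm(z)$ equals (up to an invertible factor involving $\varphi^\pm$) the coefficient $\varphi^\pm_{1,\pm(d^\pm_1+k)}$ plus $\CH^\ssl$-terms, and one inverts this triangular system recursively in $k$. This proves the commutative algebra isomorphism $\CH\simeq Z\otimes_{\BC(\vv)}\CH^\ssl$, hence also the surjectivity of $m$.

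Finally, injectivity of $m$ follows from the already-claimed injectivity of $\iota_{\mu^+,\mu^-}$ (\refp{relation Qaffine sl vs gl}) together with the centrality of $Z$: if $\sum_\alpha z_\alpha\otimes u_\alpha\in\mathrm{Ker}(m)$ with $\{z_\alpha\}$ linearly independent in $Z$, then comparing images in $\CH$ via the PBW-style decomposition coming from the triangular $E$-$F$-Cartan factorization of $U'_{\mu^+,\mu^-}(L\gl_n)$ (and the analogous decomposition for $U^\ad_{\bar{\mu}^+,\bar{\mu}^-}(L\ssl_n)$) forces each $u_\alpha$ to lie in $\mathrm{Ker}(\iota_{\mu^+,\mu^-})=0$. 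The main obstacle in the plan is the triangularity verification in the second paragraph — in particular, the check that the nonlinear change of variables from $\{\varphi^\pm_{i,\pm s^\pm_i}\}$ to $\{\psi^\pm_{i,\pm s^\pm_i},(\phi^\pm_i)^{\pm 1},C^\pm_{\pm s^\pm},(\varphi^\pm)^{\pm 1/n}\}$ is polynomial-invertible over $\BC(\vv)$; this is a trigonometric counterpart of the elementary identity $Y_\mu(\gl_n)\simeq \BC[\{C_s\}]\otimes_{\BC} Y_{\bar{\mu}}(\ssl_n)$ from Corollary~\ref{shifted Yangian sl as a quotient of gl}, but complicated by the multiplicative shifts $z\mapsto \vv^{2(i-1)}z$ and the necessity of inverting $\varphi^\pm_{i,\pm d^\pm_i}$ rather than normalizing them to $1$.
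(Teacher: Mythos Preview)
Your overall strategy matches the paper's, which gives no separate proof of this lemma: it is implicit in the proof of Proposition~\ref{relation Qaffine sl vs gl}, declared ``completely analogous'' to the Yangian argument of Proposition~\ref{relation yangians sl vs gl}. Your centrality verification for $C^\pm(z)$ and the Cartan change-of-variables are precisely the trigonometric analogues of that proof, and you correctly identify the triangularity of the change of variables in $\CH$ as the only nontrivial point.

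Your injectivity step, however, has a circularity problem. You invoke $\mathrm{Ker}(\iota_{\mu^+,\mu^-})=0$ from Proposition~\ref{relation Qaffine sl vs gl} together with a ``PBW-style'' triangular decomposition of $U'_{\mu^+,\mu^-}(L\gl_n)$. But in the paper's logic (read off from the Yangian proof, to which the trigonometric proof is declared analogous), the injectivity of $\iota$ is \emph{derived from} the tensor decomposition, not assumed for it --- the final line of that proof is ``$\iota_\mu\colon Y_{\bar{\mu}}(\ssl_n)\iso Y'_\mu(\gl_n)$, and hence $\iota_\mu$ is injective''. Moreover, no PBW theorem for the shifted $\gl_n$ quantum affine algebra is established at this point in the paper. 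The paper's route avoids both issues: once $\CH\simeq Z\otimes_{\BC(\vv)}\CH^\ssl$ is in hand, one rewrites the defining relations~(\ref{U1})--(\ref{U3}) in the new Cartan variables and observes that only the ratios $(\varphi^\pm_i)^{-1}\varphi^\pm_{i+1}$ appear (the $C^\pm_{\pm s}$ drop out entirely), so the presentation itself splits as $Z\otimes_{\BC(\vv)} U''$ for the subalgebra $U''$ generated by $E_i,F_i$ and $\CH^\ssl$, with no appeal to PBW. One then checks that $\iota_{\mu^+,\mu^-}$ maps $U^\ad_{\bar\mu^+,\bar\mu^-}(L\ssl_n)$ isomorphically onto $U''$, which simultaneously yields the decomposition~(\ref{quantum gl as sl plus center}) and the injectivity of $\iota_{\mu^+,\mu^-}$.
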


\begin{Rem}\label{conjectured center of shifted qaffine}
We expect that the trigonometric version of the key result of~\cite{w},
see Theorem~\ref{alex's theorem} and Conjecture~\ref{alex's trig conjecture}, holds.
Then, the arguments similar to those of Remark~\ref{proof of trivial center}
would yield the triviality of centers of the shifted quantum affine algebras
$U^\ad_{\nu^+,\nu^-}(L\fg)$ for any coweights $\nu^+,\nu^-$ of a semisimple Lie algebra $\fg$.
Combined with~(\ref{quantum gl as sl plus center}) this would imply
that the center of $U'_{\mu^+,\mu^-}(L\gl_n)$ coincides with
  $\BC[\{C^\pm_{\pm s^\pm}, (\varphi^\epsilon)^{\pm 1/n}\}^{\epsilon\in \{+,-\}}_{s^\pm>d^\pm_1+\ldots+d^\pm_n}]$
for any~$\mu^+,\mu^-\in \Lambda$.
\end{Rem}


\subsection{Homomorphism $\Psi_D$}
\label{ssec homomorphism from shifted qaffine of gl}
\

In this section, we generalize~\cite[Theorem 7.1]{ft1} for the type $A_{n-1}$
Dynkin diagram with arrows pointing $i\to i+1, 1\leq i\leq n-2$, by replacing
$U^\ad_{\bar{\mu}^+,\bar{\mu}^-}(L\ssl_n)$ of~\emph{loc.cit.}\ with
$U_{\mu^+,\mu^-}(L\gl_n)$.

\begin{Rem}\label{remark about orientations in trigonometric setup}
While similar generalizations exist for all orientations of $A_{n-1}$ Dynkin diagram,
for the purposes of this paper it suffices to consider only the above equioriented case,
see Remarks~\ref{remark about orientations},~\ref{rmk on other orientations}.
\end{Rem}


A \textbf{$\Lambda$-valued divisor $D$ on $\BP^1$,
$\Lambda^+$-valued outside $\{0,\infty\}\in \BP^1$}, is a formal sum
\begin{equation}\label{trig divisor def1}
  D\, = \sum_{1\leq s\leq N} \gamma_s\varpi_{i_s} [\sx_s] + \mu^+ [\infty] + \mu^- [0]
\end{equation}
with $N\in \BN$, $0\leq i_s<n$, $\sx_s\in \BC^\times$,
  $\gamma_s=\begin{cases}
     1 & \text{if } i_s\ne 0 \\
     \pm 1 & \text{if } i_s=0
   \end{cases}$,
and $\mu^+,\mu^-\in \Lambda$.
We will write $\mu^+=D|_\infty$ and $\mu^-=D|_0$.
Note that if $\mu^+,\mu^-\in \Lambda^+$, then $D$ is a
\textbf{$\Lambda^+$-valued divisor on $\BP^1$}.
It will be convenient to present
\begin{equation}\label{trig divisor def2}
  D\, = \sum_{\sx\in \BP^1\backslash\{0,\infty\}} \lambda_{\sx} [\sx] + \mu^+ [\infty] + \mu^- [0]
  \ \mathrm{with}\ \lambda_{\sx}\in \Lambda^+,
\end{equation}
related to~(\ref{trig divisor def1}) via
  $\lambda_{\sx}:=\sum_{s: \sx_s=\sx} \gamma_s\varpi_{i_s}$.
Set $\lambda:=\sum_{s=1}^N \gamma_s\varpi_{i_s}\in \Lambda^+$.
Following~\cite{ft1}, we make the following
\begin{equation}\label{trig assumption}
  \textbf{Assumption:} \quad \lambda + \mu^+ + \mu^- = a_1\alpha_1+\ldots+a_{n-1}\alpha_{n-1}
  \quad \mathrm{with}\ a_i\in \BN.
\end{equation}

Consider the associative $\BC[\vv,\vv^{-1}]$-algebra
\begin{equation}\label{v-deformed target}
  \wt{\CA}^\vv=\BC\langle D_{i,r}^{\pm 1}, \sw_{i,r}^{\pm 1/2}, (\sw_{i,r}-\vv^m\sw_{i,s})^{-1}, (1-\vv^l)^{-1} \rangle
  _{1\leq i<n, m\in \BZ, l\in \BZ\backslash\{0\}}^{1\leq r\ne s\leq a_i}
\end{equation}
with the defining relations
\begin{equation*}
  D_{i,r}\sw^{1/2}_{j,s}=\vv^{\delta_{i,j}\delta_{r,s}}\sw^{1/2}_{j,s}D_{i,r}, \quad
  [D_{i,r},D_{j,s}]=0=[\sw^{1/2}_{i,r}, \sw^{1/2}_{j,s}], \quad
  D_{i,r}^{\pm 1} D_{i,r}^{\mp 1}=1=\sw^{\pm 1/2}_{i,r}\sw^{\mp 1/2}_{i,r}.
\end{equation*}
We also define its $\BC(\vv)$-counterpart
\begin{equation}\label{v-deformed extended target}
  \wt{\CA}^\vv_\fra:=\wt{\CA}^\vv\otimes_{\BC[\vv,\vv^{-1}]} \BC(\vv).
\end{equation}

\begin{Rem}\label{multiplicative difference operators}
The algebra $\wt{\CA}^\vv$ can be represented in the algebra of
$\vv$-difference operators with rational coefficients on functions of
$\{\wt{\sw}_{i,r}\}_{1\leq i<n}^{1\leq r\leq a_i}$ with the conventions
$\wt{\sw}^{\pm 1}_{i,r}=\sw_{i,r}^{\pm 1/2}$ by taking $D^{\pm 1}_{i,r}$
to be a $\vv$-difference operator $\mathsf{D}^{\pm 1}_{i,r}$ acting via
  $(\mathsf{D}^{\pm 1}_{i,r} \mathsf{\Psi})(\wt{\sw}_{1,1},\ldots,\wt{\sw}_{i,r},\ldots, \wt{\sw}_{n-1,a_{n-1}}) =
   \mathsf{\Psi}(\wt{\sw}_{1,1},\ldots,\vv^{\pm 1}\wt{\sw}_{i,r}, \ldots, \wt{\sw}_{n-1,a_{n-1}})$.
\end{Rem}

For $0\leq i\le n-1$ and $1\le j\leq n-1$, we define
\begin{equation}\label{quantum ZW-series}
\begin{split}
  & \sZ_i(z):=\prod_{1\leq s\leq N}^{i_s=i} \left(1-\frac{\vv^{-i}\sx_s}{z}\right)^{\gamma_s}=
    \prod_{\sx\in \BP^1\backslash\{0,\infty\}} \left(1-\frac{\vv^{-i}\sx}{z}\right)^{\alphavee_i(\lambda_\sx)},\\
  & \sW_j(z):=\prod_{r=1}^{a_j} \left(1-\frac{\sw_{j,r}}{z}\right), \quad
    \sW_{j,r}(z):=\prod_{1\leq s\leq a_j}^{s\ne r} \left(1-\frac{\sw_{j,s}}{z}\right),
\end{split}
\end{equation}
where $\alphavee_0=-\epsilon^\vee_1$ as before. We also define
\begin{equation*}
  a_0:=0, \quad a_n:=0, \quad \sW_0(z):=1, \quad \sW_n(z):=1.
\end{equation*}

The following result generalizes $A_{n-1}$-case of~\cite[Theorem 7.1]{ft1}
stated for semisimple Lie algebras $\fg$:

\begin{Thm}\label{homom for qaffine gl}
Let $D$ be as above and $\mu^+=D|_\infty, \mu^-=D|_0$.
There is a unique $\BC(\vv)$-algebra homomorphism
\begin{equation}\label{trig homom psi}
  \Psi_D\colon U_{-\mu^+,-\mu^-}(L\gl_n)\longrightarrow \wt{\CA}^\vv_\fra
\end{equation}
such that
\begin{equation}\label{quantum homom assignment}
\begin{split}
  & E_i(z)\mapsto
    z^{-\alphavee_i(\mu^+)}\cdot \prod_{t=1}^{a_i}\sw_{i,t} \prod_{t=1}^{a_{i-1}} \sw_{i-1,t}^{-1/2}\cdot
    \sum_{r=1}^{a_i} \delta\left(\frac{\vv^i\sw_{i,r}}{z}\right)\frac{\sZ_i(\sw_{i,r})}{\sW_{i,r}(\sw_{i,r})}
    \sW_{i-1}(\vv^{-1}\sw_{i,r})D_{i,r}^{-1},\\
  & F_i(z)\mapsto -\vv^{-1}
    \prod_{t=1}^{a_{i+1}} \sw_{i+1,t}^{-1/2}\cdot
    \sum_{r=1}^{a_i} \delta\left(\frac{\vv^{i+2}\sw_{i,r}}{z}\right)\frac{1}{\sW_{i,r}(\sw_{i,r})}
    \sW_{i+1}(\vv\sw_{i,r})D_{i,r},\\
  & \varphi^\pm_i(z)\mapsto
    \prod_{t=1}^{a_i}\sw_{i,t}^{-1/2}\prod_{t=1}^{a_{i-1}} \sw_{i-1,t}^{1/2}\cdot
    \left(z^{d^+_i}\cdot \frac{\sW_i(\vv^{-i}z)}{\sW_{i-1}(\vv^{-i-1}z)}\prod_{0\leq k\leq i-1}\sZ_k(\vv^{-k}z)\right)^\pm=\\
  & \ \ \ \ \ \ \ \ \prod_{t=1}^{a_i}\sw_{i,t}^{-1/2}\prod_{t=1}^{a_{i-1}} \sw_{i-1,t}^{1/2}\cdot
    \left(z^{\epsilon^\vee_i(\mu^+)}\cdot \frac{\sW_i(\vv^{-i}z)}{\sW_{i-1}(\vv^{-i-1}z)}\prod_{\sx\in \BP^1\backslash\{0,\infty\}}(1-\sx/z)^{-\epsilon^\vee_i(\lambda_{\sx})}\right)^\pm.
\end{split}
\end{equation}
We write $\gamma(z)^\pm$ for the expansion of a rational function $\gamma(z)$
in $z^{\mp 1}$, respectively.
\end{Thm}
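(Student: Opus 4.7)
The plan is to mimic the proof of the rational analogue, Theorem~\ref{homom for Yangian gl}. First I would verify that the images of $\varphi^\pm_i(z)$ under~(\ref{quantum homom assignment}) are well-defined elements of the form $\sum_{r\ge d^\pm_i}\varphi^\pm_{i,\pm r}z^{\mp r}$ required in $U_{-\mu^+,-\mu^-}(L\gl_n)$ (where the relevant shifts are $d^\pm_i=-\epsilon^\vee_i(\mu^\pm)$). For $\varphi^+_i(z)$, the $+$-expansion at $z\to\infty$ of each factor $\sW_j(\vv^{-j-\epsilon}z)$ and $\sZ_k(\vv^{-k}z)$ starts at the constant $1$, so the leading power is the explicit $z^{\epsilon^\vee_i(\mu^+)}=z^{-d^+_i}$. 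For $\varphi^-_i(z)$, an analogous degree count at $z\to 0$, combined with the equality $a_i-a_{i-1}=\epsilon^\vee_i(\lambda+\mu^++\mu^-)$ coming from~(\ref{trig assumption}) and the telescoping $\sum_{k=0}^{i-1}\alpha^\vee_k=-\epsilon^\vee_i$, yields the required leading power $z^{-\epsilon^\vee_i(\mu^-)}=z^{d^-_i}$.

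Next, I would pass to the central extension $U'_{-\mu^+,-\mu^-}(L\gl_n)$ and invoke the decomposition~(\ref{quantum gl as sl plus center}) of Lemma~\ref{shifted Qaffine sl as a quotient of gl}. This reduces the verification of the defining relations~(\ref{U0}--\ref{U7}) to two sub-claims: (a) the restriction along the embedding $\iota_{-\mu^+,-\mu^-}$ of~(\ref{embedding of sl to gl qaffine}) defines an algebra homomorphism out of $U^\ad_{-\bar{\mu}^+,-\bar{\mu}^-}(L\ssl_n)$; and (b) the images of the central generators $C^\pm_{\pm s^\pm}$ and $(\varphi^\pm)^{\pm 1/n}$ lie in the center of a suitable extension of $\wt{\CA}^\vv_\fra$. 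For (a), upon a $\vv^\ast$-twist of the spectral parameters identifying our target with that of~\cite[Theorem 7.1]{ft1} (in the spirit of Remark~\ref{relating to BFNb homom}), this restriction coincides with the homomorphism $\wt{\Phi}^{\bar{\lambda}}_{-\bar{\mu}^+,-\bar{\mu}^-}$ of~\emph{loc.cit.}, so the required relations follow from that theorem.

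For (b), a direct computation using~(\ref{quantum qdeterminant}) shows that in the product $\prod_{i=1}^n\varphi^\pm_i(\vv^{2(i-1)}z)$ the $\sW_j$ factors telescope (using $\sW_0=\sW_n=1$) and the $\sw^{\pm 1/2}_{i,\ast}$ prefactors cancel pairwise, yielding
\begin{equation*}
  \Psi_D(C^\pm(z))=\left(\prod_{i=1}^n(\vv^{2(i-1)}z)^{\epsilon^\vee_i(\mu^+)}\cdot\prod_{i=1}^n\prod_{0\le k<i}\sZ_k(\vv^{2(i-1)-k}z)\right)^\pm,
\end{equation*}
which is a scalar function of $z$ and therefore central in $\wt{\CA}^\vv_\fra$. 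The images of the $n$-th roots $(\varphi^\pm)^{\pm 1/n}$ are defined as scalar $n$-th roots of $\Psi_D(\varphi^\pm)$ in a formal algebraic extension of $\wt{\CA}^\vv_\fra$; restricting the resulting homomorphism from $U'$ back to $U_{-\mu^+,-\mu^-}(L\gl_n)$ produces the desired $\Psi_D$ valued in $\wt{\CA}^\vv_\fra$.

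The main technical obstacle will be the bookkeeping in step (a): carefully matching the $\vv^i$-spectral-parameter shifts and the $\sw^{\pm 1/2}_{i,\ast}$-prefactors of~(\ref{quantum homom assignment}) against the conventions of~\cite[Theorem 7.1]{ft1}, so that the restriction along $\iota_{-\mu^+,-\mu^-}$ literally coincides with $\wt{\Phi}^{\bar{\lambda}}_{-\bar{\mu}^+,-\bar{\mu}^-}$. This requires careful tracking of the extra $\vv$-powers introduced by the Drinfeld--Jimbo-type twist in~(\ref{assignment quantum sl vs gl series}), but is otherwise entirely parallel to the rational case.
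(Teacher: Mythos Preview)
Your proposal is correct and follows essentially the same route as the paper's own proof: verify the leading powers of $\varphi^\pm_i(z)$ via the degree identity coming from~(\ref{trig assumption}), then use the decomposition~(\ref{quantum gl as sl plus center}) to reduce to the $\ssl_n$-part (handled by~\cite[Theorem~7.1]{ft1} via Remark~\ref{relating to FT1 homom}) and the central part (handled by the explicit scalar formula for $\Psi_D(C^\pm(z))$). The only nuance is that in the paper the restriction along $\iota_{-\mu^+,-\mu^-}$ does not literally coincide with $\wt{\Phi}^{\bar\lambda}_{-\bar\mu^+,-\bar\mu^-}$ but rather with its precomposition by the natural isomorphism $U^{\ad}_{-\bar\mu^+,-\bar\mu^-}(L\ssl_n)\iso U^{\ad}_{0,-\bar\mu^+-\bar\mu^-}(L\ssl_n)$; this is precisely the bookkeeping you flag as the main technical obstacle.
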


\begin{Rem}\label{relating to FT1 homom}
Let $\wt{\CA}^{\vv,\ext}_\fra$ be the associative $\BC(\vv)$-algebra obtained
from $\wt{\CA}^\vv_\fra$ by formally adjoining $n$-th roots of $\vv,\sx_s$, and
  $\Psi_D\colon U'_{-\mu^+,-\mu^-}(L\gl_n)\to \wt{\CA}^{\vv,\ext}_\fra$
be the extended homomorphism. Then, the (restriction) composition
  $U^\ad_{-\bar{\mu}^+,-\bar{\mu}^-}(L\ssl_n)\xrightarrow{\iota_{-\mu^+,-\mu^-}}
   U'_{-\mu^+,-\mu^-}(L\gl_n) \xrightarrow{\Psi_D} \wt{\CA}^{\vv,\ext}_\fra$
coincides with the composition of the natural isomorphism
  $U^\ad_{-\bar{\mu}^+,-\bar{\mu}^-}(L\ssl_n)\iso U^\ad_{0,-\bar{\mu}^+ - \bar{\mu}^-}(L\ssl_n)$
and the homomorphism
  $\wt{\Phi}^{\bar{\lambda}}_{-\bar{\mu}^+ - \bar{\mu}^-}\colon
   U^\ad_{0,-\bar{\mu}^+ - \bar{\mu}^-}(L\ssl_n) \to \wt{\CA}^{\vv,\ext}_\fra$
of~\cite[Theorem 7.1]{ft1}.
\end{Rem}

\begin{proof}[Proof of Theorem~\ref{homom for qaffine gl}]
First, we need to verify that under the above assignment~(\ref{quantum homom assignment}),
the images of $\varphi^+_i(z)$ (resp.\ $\varphi^-_i(z)$) contain only powers of $z$
which are $\leq d^+_i$ (resp.\ $\geq -d^-_i$), and the corresponding coefficients
of $z^{d^+_i}$ (respectively of $z^{-d^-_i}$) are invertible.
The claim is clear for $\varphi^+_i(z)$, while its validity for $\varphi^-_i(z)$ follows
from the equality
\begin{equation*}
  -a_i+a_{i-1}+\epsilon^\vee_i(\mu^+)+\epsilon^\vee_i(\lambda)=-\epsilon^\vee_i(\mu^-),
\end{equation*}
due to~(\ref{trig assumption}).

Evoking the decomposition~(\ref{quantum gl as sl plus center}), it suffices to prove
that the restrictions of the assignment~(\ref{quantum homom assignment}) to the subalgebras
  $U^\ad_{-\bar{\mu}^+,-\bar{\mu}^-}(L\ssl_n)$
and
  $\BC[\{C^\pm_{\pm s}\}_{s>d^\pm_1+\ldots+d^\pm_n}]$
determine algebra homomorphisms, whose images commute. The former is clear
for the restriction to $U^\ad_{-\bar{\mu}^+,-\bar{\mu}^-}(L\ssl_n)$, due to
Theorem 7.1 of~\cite{ft1} combined with Remark~\ref{relating to FT1 homom} above.
On the other hand, we have
\begin{equation}\label{image of quantum qdet}
  \Psi_D(C^\pm(z))=
  \mathsf{A}\cdot \prod_{i=1}^n\prod_{\sx\in \BP^1\backslash\{0,\infty\}} \left(1-\vv^{-2(i-1)}\frac{\sx}{z}\right)^{-\epsilon^\vee_i(\lambda_{\sx})}=
  \mathsf{A}\cdot \prod_{s=1}^N \prod_{k=i_s}^{n-1}\left(1-\vv^{-2k}\frac{\sx_s}{z}\right)^{\gamma_s},
\end{equation}
where $\mathsf{A}:=\prod_{i=1}^n (\vv^{2(i-1)}z)^{\epsilon^\vee_i(\mu^+)}$.

Thus, the restriction of $\Psi_D$ to the subalgebra
$\BC[\{C^\pm_{\pm s}\}_{s>d^\pm_1+\ldots+d^\pm_n}]$ defines an algebra homomorphism,
whose image is central in $\wt{\CA}^\vv_\fra$. This completes our proof of Theorem~\ref{homom for qaffine gl}.
\end{proof}


\subsection{Antidominantly shifted RTT quantum affine algebras of $\gl_n$}\label{ssec shifted RTT qaffine of gl}
\

Consider the \emph{trigonometric} $R$-matrix $R_\trig(z,w)=R^\vv_\trig(z,w)$ given by
\begin{equation}\label{trigR}
\begin{split}
  R_\trig(z,w):=
  (\vv z-\vv^{-1}w)\sum_{i=1}^n E_{ii}\otimes E_{ii} + (z-w)\sum_{i\ne j} E_{ii}\otimes E_{jj} \, + \\
  (\vv-\vv^{-1})z\sum_{i<j} E_{ij}\otimes E_{ji}+(\vv-\vv^{-1})w \sum_{i>j}E_{ij}\otimes E_{ji},
\end{split}
\end{equation}
cf.~\cite[(3.7)]{df}.
It satisfies the Yang-Baxter equation with a spectral parameter:
\begin{equation}\label{qYB}
  R_{\trig;12}(u,v)R_{\trig;13}(u,w)R_{\trig;23}(v,w)=
  R_{\trig;23}(v,w)R_{\trig;13}(u,w)R_{\trig;12}(u,v).
\end{equation}

Fix $\mu^+,\mu^-\in \Lambda^+$. Define the \emph{(antidominantly) shifted RTT
quantum affine algebra of $\gl_n$}, denoted by $U^\rtt_{-\mu^+,-\mu^-}(L\gl_n)$,
to be the associative $\BC(\vv)$-algebra generated by
\begin{equation*}
  \{t^\pm_{ij}[\pm r]\}_{1\leq i,j\leq n}^{r\in \BZ}\cup
  \{(g^\pm_{i,\mp d^\pm_i})^{-1}\}_{i=1}^n
\end{equation*}
subject to the following three families of relations:

\noindent
$\bullet$
The first family of relations may be encoded by a single RTT relation
\begin{equation}\label{trigRTT}
  R_{\trig}(z,w)T^\epsilon_1(z)T^{\epsilon'}_2(w)=
  T^{\epsilon'}_2(w)T^\epsilon_1(z)R_\trig(z,w)
\end{equation}
for any $\epsilon,\epsilon'\in \{+,-\}$, where
  $T^\pm(z)\in U^\rtt_{-\mu^+,-\mu^-}(L\gl_n)[[z,z^{-1}]]\otimes_\BC \End\, \BC^n$
are defined via
\begin{equation}\label{quantum T-matrix}
  T^\pm(z)=\sum_{i,j} t^\pm_{ij}(z)\otimes E_{ij}\quad \mathrm{with}\quad
  t^\pm_{ij}(z):=\sum_{r\in \BZ} t^\pm_{ij}[\pm r]  z^{\mp r}.
\end{equation}
Thus,~(\ref{trigRTT}) is an equality in
  $U^\rtt_{-\mu^+,-\mu^-}(L\gl_n)[[z,z^{-1},w,w^{-1}]]\otimes_\BC (\End\, \BC^n)^{\otimes 2}$
for any $\epsilon,\epsilon'$.

\noindent
$\bullet$
The second family of relations encodes the fact that $T^\pm(z)$ admits
the Gauss decomposition:
\begin{equation}\label{Gauss product trigonometric}
  T^\pm(z)=F^\pm(z)\cdot G^\pm(z)\cdot E^\pm(z),
\end{equation}
where
  $F^\pm(z),G^\pm(z),E^\pm(z)\in
   U^\rtt_{-\mu^+,-\mu^-}(L\gl_n)((z^{\mp 1}))\otimes_\BC \End\, \BC^n$
are of the form
\begin{equation*}
  F^\pm(z)=\sum_{i} E_{ii}+\sum_{i<j} f^\pm_{ji}(z)\otimes E_{ji},\
  G^\pm(z)=\sum_{i} g^\pm_i(z)\otimes E_{ii},\
  E^\pm(z)=\sum_{i} E_{ii}+\sum_{i<j} e^\pm_{ij}(z)\otimes E_{ij},
\end{equation*}
with the matrix coefficients having the following expansions in $z$:
\begin{equation}\label{quantum t-modes shifted}
\begin{split}
  & e^+_{ij}(z)=\sum_{r\geq 0} e^{(r)}_{ij}z^{-r},\quad
    e^-_{ij}(z)=\sum_{r<0} e^{(r)}_{ij}z^{-r},\\
  & f^+_{ij}(z)=\sum_{r>0} f^{(r)}_{ij}z^{-r},\quad
    f^-_{ij}(z)=\sum_{r\leq 0} f^{(r)}_{ij}z^{-r},\\
  & g^+_i(z)=\sum_{r\geq -d^+_i} g^{+}_{i,r} z^{-r},\quad
    g^-_i(z)=\sum_{r\geq -d^-_i} g^{-}_{i,-r} z^{r},
\end{split}
\end{equation}
where
  $\{e^{(r)}_{ij},f^{(r)}_{ji}\}_{1\leq i<j\leq n}^{r\in \BZ}
   \cup\{g^{\pm}_{i,\pm s^\pm_i}\}_{1\leq i\leq n}^{s^\pm_i\geq -d^\pm_i}
   \subset U^\rtt_{-\mu^+,-\mu^-}(L\gl_n)$.

\noindent
$\bullet$
The third family of relations is just:
\begin{equation}\label{g-modes invertibility}
  g^\pm_{i,\mp d^\pm_i}\cdot (g^\pm_{i,\mp d^\pm_i})^{-1}=
  (g^\pm_{i,\mp d^\pm_i})^{-1}\cdot g^\pm_{i,\mp d^\pm_i}=1.
\end{equation}

\begin{Rem}\label{explaining quantum shifted Gauss}
(a) For $\mu^+=\mu^-=0$, the second family of
relations~(\ref{Gauss product trigonometric},~\ref{quantum t-modes shifted})
is equivalent to the relations $t^+_{ij}[r]=t^-_{ij}[-r]=0$ for all $i,j$ and $r<0$
as well as $t^+_{ji}[0]=t^-_{ij}[0]=0$ for $1\leq i<j\leq n$. In this case, adjoining
the inverses of $g^\pm_{i,0}$, cf.~(\ref{g-modes invertibility}), is equivalent
to adjoining the inverses of $t^\pm_{ii}[0]$. Thus, $U^\rtt_{0,0}(L\gl_n)$ is the
RTT quantum loop algebra of $\gl_n$ of~\cite{frt}, or more precisely,
its extended version $U^{\rtt,\ext}_\vv(L\gl_n)$ of~\cite[(2.15)]{gm}.

\noindent
(b) Likewise,~(\ref{quantum t-modes shifted}) is equivalent to
a certain family of algebraic relations on $t^\pm_{ij}[r]$.
In particular,
  $T^\pm(z)\in U^\rtt_{-\mu^+,-\mu^-}((z^{\mp 1}))\otimes_{\BC} \End\, \BC^n$.
For example,~(\ref{quantum t-modes shifted}) for $i=1$ are equivalent~to:
\begin{equation*}
\begin{split}
  & t^+_{11}[r]=0\ \mathrm{for}\ r<-d^+_1,\quad
    t^-_{11}[-r]=0\ \mathrm{for}\ r<-d^-_1,\\
  & t^+_{1j}[r]=0\ \ \mathrm{for}\ r<-d^+_1, j>1,\quad
    t^-_{1j}[-r]=0\ \ \mathrm{for}\ r\leq -d^-_1, j>1,\\
  & t^+_{j1}[r]=0\ \ \mathrm{for}\ r\leq -d^+_1, j>1,\quad
    t^-_{j1}[-r]=0\ \ \mathrm{for}\ r<-d^-_1, j>1.
\end{split}
\end{equation*}

\noindent
(c) If $\mu^+_1,\mu^-_1,\mu^+_2,\mu^-_2\in \Lambda^+$ satisfy
$\bar{\mu}^+_1=\bar{\mu}^+_2$ and $\bar{\mu}^-_1=\bar{\mu}^-_2$ in $\bar{\Lambda}$, that is,
$\mu^+_2=\mu^+_1 + c^{+}\varpi_0$ and $\mu^-_2=\mu^-_1 + c^{-}\varpi_0$ with $c^+,c^-\in \BZ$, then
the assignment $T^\pm(z)\mapsto z^{\pm c^\pm}T^\pm(z)$ gives rise to a $\BC(\vv)$-algebra isomorphism
$U^\rtt_{-\mu^+_1,-\mu^-_1}(L\gl_n)\iso U^\rtt_{-\mu^+_2,-\mu^-_2}(L\gl_n)$,
cf.\ Lemma~\ref{identifying gl-qaffine}.
\end{Rem}

\begin{Lem}
For any $1\leq i<j\leq n$ and $r\in \BZ$, we have the following identities:
\begin{equation}\label{Gauss Matrix Entries qaffine}
\begin{split}
  & e^{(r)}_{ij}=(\vv-\vv^{-1})^{i-j+1}[e_{j-1,j}^{(0)},[e^{(0)}_{j-2,j-1},\cdots,[e^{(0)}_{i+1,i+2},e^{(r)}_{i,i+1}]_{\vv^{-1}}\cdots]_{\vv^{-1}}]_{\vv^{-1}},\\
  & f^{(r)}_{ji}=(\vv^{-1}-\vv)^{i-j+1}[[[\cdots[f^{(r)}_{i+1,i},f^{(0)}_{i+2,i+1}]_\vv,\cdots,f^{(0)}_{j-1,j-2}]_\vv,f^{(0)}_{j,j-1}]_\vv.
\end{split}
\end{equation}
\end{Lem}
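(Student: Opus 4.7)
The plan is to prove both identities in~(\ref{Gauss Matrix Entries qaffine}) by induction on $j - i \geq 1$, following directly the strategy of Ding--Frenkel~\cite{df} in the unshifted case (cf.\ also~\cite{ft2} for the $\gl_n$ and $\ssl_n$ analogues in neighboring settings). For the base case $j = i+1$, the exponent $i - j + 1 = 0$ makes the prefactor trivial and the (empty) chain of commutators degenerates, so both sides collapse to $e^{(r)}_{i,i+1}$ and $f^{(r)}_{i+1,i}$, respectively.

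For the inductive step, it suffices to establish the one-step recursions
\begin{equation*}
e^{(r)}_{ij} \;=\; (\vv-\vv^{-1})^{-1}\,\bigl[e^{(0)}_{j-1,j},\, e^{(r)}_{i,j-1}\bigr]_{\vv^{-1}},
\qquad
f^{(r)}_{ji} \;=\; -(\vv-\vv^{-1})^{-1}\,\bigl[f^{(r)}_{j-1,i},\, f^{(0)}_{j,j-1}\bigr]_{\vv}
\end{equation*}
for all $j - i \geq 2$ and all $r \in \BZ$. Iterating these immediately produces the nested-commutator formulas with the correct prefactor $(\vv-\vv^{-1})^{i-j+1}$ (resp.\ $(\vv^{-1}-\vv)^{i-j+1}$).

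To derive the recursions, I extract from the RTT relation~(\ref{trigRTT}) the scalar identities obtained by picking out specific matrix entries---namely the $E_{i,j-1}\otimes E_{j-1,j}$-component for the upper-triangular identity and the symmetric $E_{j-1,i}\otimes E_{j,j-1}$-component for the lower-triangular one, with an appropriate choice of $\epsilon,\epsilon'\in\{+,-\}$. Using the Gauss decomposition~(\ref{Gauss product trigonometric}) to re-express the matrix coefficients $t^\pm_{\cdot,\cdot}(z)$ in terms of $e^\pm_{\cdot,\cdot}(z),\, f^\pm_{\cdot,\cdot}(z),\, g^\pm_\cdot(z)$, and then multiplying on the appropriate sides by $(g^\pm_\cdot)^{-1}$, I obtain identities purely among the Gauss coordinates of the form
\begin{equation*}
\bigl[e^\pm_{j-1,j}(w),\, e^\pm_{i,j-1}(z)\bigr]_{\vv^{-1}} \;=\; (\vv-\vv^{-1})\cdot(\text{rational combination involving }e^\pm_{i,j}(z), e^\pm_{i,j}(w)),
\end{equation*}
and similarly for the $f$-coordinates. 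Comparing coefficients of $z^{-r}w^0$ (resp.\ $w^0 z^{-r}$) in these series identities yields exactly the recursions stated above.

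The main obstacle is purely computational bookkeeping: tracking the correct $\vv$-factors, signs, and the distinction between $[\cdot,\cdot]_\vv$ and $[\cdot,\cdot]_{\vv^{-1}}$ in the relations extracted from the trigonometric $R$-matrix~(\ref{trigR}); this is where~\cite{df} does the hard work in the unshifted case. Crucially, the antidominant shift introduces no new difficulty: the RTT relation~(\ref{trigRTT}) has precisely the same form as in the unshifted case, and the Gauss decomposition~(\ref{Gauss product trigonometric},~\ref{quantum t-modes shifted}) differs from the unshifted setup only in the range of $z$-modes allowed for $g^\pm_i(z)$, which does not affect the expansions of $e^\pm_{ij}(z)$ and $f^\pm_{ji}(z)$ that carry the content of~(\ref{Gauss Matrix Entries qaffine}). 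Thus the entire Ding--Frenkel argument transfers verbatim to the shifted setting.
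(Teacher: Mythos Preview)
Your proposal is correct and follows essentially the same approach as the paper, which simply states that the proof is analogous to~\cite[Corollary 3.51]{ft2}---i.e., the Ding--Frenkel argument you outline. Your observation that the antidominant shift only affects the $g^\pm_i(z)$-modes and not the $e^\pm_{ij}(z), f^\pm_{ji}(z)$-expansions, so that the unshifted computation transfers verbatim, is exactly the point.
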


\begin{proof}
The proof is analogous to that of~\cite[Corollary 3.22]{ft2}.
\end{proof}

\begin{Cor}\label{trig generating set}
The algebra $U^\rtt_{-\mu^+,-\mu^-}(L\gl_n)$ is generated by
\begin{equation*}
  \{e_{i,i+1}^{(r)}, f_{i+1,i}^{(r)},g^\pm_{j,\pm s^\pm_j},(g^\pm_{j,\mp d^\pm_j})^{-1}\}
  _{1\leq i<n, 1\leq j\leq n}^{r\in \BZ,s^\pm_j\geq -d^\pm_j}.
\end{equation*}
\end{Cor}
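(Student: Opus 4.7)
The plan is to derive Corollary~\ref{trig generating set} directly from the Gauss decomposition~(\ref{Gauss product trigonometric},~\ref{quantum t-modes shifted}), the invertibility relation~(\ref{g-modes invertibility}), and the iterated commutator formulas~(\ref{Gauss Matrix Entries qaffine}). Since the statement is a bookkeeping consequence of the preceding lemma, the argument has no hidden difficulty; the only task is to assemble the pieces in the right order.

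First, I would recall that by its very definition, $U^\rtt_{-\mu^+,-\mu^-}(L\gl_n)$ is generated (as a $\BC(\vv)$-algebra) by the collection $\{t^\pm_{ij}[\pm r]\}_{1\leq i,j\leq n}^{r\in \BZ}$ together with the inverses $(g^\pm_{i,\mp d^\pm_i})^{-1}$. The Gauss decomposition~(\ref{Gauss product trigonometric}) expresses each matrix coefficient $t^\pm_{ij}(z)$ as a finite sum of products in the coefficients of $F^\pm(z),G^\pm(z),E^\pm(z)$, i.e.\ in the elements $\{e^{(r)}_{ij},f^{(r)}_{ji},g^\pm_{j,\pm s^\pm_j}\}$. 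Conversely, the usual triangular inversion procedure (which only uses the invertibility of the diagonal factors $g^\pm_{j,\mp d^\pm_j}$, built into~(\ref{g-modes invertibility})) recovers the Gauss generators from the $t^\pm_{ij}[\pm r]$. Consequently, the algebra is generated by $\{e^{(r)}_{ij},f^{(r)}_{ji}\}_{1\leq i<j\leq n}^{r\in \BZ}$ together with $\{g^{\pm}_{j,\pm s^\pm_j}, (g^\pm_{j,\mp d^\pm_j})^{-1}\}_{1\leq j\leq n}^{s^\pm_j\geq -d^\pm_j}$.

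Second, I would invoke the iterated commutator identities~(\ref{Gauss Matrix Entries qaffine}): for any $1\leq i<j\leq n$ with $j-i>1$ and any $r\in \BZ$, the element $e^{(r)}_{ij}$ is (up to an explicit invertible scalar in $\BC(\vv)$) an iterated $\vv^{-1}$-bracket of $e^{(r)}_{i,i+1}$ with the zero-modes $e^{(0)}_{k,k+1}$ for $i<k<j$, and similarly $f^{(r)}_{ji}$ is an iterated $\vv$-bracket of $f^{(r)}_{i+1,i}$ with the zero-modes $f^{(0)}_{k+1,k}$. Hence all non-nearest-neighbor $e^{(r)}_{ij}$ and $f^{(r)}_{ji}$ lie in the subalgebra generated by the nearest-neighbor ones $\{e^{(r)}_{i,i+1},f^{(r)}_{i+1,i}\}_{1\leq i<n}^{r\in \BZ}$.

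Combining these two observations yields exactly the generating set stated in the corollary. The main (and only) point that one must verify is that the Gauss decomposition together with~(\ref{g-modes invertibility}) actually allows one to express each $t^\pm_{ij}[\pm r]$ as a polynomial in the Gauss generators and the inverses $(g^\pm_{j,\mp d^\pm_j})^{-1}$; this is a routine matrix computation in $\End\,\BC^n$, tracked order-by-order in $z^{\mp 1}$. There is no substantive obstacle, and the argument is essentially a trigonometric replica of Corollary~\ref{rat generating set} in the rational setting.
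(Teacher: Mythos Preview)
Your proposal is correct and matches the paper's approach: the corollary is stated there without proof, as an immediate consequence of the Gauss decomposition~(\ref{Gauss product trigonometric},~\ref{quantum t-modes shifted}) together with the iterated commutator formulas~(\ref{Gauss Matrix Entries qaffine}), exactly as you outline. The ``conversely'' remark about recovering the Gauss generators from the $t^\pm_{ij}[\pm r]$ is not needed for the corollary itself, but it does no harm.
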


The following result is a shifted version of~\cite[Main Theorem]{df} and
a trigonometric version of our Theorem~\ref{epimorphism of shifted Yangians}:

\begin{Thm}\label{epimorphism of shifted quantum affine}
For any $\mu^+,\mu^-\in \Lambda^+$, there is a unique $\BC(\vv)$-algebra epimorphism
\begin{equation*}
  \Upsilon_{-\mu^+,-\mu^-}\colon U_{-\mu^+,-\mu^-}(L\gl_n)
  \twoheadrightarrow U^\rtt_{-\mu^+,-\mu^-}(L\gl_n)
\end{equation*}
defined by
\begin{equation}\label{matching qaffine}
  E^\pm_i(z)\mapsto e^\pm_{i,i+1}(z),\quad
  F^\pm_i(z)\mapsto f^\pm_{i+1,i}(z),\quad
  \varphi^\pm_j(z)\mapsto g^\pm_j(z).
\end{equation}
\end{Thm}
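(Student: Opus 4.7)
The plan is to mirror the proof of Theorem~\ref{epimorphism of shifted Yangians} from the rational setting. First I would establish a trigonometric counterpart of Lemma~\ref{defining relations from bk}: starting from the RTT relation~(\ref{trigRTT}) (taken for all four choices of $\epsilon,\epsilon'\in\{+,-\}$) together with the Gauss decomposition~(\ref{Gauss product trigonometric},~\ref{quantum t-modes shifted}), derive the full collection of commutation relations among the Gauss entries $e^\pm_{i,i+1}(z)$, $f^\pm_{i+1,i}(z)$, $g^\pm_j(z)$. In the unshifted case $\mu^\pm=0$ this is the content of the Ding--Frenkel theorem~\cite{df}; the crucial point is that the derivation of these identities is purely formal in the RTT manipulation and only uses the lower-triangular/diagonal/upper-triangular shape of $F^\pm(z),G^\pm(z),E^\pm(z)$ together with the fact that each of $e^\pm,f^\pm,g^\pm$ has a prescribed range of $z$-degrees. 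Since exactly the same shape persists in the shifted setting~(\ref{quantum t-modes shifted}), the same identities go through verbatim.

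Next I would check that the resulting identities match the defining relations~(\ref{U0}--\ref{U7}) of $U_{-\mu^+,-\mu^-}(L\gl_n)$ under the assignment~(\ref{matching qaffine}). The relation~(\ref{U0}) follows from the commutativity of $g^\epsilon_i(z),g^{\epsilon'}_j(w)$ together with~(\ref{g-modes invertibility}); the Cartan--current relations~(\ref{U2},~\ref{U3}) and the $ee$, $ff$ quadratic relations~(\ref{U4},~\ref{U5}) come from the appropriate commutation identities; and the Serre relations~(\ref{U6},~\ref{U7}) are obtained as in~\cite{df} by taking the higher-order consequences of~(\ref{U4},~\ref{U5}) combined with the fact that the entries $e^{(r)}_{ij}, f^{(r)}_{ji}$ for non-adjacent indices can be expressed as nested $\vv$-commutators of the simple ones via~(\ref{Gauss Matrix Entries qaffine}). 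This gives a well-defined algebra homomorphism $\Upsilon_{-\mu^+,-\mu^-}$.

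Surjectivity is then automatic from Corollary~\ref{trig generating set}: the target $U^\rtt_{-\mu^+,-\mu^-}(L\gl_n)$ is generated by $\{e^{(r)}_{i,i+1}, f^{(r)}_{i+1,i}, g^\pm_{j,\pm s^\pm_j}, (g^\pm_{j,\mp d^\pm_j})^{-1}\}$, and all of these lie in the image by~(\ref{matching qaffine}). Uniqueness is clear since $U_{-\mu^+,-\mu^-}(L\gl_n)$ is generated by the coefficients of the series $E_i(z), F_i(z), \varphi^\pm_j(z)$ together with $(\varphi^\pm_{j,\pm d^\pm_j})^{-1}$, and the inverses in the target are forced by~(\ref{g-modes invertibility}).

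The main obstacle is bookkeeping around the assumption $\mu^\pm\in\Lambda^+$. As highlighted in Remark~\ref{Importance of antidominance}, in the non-antidominant case the $ef$--relation~(\ref{U1}) becomes incompatible with the Gauss-decomposition expansions (one side contains positive powers of $z$ while the other only has negative powers), so the putative shifted RTT algebra is trivial. The antidominance of $\mu^\pm$ is precisely what is needed to make the degrees of the $g^\pm_i(z)$ match those of the $\varphi^\pm_i(z)$: the $z^{d^\pm_i}$ leading term in $g^+_i(z)$ has $d^+_i\geq d^+_{i+1}$, and analogously for $g^-_i(z)$, so that all the identities produced by the trigonometric analogue of Lemma~\ref{defining relations from bk} are sensible equalities in $U^\rtt_{-\mu^+,-\mu^-}(L\gl_n)[[z^{\pm 1},w^{\pm 1}]]$. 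This subtlety is already covered by the hypothesis $\mu^+,\mu^-\in\Lambda^+$ under which the shifted RTT algebra is defined, so no extra work is required beyond the careful tracking of leading $z$-degrees.
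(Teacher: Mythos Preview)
Your proposal is correct and follows essentially the same approach as the paper, which presents this theorem as the direct trigonometric analogue of Theorem~\ref{epimorphism of shifted Yangians} and a shifted version of~\cite[Main Theorem]{df}, without writing out an independent proof. You have simply spelled out the intended argument (deriving the relations among the Gauss entries from the RTT relation, matching them with~(\ref{U0}--\ref{U7}), and invoking Corollary~\ref{trig generating set} for surjectivity) in more detail than the paper itself does.
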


Modulo a trigonometric counterpart of~\cite{w},
see Conjecture~\ref{alex's trig conjecture}, the following result
is proved in Section~\ref{ssec proof of Conjecture 2}:

\begin{Thm}\label{Main Conjecture 2}
$\Upsilon_{-\mu^+,-\mu^-}\colon U_{-\mu^+,-\mu^-}(L\gl_n)\iso U^\rtt_{-\mu^+,-\mu^-}(L\gl_n)$
is a $\BC(\vv)$-algebra isomorphism for any $\mu^+,\mu^-\in \Lambda^+$.
\end{Thm}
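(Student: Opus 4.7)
The plan is to follow in lock-step the argument proving Theorem~\ref{Main Conjecture 1} in Section~\ref{ssec proof of Conjecture 1}, transcribing each step from the rational to the trigonometric setting. Since $\Upsilon_{-\mu^+,-\mu^-}$ is already a surjection by Theorem~\ref{epimorphism of shifted quantum affine}, only injectivity is at stake. I will extract this injectivity from a rich family of homomorphisms $\Theta_D\colon U^\rtt_{-\mu^+,-\mu^-}(L\gl_n)\to \wt{\CA}^\vv_\fra$, one for each $\Lambda^+$-valued divisor $D$ on $\BP^1$ satisfying~(\ref{trig assumption}), whose pullbacks along $\Upsilon_{-\mu^+,-\mu^-}$ recover the Drinfeld-side homomorphisms $\Psi_D$ of Theorem~\ref{homom for qaffine gl}.

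First, for each such $D$ I would define the pair of trigonometric Lax matrices $T^\pm_D(z)$ intrinsically, via the Gauss factorization $T^\pm_D(z)=F^\pm_D(z)\,G^\pm_D(z)\,E^\pm_D(z)$ whose factors are the $\pm$-expansions of the explicit rational expressions coming from the would-be composition $\Psi_D\circ \Upsilon_{-\mu^+,-\mu^-}^{-1}$; the leading $z$-modes of $g^\pm_{i}(z)$ are invertible by inspection of~(\ref{quantum homom assignment}), so the third relation~(\ref{g-modes invertibility}) is satisfied. Second, I would prove that $T^\pm_D(z)$ satisfy the trigonometric RTT relation~(\ref{trigRTT}) by a normalized-limit bootstrap: every admissible $D$ arises as the $x_s\to 0$ or $x_s\to \infty$ limit of a divisor $\bar D$ supported away from $\{0,\infty\}$, for which the unshifted case $\mu^+=\mu^-=0$ of Theorem~\ref{Main Conjecture 2} is the classical theorem of Ding--Frenkel (cf.~Remark~\ref{Validity of Main Conj 2}), so $T^\pm_{\bar D}(z)$ a priori satisfies~(\ref{trigRTT}); since the limit procedure multiplies $T^\pm(z)$ by $z$-independent diagonal matrices, which manifestly preserves~(\ref{trigRTT}) (this is the trigonometric analogue of Proposition~\ref{preserving RTT}), the relation descends to arbitrary $D$.

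Given the RTT relation together with the prescribed Gauss form~(\ref{Gauss product trigonometric}--\ref{g-modes invertibility}), the assignment $T^\pm(z)\mapsto T^\pm_D(z)$ extends to an algebra homomorphism $\Theta_D$ with $\Theta_D\circ \Upsilon_{-\mu^+,-\mu^-}=\Psi_D$; that is, every $\Psi_D$ factors through $\Upsilon_{-\mu^+,-\mu^-}$. Consequently $\Ker(\Upsilon_{-\mu^+,-\mu^-})\subseteq \bigcap_D \Ker(\Psi_D)$, and it remains to show the right-hand side vanishes. Restricting to the $\ssl_n$-subalgebra $U^\ssc_{-\bar\mu^+,-\bar\mu^-}(L\ssl_n)\subset U_{-\mu^+,-\mu^-}(L\gl_n)$ (via the embedding $\iota_{-\mu^+,-\mu^-}$ of Proposition~\ref{relation Qaffine sl vs gl}), this is precisely the statement of the trigonometric counterpart of Weekes' theorem, namely Conjecture~\ref{alex's trig conjecture}: $\bigcap_\lambda \Ker(\wt\Phi^\lambda_{-\bar\mu^+,-\bar\mu^-})=0$. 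The remaining central directions are handled by the formula~(\ref{image of quantum qdet}) for $\Psi_D(C^\pm(z))$, which as $D$ varies separates the central polynomial subalgebra appearing in~(\ref{quantum gl as sl plus center}), so altogether $\bigcap_D \Ker(\Psi_D)=0$ and $\Upsilon_{-\mu^+,-\mu^-}$ is injective.

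The main obstacle is, of course, Conjecture~\ref{alex's trig conjecture} itself, which is not proven in the text; all other ingredients are routine $\vv$-deformations of their rational analogues (in particular the pole-cancellation identities underlying regularity of $\sT_D(z)$, which here become the $\vv$-deformed versions of Lemmas~\ref{auxil lemma 1}--\ref{auxil lemma 3} needed for the forthcoming Theorem~\ref{Main Theorem 1q}). Modulo the conjecture, the argument is then a structural analogue of Section~\ref{ssec proof of Conjecture 1}, and the isomorphism follows.
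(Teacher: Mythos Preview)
Your proposal is correct and follows essentially the same route as the paper's proof in Section~\ref{ssec proof of Conjecture 2}: construct $\Theta_D$ from the explicit Gauss-factored $T^\pm_D(z)$, verify the RTT relation via the normalized-limit descent from the unshifted Ding--Frenkel case (Proposition~\ref{preserving trig RTT}), conclude that every $\Psi_D$ factors through $\Upsilon_{-\mu^+,-\mu^-}$, and then invoke Conjecture~\ref{alex's trig conjecture} for injectivity. The only cosmetic difference is that the paper states Conjecture~\ref{alex's trig conjecture} directly for the $\gl_n$-side maps $\Psi_D$, whereas you decompose it into an $\ssl_n$-statement plus a separate treatment of the central generators via~(\ref{image of quantum qdet}); also, the regularity Theorem~\ref{Main Theorem 1q} you mention in passing is not actually needed for this argument.
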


\begin{Rem}\label{Validity of Main Conj 2}
(a) For $\mu^+=\mu^-=0$ and any $n$, the isomorphism $\Upsilon_{0,0}$ of
Theorem~\ref{Main Conjecture 2} was established in~\cite[Main Theorem]{df}
(more precisely, $\Upsilon_{0,0}$ is an isomorphism between the extended
versions of both algebras in~\emph{loc.cit.}).

\noindent
(b) For $n=2$ and $\mu^+,\mu^-\in \Lambda^+$,
a long straightforward verification shows that the assignment
\begin{equation*}
\begin{split}
  & t^\pm_{11}(z)\mapsto \varphi^\pm_1(z),\quad
    t^\pm_{22}(z)\mapsto F^\pm_1(z)\varphi^\pm_1(z)E^\pm_1(z)+\varphi^\pm_2(z),\\
  & t^\pm_{12}(z)\mapsto \varphi^\pm_1(z)E^\pm_1(z),\quad
    t^\pm_{21}(z)\mapsto F^\pm_1(z)\varphi^\pm_1(z),
\end{split}
\end{equation*}
gives rise to a $\BC(\vv)$-algebra homomorphism
  $U^\rtt_{-\mu^+,-\mu^-}(L\gl_2)\to U_{-\mu^+,-\mu^-}(L\gl_2)$
(the $\ssl_2$-counterpart of which is due to~\cite[Theorem 11.11]{ft1}),
which is clearly the inverse of $\Upsilon_{-\mu^+,-\mu^-}$. Thus,
Theorem~\ref{Main Conjecture 2} for $n=2$ is essentially due to~\cite{ft1}.
\end{Rem}


\subsection{Trigonometric Lax matrices via antidominantly shifted quantum affine algebras of $\gl_n$}
\label{ssec trigonometric Lax via shifted qaffine}
\

In this section, we construct $n\times n$ \emph{trigonometric Lax} matrices $T_D(z)$
(with coefficients in $\wt{\CA}^\vv(z)$) for each $\Lambda^+$-valued divisor $D$
on $\BP^1$ satisfying~(\ref{trig assumption}). They are explicitly defined
via~(\ref{redefinition of trigonometric Lax uniform},~\ref{explicit long formula trigonometric 1 uniform}) combined
with~(\ref{diagonal quantum entries},~\ref{upper triangular quantum all entries},~\ref{lower triangular quantum all entries}).
We note that these formulas arise naturally by considering the images of
  $T^\pm(z)\in U^\rtt_{-\mu^+,-\mu^-}(L\gl_n)((z^{\mp 1}))\otimes_\BC \End\, \BC^n$
under the composition
  $\Psi_D\circ \Upsilon_{-\mu^+,-\mu^-}^{-1}\colon
   U^\rtt_{-\mu^+,-\mu^-}(L\gl_n)\to \wt{\CA}^\vv_\fra$,
assuming Theorem~\ref{Main Conjecture 2} has been established,
see~(\ref{trig Theta homom},~\ref{construction of trigonometric Lax})
and Proposition~\ref{equality of two Lax matries}. As the name indicates,
$(T_D(z))^\pm$ satisfy the RTT relation~(\ref{trigRTT}), which is derived in
Proposition~\ref{preserving trig RTT}. Combining the latter with the conjectured
generalization of~\cite{w}, see Conjecture~\ref{alex's trig conjecture}, we finally
prove Theorem~\ref{Main Conjecture 2} in Section~\ref{ssec proof of Conjecture 2}.

We also establish the regularity (up to a rational factor~(\ref{renormalized trigonometric Lax}))
of $T_D(z)$ in Theorem~\ref{Main Theorem 1q}, and find simplified explicit formulas for those
$T_D(z)$ which are linear in $z$ in Theorem~\ref{Main Theorem 2q}. Finally, we show
how to degenerate these trigonometric Lax matrices into the rational Lax matrices of
Section~\ref{sssec construction Lax}, see~Proposition~\ref{rat from trig}.


\subsubsection{Construction of $T_D(z)$ and their regularity}\label{sssec construction trig Lax}
\

Consider a $\Lambda^+$-valued divisor $D$ on $\BP^1$, see~\eqref{trig divisor def1},
satisfying the assumption~(\ref{trig assumption}). Note that
$\mu^+:=D|_\infty\in \Lambda^+$ and $\mu^-:=D|_0\in \Lambda^+$.
Composing
  $\Psi_D\colon U_{-\mu^+,-\mu^-}(L\gl_n)\to \wt{\CA}^\vv_\fra$
of~(\ref{trig homom psi}) with the isomorphism
  $\Upsilon_{-\mu^+,-\mu^-}^{-1}\colon
   U^\rtt_{-\mu^+,-\mu^-}(L\gl_n)\iso U_{-\mu^+,-\mu^-}(L\gl_n)$
(assuming the validity of Theorem~\ref{Main Conjecture 2}),
we obtain an algebra homomorphism
\begin{equation}\label{trig Theta homom}
  \Theta_D=\Psi_D\circ \Upsilon_{-\mu^+,-\mu^-}^{-1}\colon
  U^\rtt_{-\mu^+,-\mu^-}(L\gl_n)\longrightarrow \wt{\CA}^\vv_\fra.
\end{equation}
Such a homomorphism is uniquely determined by two matrices
  $T^{\pm}_D(z)\in \wt{\CA}^\vv_\fra((z^{\mp 1}))\otimes_\BC \End\, \BC^n$
defined via
\begin{equation}\label{construction of trigonometric Lax}
  T^{\pm}_D(z):=\Theta_D(T^\pm(z))=
  \Theta_D(F^\pm(z))\cdot \Theta_D(G^\pm(z))\cdot \Theta_D(E^\pm(z)).
\end{equation}

\begin{Rem}\label{Lax is non-localized}
Actually $T^{\pm}_D(z)\in \wt{\CA}^\vv((z^{\mp 1}))\otimes_\BC \End\, \BC^n$, due to
the formulas~(\ref{diagonal quantum entries},~\ref{upper triangular quantum all entries},~\ref{lower triangular quantum all entries}).
\end{Rem}

Let us compute explicitly the images of the matrices $F^\pm(z),G^\pm(z),E^\pm(z)$
under $\Theta_D$, which shall provide an explicit formula for the matrices
$T^\pm_D(z)$ via~(\ref{construction of trigonometric Lax}).

Combining $\Upsilon_{-\mu^+,-\mu^-}^{-1}(g^\pm_i(z))=\varphi^\pm_i(z)$
with the formula for $\Psi_D(\varphi^\pm_i(z))$, we obtain:
\begin{equation}\label{diagonal quantum entries}
\begin{split}
  & \Theta_D(g^\pm_i(z))=\\
  & \prod_{t=1}^{a_i}\sw_{i,t}^{-1/2}\prod_{t=1}^{a_{i-1}} \sw_{i-1,t}^{1/2}\cdot
    \left(z^{\epsilon^\vee_i(\mu^+)}\frac{\sW_i(\vv^{-i}z)}{\sW_{i-1}(\vv^{-i-1}z)}\prod_{\sx\in \BP^1\backslash\{0,\infty\}}(1-\sx/z)^{-\epsilon^\vee_i(\lambda_{\sx})}\right)^\pm.
\end{split}
\end{equation}

Combining $\Upsilon_{-\mu^+,-\mu^-}^{-1}(e^\pm_{i,i+1}(z))=E^\pm_i(z)$
with the formula for $\Psi_D(E^\pm_i(z))$, we obtain:
\begin{equation}\label{upper triangular quantum simplest entries}
  \Theta_D(e^\pm_{i,i+1}(z))=
  \prod_{t=1}^{a_i}\sw_{i,t} \prod_{t=1}^{a_{i-1}} \sw_{i-1,t}^{-1/2}\cdot
  \sum_{r=1}^{a_i} \left(\frac{(\vv^i \sw_{i,r})^{-\alphavee_i(\mu^+)}}{1-\vv^i\sw_{i,r}/z}\right)^\pm
  \frac{\sZ_i(\sw_{i,r})\sW_{i-1}(\vv^{-1}\sw_{i,r})}{\sW_{i,r}(\sw_{i,r})}D_{i,r}^{-1}.
\end{equation}
As
  $e^\pm_{ij}(z)=
  (\vv-\vv^{-1})^{i-j+1}[e_{j-1,j}^{(0)},\cdots,[e^{(0)}_{i+1,i+2},e^\pm_{i,i+1}(z)]_{\vv^{-1}}\cdots]_{\vv^{-1}}$
due to~(\ref{Gauss Matrix Entries qaffine}), we thus get (cf.~\cite[(4.6)]{ft2}):
\begin{equation}\label{upper triangular quantum all entries}
\begin{split}
  & \Theta_D(e^\pm_{ij}(z))=
    (-1)^{i-j+1}\cdot
     \prod_{t=1}^{a_{j-1}}\sw_{j-1,t}\prod_{k=i}^{j-2}\prod_{t=1}^{a_k}\sw_{k,t}^{1/2}
     \prod_{t=1}^{a_{i-1}}\sw_{i-1,t}^{-1/2} \, \times\\
  & \sum_{\substack{1\leq r_i\leq a_i\\\cdots\\ 1\leq r_{j-1}\leq a_{j-1}}}
    \left\{\left(\frac{(\vv^i \sw_{i,r_i})^{-\alphavee_i(\mu^+)}\cdots (\vv^{j-1} \sw_{j-1,r_{j-1}})^{-\alphavee_{j-1}(\mu^+)}}{1-\vv^i\sw_{i,r_i}/z}\right)^\pm\times\right.\\
  & \left. \frac{\sW_{i-1}(\vv^{-1}\sw_{i,r_i})\prod_{k=i}^{j-2}\sW_{k,r_k}(\vv^{-1}\sw_{k+1,r_{k+1}})}
           {\prod_{k=i}^{j-1} \sW_{k,r_k}(\sw_{k,r_k})}
    \prod_{k=i}^{j-1} \sZ_k(\sw_{k,r_k})\cdot \frac{\sw_{i,r_i}}{\sw_{j-1,r_{j-1}}}
    \cdot \prod_{k=i}^{j-1} D_{k,r_k}^{-1}\right\}.
\end{split}
\end{equation}

Combining $\Upsilon_{-\mu^+,-\mu^-}^{-1}(f^\pm_{i+1,i}(z))=F^\pm_i(z)$
with the formula for $\Psi_D(F^\pm_i(z))$, we obtain:
\begin{equation}\label{lower triangular quantum simplest entries}
  \Theta_D(f^\pm_{i+1,i}(z))=
  \vv^{-1} \prod_{t=1}^{a_{i+1}} \sw_{i+1,t}^{-1/2}\cdot
  \sum_{r=1}^{a_i} \left(\frac{1}{1-z/\vv^{i+2}\sw_{i,r}}\right)^\pm
  \frac{\sW_{i+1}(\vv\sw_{i,r})}{\sW_{i,r}(\sw_{i,r})}D_{i,r}.
\end{equation}
As
  $f^\pm_{ji}(z)=
   (\vv^{-1}-\vv)^{i-j+1}[[[\cdots[f^\pm_{i+1,i}(z),f^{(0)}_{i+2,i+1}]_\vv,\cdots,f^{(0)}_{j,j-1}]_\vv$
due to~(\ref{Gauss Matrix Entries qaffine}), we thus get (cf.~\cite[(4.7)]{ft2}):
\begin{multline}\label{lower triangular quantum all entries}
  \Theta_D(f^\pm_{ji}(z))=
    (-1)^{i-j+1}\vv^{i-j}\cdot
    \prod_{k=i+1}^{j}\prod_{t=1}^{a_k}\sw_{k,t}^{-1/2} \, \times\\
  \sum_{\substack{1\leq r_i\leq a_i\\\cdots\\ 1\leq r_{j-1}\leq a_{j-1}}}
    \left\{\left(\frac{1}{1-z/\vv^{i+2}\sw_{i,r_i}}\right)^\pm
    \frac{\sW_{j}(\vv\sw_{j-1,r_{j-1}})\prod_{k=i+1}^{j-1}\sW_{k,r_k}(\vv\sw_{k-1,r_{k-1}})}
         {\prod_{k=i}^{j-1} \sW_{k,r_k}(\sw_{k,r_k})}\times\right.\\
  \left. \frac{\sw_{j-1,r_{j-1}}}{\sw_{i,r_i}}\cdot \prod_{k=i}^{j-1} D_{k,r_k}\right\}.
\end{multline}

While the above derivation of the
formulas~(\ref{diagonal quantum entries},~\ref{upper triangular quantum all entries},~\ref{lower triangular quantum all entries})
is based on yet unproved Theorem~\ref{Main Conjecture 2}, we shall
use their explicit right-hand sides from now on, without any direct referral to
Theorem~\ref{Main Conjecture 2}. More precisely, let us define $\wt{\CA}^\vv((z^{\mp 1}))$-valued
$n\times n$ diagonal matrix $G^\pm_D(z)$, an upper-triangular matrix $E^\pm_D(z)$,
and a lower-triangular matrix $F^\pm_D(z)$, whose matrix coefficients
$g^{\pm;D}_i(z), e^{\pm;D}_{ij}(z),f^{\pm;D}_{ji}(z)$ are given by the right-hand sides
of~(\ref{diagonal quantum entries},~\ref{upper triangular quantum all entries},~\ref{lower triangular quantum all entries})
expanded in $z^{\mp 1}$, respectively. Thus, we amend~\eqref{construction of trigonometric Lax} and define
\begin{equation}\label{redefinition of trigonometric Lax}
  T^\pm_D(z):=F^\pm_D(z)G^\pm_D(z)E^\pm_D(z),
\end{equation}
so that the matrix coefficients of $T^\pm_D(z)$ are given by
\begin{equation}\label{explicit long formula trigonometric 1}
  T^\pm_D(z)_{\alpha,\beta} \, =
  \sum_{i=1}^{\min\{\alpha,\beta\}}
  f^{\pm;D}_{\alpha,i}(z)\cdot g^{\pm;D}_i(z)\cdot e^{\pm;D}_{i,\beta}(z)
\end{equation}
for any $1\leq \alpha,\beta\leq n$, where the three factors in the
right-hand side of~(\ref{explicit long formula trigonometric 1}) are determined
via~(\ref{lower triangular quantum all entries},~\ref{diagonal quantum entries},~\ref{upper triangular quantum all entries}),
respectively, with the conventions
  $f^{\pm;D}_{\alpha,\alpha}(z)=1=e^{\pm;D}_{\beta,\beta}(z)$.

\begin{Prop}\label{equality of two Lax matries}
The matrix coefficients of the matrices $T^+_D(z)$ and $T^-_D(z)$ are the expansions
of the same rational functions in $z^{-1}$ and $z$, respectively.
\end{Prop}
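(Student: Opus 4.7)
The plan is to exhibit, for each pair $1\leq \alpha,\beta\leq n$, one common $\wt{\CA}^\vv$-valued rational function $\mathsf{T}^D_{\alpha,\beta}(z)$ whose $(\cdot)^\pm$ expansion recovers $T^\pm_D(z)_{\alpha,\beta}$. Denote by $\mathsf{g}^D_i(z),\,\mathsf{e}^D_{ij}(z),\,\mathsf{f}^D_{ji}(z)$ the rational functions sitting inside the $(\cdot)^\pm$ operations on the right-hand sides of~(\ref{diagonal quantum entries},~\ref{upper triangular quantum all entries},~\ref{lower triangular quantum all entries}), so that by construction $g^{\pm;D}_i(z)=\mathsf{g}^D_i(z)^\pm$, and likewise for $e^{\pm;D}_{ij}(z)$ and $f^{\pm;D}_{ji}(z)$. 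The natural candidate is
\[
  \mathsf{T}^D_{\alpha,\beta}(z)
  :=\sum_{i=1}^{\min\{\alpha,\beta\}}\mathsf{f}^D_{\alpha,i}(z)\,\mathsf{g}^D_i(z)\,\mathsf{e}^D_{i,\beta}(z),
\]
which is manifestly sign-independent; the content of the proposition reduces to the identity $T^\pm_D(z)_{\alpha,\beta}=\mathsf{T}^D_{\alpha,\beta}(z)^\pm$, or equivalently, to the commutativity of multiplication with the expansion $(\cdot)^\pm$ when applied termwise to the triple products displayed in~(\ref{explicit long formula trigonometric 1}).

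To verify this, I observe by direct inspection of~(\ref{diagonal quantum entries}--\ref{lower triangular quantum all entries}) that inside each individual summand the $z$-dependence is isolated in a scalar rational factor of one of the forms $(1-\vv^{\ast}\sw_{\ast,\ast}/z)^{-1}$, $(1-z/\vv^{\ast}\sw_{\ast,\ast})^{-1}$, $(1-\sx_{\ast}/z)^{\pm 1}$, or a monomial $z^m$, whose coefficients all lie in the commutative subalgebra $\CB:=\BC[\vv^{\pm 1/2},\sw^{\pm 1/2}_{\ast,\ast},\sx_\ast^{\pm 1}]\subset \wt{\CA}^\vv$. In the triple product $\mathsf{f}^D_{\alpha,i}(z)\,\mathsf{g}^D_i(z)\,\mathsf{e}^D_{i,\beta}(z)$ the ``shift operators'' $D^{\pm 1}_{k,r}$ can be transported all the way to the right past the $\sw^{1/2}_{j,s}$-factors via $D_{k,r}\sw^{1/2}_{j,s}=\vv^{\delta_{kj}\delta_{rs}}\sw^{1/2}_{j,s}D_{k,r}$; this introduces only finite $\vv$-powers and replaces certain $\sw_{k,r}\mapsto \vv^{\pm}\sw_{k,r}$ inside the scalar $z$-factors, preserving their rationality in $z$. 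After this reordering each summand factorises as $R(z)\cdot B$, where $B\in \wt{\CA}^\vv$ is $z$-independent and $R(z)\in \CB(z)$. Since $\CB$ is commutative, the expansion maps $\CB(z)\hookrightarrow \CB((z^{\mp 1}))$ are genuine ring homomorphisms, and hence the product of the three $\pm$ expansions coincides with the $\pm$ expansion of the corresponding product in $\CB(z)$, which is precisely what is needed.

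The main technical obstacle is the careful bookkeeping of the $\vv$-shifts produced by commuting the $D^{\pm 1}_{k,r}$ past the $\sw^{1/2}_{j,s}$, since in principle each such $D_{k,r}$ can interact with the corresponding $\sw_{k,r}$ sitting in the $z$-rational factors of the other two Gauss factors. These interactions are, however, local -- each $D_{k,r}$ commutes with all $\sw_{j,s}$ except for the unique pair $(j,s)=(k,r)$ -- so the bookkeeping is finite and yields the same rational function $\mathsf{T}^D_{\alpha,\beta}(z)\in \wt{\CA}^\vv(z)$ whether one starts from the $+$ or from the $-$ expansion, completing the proof.
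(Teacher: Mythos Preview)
Your proof is correct and follows the same approach as the paper, which simply notes that the three Gauss factors $f^{\pm;D}_{\alpha,i}(z),\,g^{\pm;D}_i(z),\,e^{\pm;D}_{i,\beta}(z)$ are the $z^{\mp 1}$-expansions of common rational functions and hence so is their product. Your version is in fact more careful than the paper's one-sentence argument: you explicitly justify why multiplication commutes with the $(\cdot)^\pm$ expansions in the non-commutative algebra $\wt{\CA}^\vv$ by pushing all shift operators $D^{\pm 1}_{k,r}$ to the right and reducing the $z$-dependent part to the commutative subalgebra $\CB$---a point the paper leaves implicit.
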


\begin{proof}
This result follows immediately from the defining formula~(\ref{explicit long formula trigonometric 1}),
since $f^{+;D}_{\alpha,i}(z)$ and $f^{-;D}_{\alpha,i}(z)$
(as well as $e^{+;D}_{i,\beta}(z)$ and $e^{-;D}_{i,\beta}(z)$,
or $g^{+;D}_i(z)$ and $g^{-;D}_i(z)$, respectively)
are expansions of the same rational functions in $z^{-1}$ and $z$.
\end{proof}

Thus, $T^\pm_D(z)=(T_D(z))^\pm$ for an $\wt{\CA}^\vv(z)$-valued $n\times n$
matrix $T_D(z)$. Explicitly, $T_D(z)$ is defined via its Gauss decomposition
\begin{equation}\label{redefinition of trigonometric Lax uniform}
  T_D(z):=F_D(z)G_D(z)E_D(z),
\end{equation}
so that the matrix coefficients of $T_D(z)$ are given by
\begin{equation}\label{explicit long formula trigonometric 1 uniform}
  T_D(z)_{\alpha,\beta} \, =
  \sum_{i=1}^{\min\{\alpha,\beta\}}
  f^{D}_{\alpha,i}(z)\cdot g^{D}_i(z)\cdot e^{D}_{i,\beta}(z)
\end{equation}
for any $1\leq \alpha,\beta\leq n$, where the three factors in the
right-hand side of~(\ref{explicit long formula trigonometric 1 uniform})
are the rational functions
of~(\ref{lower triangular quantum all entries},~\ref{diagonal quantum entries},~\ref{upper triangular quantum all entries}),
respectively, with the conventions $f^{D}_{\alpha,\alpha}(z)=1=e^{D}_{\beta,\beta}(z)$.

\begin{Rem}\label{Lax asymptotics trig}
We note that $T_D(z)$ is singular at $\sx\in \BC^\times$ if and only if $\lambda_{\sx}\ne 0$.
As $F_D(z)$ and $E_D(z)$ are regular in the neighborhood of $\sx$, while
$G_D(z)=(\mathrm{regular\ part})\cdot (z-\sx)^{-\lambda_{\sx}}$, we see that
in the classical limit $T_D(z)$ represents a $GL_n$-multiplicative
Higgs field on $\BP^1$ with partial (Borel) framing at $0,\infty \in \BP^1$
(trigonometric type) and with prescribed singularities on $D$.
\end{Rem}

We shall also need the following normalized trigonometric Lax matrices:
\begin{equation}\label{renormalized trigonometric Lax}
  \sT_D(z):=\frac{z^{\epsilon^\vee_1(\lambda+\mu^-)}}{\sZ_0(z)}T_D(z),
\end{equation}
with the normalization factor determined via~(\ref{quantum ZW-series}):
\begin{equation*}
  \frac{z^{\epsilon^\vee_1(\lambda+\mu^-)}}{\sZ_0(z)}=
  z^{\epsilon^\vee_1(\mu^-)} \prod_{1\leq s\leq N}^{i_s=0} (z-\sx_s)^{-\gamma_s}=
  z^{\epsilon^\vee_1(\mu^-)} \prod_{\sx\in \BP^1\backslash\{0,\infty\}} (z-\sx)^{-\alphavee_0(\lambda_{\sx})}.
\end{equation*}
The first main result of this section establishes the regularity of these matrices:

\begin{Thm}\label{Main Theorem 1q}
We have $\sT_D(z)\in \wt{\CA}^\vv[z]\otimes_\BC \End\, \BC^n$.
\end{Thm}

\begin{proof}
First, we claim that $\sT_D(z)$ is regular at $z=0$. Since
$f^{D}_{\ast,\ast}(z),e^{D}_{\ast,\ast}(z)$ are clearly regular at $z=0$, it
remains to show that $\frac{z^{\epsilon^\vee_1(\lambda+\mu^-)}}{\sZ_0(z)}g^D_i(z)$
is regular at $z=0$ for any $1\leq i\leq n$. However, the minimal power of $z$ in
$(\frac{z^{\epsilon^\vee_1(\lambda+\mu^-)}}{\sZ_0(z)}g^D_i(z))^-$ equals
\begin{equation*}
  -a_i+a_{i-1}+\epsilon^\vee_i(\mu^+)+\epsilon^\vee_i(\lambda)+\epsilon^\vee_1(\mu^-)=
  \epsilon^\vee_1(\mu^-)-\epsilon^\vee_i(\mu^-)=(\alphavee_1+\ldots+\alphavee_{i-1})(\mu^-)\geq 0.
\end{equation*}
Hence, the rational function $\frac{z^{\epsilon^\vee_1(\lambda+\mu^-)}}{\sZ_0(z)}g^D_i(z)$
is indeed regular at $z=0$ for any $1\leq i\leq n$.

The rest of the proof is completely analogous to our proof
of Theorem~\ref{Main Theorem 1}.
\end{proof}


\subsubsection{Normalized limit description and the RTT relation for $T_D(z)$}
\label{ssec trigonometric limit shifted from nonshifted}
\

Consider a $\Lambda^+$-valued divisor
  $D=\sum_{s=1}^{N} \gamma_s\varpi_{i_s} [\sx_s] + \mu^+ [\infty] + \mu^- [0]$.
As $\sx_N\to \infty$, we obtain another $\Lambda^+$-valued divisor
  $D'=\sum_{s=1}^{N-1} \gamma_s\varpi_{i_s} [\sx_s] +
      (\mu^+ + \gamma_N\varpi_{i_N}) [\infty] + \mu^- [0]$,
while as $\sx_N\to 0$, we obtain yet another $\Lambda^+$-valued divisor
  $D''=\sum_{s=1}^{N-1} \gamma_s\varpi_{i_s} [\sx_s] +
       \mu^+ [\infty] + (\mu^- + \gamma_N\varpi_{i_N}) [0]$.
We will now relate the corresponding matrices $T_{D'}(z), T_{D''}(z)$ to $T_D(z)$,
defined via~(\ref{redefinition of trigonometric Lax uniform},~\ref{explicit long formula trigonometric 1 uniform}).

If $i_N=0$, then
\begin{equation}\label{relation 1q}
  T_{D'}(z)=(z-\sx_N)^{-\gamma_N}T_D(z),\quad
  T_{D''}(z)=(1-\sx_N/z)^{-\gamma_N}T_D(z),
\end{equation}
due to the defining formula~(\ref{redefinition of trigonometric Lax uniform})
and the equalities
  $F_{D}(z)=F_{D'}(z)=F_{D''}(z)$, $E_{D}(z)=E_{D'}(z)=E_{D''}(z)$,
  $G_D(z)=(z-\sx_N)^{\gamma_N}G_{D'}(z)=(1-\sx_N/z)^{\gamma_N}G_{D''}(z)$.

Let us now consider the case $1\leq i_N\leq n-1$ (note that $\gamma_N=1$).

\begin{Prop}\label{Degerating trigonometric Lax at zero}
The $\sx_N\to 0$ limit of $T_D(z)$ equals $T_{D''}(z)$.
\end{Prop}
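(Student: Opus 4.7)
The plan is to compare the three Gauss factors $F_D(z),G_D(z),E_D(z)$ of $T_D(z)$, given by the explicit formulas~(\ref{lower triangular quantum all entries},~\ref{diagonal quantum entries},~\ref{upper triangular quantum all entries}), with those of $T_{D''}(z)$, and to verify that each of them has a well-defined $\sx_N\to 0$ limit that matches the corresponding factor of $T_{D''}(z)$. The proposition will then follow at once from the Gauss decomposition~(\ref{redefinition of trigonometric Lax uniform},~\ref{explicit long formula trigonometric 1 uniform}).

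The key preliminary observation is that the shift parameters $\{a_i\}_{i=1}^{n-1}$ entering the target algebra $\wt{\CA}^\vv$ are the same for $D$ and $D''$. Indeed, denoting by $\lambda''$ the sum of coefficients of $D''$ away from $\{0,\infty\}$, we have $\lambda''=\lambda-\varpi_{i_N}$ while $D''|_0=\mu^-+\varpi_{i_N}$ and $D''|_\infty=\mu^+$, so
\begin{equation*}
  \lambda''+D''|_\infty+D''|_0=\lambda+\mu^++\mu^-=\sum_{i=1}^{n-1}a_i\alpha_i.
\end{equation*}
Hence the auxiliary condition~(\ref{trig assumption}) produces the same integers $\{a_i\}$, and both matrices $T_D(z)$ and $T_{D''}(z)$ are $\wt{\CA}^\vv(z)$-valued.

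Next, I would inspect the $\sx_N$-dependence of each Gauss factor. Direct inspection of~(\ref{lower triangular quantum all entries}) shows that the matrix entries of $F_D(z)$ depend only on $\vv$, the generators $\sw_{k,r},D_{k,r}$, and the integers $a_i$, but not on the divisor's support at all; thus $F_D(z)=F_{D''}(z)$ identically. The only $\sx_N$-dependence of the remaining factors enters through the series $\sZ_k(u)$ of~(\ref{quantum ZW-series}): for $k\ne i_N$ one has $\sZ_k(u;D)=\sZ_k(u;D'')$, while for $k=i_N$
\begin{equation*}
  \sZ_{i_N}(u;D)=\bigl(1-\vv^{-i_N}\sx_N/u\bigr)\cdot \sZ_{i_N}(u;D''),
\end{equation*}
which tends to $\sZ_{i_N}(u;D'')$ as $\sx_N\to 0$. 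Substituting this into~(\ref{upper triangular quantum all entries}) (note that $\sZ_{i_N}$ is evaluated at $u=\sw_{i_N,r_{i_N}}$, not at $z$, so no singularity in $z$ is created) and using that the remaining ingredients are $\sx_N$-independent since $\mu^+$ is unchanged, one obtains $e^D_{ij}(z)\to e^{D''}_{ij}(z)$. For the diagonal factor, formula~(\ref{diagonal quantum entries}) gives
\begin{equation*}
  g^D_i(z)=(1-\sx_N/z)^{-\epsilon^\vee_i(\varpi_{i_N})}\cdot g^{D''}_i(z),
\end{equation*}
with exponent equal to $\delta_{i>i_N}$; the extra factor tends to $1$ as $\sx_N\to 0$, so $g^D_i(z)\to g^{D''}_i(z)$ as rational functions in $z$.

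Combining these three convergence statements via~(\ref{redefinition of trigonometric Lax uniform},~\ref{explicit long formula trigonometric 1 uniform}) yields $T_D(z)\to T_{D''}(z)$, as claimed. There is no real obstacle: the argument reduces to a direct inspection of the explicit formulas, with the only nontrivial input being the elementary bookkeeping of the preceding paragraph that ensures $T_D(z)$ and $T_{D''}(z)$ live in the same algebra $\wt{\CA}^\vv(z)\otimes_\BC\End\,\BC^n$.
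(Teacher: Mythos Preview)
Your proof is correct and follows exactly the same approach as the paper's own proof: compare the three Gauss factors $F_D(z),G_D(z),E_D(z)$ with their $D''$-counterparts via the explicit formulas~(\ref{lower triangular quantum all entries},~\ref{diagonal quantum entries},~\ref{upper triangular quantum all entries}), observe that $F_D(z)=F_{D''}(z)$ identically while the other two converge as $\sx_N\to 0$, and conclude via~(\ref{redefinition of trigonometric Lax uniform},~\ref{explicit long formula trigonometric 1 uniform}). Your version simply spells out in more detail the bookkeeping (unchanged $\{a_i\}$, the explicit extra factors $(1-\vv^{-i_N}\sx_N/u)$ in $\sZ_{i_N}$ and $(1-\sx_N/z)^{\delta_{i>i_N}}$ in $g^D_i$, unchanged $\mu^+$) that the paper leaves implicit.
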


\begin{proof}
We note that $F_{D}(z)=F_{D''}(z)$ by~(\ref{lower triangular quantum all entries}),
the $\sx_N\to 0$ limit of $G_D(z)$ equals $G_{D''}(z)$ by~(\ref{diagonal quantum entries}),
and the $\sx_N\to 0$ limit of $E_D(z)$ equals $E_{D''}(z)$ by~(\ref{upper triangular quantum all entries}).
This implies the result, due to the defining
formulas~(\ref{redefinition of trigonometric Lax uniform},~\ref{explicit long formula trigonometric 1 uniform}).
\end{proof}

To treat the case $\sx_N\to \infty$, let us recall the notation
$(-\sx_N)^{\varpi_{i_N}}=\mathrm{diag}(1^{i_N},(-\sx_N^{-1})^{n-i_N})$.

\begin{Prop}\label{Degenerating trigonometric Lax at infinity}
The $\sx_N\to \infty$ limit of $T_D(z)\cdot (-\sx_N)^{\varpi_{i_N}}$ equals $T_{D'}(z)$.
\end{Prop}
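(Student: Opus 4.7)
The plan is to mimic the proof of Proposition~\ref{Degenerating rational Lax}, using the Gauss decomposition~(\ref{redefinition of trigonometric Lax uniform}) and the conjugation trick, together with the explicit formulas~(\ref{diagonal quantum entries}--\ref{lower triangular quantum all entries}). Writing
\begin{equation*}
  T_D(z)\cdot(-\sx_N)^{\varpi_{i_N}}=
  F_D(z)\cdot\bigl(G_D(z)(-\sx_N)^{\varpi_{i_N}}\bigr)\cdot
  \bigl((-\sx_N)^{-\varpi_{i_N}}E_D(z)(-\sx_N)^{\varpi_{i_N}}\bigr),
\end{equation*}
it suffices to prove that each of the three factors converges, as $\sx_N\to\infty$, to the corresponding Gauss factor $F_{D'}(z),\ G_{D'}(z),\ E_{D'}(z)$.

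The leftmost factor is immediate: formula~(\ref{lower triangular quantum all entries}) shows that $f^D_{ji}(z)$ does not involve $\sx_N$ (nor any of the $\sZ_k$), so $F_D(z)=F_{D'}(z)$. For the diagonal factor, observe from~(\ref{diagonal quantum entries}) that passing from $D$ to $D'$ replaces the rational factor $\sZ_{i_N}(\vv^{-i_N}z)=(1-\sx_N/z)$ (which is present in $g^D_i(z)$ exactly when $i>i_N$, since $\epsilon^\vee_i(\varpi_{i_N})=-\delta_{i>i_N}$) and simultaneously shifts the monomial $z^{\epsilon^\vee_i(\mu^+)}$ by $z^{\epsilon^\vee_i(\varpi_{i_N})}=z^{-\delta_{i>i_N}}$; therefore $g^D_i(z)=g^{D'}_i(z)\cdot(z-\sx_N)^{\delta_{i>i_N}}$. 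Multiplying by the $i$-th diagonal entry of $(-\sx_N)^{\varpi_{i_N}}$ (which is $1$ for $i\leq i_N$ and $-\sx_N^{-1}$ for $i>i_N$) yields $g^{D'}_i(z)\cdot(1-z/\sx_N)^{\delta_{i>i_N}}\xrightarrow{\sx_N\to\infty}g^{D'}_i(z)$.

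For the upper-triangular factor, conjugation by $(-\sx_N)^{\varpi_{i_N}}$ multiplies the $(i,j)$ entry by $(-\sx_N^{-1})^{[i\leq i_N<j]}$. Inspecting~(\ref{upper triangular quantum all entries}), the entry $e^D_{ij}(z)$ differs from $e^{D'}_{ij}(z)$ precisely when $i\leq i_N<j$: in that case, passing from $D'$ to $D$ multiplies each summand by $\sZ^D_{i_N}/\sZ^{D'}_{i_N}=(1-\vv^{-i_N}\sx_N/\sw_{i_N,r_{i_N}})$ and simultaneously inserts an extra factor $(\vv^{i_N}\sw_{i_N,r_{i_N}})^{\alphavee_{i_N}(\varpi_{i_N})}=\vv^{i_N}\sw_{i_N,r_{i_N}}$ coming from the monomial term in the numerator (since $\alphavee_{i_N}(\mu^+)$ decreases by $1$ upon passing from $D'$ to $D$). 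Combining, the summand-wise ratio is $\vv^{i_N}\sw_{i_N,r_{i_N}}-\sx_N$, and multiplication by $-\sx_N^{-1}$ gives $1-\vv^{i_N}\sw_{i_N,r_{i_N}}/\sx_N$, which tends to $1$. Hence each summand of $e^D_{ij}(z)\cdot(-\sx_N^{-1})$ converges to the corresponding summand of $e^{D'}_{ij}(z)$, and the full sum does as well. The entries with $j\leq i_N$ or $i>i_N$ are $\sx_N$-independent and undergo no conjugation, so they already equal the corresponding entries of $E_{D'}(z)$.

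The only mildly subtle point --- and probably the main place requiring care in the write-up --- is the interchange of expansion and limit: the rational functions $f^D,\ g^D,\ e^D$ of~(\ref{diagonal quantum entries}--\ref{lower triangular quantum all entries}) are viewed in $\wt\CA^\vv(z)$, and the limits above are pointwise equalities of rational functions in $z$ (after clearing the $\vv^{\mp}$ shifts). Since $T_D(z)$ is obtained from these rational functions by multiplication in $\wt\CA^\vv_\fra$ and Proposition~\ref{equality of two Lax matries} guarantees that both expansions $T^\pm_D(z)$ come from the same rational function, the limit statement at the level of rational functions is equivalent to the statement at the level of either expansion $T^\pm_D(z)$, completing the proof.
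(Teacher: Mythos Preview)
Your proof is correct and follows exactly the approach the paper intends: the paper's own proof simply reads ``completely analogous to our proof of Proposition~\ref{Degenerating rational Lax}'', and you have carried out that analogy in detail, correctly tracking the extra monomial factor $(\vv^{i_N}\sw_{i_N,r_{i_N}})^{\alphavee_{i_N}(\varpi_{i_N})}$ in~(\ref{upper triangular quantum all entries}) that has no rational counterpart. Your final paragraph on interchange of expansion and limit is unnecessary here, since both the statement and the argument operate at the level of the rational-function-valued matrix $T_D(z)$ of~(\ref{redefinition of trigonometric Lax uniform}) rather than its expansions $T^\pm_D(z)$, but it does no harm.
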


\begin{proof}
The proof is completely analogous to our proof of Proposition~\ref{Degenerating rational Lax}.
\end{proof}

\begin{Cor}\label{normalized limit trigonometric}
(a) $T_{D''}(z)$ is a limit of $T_D(z)$.

\noindent
(b) $T_{D'}(z)$ is a \emph{normalized limit} of $T_D(z)$.
\end{Cor}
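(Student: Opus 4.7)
The plan is to deduce both statements by a case analysis on the index $i_N$ attached to the point $\sx_N$, combining the explicit relation~(\ref{relation 1q}) (the case $i_N=0$) with Propositions~\ref{Degerating trigonometric Lax at zero} and~\ref{Degenerating trigonometric Lax at infinity} (the case $1\le i_N\le n-1$). In both cases the statement reduces to a straightforward limiting procedure: for part~(a) one lets $\sx_N\to 0$, and for part~(b) one lets $\sx_N\to \infty$, possibly after multiplying on the right by a diagonal $z$-independent normalization matrix.

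First I would handle the case $i_N=0$. In this case the relations~(\ref{relation 1q}) give
\begin{equation*}
  T_{D''}(z)=(1-\sx_N/z)^{-\gamma_N}T_D(z),\qquad
  T_{D'}(z)=(z-\sx_N)^{-\gamma_N}T_D(z),
\end{equation*}
so that $T_{D''}(z)$ (resp.\ $T_{D'}(z)$) is a scalar normalization of $T_D(z)$ in the limit $\sx_N\to 0$ (resp.\ $\sx_N\to\infty$). In particular, $T_{D''}(z)=\lim_{\sx_N\to 0}\bigl((1-\sx_N/z)^{-\gamma_N}T_D(z)\bigr)$, which is a (normalized, though in fact trivially so as $\sx_N\to 0$) limit of $T_D(z)$, while $T_{D'}(z)$ is a genuine normalized limit by the scalar factor $(z-\sx_N)^{-\gamma_N}$.

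Next I would treat the case $1\le i_N\le n-1$. For part~(a), Proposition~\ref{Degerating trigonometric Lax at zero} states directly that $T_{D''}(z)=\lim_{\sx_N\to 0}T_D(z)$, so $T_{D''}(z)$ is a limit of $T_D(z)$ without any normalization needed. For part~(b), Proposition~\ref{Degenerating trigonometric Lax at infinity} states that $T_{D'}(z)=\lim_{\sx_N\to\infty}\bigl(T_D(z)\cdot(-\sx_N)^{\varpi_{i_N}}\bigr)$, exhibiting $T_{D'}(z)$ as a normalized limit of $T_D(z)$ with respect to the $z$-independent diagonal normalization matrix $(-\sx_N)^{\varpi_{i_N}}=\mathrm{diag}(1^{i_N},(-\sx_N^{-1})^{n-i_N})$.

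Combining the two cases completes the proof. No further obstacles arise, since both the explicit formulas~(\ref{relation 1q}) and the two propositions have already been established; the corollary is simply a unified reformulation of these results in the language of (normalized) limits, and it will be used subsequently to derive the RTT relation~(\ref{trigRTT}) for all $T_D(z)$ from the unshifted case (cf.\ Proposition~\ref{preserving trig RTT}).
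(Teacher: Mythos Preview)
Your proof is correct and follows the same approach as the paper: the corollary is stated without proof there, being an immediate consequence of the preceding relation~(\ref{relation 1q}) and Propositions~\ref{Degerating trigonometric Lax at zero}--\ref{Degenerating trigonometric Lax at infinity}, exactly as you have spelled out via the case analysis on $i_N$.
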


For $D$ as above, we can pick a $\Lambda^+$-valued divisor
  $\bar{D}=\sum_{s=1}^{N+M} \gamma_s\varpi_{i_s} [\sx_s]$,
so that $\{\sx_s\}_{s=N+1}^{N+M}$ are some points on $\BP^1\backslash\{0,\infty\}$
while $\sum_{s=N+1}^{N+M} \gamma_s\varpi_{i_s}=\mu^+ + \mu^-$.
Note that $0,\infty\notin \supp(\bar{D})$, that is, $\bar{D}|_\infty=0$ and $\bar{D}|_0=0$.

\begin{Cor}\label{trigonometric Lax as a limit of nonshifted}
For any $\Lambda^+$-valued divisor $D$ on $\BP^1$ satisfying~(\ref{trig assumption}),
the matrix $T_{D}(z)$ is a normalized limit of $T_{\bar{D}}(z)$ with a $\Lambda^+$-valued
divisor $\bar{D}$ satisfying $\bar{D}|_\infty=0=\bar{D}|_0$.
\end{Cor}

Evoking Remark~\ref{Validity of Main Conj 2}(a), we see that the original definition
of $T^\pm_{\bar{D}}(z)$ via~(\ref{trig Theta homom},~\ref{construction of trigonometric Lax})
is valid. Hence, $T^\pm_{\bar{D}}(z)$ defined via~(\ref{redefinition of trigonometric Lax},~\ref{explicit long formula trigonometric 1})
indeed satisfies the RTT relation~(\ref{trigRTT}), and so is $T_{\bar{D}}(z)$.
As a multiplication by diagonal $z$-independent matrices preserves~(\ref{trigRTT}),
we obtain the main result of this section:

\begin{Prop}\label{preserving trig RTT}
For any $\Lambda^+$-valued divisor $D$ on $\BP^1$ satisfying the assumption~(\ref{trig assumption}),
the matrix $T_D(z)$ defined via~(\ref{redefinition of trigonometric Lax uniform},~\ref{explicit long formula trigonometric 1 uniform})
is Lax, i.e.\ it satisfies the RTT relation~(\ref{trigRTT}).
\end{Prop}


\subsubsection{Proof of Theorem~\ref{Main Conjecture 2}}
\label{ssec proof of Conjecture 2}
\

Due to Proposition~\ref{preserving trig RTT} and the Gauss
decomposition~(\ref{redefinition of trigonometric Lax uniform},~\ref{explicit long formula trigonometric 1 uniform})
of $T_D(z)$ with the factors defined
via~(\ref{diagonal quantum entries},~\ref{upper triangular quantum all entries},~\ref{lower triangular quantum all entries}),
we see that $T_D(z)$ indeed gives rise to the algebra homomorphism
  $\Theta_D\colon U^\rtt_{-\mu^+,-\mu^-}(L\gl_n)\to \wt{\CA}^\vv_\fra$,
given by
  $T^\pm(z)\mapsto (T_D(z))^\pm$,
whose composition with the epimorphism
  $\Upsilon_{-\mu^+,-\mu^-}\colon
   U_{-\mu^+,-\mu^-}(L\gl_n)\twoheadrightarrow U^\rtt_{-\mu^+,-\mu^-}(L\gl_n)$
of Theorem~\ref{epimorphism of shifted quantum affine} coincides with the homomorphism
$\Psi_D$ of~(\ref{trig homom psi}). Thus, for $\mu^+,\mu^-\in \Lambda^+$ and any
$\Lambda^+$-valued divisor $D$ on $\BP^1$ satisfying~(\ref{trig assumption}) and
$D|_\infty=\mu^+,D|_0=\mu^-$, the homomorphism $\Psi_D$ factors through $\Upsilon_{-\mu^+,-\mu^-}$.

The latter observation immediately implies the injectivity of $\Upsilon_{-\mu^+,-\mu^-}$ once
the following trigonometric counterpart of Theorem~\ref{alex's theorem} is established:

\begin{Conj}\label{alex's trig conjecture}
For any coweights $\mu^+,\mu^-\in \Lambda$, the intersection of kernels
of the homomorphisms $\Psi_D$ of~(\ref{trig homom psi}) is zero:
  $\bigcap_{D}\ \mathrm{Ker}(\Psi_D)=0$,
where $D$ ranges through all $\Lambda$-valued divisors on $\BP^1$,
$\Lambda^+$-valued outside $\{0,\infty\}\in \BP^1$, satisfying~(\ref{trig assumption})
and such that $D|_\infty=\mu^+,D|_0=\mu^-$.
\end{Conj}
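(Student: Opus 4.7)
The plan is to adapt the proof strategy of Theorem~\ref{alex's theorem} (A.~Weekes' result \cite{w}) from the rational to the trigonometric setting, while first reducing the $\gl_n$-statement to a corresponding claim for the simply-connected shifted quantum affine algebra of $\ssl_n$. Via the decomposition~(\ref{quantum gl as sl plus center}) combined with Proposition~\ref{relation Qaffine sl vs gl}, any nonzero $x \in U_{-\mu^+,-\mu^-}(L\gl_n)$ can be written uniquely as $\sum_k z_k u_k$ with $z_k$ linearly independent in the central subalgebra generated by $\{C^\pm_{\pm s}\}$ and $u_k \in \iota_{-\mu^+,-\mu^-}(U^\ssc_{-\bar{\mu}^+,-\bar{\mu}^-}(L\ssl_n))$. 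From formula~(\ref{image of quantum qdet}), the central elements $C^\pm_{\pm s}$ map to polynomials in $\sx_s$ whose coefficients are algebraically independent as $D$ varies. Thus it suffices to prove, at the level of $U^\ad_{-\bar{\mu}^+,-\bar{\mu}^-}(L\ssl_n)$, that $\bigcap_{D}\on{Ker}(\Psi_D \circ \iota_{-\mu^+,-\mu^-})=0$.

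Next, I would fix a PBW-type basis of $U^\ad_{-\bar{\mu}^+,-\bar{\mu}^-}(L\ssl_n)$ of the form $\{F^{\mathbf{r}} \Phi^{\mathbf{s}} E^{\mathbf{t}}\}$, obtained from the Beck-type basis of the unshifted quantum loop algebra $U_\vv(L\ssl_n)$ via the shift homomorphisms, analogous to the PBW basis used by Weekes in the rational case. Assuming $x=\sum c_\alpha F^{\mathbf{r}_\alpha} \Phi^{\mathbf{s}_\alpha} E^{\mathbf{t}_\alpha}$ lies in every $\on{Ker}(\Psi_D)$, I would compute the images of PBW monomials using~(\ref{quantum homom assignment}) and extract a ``leading term'' with respect to a suitable bi-filtration on $\wt{\CA}^\vv_\fra$: the filtration by degrees in $\sw_{i,r}$ refined by the multi-degree in the shift operators $D_{i,r}^{\pm 1}$. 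The point is that for generic choices of the supporting points $\sx_s$ and with $N$ (the number of simple summands of $D$) sufficiently large, the leading terms corresponding to distinct PBW monomials are linearly independent, forcing all $c_\alpha$ to vanish.

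The main obstacle I anticipate is that, unlike the rational case where polynomial degrees in $z$ give a clean separation between the images of the $E$, $\Phi$, $F$ factors, in the trigonometric setting the formal expansions $(\cdots)^\pm$ of~(\ref{quantum homom assignment}) mix positive and negative modes of $E_i(z)$, $F_i(z)$, $\varphi^\pm_j(z)$. This complicates the comparison of ``leading terms'' because the Gauss-decomposed factors $E^\pm(z), G^\pm(z), F^\pm(z)$ of $T_D(z)$ exchange contributions under this expansion, as seen in the formulas of Section~\ref{sssec construction trig Lax}. To control this, one should work with the $\BC[\vv^{\pm 1}]$-integral form of $\wt{\CA}^\vv$ and consider the $\vv\to 0$ crystal-type limit, or alternatively the $\vv\to 1$ classical limit: in either degeneration the non-commutative $\Psi_D$ should degenerate to a homomorphism into a commutative function algebra where separation of PBW monomials can be checked directly (for example by intersecting with the commutative subalgebra of Drinfeld polynomials, on which $\Psi_D$ acts by known eigenvalues parametrized by $D$).

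A complementary and perhaps more tractable route is via explicit contraction from the trigonometric to the rational setting. Specifically, one rescales $z\mapsto \vv^{2z}$ and $\vv\to 1$ in the definitions, degenerating $R_\trig(z,w)$ to $R_\rat(z-w)$, the algebra $U_{-\mu^+,-\mu^-}(L\gl_n)$ to $Y_{-\mu^+-\mu^-}(\gl_n)$ (with the two shifts combining into a single rational shift), and the homomorphisms $\Psi_D$ of~(\ref{trig homom psi}) to the rational homomorphisms $\Psi_{D'}$ of Theorem~\ref{homom for Yangian gl} with a suitably collapsed divisor $D'$. Provided one can show that this degeneration is flat at the level of the PBW bases (so that kernels in the limit are contained in the limits of kernels), Theorem~\ref{alex's theorem} would directly imply Conjecture~\ref{alex's trig conjecture}. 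The difficulty here is establishing the required flatness, which essentially amounts to a non-degeneracy of the Rees algebra construction for the $\vv\to 1$ filtration on $U_{-\mu^+,-\mu^-}(L\gl_n)$—a statement closely related to the PBW property itself.
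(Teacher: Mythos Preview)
The paper does \emph{not} prove this statement: it is explicitly labeled a \textbf{Conjecture}, and Theorem~\ref{Main Conjecture 2} is proved only \emph{modulo} it. There is therefore no ``paper's own proof'' to compare against; what you have written is a sketch toward an open problem.

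As a sketch, your outline identifies the natural strategies but does not close the gaps you yourself flag. The reduction to $\ssl_n$ via~(\ref{quantum gl as sl plus center}) and~(\ref{image of quantum qdet}) is fine. After that, both of your routes stall at essentially the same missing ingredient: a PBW theorem for $U^{\ssc}_{\nu^+,\nu^-}(L\ssl_n)$ (or its adjoint version) together with a compatible filtration on $\wt{\CA}^\vv_\fra$ that separates the images of PBW monomials. You assume a Beck-type basis transported via shift homomorphisms, but such a basis for the \emph{shifted} quantum affine algebras is not established in the paper or its references, and the shift homomorphisms are not a priori injective in a way that would let you pull one back from the unshifted algebra. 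Without this, the ``leading term'' argument has no foundation. Your degeneration route has the same problem in disguise: the flatness of the $\vv\to 1$ (or $\vv\to 0$) limit is precisely the PBW property for an integral form, and you have not supplied it. Moreover, the naive degeneration you describe collapses the two shifts $\mu^+,\mu^-$ into a single Yangian shift $\mu^++\mu^-$, so even granting flatness you would need to argue that nothing is lost in this collapse---which is not obvious, since the trigonometric algebra carries strictly more data (two independent Cartan currents $\varphi^\pm_i(z)$ versus one $D_i(z)$).

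In short: your plan is a reasonable research program, not a proof. The decisive missing step is a PBW/flatness statement for the shifted quantum affine algebras, which is exactly why the paper leaves this as a conjecture.
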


This completes our proof of Theorem~\ref{Main Conjecture 2} modulo
Conjecture~\ref{alex's trig conjecture}, left to a future work.


\subsubsection{Linear trigonometric Lax matrices}\label{sssec explicit trig Lax}
\

In this section, we will obtain simplified explicit formulas for all $\sT_D(z)$
that are linear in $z$.

Following Section~\ref{sssec explicit Lax}, let us fix a triple of \emph{pseudo Young diagrams}
$\blambda,\bmu^+,\bmu^-$. They give rise to $\lambda,\mu^+,\mu^-\in \Lambda^+$
via~(\ref{cwt via pseudoYoung}). Then, $\lambda + \mu^+ + \mu^-$ is of the form
$\lambda + \mu^+ + \mu^-=\sum_{i=1}^{n-1} a_i\alpha_i$ for some $a_i\in \BC$
iff $|\blambda|+|\bmu^+|+|\bmu^-|=0$. Moreover, due to Lemma~\ref{explicit a's}, we have:

\begin{Lem}\label{explicit a's updated}
(a) $a_i=-\sum_{j=n-i+1}^n(\blambda_j+\bmu^+_j+\bmu^-_j)$ for any $1\leq i\leq n-1$.

\noindent
(b) $a_i\in \BN$ for any $1\leq i\leq n-1$.

\noindent
(c) $a_j-a_{j-1}=-\blambda_{n-j+1}-\bmu^+_{n-j+1}-\bmu^-_{n-j+1}$ for any $1\leq j\leq n$,
where we set $a_0:=0,a_n:=0$.
\end{Lem}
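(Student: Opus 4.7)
The plan is to reduce Lemma~\ref{explicit a's updated} directly to the arguments already used in the proof of Lemma~\ref{explicit a's}, since the structure of the claims is identical once one treats the triple $(\blambda,\bmu^+,\bmu^-)$ in place of the pair $(\blambda,\bmu)$. The only arithmetic input is the relation~(\ref{cwt via pseudoYoung}) between a pseudo Young diagram and its associated dominant coweight, combined with the identity $\alpha_i=\epsilon_i-\epsilon_{i+1}$.

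First, I would prove part (c). Using~(\ref{cwt via pseudoYoung}) applied to each of $\blambda,\bmu^+,\bmu^-$, one has
\begin{equation*}
  \lambda+\mu^++\mu^- = -\sum_{1\leq j\leq n}(\blambda_{n-j+1}+\bmu^+_{n-j+1}+\bmu^-_{n-j+1})\,\epsilon_j,
\end{equation*}
while on the other hand (with the convention $a_0=a_n=0$)
\begin{equation*}
  \sum_{1\leq i\leq n-1} a_i\alpha_i = \sum_{1\leq j\leq n}(a_j-a_{j-1})\,\epsilon_j.
\end{equation*}
Comparing $\epsilon_j$-coefficients gives (c).

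Part (a) follows by telescoping: summing the equalities of (c) for $j=1,\ldots,i$ and using $a_0=0$ yields the formula for $a_i$. Part (b) is then the content of the argument from Lemma~\ref{explicit a's}(b), which only used that a pseudo Young diagram has weakly decreasing entries: since each of $\blambda,\bmu^+,\bmu^-$ is weakly decreasing, so is the componentwise sum $\blambda_j+\bmu^+_j+\bmu^-_j$, hence $-\blambda_n-\bmu^+_n-\bmu^-_n\geq \cdots\geq -\blambda_1-\bmu^+_1-\bmu^-_1$, and therefore
\begin{equation*}
  \sum_{j=n-i+1}^{n}(-\blambda_j-\bmu^+_j-\bmu^-_j)\;\geq\;\tfrac{i}{n}\sum_{j=1}^{n}(-\blambda_j-\bmu^+_j-\bmu^-_j)=0,
\end{equation*}
where the final equality uses the hypothesis $|\blambda|+|\bmu^+|+|\bmu^-|=0$. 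Combined with (a), this gives $a_i\in\BN$.

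There is no main obstacle here: the result is a purely combinatorial lemma about pseudo Young diagrams and the change of basis between $\{\epsilon_j\}$ and $\{\alpha_i\}$, and it is formally identical to Lemma~\ref{explicit a's} with the substitution $\bmu\rightsquigarrow \bmu^++\bmu^-$. In fact one could simply observe that $\bmu^++\bmu^-$ is itself a weakly decreasing sequence satisfying $|\blambda|+|\bmu^++\bmu^-|=0$ and directly invoke Lemma~\ref{explicit a's}, which gives a one-line proof.
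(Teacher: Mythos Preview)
Your proposal is correct and matches the paper's approach exactly: the paper does not give a separate proof of Lemma~\ref{explicit a's updated} but simply states it as following from Lemma~\ref{explicit a's}, which is precisely your one-line reduction via $\bmu\rightsquigarrow\bmu^++\bmu^-$. The detailed argument you spell out for (c), (a), (b) is a verbatim transcription of the proof of Lemma~\ref{explicit a's} with this substitution.
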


Thus, $\Lambda^+$-valued divisors on $\BP^1$ satisfying~(\ref{trig assumption}) and
without summands $\{-\varpi_0[\sx]\}_{\sx\in \BC^\times}$ may be encoded by triples
$(\blambda,\bmu^+,\bmu^-)$ of a Young diagram $\blambda$ of length $\leq n$ and a pair
of pseudo Young diagrams $\bmu^+,\bmu^-$ with $n$ rows and of total size $|\blambda|+|\bmu^+|+|\bmu^-|=0$,
together with a collection of points $\unl{\sx}=\{\sx_i\}_{i=1}^{\blambda_1}$ of $\BC^\times$
(so that $\sx_i$ is assigned to the $i$-th column of $\blambda$).
Explicitly, given $\blambda,\bmu^+,\bmu^-,\unl{\sx}$ as above, we set
  $D=D(\blambda,\unl{\sx},\bmu^+,\bmu^-):=
   \sum_{i=1}^{\blambda_1} \varpi_{n-\blambda^t_i}[\sx_i]+\mu^+[\infty]+\mu^-[0]$.

Due to~(\ref{relation 1q}), we can actually assume that $D$ does not contain
summands $\{\pm \varpi_0[\sx]\}_{\sx\in \BC}$. Thus, $\blambda_n=0=\bmu^-_n$,
so that $\sZ_0(z)=1, \epsilon^\vee_1(\lambda+\mu^-)=-\blambda_n-\bmu^-_n=0$,
and $T_D(z)=\sT_D(z)$ is polynomial in $z$ by Theorem~\ref{Main Theorem 1}.
Moreover, $T_D(z)_{11}=g^D_{1}(z)$ is a polynomial in $z$ of degree
$\epsilon^\vee_1(\mu^+)=-\bmu^+_n\geq 0$. Thus, we have $-\bmu^+_n\leq 1$ for
linear Lax matrices $T_D(z)$. If $\bmu^+_n=0$, then $\blambda_i=\bmu^+_i=\bmu^-_i=0$
for all $i$, and so $T_D(z)=\sT_D(z)=I_n$. Therefore, it remains
to treat the case when $\blambda_n=0,\bmu^-_n=0,\bmu^+_n=-1$,
which constitutes the key result of this section.

\begin{Thm}\label{Main Theorem 2q}
Following the above notations, assume further that $\blambda_n=0,\bmu^-_n=0,\bmu^+_n=-1$.

\noindent
(a) The trigonometric Lax matrix $\sT_D(z)$ is explicitly determined as follows:

(I) The matrix coefficients on the main diagonal are:
\begin{equation}\label{diagonal qLax entries}
\begin{split}
  & \sT_D(z)_{ii}=
    z\cdot \delta_{\bmu^+_{n-i+1},-1}\cdot \prod_{t=1}^{a_i} \sw_{i,t}^{-1/2}\prod_{t=1}^{a_{i-1}} \sw_{i-1,t}^{1/2} \, + \\
  & \delta_{\bmu^-_{n-i+1},0}\cdot \prod_{t=1}^{a_i} \sw_{i,t}^{1/2}\prod_{t=1}^{a_{i-1}} \sw_{i-1,t}^{-1/2}
    \frac{(-\vv^i)^{a_i}}{(-\vv^{i+1})^{a_{i-1}}}\prod_{1\leq s\leq \blambda_1}^{i_s\leq i-1} (-\sx_s),
\end{split}
\end{equation}
where $i_s:=n-\blambda^t_{s}$.

(II) The matrix coefficients above the main diagonal are:
\begin{multline}\label{upper triangular qLax entries}
  \sT_D(z)_{ij}=z\cdot \delta_{\bmu^+_{n-i+1},-1}(-1)^{i-j+1}\cdot
     \prod_{t=1}^{a_{j-1}}\sw_{j-1,t}\prod_{k=i}^{j-2}\prod_{t=1}^{a_k}\sw_{k,t}^{1/2}\prod_{t=1}^{a_i}\sw_{i,t}^{-1/2} \, \times\\
  \sum_{\substack{1\leq r_i\leq a_i\\\cdots\\ 1\leq r_{j-1}\leq a_{j-1}}}
    \frac{(\vv^i \sw_{i,r_i})^{-b^+_i}\cdots (\vv^{j-1} \sw_{j-1,r_{j-1}})^{-b^+_{j-1}}\sW_{i-1}(\vv^{-1}\sw_{i,r_i})\prod_{k=i}^{j-2}\sW_{k,r_k}(\vv^{-1}\sw_{k+1,r_{k+1}})}
         {\prod_{k=i}^{j-1} \sW_{k,r_k}(\sw_{k,r_k})}\times\\
  \prod_{k=i}^{j-1} \sZ_k(\sw_{k,r_k})\cdot \frac{\sw_{i,r_i}}{\sw_{j-1,r_{j-1}}}
    \cdot \prod_{k=i}^{j-1} D_{k,r_k}^{-1},
\end{multline}
for $i<j$, where the constants $b^+_r$ are defined via $b^+_r:=\bmu^+_{n-r}-\bmu^+_{n-r+1}$.

(III) The matrix coefficients below the main diagonal are:
\begin{multline}\label{lower triangular qLax entries}
  \sT_D(z)_{ji}=
  \delta_{\bmu^-_{n-i+1},0} (-1)^{i-j+1}\vv^{i-j+1}\cdot
    \prod_{k=i-1}^{j}\prod_{t=1}^{a_k}\sw_{k,t}^{-1/2+\delta_{k,i}}
    \frac{(-\vv^i)^{a_i}}{(-\vv^{i+1})^{a_{i-1}}}\prod_{1\leq s\leq \blambda_1}^{i_s\leq i-1} (-\sx_s)\times\\
  \sum_{\substack{1\leq r_i\leq a_i\\\cdots\\ 1\leq r_{j-1}\leq a_{j-1}}}
    \frac{\prod_{k=i+1}^{j-1}\sW_{k,r_k}(\vv\sw_{k-1,r_{k-1}})\sW_{j}(\vv\sw_{j-1,r_{j-1}})}
         {\prod_{k=i}^{j-1} \sW_{k,r_k}(\sw_{k,r_k})}\cdot
    \frac{\sw_{j-1,r_{j-1}}}{\sw_{i,r_i}}\cdot \prod_{k=i}^{j-1} D_{k,r_k}
\end{multline}
for $i<j$.

\noindent
(b) $\sT_D(z)=T_D(z)$ is polynomial of degree $1$ in $z$.
\end{Thm}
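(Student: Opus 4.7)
The plan is to mirror the proof of Theorem~\ref{Main Theorem 2}, using the trigonometric ingredients in the same pattern. Under the hypotheses $\blambda_n = \bmu^-_n = 0$ the normalization factor in (\ref{renormalized trigonometric Lax}) becomes trivial: $\epsilon^\vee_1(\lambda+\mu^-) = -\blambda_n - \bmu^-_n = 0$ and $\sZ_0(z) = 1$, since no $\pm\varpi_0[\sx]$ summand appears in $D$. Therefore $\sT_D(z) = T_D(z)$, which is polynomial in $z$ by Theorem~\ref{Main Theorem 1q}. It thus remains to compute each entry $T_D(z)_{\alpha,\beta}$ explicitly and show it has degree $\leq 1$ in $z$; part (b) will then be immediate.

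The next step is a direct inspection of the three Gauss factors appearing in (\ref{explicit long formula trigonometric 1 uniform}). The $+$ expansion $g^{+,D}_i(z)$ obtained from (\ref{diagonal quantum entries}) has highest power $z^{-\bmu^+_{n-i+1}}$, which is at most $z^1$ and equals $z^1$ exactly when $\bmu^+_{n-i+1} = -1$ (since $\bmu^+_n = -1$ and $\bmu^+$ is weakly decreasing, one has $\bmu^+_j \geq -1$ for all $j$). The factor $(1-\vv^i\sw_{i,r_i}/z)^{-1}$ in $e^{+,D}_{ij}(z)$ starts at $z^0$, while the factor $(1-z/\vv^{i+2}\sw_{i,r_i})^{-1}$ in $f^{+,D}_{ji}(z)$ starts at $z^{-1}$. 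Substituting into (\ref{explicit long formula trigonometric 1 uniform}), only the summand with $k = \min(\alpha,\beta)$ can reach the top degree; every other summand loses at least one power of $z$ through the $f$-factor. Hence each entry is of the form $T_D(z)_{\alpha,\beta} = z\,T^{(1)}_{\alpha,\beta} + T^{(0)}_{\alpha,\beta}$.

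To extract the coefficient $T^{(1)}_{\alpha,\beta}$ of $z^1$, I would analyze $T^{+}_D(z)$ asymptotically as $z\to\infty$: only the summand $k=\min(\alpha,\beta)$ contributes, and only when $\bmu^+_{n-k+1} = -1$; the extraction then amounts to replacing each $(1-\vv^i\sw_{i,r_i}/z)^{-1}$ by $1$ and retaining the leading $z$-monomial of $g^{+,D}_k(z)$, which unpacks to the right-hand sides of (\ref{diagonal qLax entries}) and (\ref{upper triangular qLax entries}). Symmetrically, I would extract the constant $T^{(0)}_{\alpha,\beta}$ by evaluating $T^{-}_D(z)$ at $z=0$: the factor $(1-\vv^i\sw_{i,r_i}/z)^{-1}$ in $e^{-,D}_{ij}(z)$ starts at $z^1$, so again only the $k=\min(\alpha,\beta)$ summand survives at $z=0$; moreover $g^{-,D}_k(0)$ is nonzero precisely when $\bmu^-_{n-k+1} = 0$, and its value reproduces the constant contributions in (\ref{diagonal qLax entries}) and (\ref{lower triangular qLax entries}). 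In particular, the strictly upper (resp.\ strictly lower) triangular entry vanishes unless $\bmu^+_{n-i+1} = -1$ (resp.\ $\bmu^-_{n-i+1} = 0$), matching the indicator functions in the statement.

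The hard part will be the careful bookkeeping of the $\vv$-powers, the $\prod_t \sw_{\ast,t}^{\pm 1/2}$ prefactors, and the sign factors $(-1)^{i-j+1}$ arising from the nested $\vv$-commutators that define $e^{D}_{ij}$ and $f^{D}_{ji}$ in (\ref{upper triangular quantum all entries})--(\ref{lower triangular quantum all entries}); I would use the identity $a_j - a_{j-1} = -\blambda_{n-j+1} - \bmu^+_{n-j+1} - \bmu^-_{n-j+1}$ from Lemma~\ref{explicit a's updated}(c) repeatedly to collapse the exponent counts into the compact forms displayed in (\ref{diagonal qLax entries})--(\ref{lower triangular qLax entries}). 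Conceptually nothing new beyond the rational proof of Theorem~\ref{Main Theorem 2} enters, but the trigonometric setting introduces $\vv$-dependent shifts and half-integer powers of $\sw_{\ast,\ast}$ that must be tracked with extra care.
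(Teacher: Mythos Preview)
Your proposal is correct and follows essentially the same approach as the paper: establish $\sT_D(z)=T_D(z)$ via triviality of the normalization, invoke Theorem~\ref{Main Theorem 1q} for polynomiality, bound the degree by $1$ via the leading orders of the Gauss factors, and then read off the $z^1$ and $z^0$ coefficients by taking $z\to\infty$ and $z\to 0$ respectively. The paper phrases the extraction directly as limits of the rational functions $e^D_{ij}(z),f^D_{ji}(z),g^D_i(z)$ rather than via the $\pm$-expansions, but this is the same computation.
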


\begin{proof}
(a) Combining the explicit
formulas~(\ref{explicit long formula trigonometric 1 uniform},~\ref{renormalized trigonometric Lax})
for the matrix coefficients $\sT_D(z)_{\alpha,\beta}$ with their polynomiality of
Theorem~\ref{Main Theorem 1q}, we may immediately determine all of them explicitly.
As $e^D_{\ast,\ast}(z),f^D_{\ast,\ast}(z),\frac{g^D_i(z)}{z}$ are regular at $z=\infty$
(for the latter, note that $\epsilon^\vee_i(\mu^+)-1=-\bmu^+_{n-i+1}-1\leq 0$), each matrix coefficient
$\sT_D(z)_{\alpha,\beta}$ is a linear polynomial in $z$, due to Theorem~\ref{Main Theorem 1q}.

The computation of the coefficients of $z^1$ is based on the following observations:

\noindent
$\bullet$  The $z\to \infty$ limit of $e^D_{ij}(z)$ equals the right-hand side
of~(\ref{upper triangular quantum all entries}) with $\frac{1}{1-\vv^i\sw_{i,r_i}/z}$ disregarded.

\noindent
$\bullet$ The $z\to \infty$ limit of $f^D_{ji}(z)$ equals $0$.

\noindent
$\bullet$ The $z\to \infty$ limit of $\frac{g^D_i(z)}{z}$ equals
  $\delta_{\bmu^+_{n-i+1},-1}\cdot
   \prod_{t=1}^{a_i} \sw_{i,t}^{-1/2}\prod_{t=1}^{a_{i-1}} \sw_{i-1,t}^{1/2}$.

The computation of the coefficients of $z^0$ is based on the following observations:

\noindent
$\bullet$  The $z\to 0$ limit of $e^D_{ij}(z)$ equals $0$.

\noindent
$\bullet$ The $z\to 0$ limit of $f^D_{ji}(z)$ equals the right-hand side
of~(\ref{lower triangular quantum all entries}) with $\frac{1}{1-z/\vv^{i+2}\sw_{i,r_i}}$ disregarded.

\noindent
$\bullet$ The $z\to 0$ limit of $g^D_i(z)$ equals
  $\delta_{\bmu^-_{n-i+1},0}\cdot
   \prod_{t=1}^{a_i} \sw_{i,t}^{1/2}\prod_{t=1}^{a_{i-1}} \sw_{i-1,t}^{-1/2}
   \frac{(-\vv^i)^{a_i}}{(-\vv^{i+1})^{a_{i-1}}}\prod_{1\leq s\leq \blambda_1}^{i_s\leq i-1} (-\sx_s)$.

Part (b) follows immediately from part (a).
\end{proof}

\begin{Rem}\label{relation to qGT formulas}
In the particular case when $\bmu^-=(0^n),\bmu^+=((-1)^n)$, and $\blambda$ is a Young diagram
of size $n$ and length $<n$, the Lax matrices $T_D(z)$ of Theorem~\ref{Main Theorem 2q}
are closely related to the $\vv$-deformed parabolic Gelfand-Tsetlin formulas
(cf.~\cite[Proposition 12.8]{ft1}), thus providing a $\vv$-deformed version
of Section~\ref{ssec GT patterns}.
\end{Rem}

We note that the trigonometric Lax matrices of Theorem~\ref{Main Theorem 2q}
have the form $z\cdot T^{+} - T^{-}$. Here, $T^+$ is an upper-triangular and
$T^-$ is a lower-triangular $z$-independent $n\times n$ matrices, with some of their
diagonal entries being zero as prescribed by the pseudo Young diagrams~$\bmu^\pm$.

We conclude this section by deriving the conditions on a pair of $n\times n$
matrices $T^+,T^-$ (with values in an associative algebra $\mathcal{D}$) which are
equivalent to $T(z):=z\cdot T^{+} - T^{-}$ satisfying the trigonometric RTT relation
\begin{equation}\label{trigRTT single}
  R_{\trig}(z,w)T_1(z)T_2(w)=T_2(w)T_1(z)R_\trig(z,w).
\end{equation}
To this end, let us recall the (finite) \emph{trigonometric} $R$-matrix $R=R^\vv$ given by
\begin{equation}\label{finite R}
  R=\vv^{-1}\sum_{1\leq i\leq n} E_{ii}\otimes E_{ii}+\sum_{i\ne j} E_{ii}\otimes E_{jj}+
  (\vv^{-1}-\vv)\sum_{i>j}E_{ij}\otimes E_{ji}.
\end{equation}
It satisfies the Yang-Baxter equation:
\begin{equation}\label{qfYB}
  R_{12}R_{13}R_{23}=R_{23}R_{13}R_{12}.
\end{equation}
The final result of this section is:

\begin{Prop}\label{shifted vs contracted}
Matrix $T(z)=zT^{+} - T^{-}$ satisfies the trigonometric RTT relation~(\ref{trigRTT single})
if and only if $(T^+,T^-)$ satisfy the following three finite trigonometric RTT relations:
\begin{equation}\label{finite RTT}
  RT^+_1T^+_2=T^+_2T^+_1R, \quad
  RT^-_1T^-_2=T^-_2T^-_1R, \quad
  RT^-_1T^+_2=T^+_2T^-_1R.
\end{equation}
\end{Prop}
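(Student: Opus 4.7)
The plan is to decompose the spectral $R$-matrix as a bilinear form in $(z,w)$ with constant matrix coefficients, substitute $T(z) = zT^+ - T^-$ into~(\ref{trigRTT single}), and match coefficients of monomials $z^aw^b$. One writes $R_\trig(z,w) = zR^+ - wR^-$, where $R^- := R$ is the finite trigonometric $R$-matrix of~(\ref{finite R}) and
\begin{equation*}
  R^+ := \vv\sum_i E_{ii}\otimes E_{ii} + \sum_{i\ne j} E_{ii}\otimes E_{jj} + (\vv - \vv^{-1})\sum_{i<j} E_{ij}\otimes E_{ji}.
\end{equation*}
Both sides of~(\ref{trigRTT single}) are then polynomials in $(z,w)$ of total degree at most three, and exactly seven monomials appear: $z^2w,\ zw^2,\ z^2,\ w^2,\ zw,\ z,\ w$. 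Matching their coefficients yields seven a priori relations on $(T^+,T^-)$; the task is to show they are jointly equivalent to the three relations in~(\ref{finite RTT}).

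The argument rests on two elementary identities for $R^\pm$. Setting $D := \sum_i E_{ii}\otimes E_{ii}$, $L^+ := \sum_{i<j} E_{ij}\otimes E_{ji}$, and $L^- := \sum_{i>j} E_{ij}\otimes E_{ji}$, so that $R^+ = I + (\vv-1)D + (\vv-\vv^{-1})L^+$ and $R^- = I + (\vv^{-1}-1)D + (\vv^{-1}-\vv)L^-$, the obvious block relations $D^2 = D$, $(L^\pm)^2 = 0$, $DL^\pm = L^\pm D = 0$, $PDP = D$, $PL^+P = L^-$ give by direct expansion
\begin{equation*}
  R^+ - R^- = (\vv - \vv^{-1})P \qquad \text{and} \qquad R^- \cdot R^+_{21} = I,
\end{equation*}
the second being the unitarity identity $R^+ = R_{21}^{-1}$. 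Combined with the transposition rule $PT^\epsilon_1 T^{\epsilon'}_2 = T^{\epsilon'}_2 T^\epsilon_1 P$ (a direct consequence of $P(A\otimes B) = (B\otimes A)P$), the first identity collapses the $z^2w$ and $zw^2$ coefficients into the single relation $RT^+_1 T^+_2 = T^+_2 T^+_1 R$, and symmetrically the $z$ and $w$ coefficients into $RT^-_1 T^-_2 = T^-_2 T^-_1 R$. The $w^2$-coefficient is already exactly the third desired relation $RT^-_1 T^+_2 = T^+_2 T^-_1 R$.

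The main step is to reduce the $z^2$-coefficient $R^+ T^+_1 T^-_2 = T^-_2 T^+_1 R^+$ to the $w^2$-coefficient. Conjugating both sides by $P$ and using $PR^+P = R^+_{21} = R^{-1}$ together with the transposition rule, this becomes $R^{-1} T^+_2 T^-_1 = T^-_1 T^+_2 R^{-1}$; left- and right-multiplying by $R$ returns precisely $RT^-_1 T^+_2 = T^+_2 T^-_1 R$. Finally, a short computation using $R^+ - R^- = (\vv - \vv^{-1})P$ shows that, once the $z^2$ and $w^2$ relations hold, the remaining $zw$-coefficient holds automatically: both the left- and right-hand discrepancies evaluate to $(\vv - \vv^{-1})(T^-_2 T^+_1 - T^+_2 T^-_1)P$ and cancel.

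In sum, the seven coefficient relations extracted from~(\ref{trigRTT single}) are jointly equivalent to the three relations in~(\ref{finite RTT}); the converse direction is immediate since all seven then reassemble into the full trigonometric RTT relation for $T(z) = zT^+ - T^-$. The only non-routine ingredient is the unitarity identity $R^+ = R_{21}^{-1}$, which is the main obstacle, though an elementary direct computation with the $D, L^\pm$ blocks.
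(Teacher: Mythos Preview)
Your proof is correct and takes essentially the same approach as the paper: the paper writes $R_\trig(z,w)=(z-w)R+(\vv-\vv^{-1})zP$, i.e.\ $R_\trig(z,w)=z\wt{R}-wR$ with $\wt{R}:=R+(\vv-\vv^{-1})P$ (your $R^+$), and pivots on exactly the same two identities you isolate, $R^+-R^-=(\vv-\vv^{-1})P$ and $PR^{-1}P=\wt{R}$ (your unitarity $R^-\cdot R^+_{21}=I$). The only difference is bookkeeping: you systematically list all seven monomial coefficients, whereas the paper singles out four of them for the ``only if'' direction and then verifies the full identity directly for the ``if'' direction.
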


\begin{proof}
Recall the following relation between the trigonometric $R$-matrices~\eqref{trigR} and~\eqref{finite R}:
\begin{equation*}
  R_\trig(z,w)=(z-w)R+(\vv-\vv^{-1})zP,
\end{equation*}
where $P=\sum_{i,j=1}^n E_{ij}\otimes E_{ji}$ as before.
Thus, the relation~(\ref{trigRTT single}) on $T(z)$ may be written as
\begin{equation}\label{affine to finite}
\begin{split}
  & \left((z-w)R+(\vv-\vv^{-1})zP\right)(zT^+_1-T^-_1)(wT^+_2-T^-_2)=\\
  & (wT^+_2-T^-_2)(zT^+_1-T^-_1)\left((z-w)R+(\vv-\vv^{-1})zP\right).
\end{split}
\end{equation}

To prove the ``only if'' part, we compare the coefficients of
$z^1w^2, z^0w^1,z^0w^2$, and $z^2w^0$ in~(\ref{affine to finite}) to recover
the equalities
  $RT^+_1T^+_2=T^+_2T^+_1R, RT^-_1T^-_2=T^-_2T^-_1R, RT^-_1T^+_2=T^+_2T^-_1R$,
and $\wt{R}T^+_1T^-_2=T^-_2T^+_1\wt{R}$, respectively, where
$\wt{R}:=R+(\vv-\vv^{-1})P$.

To prove the ``if'' part, we note that multiplying the last equality
of~(\ref{finite RTT}) by $R^{-1}$ both on the left and on the right,
and conjugating  further by the permutation operator $P$, we get
  $(PR^{-1}P^{-1})T^+_1T^-_2=T^-_2T^+_1(PR^{-1}P^{-1})$,
which together with $PR^{-1}P^{-1}=\wt{R}$ finally implies
\begin{equation}\label{4th finite RTT}
  \wt{R}T^+_1T^-_2=T^-_2T^+_1\wt{R}.
\end{equation}
Combining this with~(\ref{finite RTT}) and $\wt{R}=R+(\vv-\vv^{-1})P$,
the equality~(\ref{affine to finite}) is equivalent to
\begin{multline*}
  (\vv-\vv^{-1})z^2w(PT^+_1T^+_2-T^+_2T^+_1P)+(\vv-\vv^{-1})z(PT^-_1T^-_2-T^-_2T^-_1P)-\\
  (\vv-\vv^{-1})zw(PT^-_1T^+_2-T^+_2T^-_1P)+zw(RT^+_1T^-_2-T^-_2T^+_1R)=0.
\end{multline*}
In the above left-hand side, the first two summands are clearly zero as
  $PT^+_1T^+_2=T^+_2T^+_1P$ and $PT^-_1T^-_2=T^-_2T^-_1P$,
while the sum of the latter two equals
\begin{equation*}
  zw\left((R+(\vv-\vv^{-1})P)T^+_1T^-_2-T^-_2T^+_1(R+(\vv-\vv^{-1})P)\right)=
  zw\left(\wt{R}T^+_1T^-_2-T^-_2T^+_1\wt{R}\right)=0,
\end{equation*}
due to~(\ref{4th finite RTT}).

This completes our proof of Proposition~\ref{shifted vs contracted}.
\end{proof}

\begin{Rem}
The above proof is identical to the verification of the fact that the assignment
$T^+(z)\mapsto T^+ - z^{-1}T^-, T^-(z)\mapsto T^- - zT^+$ gives rise to the
(\emph{evaluation}) homomorphism $U^\rtt_\vv(L\gl_n)\to U^\rtt_\vv(\gl_n)$.
In particular, if it was not for~(\ref{Gauss product trigonometric},~\ref{quantum t-modes shifted}),
we would get homomorphisms from shifted quantum affine algebras to the
corresponding \emph{contracted algebras} of~\cite{z}.
\end{Rem}


\subsubsection{From trigonometric Lax matrices to rational Lax matrices}
\label{ssec rational via trigonometric}
\

In this section, we explain how the trigonometric Lax matrices $T^\trig_\ast(z)$
of Section~\ref{sssec construction trig Lax} may be degenerated into the rational
Lax matrices $T^\rat_\ast(z)$ of Section~\ref{sssec construction Lax}
(here, the superscripts $\trig,\rat$ are used to distinguish between the trigonometric and the rational setups).
Given a $\Lambda^+$-valued divisor
  $D=\sum_{s=1}^{N} \gamma_s\varpi_{i_s} [\sx_s] + \mu^+ [\infty] + \mu^- [0]$
on $\BP^1$ (with $\sx_s\in \BC^\times$), we consider another $\Lambda^+$-valued divisor
  $\hat{D}=\sum_{s=1}^{N} \gamma_s\varpi_{i_s} [\sx_s] + (\mu^+ + \mu^-) [\infty]$
on $\BP^1$.

Let us make the following change of variables:
\begin{equation}\label{trig to rational 1}
  \vv\rightsquigarrow e^{\epsilon/2}, \quad
  z\rightsquigarrow e^{\epsilon x}, \quad
  \sx_s\rightsquigarrow e^{\epsilon x_s};
\end{equation}
\begin{equation}\label{trig to rational 2}
  \sw_{i,r}\rightsquigarrow e^{\epsilon (p_{i,r}-\frac{i}{2})}=e^{\epsilon w_{i,r}},
  \quad \mathrm{where}\ w_{i,r}:=p_{i,r}-i/2
  \ \mathrm{as\ in\ Remark~\ref{relating to BFNb homom}};
\end{equation}
\begin{equation}\label{trig to rational 3}
  D_{i,r}\rightsquigarrow -e^{-q_{i,r}}\epsilon^{s_i},
  \quad \mathrm{where}\ s_i:=a_{i}-a_{i+1}=-\epsilon^\vee_{i+1}(\lambda+\mu^++\mu^-).
\end{equation}
We also consider the diagonal $z$-independent matrix
\begin{equation}\label{trig to rational 4}
  \epsilon^{-\mu^+-\mu^-}:=\mathrm{diag}(\epsilon^{-d_1}, \epsilon^{-d_2},\cdots, \epsilon^{-d_n})
  \ \mathrm{with}\ d_i:=\epsilon^\vee_i(\mu^+ + \mu^-)=d^+_i+d^-_i.
\end{equation}
The main result of this section is:

\begin{Prop}\label{rat from trig}
  $\underset{\epsilon\to 0}\lim\
   \left(T^\trig_D(z)\cdot \epsilon^{-\mu^+-\mu^-}\right) = T^\rat_{\hat{D}}(x)$.
\end{Prop}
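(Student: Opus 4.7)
The plan is to verify the limit one Gauss factor at a time. Since $G^\trig_D(z)$ is diagonal and commutes with $\epsilon^{-\mu^+-\mu^-}$, one rewrites
\[
  T^\trig_D(z)\cdot \epsilon^{-\mu^+-\mu^-} =
  F^\trig_D(z)\cdot \bigl(G^\trig_D(z)\epsilon^{-\mu^+-\mu^-}\bigr)\cdot
  \bigl(\epsilon^{\mu^++\mu^-}E^\trig_D(z)\epsilon^{-\mu^+-\mu^-}\bigr).
\]
Since conjugating an upper triangular unipotent matrix by an invertible diagonal matrix preserves the class of upper triangular unipotent matrices, the rightmost factor is unipotent upper triangular and the middle factor is diagonal, so this is a valid Gauss decomposition. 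By uniqueness of the Gauss decomposition of $T^\rat_{\hat D}(x)=F^\rat_{\hat D}(x)G^\rat_{\hat D}(x)E^\rat_{\hat D}(x)$, it suffices to prove entry-wise convergence of each of the three factors to the corresponding factor of $T^\rat_{\hat D}(x)$ as given by~(\ref{lower triangular all entries}, \ref{diagonal entries}, \ref{upper triangular all entries}).

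First I would record the basic asymptotics under~(\ref{trig to rational 1}--\ref{trig to rational 2}): every elementary difference appearing in~(\ref{diagonal quantum entries}--\ref{lower triangular quantum all entries}), such as $1-\vv^m\sw_{j,s}/(\vv^l\sw_{i,r})$, $1-\sx_s/\sw_{i,r}$, or $1-\vv^k\sw_{i,r}/z$, equals $-\epsilon$ times a linear expression in $p_{\ast,\ast}$, $x_s$, or $x$, modulo $O(\epsilon^2)$; and the half-integer shifts coming from the half-integer powers of $\vv=e^{\epsilon/2}$ combine with the normalization $w_{i,r}=p_{i,r}-i/2$ to produce exactly the integer shifts of $\pm 1$ appearing in the rational formulas~(\ref{diagonal entries}--\ref{lower triangular all entries}). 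Consequently $\sW_i(\cdots),\sW_{j,r}(\cdots),\sZ_k(\cdots)$ each equal $\epsilon^N$ (with $N$ the number of linear factors) times the corresponding rational polynomial $P_i, P_{j,r}, Z_k$ of~(\ref{ZW-series}) evaluated at the appropriately shifted argument. Combined with the substitution~(\ref{trig to rational 3}) for $D_{k,r_k}$, and noting that the prefactors $\prod_t\sw_{i,t}^{\pm 1/2}, \prod_t\sw_{i,t}$ in~(\ref{diagonal quantum entries}--\ref{lower triangular quantum all entries}) all tend to $1$, one reads off the leading $\epsilon$-behaviour of each matrix entry.

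Next I would carry out the power-counting for each factor. For $f^{\trig,D}_{ji}(z)$, the contribution $\epsilon^{a_i-a_j}$ from $\prod_{k=i}^{j-1}D_{k,r_k}$ cancels exactly against the collective $\epsilon$ factors from $\sW_j(\vv\sw_{j-1,r_{j-1}})$, $\sW_{k,r_k}(\vv\sw_{k-1,r_{k-1}})$, $\sW_{k,r_k}(\sw_{k,r_k})^{-1}$, and $(1-z/\vv^{i+2}\sw_{i,r_i})^{-1}$, so that the net $\epsilon$-power is zero and $f^{\trig,D}_{ji}(z)\to f^{\rat,\hat D}_{ji}(x)$. For $e^{\trig,D}_{ij}(z)$, an analogous count using $\sum_{k=i}^{j-1}\alphavee_k(\lambda)=\epsilon^\vee_i(\lambda)-\epsilon^\vee_j(\lambda)$ together with the identity $\epsilon^\vee_k(\lambda)+d_k=a_k-a_{k-1}$ from~(\ref{trig assumption}) yields a net power $\epsilon^{d_j-d_i}$, precisely cancelled by the extra factor $\epsilon^{d_i-d_j}$ produced by the conjugation $\epsilon^{\mu^++\mu^-}(\cdot)\epsilon^{-\mu^+-\mu^-}$. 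For $g^{\trig,D}_i(z)$, the $z^{d^+_i}=e^{\epsilon x d^+_i}\to 1$ factor is harmless, and one similarly obtains leading order $\epsilon^{d_i}\cdot g^{\rat,\hat D}_i(x)$, matching the rescaling by $\epsilon^{-d_i}$ from the right-multiplication by $\epsilon^{-\mu^+-\mu^-}$ (note that the $-\mu^+$ at $\infty$ in the trigonometric divisor is exactly replaced by $-(\mu^++\mu^-)$ in $\hat D|_\infty=\mu^++\mu^-$).

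The main obstacle is the combinatorial bookkeeping of all the $\epsilon$-exponents, integer shifts, and signs. Concretely, one must verify that (i) the half-integer shifts from $\vv^k=e^{\epsilon k/2}$ combine with $w_{i,r}=p_{i,r}-i/2$ so as to produce exactly the shifts $\pm 1$ of arguments of $P_i, P_{i-1}, P_{k,r_k}$ appearing in~(\ref{diagonal entries}--\ref{lower triangular all entries}); (ii) the total $\epsilon$-power in each entry of the three Gauss factors matches the $\epsilon$-power of the corresponding entry of $\epsilon^{\pm(\mu^++\mu^-)}$ on the appropriate side, which reduces to the identity $\sum_{k=i}^{j-1}s_k=a_i-a_j$ and~(\ref{trig assumption}); and (iii) the various signs, namely the $(-1)^{i-j+1}$ prefactor in~(\ref{upper triangular quantum all entries}, \ref{lower triangular quantum all entries}), the $(-1)^{j-i}$ from $\prod D_{k,r_k}^{\pm 1}$, and the minus signs from each linearization $1-e^{\epsilon u}\approx -\epsilon u$, combine to $+1$ (for $f$) or recover the single minus sign of~(\ref{upper triangular all entries}) (for $e$). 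Once these three items are checked, the proposition follows by reassembling the three factors.
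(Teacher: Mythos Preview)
Your proposal is correct and follows essentially the same route as the paper: rewrite $T^\trig_D(z)\cdot\epsilon^{-\mu^+-\mu^-}$ via the conjugated Gauss decomposition $F^\trig_D(z)\cdot(G^\trig_D(z)\epsilon^{-\mu^+-\mu^-})\cdot(\epsilon^{\mu^++\mu^-}E^\trig_D(z)\epsilon^{-\mu^+-\mu^-})$ and verify factor by factor that the $\epsilon\to 0$ limits match $F^\rat_{\hat D}(x),G^\rat_{\hat D}(x),E^\rat_{\hat D}(x)$ using the elementary asymptotics of $\sW_k,\sW_{k,r_k},\sZ_k$ together with the identities $\sum_{k=i}^{j-1}s_k=a_i-a_j$ and $a_i-a_{i-1}=\epsilon^\vee_i(\lambda+\mu^++\mu^-)$. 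The paper's proof records exactly these limits and the same $\epsilon$-power bookkeeping identities, so the two arguments coincide.
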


\begin{proof}
Recall the Gauss decomposition $T^\trig_D(z)=F^\trig_D(z)G^\trig_D(z)E^\trig_D(z)$
of~(\ref{redefinition of trigonometric Lax uniform}) with all three factors determined explicitly
via~(\ref{diagonal quantum entries},~\ref{upper triangular quantum all entries},~\ref{lower triangular quantum all entries}).
Then, $T^\trig_D(z)\cdot \epsilon^{-\mu^+-\mu^-}$ has the following Gauss decomposition:
\begin{equation}\label{diagonally twisted quantum Gauss}
  T^\trig_D(z)\cdot \epsilon^{-\mu^+-\mu^-}=
  F^\trig_D(z)\cdot
  \left(G^\trig_D(z)\epsilon^{-\mu^+-\mu^-}\right)\cdot
  \left(\epsilon^{\mu^++\mu^-}E^\trig_D(z)\epsilon^{-\mu^+-\mu^-}\right).
\end{equation}
On the other hand, we also have the Gauss decomposition
\begin{equation}\label{again rational Gauss}
  T^\rat_{\hat{D}}(x)=
  F^\rat_{\hat{D}}(x)\cdot G^\rat_{\hat{D}}(x)\cdot E^\rat_{\hat{D}}(x)
\end{equation}
of~(\ref{redefinition of rational Lax}) with all three factors determined explicitly
via~(\ref{diagonal entries},~\ref{upper triangular all entries},~\ref{lower triangular all entries}).

It remains to note that upon the above change of
variables~(\ref{trig to rational 1}--\ref{trig to rational 3}), the $\epsilon\to 0$ limit
of each of the three factors in~(\ref{diagonally twisted quantum Gauss}) exactly coincides
with the corresponding factor in~(\ref{again rational Gauss}):

\noindent
$\bullet$ For the diagonal factors, this immediately follows from
\begin{equation*}
  \epsilon^{-a_i}\sW_i(\vv^{-i}z)\to P_i(x),\quad
  \epsilon^{-a_{i-1}}\sW_{i-1}(\vv^{-i-1}z)\to P_{i-1}(x-1),\quad
  \epsilon^{-\alphavee_k(\lambda)}\sZ_k(\vv^{-k}z)\to Z_k(x)
\end{equation*}
as $\epsilon \to 0$, combined with the equality
\begin{equation*}
  a_i-a_{i-1}+\sum_{k=0}^{i-1} \alphavee_k(\lambda)-d_i=
  a_i-a_{i-1} - \epsilon^\vee_i(\lambda) - \epsilon^\vee_i(\mu^++\mu^-)=
  a_i-a_{i-1} - \epsilon^\vee_i(\lambda+\mu^++\mu^-)=0;
\end{equation*}

\noindent
$\bullet$ For the upper triangular factors, this follows from
\begin{equation*}
\begin{split}
  & \epsilon^{-a_k+1}\sW_{k,r_k}(\vv^{-1} \sw_{k+1,r_{k+1}})\to P_{k,r_k}(p_{k+1,r_{k+1}}-1),\quad
    \epsilon^{-a_k+1}\sW_{k,r_k}(\sw_{k,r_{k}})\to P_{k,r_k}(p_{k,r_{k}}),\\
  & \epsilon^{-a_{i-1}}\sW_{i-1}(\vv^{-1} \sw_{i,r_{i}})\to P_{i-1}(p_{i,r_{i}}-1),\quad
    \frac{\epsilon}{1-\vv^{i}\sw_{i,r_i}/z}\to \frac{1}{x-p_{i,r_i}}
\end{split}
\end{equation*}
as $\epsilon \to 0$, combined with the equality
\begin{equation*}
  a_{i-1}-a_{j-1}+\sum_{k=i}^{j-1} \alphavee_k(\lambda)-\sum_{k=i}^{j-1} s_k + d_i - d_j=
  a_{i-1}-a_i-a_{j-1}+a_j+(\epsilon^\vee_i-\epsilon^\vee_j)(\lambda+\mu^++\mu^-)=0;
\end{equation*}

\noindent
$\bullet$ For the lower triangular factors, this follows from
\begin{equation*}
\begin{split}
  & \epsilon^{-a_k+1}\sW_{k,r_k}(\vv \sw_{k-1,r_{k-1}})\to P_{k,r_k}(p_{k-1,r_{k-1}}+1),\quad
    \epsilon^{-a_k+1}\sW_{k,r_k}(\sw_{k,r_{k}})\to P_{k,r_k}(p_{k,r_{k}}),\\
  & \epsilon^{-a_j}\sW_{j}(\vv \sw_{j-1,r_{j-1}})\to P_{j}(p_{j-1,r_{j-1}}+1),\quad
    \frac{\epsilon}{1-z/\vv^{i+2}\sw_{i,r_i}}\to \frac{-1}{x-p_{i,r_i}-1}
\end{split}
\end{equation*}
as $\epsilon \to 0$, combined with the equality $a_j-a_i+\sum_{k=i}^{j-1} s_k=0$.

This completes our proof of Proposition~\ref{rat from trig}.
\end{proof}


\subsection{Six explicit linear trigonometric Lax matrices for $n=2$}
\label{ssec six trig Lax}
\

In this section, we apply Theorem~\ref{Main Theorem 2q} to obtain explicitly all
linear trigonometric Lax matrices $\sT_D(z)$ for the smallest rank $n=2$, corresponding
to a triple of pseudo Young diagrams
\begin{equation*}
\begin{split}
  & \blambda=(\blambda_1,0),\ \bmu^+=(\bmu^+_1,-1),\ \bmu^-=(\bmu^-_1,0)\\
  & \mathrm{with}\ \blambda_1\geq 0, \bmu^+_1\geq -1, \bmu^-_1\geq 0 \
    \mathrm{and}\ \blambda_1+\bmu^+_1+\bmu^-_1=1.
\end{split}
\end{equation*}
We shall also compute their \textbf{quantum determinant}
$\qdet\, \sT_D(z)$, defined via
\begin{equation}\label{trig qdem n=2}
  \qdet\, \sT_D(z):=
  \sT_D(\vv^2 z)_{11}\sT_D(z)_{22}-\vv^{-1}\sT_D(\vv^2 z)_{12}\sT_D(z)_{21}.
\end{equation}
Note that $a_1=-(\blambda_2+\bmu^+_2+\bmu^-_2)=1$ manifestly, due to Lemma~\ref{explicit a's updated}(a).
To simplify the formulas below, we relabel
$D^{\pm 1}_1,\sw^{\pm 1/2}_1$ by $D^{\pm 1},\wt{\sw}^{\pm 1}$, respectively.

\medskip
\noindent
$\bullet$ \emph{Case $\blambda_1=0, \bmu^+_1=-1, \bmu^-_1=2$.}

We have
\begin{equation}\label{qMatrix Example 1}
\sT_D(z)=
\begin{pmatrix}
  z\cdot\wt{\sw}^{-1}-\vv\wt{\sw} & z\cdot \wt{\sw}D^{-1} \\
  -\vv\wt{\sw}D & z\cdot \wt{\sw}
\end{pmatrix}
\end{equation}
and its quantum determinant is $\qdet\, \sT_D(z)=\vv^{2}z^2$.

\medskip
\noindent
$\bullet$ \emph{Case $\blambda_1=0, \bmu^+_1=0, \bmu^-_1=1$.}

We have
\begin{equation}\label{qMatrix Example 2}
\sT_D(z)=
\begin{pmatrix}
  z\cdot \wt{\sw}^{-1}-\vv\wt{\sw} & z\cdot \vv^{-1}\wt{\sw}^{-1}D^{-1} \\
  -\vv\wt{\sw}D & 0
\end{pmatrix}
\end{equation}
and its quantum determinant is $\qdet\, \sT_D(z)=z$.

\medskip
\noindent
$\bullet$ \emph{Case $\blambda_1=0, \bmu^+_1=1, \bmu^-_1=0$.}

We have
\begin{equation}\label{qMatrix Example 3}
\sT_D(z)=
\begin{pmatrix}
  z\cdot \wt{\sw}^{-1}-\vv\wt{\sw} & z\cdot \vv^{-2}\wt{\sw}^{-3}D^{-1} \\
  -\vv\wt{\sw}D & -\vv^{-3}\wt{\sw}^{-1}
\end{pmatrix}
\end{equation}
and its quantum determinant is $\qdet\, \sT_D(z)=\vv^{-2}$.

\medskip
\noindent
$\bullet$ \emph{Case $\blambda_1=1, \bmu^+_1=-1, \bmu^-_1=1$.}

We have
\begin{equation}\label{qMatrix Example 4}
\sT_D(z)=
\begin{pmatrix}
  z\cdot \wt{\sw}^{-1}-\vv\wt{\sw} & z\cdot \wt{\sw}(1-\vv^{-1}\sx_1/\wt{\sw}^2)D^{-1} \\
  -\vv\wt{\sw}D & z\cdot \wt{\sw}
\end{pmatrix}
\end{equation}
and its quantum determinant is $\qdet\, \sT_D(z)=\vv^{2}z(z-\vv^{-2}\sx_1)$.

\medskip
\noindent
$\bullet$ \emph{Case $\blambda_1=1, \bmu^+_1=0, \bmu^-_1=0$.}

We have
\begin{equation}\label{qMatrix Example 5}
\sT_D(z)=
\begin{pmatrix}
  z\cdot \wt{\sw}^{-1}-\vv\wt{\sw} & z\cdot \vv^{-1}\wt{\sw}^{-1}(1-\vv^{-1}\sx_1/\wt{\sw}^2)D^{-1} \\
  -\vv\wt{\sw}D & \vv^{-3}\wt{\sw}^{-1}\sx_1
\end{pmatrix}
\end{equation}
and its quantum determinant is $\qdet\, \sT_D(z)=z-\vv^{-2}\sx_1$.

\medskip
\noindent
$\bullet$ \emph{Case $\blambda_1=2, \bmu^+_1=-1, \bmu^-_1=0$.}

We have
\begin{equation}\label{qMatrix Example 6}
\sT_D(z)=
\begin{pmatrix}
  z\cdot \wt{\sw}^{-1}-\vv\wt{\sw} & z\cdot \wt{\sw}(1-\vv^{-1}\sx_1/\wt{\sw}^2)(1-\vv^{-1}\sx_2/\wt{\sw}^2)D^{-1} \\
  -\vv\wt{\sw}D & z\cdot \wt{\sw}-\vv^{-3}\wt{\sw}^{-1}\sx_1\sx_2
\end{pmatrix}
\end{equation}
and its quantum determinant is $\qdet\, \sT_D(z)=\vv^{2}(z-\vv^{-2}\sx_1)(z-\vv^{-2}\sx_2)$.

\begin{Rem}\label{relation to L-matrices of ft1}
The first three Lax matrices~(\ref{qMatrix Example 1},~\ref{qMatrix Example 2},~\ref{qMatrix Example 3})
first appeared in~\cite{ft1} (up to a normalization factor, they coincide with those of~\cite[(11.4, 11.6, 11.7)]{ft1} having $\qdet=1$).
\end{Rem}


\subsection{Coproduct homomorphisms for shifted quantum affine algebras}\label{ssec coproduct qaffine}
\

A crucial benefit of the RTT realization is that it immediately endows
the quantum affine algebra of $\gl_n$ with the Hopf algebra structure,
in particular, the coproduct homomorphism
\begin{equation}\label{eq:rtt-coproduct-affine}
  \Delta^\rtt\colon U^\rtt_\vv(L\gl_n)\longrightarrow U^\rtt_\vv(L\gl_n)\otimes U^\rtt_\vv(L\gl_n),
  \qquad T^\pm(z)\mapsto T^\pm(z)\otimes T^\pm(z).
\end{equation}

The main observation of this section is that~\eqref{eq:rtt-coproduct-affine}
naturally admits a shifted version:

\begin{Prop}\label{shifted rtt quantum coproduct}
For any $\mu^\pm_1,\mu^\pm_2\in \Lambda^+$, there is
a unique $\BC(\vv)$-algebra homomorphism
\begin{equation*}
  \Delta^\rtt_{-\mu^+_1,-\mu^-_1,-\mu^+_2,-\mu^-_2}\colon
  U^\rtt_{-\mu^+_1-\mu^+_2,-\mu^-_1-\mu^-_2}(L\gl_n)\longrightarrow
  U^\rtt_{-\mu^+_1,-\mu^-_1}(L\gl_n) \otimes U^\rtt_{-\mu^+_2,-\mu^-_2}(L\gl_n)
\end{equation*}
defined by
\begin{equation}\label{trig coproduct rtt}
  \Delta^\rtt_{-\mu^+_1,-\mu^-_1,-\mu^+_2,-\mu^-_2}(T^\pm(z))=T^\pm(z)\otimes T^\pm(z).
\end{equation}
\end{Prop}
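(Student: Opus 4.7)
The plan is to mirror the strategy used in the Yangian analogue (Proposition~\ref{shifted rtt coproduct}), adapted to the trigonometric setting where one must handle the two halves $T^+(z)$ and $T^-(z)$ simultaneously. I need to verify that the matrices $T^\pm(z)\otimes T^\pm(z)$, viewed as elements of
$\bigl(U^\rtt_{-\mu^+_1,-\mu^-_1}(L\gl_n)\otimes U^\rtt_{-\mu^+_2,-\mu^-_2}(L\gl_n)\bigr)[[z,z^{-1}]]\otimes\End\BC^n$,
satisfy all three families of defining relations of $U^\rtt_{-\mu^+_1-\mu^+_2,-\mu^-_1-\mu^-_2}(L\gl_n)$: the RTT relations~\eqref{trigRTT}, the Gauss decomposition conditions~\eqref{Gauss product trigonometric}--\eqref{quantum t-modes shifted}, and the invertibility conditions~\eqref{g-modes invertibility}.

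The RTT relations for all four combinations $(\epsilon,\epsilon')\in\{+,-\}^2$ follow immediately by the standard formal manipulation: the entries of $T^\epsilon(z)\otimes 1$ commute with those of $1\otimes T^{\epsilon'}(w)$ (living in different tensor factors), so the RTT relation for each tensor factor propagates to $T^\epsilon(z)\otimes T^\epsilon(z)$ and $T^{\epsilon'}(w)\otimes T^{\epsilon'}(w)$. This is exactly the argument used for the unshifted quantum loop algebra of $\gl_n$.

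The main content lies in verifying the Gauss decomposition conditions, where I would repeat the strategy of Proposition~\ref{shifted rtt coproduct} separately for each sign. Write
\begin{equation*}
  T^\pm(z)\otimes 1 = F^{(1),\pm}(z)G^{(1),\pm}(z)E^{(1),\pm}(z),\quad
  1\otimes T^\pm(z) = F^{(2),\pm}(z)G^{(2),\pm}(z)E^{(2),\pm}(z),
\end{equation*}
and factor $G^{(a),+}(z) = D^{(a),+}(z)\,z^{\mu^+_a}$ and $G^{(a),-}(z) = z^{-\mu^-_a}\,D^{(a),-}(z)$ with $z^{\pm\mu^\pm_a}:=\diag\bigl(z^{\pm\epsilon^\vee_1(\mu^\pm_a)},\ldots,z^{\pm\epsilon^\vee_n(\mu^\pm_a)}\bigr)$ and $D^{(a),\pm}(z)$ having constant term $1$ expanding in $z^{\mp 1}$. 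The product $E^{(1),\pm}(z)F^{(2),\pm}(z)$ is then (upper triangular)$\cdot$(lower triangular) with commuting entries, and the general Gauss decomposition observation used in the Yangian proof yields $E^{(1),\pm}(z)F^{(2),\pm}(z)=\bar F^{\pm}(z)\bar G^{\pm}(z)\bar E^{\pm}(z)$ with the required expansion types. Conjugating $\bar F^{\pm}, \bar E^{\pm}$ past the remaining diagonal factors $z^{\pm\mu^\pm_a}$ via the analogue of~\eqref{conjugation 1}--\eqref{conjugation 2} gives the Gauss decomposition of $T^\pm(z)\otimes T^\pm(z)$; the antidominance $\mu^\pm_a\in\Lambda^+$ (i.e.\ non-increasing diagonal exponents) ensures that all shifts $z^{d_j-d_i}$ produced by the conjugation have the correct sign, so the triangular factors remain in $z^{\mp 1}$-expansions, and the new leading diagonal exponent equals $\pm\epsilon^\vee_i(\mu^\pm_1+\mu^\pm_2)$. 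The invertibility condition~\eqref{g-modes invertibility} for the coproduct image follows because the leading coefficient of the new diagonal Gauss factor is $g^{(1),\pm}_{i,\mp d^\pm_{i,1}}\otimes g^{(2),\pm}_{i,\mp d^\pm_{i,2}}$ multiplied by a unit (constant term $1$) coming from $\bar G^{\pm}$, and both factors are invertible by~\eqref{g-modes invertibility} in each tensor factor.

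The main obstacle I anticipate is the careful bookkeeping of expansion directions for the $-$ half. In the Yangian proof, one only had to analyze $\sE(z)\sF(z)$ with both series in $z^{-1}\mathcal{C}[[z^{-1}]]$ or $\mathcal{C}[[z^{-1}]]$. Here, for the $-$ case, $e^-_{ij}(z)\in z\cdot\mathcal{C}[[z]]$ has no constant term while $f^-_{ji}(z)\in\mathcal{C}[[z]]$ does, so the roles are exchanged relative to the $+$ case, and the Gauss decomposition lemma must be applied in a mirrored form to ensure that $\bar G^{-}$ has constant term $1$ and that $\bar E^{-}, \bar F^{-}$ expand in $z^{+1}$. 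Once this symmetry is set up correctly, the rest of the verification is entirely parallel to the Yangian argument.
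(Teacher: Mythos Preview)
Your proposal is correct and follows essentially the same approach as the paper's own proof: the paper likewise reduces to the Yangian argument of Proposition~\ref{shifted rtt coproduct}, and its only stated modification is exactly the one you isolate in your final paragraph---namely that in the general Gauss decomposition lemma for $\sE(z)\sF(z)$ one must allow a constant term either in $\se_{ij}(z)$ (for the $+$ half) or in $\sff_{ji}(z)$ (for the $-$ half), and check that the resulting $\bar\sE,\bar\sF,\bar\sG$ inherit the correct expansion types. One small slip: your $D^{(a),\pm}(z)$ need not have leading coefficient $1$, only an invertible one (namely $g^{(a),\pm}_{i,\mp d^\pm_{i,a}}$), but you effectively correct this yourself when you compute the leading diagonal term of the product.
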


\begin{proof}
The proof is completely analogous to our proof of Proposition~\ref{shifted rtt coproduct}
with the only minor update of the general observation we used in \emph{loc.cit.}\
To be more precise, we either need to add the generators $\se^{(0)}_{ij}$ so that
$\se_{ij}(z)=\sum_{r\geq 0} \se^{(r)}_{ij}z^{-r}$ or to add the generators
$\sff^{(0)}_{ji}$ so that $\sff_{ji}(z)=\sum_{r\geq 0} \sff^{(r)}_{ji}z^{-r}$.
In both cases, the product $\sE(z)\cdot \sF(z)$ still admits the Gauss
decomposition~(\ref{normal ordering in TT}) with either
  $\bar{\se}_{ij}(z)=\sum_{r\geq 0} \bar{\se}^{(r)}_{ij}z^{-r}$
and
  $\bar{\sff}_{ji}(z)=\sum_{r\geq 1} \bar{\sff}^{(r)}_{ji}z^{-r}$,
or
  $\bar{\se}_{ij}(z)=\sum_{r\geq 1} \bar{\se}^{(r)}_{ij}z^{-r}$
and
  $\bar{\sff}_{ji}(z)=\sum_{r\geq 0} \bar{\sff}^{(r)}_{ji}z^{-r}$,
respectively.
\end{proof}

The following basic property of $\Delta^\rtt_{\ast,\ast,\ast,\ast}$ is straightforward:

\begin{Cor}\label{coassociativity quantum}
For any $\mu^+_1,\mu^-_1,\mu^+_2,\mu^-_2,\mu^+_3,\mu^-_3\in \Lambda^+$,
the following equality holds:
\begin{multline*}
  (\on{Id}\otimes \Delta^\rtt_{-\mu^+_2,-\mu^-_2,-\mu^+_3,-\mu^-_3})\circ \Delta^\rtt_{-\mu^+_1,-\mu^-_1,-\mu^+_2-\mu^+_3,-\mu^-_2-\mu^-_3}=\\
  (\Delta^\rtt_{-\mu^+_1,-\mu^-_1,-\mu^+_2,-\mu^-_2}\otimes\on{Id})\circ \Delta^\rtt_{-\mu^+_1-\mu^+_2,-\mu^-_1-\mu^-_2,-\mu^+_3,-\mu^-_3}.
\end{multline*}
\end{Cor}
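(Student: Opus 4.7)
The plan is to verify the asserted equality by checking it on a generating set for the shifted RTT quantum affine algebra $U^\rtt_{-\mu^+_1-\mu^+_2-\mu^+_3,-\mu^-_1-\mu^-_2-\mu^-_3}(L\gl_n)$. Since both sides of the desired identity are compositions of $\BC(\vv)$-algebra homomorphisms (provided by Proposition~\ref{shifted rtt quantum coproduct}), it suffices to check agreement on the matrix coefficients $t^\pm_{ij}[\pm r]$, or equivalently, on the matrix-valued generating series $T^\pm(z)$, together with the inverse generators $(g^\pm_{i,\mp d^\pm_i})^{-1}$.

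First I would compute the left-hand side on $T^\pm(z)$. By the defining formula~(\ref{trig coproduct rtt}), the innermost map sends $T^\pm(z)\mapsto T^\pm(z)\otimes T^\pm(z)$, landing in $U^\rtt_{-\mu^+_1,-\mu^-_1}(L\gl_n)\otimes U^\rtt_{-\mu^+_2-\mu^+_3,-\mu^-_2-\mu^-_3}(L\gl_n)$. Applying $\on{Id}\otimes \Delta^\rtt_{-\mu^+_2,-\mu^-_2,-\mu^+_3,-\mu^-_3}$ to the second tensor factor yields $T^\pm(z)\otimes T^\pm(z)\otimes T^\pm(z)$. The symmetric computation for the right-hand side produces the same triple tensor product in the same target algebra $U^\rtt_{-\mu^+_1,-\mu^-_1}(L\gl_n)\otimes U^\rtt_{-\mu^+_2,-\mu^-_2}(L\gl_n)\otimes U^\rtt_{-\mu^+_3,-\mu^-_3}(L\gl_n)$. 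Thus the two compositions agree on each matrix coefficient $t^\pm_{ij}[\pm r]$.

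To promote this to an equality of algebra homomorphisms, I must also verify agreement on the distinguished inverses $(g^\pm_{i,\mp d^\pm_i})^{-1}$ introduced by relation~(\ref{g-modes invertibility}). These are unique two-sided inverses of $g^\pm_{i,\mp d^\pm_i}$, which are themselves expressible in terms of the modes $t^\pm_{\ast,\ast}[\pm \ast]$ via the Gauss decomposition~(\ref{Gauss product trigonometric}--\ref{quantum t-modes shifted}). Since both compositions are algebra homomorphisms agreeing on $g^\pm_{i,\mp d^\pm_i}$, they must send $(g^\pm_{i,\mp d^\pm_i})^{-1}$ to the common inverse in the target, so agreement extends automatically.

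The only subtlety — and essentially the only content of the argument — is to ensure that the intermediate algebras in the two compositions are indeed the same, i.e.\ that the shift parameters add up consistently: on the left we pass through shifts $(-\mu^+_1,-\mu^-_1,-\mu^+_2-\mu^+_3,-\mu^-_2-\mu^-_3)$ and then split the second factor, while on the right we pass through $(-\mu^+_1-\mu^+_2,-\mu^-_1-\mu^-_2,-\mu^+_3,-\mu^-_3)$ and then split the first factor. Both routes land in the triple tensor product with shift data $(-\mu^+_i,-\mu^-_i)_{i=1,2,3}$, so no discrepancy arises. I do not anticipate any genuine obstacle; the corollary is the direct quantum-affine counterpart of Corollary~\ref{coassociativity}, and the proof is a one-line matrix computation once the bookkeeping of shifts is spelled out.
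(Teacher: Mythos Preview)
Your proposal is correct and matches the paper's approach: the paper simply declares the corollary ``straightforward'' without writing out a proof, and the verification you give --- checking that both compositions send $T^\pm(z)$ to $T^\pm(z)\otimes T^\pm(z)\otimes T^\pm(z)$, then extending to the adjoined inverses --- is exactly the intended one-line matrix computation.
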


Evoking the key isomorphisms
  $\Upsilon_{-\mu^+,-\mu^-}\colon U_{-\mu^+,-\mu^-}(L\gl_n)\iso U^\rtt_{-\mu^+,-\mu^-}(L\gl_n)$
of Theorem~\ref{Main Conjecture 2} for $(\mu^+,\mu^-)$ being either of
the three pairs $(\mu^+_1,\mu^-_1), (\mu^+_2,\mu^-_2), (\mu^+_1+\mu^+_2,\mu^-_1+\mu^-_2)$,
we conclude that $\Delta^\rtt_{-\mu^+_1,-\mu^-_1,-\mu^+_2,-\mu^-_2}$
of~(\ref{trig coproduct rtt}) gives rise to the $\BC(\vv)$-algebra homomorphism
\begin{equation}\label{trig shifted coproduct}
  \Delta_{-\mu^+_1,-\mu^-_1,-\mu^+_2,-\mu^-_2}\colon
  U_{-\mu^+_1-\mu^+_2,-\mu^-_1-\mu^-_2}(L\gl_n)\longrightarrow
  U_{-\mu^+_1,-\mu^-_1}(L\gl_n) \otimes U_{-\mu^+_2,-\mu^-_2}(L\gl_n).
\end{equation}
Since the algebra $U_{-\mu^+_1-\mu^+_2,-\mu^-_1-\mu^-_2}(L\gl_n)$ is generated by
\begin{equation}\label{eq:sl-gener}
  \Big\{E_{i,0},F_{i,0},
    \varphi^\pm_{j,\mp \epsilon^\vee_j(\mu^\pm_1+\mu^\pm_2)},
    (\varphi^\pm_{j,\mp \epsilon^\vee_j(\mu^\pm_1+\mu^\pm_2)})^{-1},
    \varphi^\pm_{j, \mp \epsilon^\vee_j(\mu^\pm_1+\mu^\pm_2)\pm 1}\Big\}_{1\leq i<n}^{1\leq j\leq n}
\end{equation}
and the coefficients of the central series $C^\pm(z)$ of~\eqref{quantum qdeterminant},
as follows from Lemma~\ref{shifted Qaffine sl as a quotient of gl}, the homomorphism
$\Delta_{-\mu^+_1,-\mu^-_1,-\mu^+_2,-\mu^-_2}$ is uniquely determined by the images of these elements:
\begin{enumerate}
\item [$\bullet$]
  the images of the finite set of the generators~\eqref{eq:sl-gener} under $\Delta_{-\mu^+_1,-\mu^-_1,-\mu^+_2,-\mu^-_2}$
  were computed explicitly in \cite[Appendices G, H]{ft1}, cf.~\cite[Theorems G.10, G.13]{ft1};
\item [$\bullet$]
  in a complete analogy to~\eqref{eq:coproduct-on-center}, the images of the central series $C^\pm(z)$ are given by
\begin{equation}\label{eq:coproduct-trig-C}
 \Delta_{-\mu^+_1,-\mu^-_1,-\mu^+_2,-\mu^-_2}(C^\pm(z))=C^\pm(z)\otimes C^\pm(z).
\end{equation}
\end{enumerate}
The proof of~\eqref{eq:coproduct-trig-C} follows from the standard formulas
\begin{equation*}
  \Delta^\rtt_{-\mu^+_1,-\mu^-_1,-\mu^+_2,-\mu^-_2} (\qdet\, T^\pm(z))=\qdet\, T^\pm(z)\otimes \qdet\, T^\pm(z)
\end{equation*}
combined with the trigonometric version of Proposition~\ref{prop:center-identification}:
\begin{equation*}
  C^\pm(z)=\Upsilon^{-1}_{-\mu^+,-\mu^-}(\qdet\, T^\pm(z)).
\end{equation*}
Here, the \emph{quantum determinant} $\qdet\, T^\pm(z)$ of $U^\rtt_{-\mu^+,-\mu^-}(L\gl_n)$ is defined via
(cf.~\eqref{trig qdem n=2} for the smallest rank $n=2$ case):
\begin{equation}\label{eq:qdet-trig}
  \qdet\, T^\pm(z):=\sum_{\sigma\in S_n} (-\vv)^{-\ell(\sigma)}
    t^\pm_{1,\sigma(1)}(\vv^{2(n-1)}z) t^\pm_{2,\sigma(2)}(\vv^{2(n-2)} z)\cdots t^\pm_{n-1,\sigma(n-1)}(\vv^2 z) t^\pm_{n,\sigma(n)}(z).
\end{equation}

Moreover, the homomorphisms~(\ref{trig shifted coproduct})
have natural $\ssl_n$--counterparts:

\begin{Prop}\label{shifted coproduct qaffine sl}
For any $\nu^\pm_1,\nu^\pm_2\in \bar{\Lambda}^+$,
there is a unique $\BC(\vv)$-algebra homomorphism
\begin{equation*}
  \Delta_{-\nu^+_1,-\nu^-_1,-\nu^+_2,-\nu^-_2}\colon
   U^\ssc_{-\nu^+_1-\nu^+_2,-\nu^-_1-\nu^-_2}(L\ssl_n)\longrightarrow
   U^\ssc_{-\nu^+_1,-\nu^-_1}(L\ssl_n) \otimes U^\ssc_{-\nu^+_2,-\nu^-_2}(L\ssl_n)
\end{equation*}
such that the following diagram is commutative
\begin{equation}\label{compatibility with FT1-coproduct}
 \begin{CD}
 U^\ssc_{-\bar{\mu}^+_1-\bar{\mu}^+_2,-\bar{\mu}^-_1-\bar{\mu}^-_2}(L\ssl_n)
    @>{\Delta_{-\bar{\mu}^+_1,-\bar{\mu}^-_1,-\bar{\mu}^+_2,-\bar{\mu}^-_2}}>>
 U^\ssc_{-\bar{\mu}^+_1,-\bar{\mu}^-_1}(L\ssl_n) \otimes U^\ssc_{-\bar{\mu}^+_2,-\bar{\mu}^-_2}(L\ssl_n)\\
 @V{\iota_{-\mu^+_1-\mu^+_2,-\mu^-_1-\mu^-_2}}VV   @VV{\iota_{-\mu^+_1,-\mu^-_1}\otimes \, \iota_{-\mu^+_2,-\mu^-_2}}V\\
 U_{-\mu^+_1-\mu^+_2,-\mu^-_1-\mu^-_2}(L\gl_n)
    @>>{\Delta_{-\mu^+_1,-\mu^-_1,-\mu^+_2,-\mu^-_2}}>
 U_{-\mu^+_1,-\mu^-_1}(L\gl_n) \otimes U_{-\mu^+_2,-\mu^-_2}(L\gl_n)
 \end{CD}
\end{equation}
for any $\mu^+_1,\mu^-_1,\mu^+_2,\mu^-_2\in \Lambda^+$.
\end{Prop}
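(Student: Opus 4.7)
The strategy is to deduce the $\ssl_n$ coproduct from its $\gl_n$ counterpart~(\ref{trig shifted coproduct}), in complete analogy with how Proposition~\ref{shifted coproduct Drinfeld Yangian sl} was derived from Proposition~\ref{shifted coproduct Drinfeld Yangian gl} in the rational case. First, for each $\nu^\pm_i \in \bar{\Lambda}^+$ choose an arbitrary lift $\mu^\pm_i \in \Lambda^+$ (these exist since the projection $\Lambda \twoheadrightarrow \bar{\Lambda}$ takes $\Lambda^+$ onto $\bar{\Lambda}^+$). Form the composition
\begin{equation*}
  \Xi := \Delta_{-\mu^+_1,-\mu^-_1,-\mu^+_2,-\mu^-_2}\circ \iota_{-\mu^+_1-\mu^+_2,-\mu^-_1-\mu^-_2}\colon
  U^\ssc_{-\nu^+_1-\nu^+_2,-\nu^-_1-\nu^-_2}(L\ssl_n)\to U_{-\mu^+_1,-\mu^-_1}(L\gl_n)\otimes U_{-\mu^+_2,-\mu^-_2}(L\gl_n).
\end{equation*}
The core of the proof is to show that $\Xi$ factors through $\iota_{-\mu^+_1,-\mu^-_1}\otimes \iota_{-\mu^+_2,-\mu^-_2}$, i.e.\ that the image of every generator of the source lies in $(\iota_{-\mu^+_1,-\mu^-_1}\otimes \iota_{-\mu^+_2,-\mu^-_2})(U^\ssc_{-\nu^+_1,-\nu^-_1}(L\ssl_n)\otimes U^\ssc_{-\nu^+_2,-\nu^-_2}(L\ssl_n))$. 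Once this is established, the injectivity of $\iota_{-\mu^+_1,-\mu^-_1}\otimes \iota_{-\mu^+_2,-\mu^-_2}$ (Proposition~\ref{relation Qaffine sl vs gl} applied to each factor) yields a unique algebra homomorphism $\Delta_{-\nu^+_1,-\nu^-_1,-\nu^+_2,-\nu^-_2}$ making~(\ref{compatibility with FT1-coproduct}) commute.

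The verification of factorization would be carried out by computing explicit images under $\Xi$ of the generators $e_i(z), f_i(z), \psi^\pm_i(z)$ of $U^\ssc_{-\nu^+_1-\nu^+_2,-\nu^-_1-\nu^-_2}(L\ssl_n)$. Under $\iota_{-\mu^+_1-\mu^+_2,-\mu^-_1-\mu^-_2}$ these map, up to the scalar $(\vv-\vv^{-1})^{\pm 1}$ and the spectral shift $z\mapsto \vv^i z$, to $E_i(z), F_i(z)$ and $(\varphi^\pm_i(z))^{-1}\varphi^\pm_{i+1}(z)$ respectively, by~(\ref{assignment quantum sl vs gl series}). Applying the $\gl_n$ coproduct $\Delta^\rtt_{-\mu^+_1,-\mu^-_1,-\mu^+_2,-\mu^-_2}$ via the RTT formula $T^\pm(z)\mapsto T^\pm(z)\otimes T^\pm(z)$, the Gauss decomposition manipulations in the proof of Proposition~\ref{shifted rtt quantum coproduct} yield $\Delta(E_i(z))$, $\Delta(F_i(z))$, $\Delta(\varphi^\pm_i(z))$ as explicit (finite) expressions in the $\gl_n$ generators in each tensor factor. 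The key observation is that although individual factors $\varphi^\pm_{i,1}\otimes 1$ and $1\otimes \varphi^\pm_{i,2}$ need not lie in the image of $\iota$, the ratios $\iota(\psi^\pm_i(z)) = (\varphi^\pm_i)^{-1}\varphi^\pm_{i+1}$ do; a short computation shows that the correction terms arising from conjugating the Gauss factors by $z^{\mu_1}$ and $z^{\mu_2}$ (as in the proof of Proposition~\ref{shifted rtt quantum coproduct}) assemble into expressions built from $\iota(\psi^\pm_i)\otimes 1$, $1\otimes \iota(\psi^\pm_i)$, $\iota(e_j)\otimes \iota(f_j)$ and similar.

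The main obstacle is precisely this last step: while the rational analogue (Proposition~\ref{shifted coproduct Drinfeld Yangian sl}) requires only the leading two coefficients of $\sH_i(z)$, the trigonometric generator $\psi^\pm_i(z)$ is an invertible series whose image under $\Xi$ must be shown to be a product of two \emph{invertible} series in the respective tensor factors, each separately an image of $\iota\otimes 1$ and $1\otimes \iota$ applied to $\psi^\pm_i(z)$-type elements. This requires a careful bookkeeping of the $\varphi^\pm_j$-factors and the observation that Cartan-type contributions of the form $\varphi^\pm_{j;1}\otimes \varphi^\pm_{j;2}$ telescope correctly between consecutive indices $j=i,i+1$. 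Finally, independence of $\Delta_{-\nu^+_1,-\nu^-_1,-\nu^+_2,-\nu^-_2}$ from the choice of lifts $\mu^\pm_i$ is immediate from Lemma~\ref{identifying gl-qaffine} applied simultaneously to the source and to each factor of the target (the isomorphisms~(\ref{isom of shifted gl-qaffine}) act trivially on $E_i, F_i$ and merely re-index the $\varphi^\pm_i$-modes, hence commute with $\iota$ modulo the identifications in~(\ref{assignment quantum sl vs gl series})). Uniqueness of $\Delta_{-\nu^+_1,-\nu^-_1,-\nu^+_2,-\nu^-_2}$ follows from the fact that $U^\ssc_{-\nu^+_1-\nu^+_2,-\nu^-_1-\nu^-_2}(L\ssl_n)$ is generated by $\{e_i, f_i, \psi^\pm_i\}$ combined with the injectivity of $\iota_{-\mu^+_1,-\mu^-_1}\otimes \iota_{-\mu^+_2,-\mu^-_2}$.
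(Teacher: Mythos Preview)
Your approach is correct and is essentially the same as the paper's: both deduce the $\ssl_n$ coproduct from the $\gl_n$ coproduct~(\ref{trig shifted coproduct}) by composing with the embedding $\iota$ of Proposition~\ref{relation Qaffine sl vs gl} and verifying that the image lands in the $\ssl_n$ subalgebra of each tensor factor. The paper is terser, deferring the explicit computation of the images of the finite generating set to the calculations already carried out in~\cite[Appendix~G]{ft1} and observing that the resulting formulas coincide with those of~\cite[Theorem~10.22]{ft1}; your sketch instead outlines how one would verify the factorization directly via the telescoping of the Cartan factors $(\varphi^\pm_i)^{-1}\varphi^\pm_{i+1}$, which is the same underlying mechanism.
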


Evoking the defining formulas~(\ref{assignment quantum sl vs gl series})
for the embedding
  $\iota_{-\mu^+,-\mu^-}\colon
   U^\ssc_{-\bar{\mu}^+,-\bar{\mu}^-}(L\ssl_n)\hookrightarrow
   U_{-\mu^+,-\mu^-}(L\gl_n)$
of Proposition~\ref{relation Qaffine sl vs gl},
one obtains explicit formulas for the
  $\Delta_{-\bar{\mu}^+_1,-\bar{\mu}^-_1,-\bar{\mu}^+_2,-\bar{\mu}^-_2}$-images
of the finite generating set, following the proof of~\cite[Theorem 10.13]{ft1}
presented in~\cite[Appendix G]{ft1}. The resulting formulas coincide
with the explicit long formulas of~\cite[Theorem 10.16]{ft1}, thus providing
a simpler and more conceptual proof of~\cite[Theorem 10.16]{ft1}.

\begin{Rem}\label{coproduct all shifts qaffine}
Due to~\cite[Theorem 10.20]{ft1}, $\Delta_{-\nu^+_1,-\nu^-_1,-\nu^+_2,-\nu^-_2}$
with $\nu^\pm_1,\nu^\pm_2\in \bar{\Lambda}^+$ give rise to algebra homomorphisms
  $\Delta_{\nu^+_1,\nu^-_1,\nu^+_2,\nu^-_2}\colon
   U^\ssc_{\nu^+_1+\nu^+_2,\nu^-_1+\nu^-_2}(L\ssl_n)\to
   U^\ssc_{\nu^+_1,\nu^-_1}(L\ssl_n) \otimes U^\ssc_{\nu^+_2,\nu^-_2}(L\ssl_n)$
for any $\ssl_n$--coweights $\nu^\pm_1,\nu^\pm_2\in \bar{\Lambda}$.
However, we note that $\Delta_{\nu^+_1,\nu^-_1,\nu^+_2,\nu^-_2}\, (\nu^\pm_1,\nu^\pm_2\in \bar{\Lambda})$
are not coassociative, in contrast to Corollary~\ref{coassociativity quantum}.
\end{Rem}


\end{document}